\newtheorem{theorem}{Theorem}[section]
\newtheorem{lemma}[theorem]{Lemma}
\newtheorem{proposition}[theorem]{Proposition}
\newtheorem{corollary}[theorem]{Corollary}
\theoremstyle{definition}
\newtheorem{definition}[theorem]{Definition}
\newtheorem*{remark}{Remark}
\newtheorem*{remarks}{Remarks}
\numberwithin{equation}{section}
\newcommand{\N}{{\mathbb N}}
\newcommand{\Z}{{\mathbb Z}}
\newcommand{\R}{{\mathbb R}}
\newcommand{\C}{{\mathbb C}}
\newcommand{\D}{{\mathbb D}}
\newcommand{\T}{{\mathbb T}}
\newcommand{\A}{{\mathbb A}}
\newcommand{\E}{{\mathsf E}}
\newcommand{\mj}{j}
\newcommand{\mjH}{j}
\newcommand{\supp}{{\operatorname{supp}}}
\newcommand{\Res}{\operatorname{Res}}
\newcommand{\tr}{\operatorname{tr}}
\newcommand{\lin}{\mathrm{span}}
\newcommand{\diag}{\operatorname{diag}}
\newcommand{\cB}{{M}}
\newcommand{\cD}{{\mathcal{D}}}
\newcommand{\cF}{{\mathcal{F}}}
\newcommand{\cH}{{\mathcal{H}}}
\newcommand{\cK}{{\mathcal{K}}}
\newcommand{\cL}{{\mathcal{L}}}
\newcommand{\cM}{{T}}
\newcommand{\cS}{{\mathcal{S}}}
\newcommand{\cT}{{\mathcal{T}}}
\newcommand{\cU}{{U}}
\newcommand{\fA}{{\mathfrak{A}}}
\newcommand{\fB}{{\mathfrak{B}}}
\newcommand{\fb}{{\mathfrak{b}}}
\newcommand{\fw}{{\mathfrak{w}}}
\newcommand{\fz}{{\mathfrak{z}}}
\newcommand{\G}{{\Gamma}}
\newcommand{\e}{{\varepsilon}}
\newcommand{\z}{\zeta}
\newcommand{\g}{\gamma}
\renewcommand{\l}{\lambda}
\renewcommand{\a}{\alpha}
\renewcommand{\chi}{y}
\renewcommand{\b}{\beta}
\renewcommand{\d}{\delta}
\renewcommand{\L}{{\Lambda}}
\newcommand{\dd}{\mathrm{d}}
\newcommand{\spa}{\operatorname{span}}
\renewcommand{\Re}{\operatorname{Re}}
\renewcommand{\Im}{\operatorname{Im}}
\newcommand{\rz}{z^*}
\renewcommand{\i}{\infty}
\newcommand{\vz}{\vec{\vspace{1cm}z}}
\newcommand{\minus}{\scalebox{0.5}[1.0]{\( -\)}\hspace{-0.04cm}}
\newcommand{\apa}{\mathbf a}
\newcommand{\apr}{\mathbold\rho}
\newcommand{\apC}{\mathcal C}
\newcommand{\mv}{\text{\rm{CMV}}}
\newcommand{\MV}{\text{\rm{MCMV}}}
\newcommand{\per}{\text{\rm{per}}}
\newcommand{\ess}{\text{\rm{ess}}}
\newcommand{\DD}{D_{0}}
\title{Finite-gap CMV matrices: \\
Periodic Coordinates and a Magic Formula}
\author{Jacob~S.~Christiansen\thanks{Centre for Mathematical Sciences, Lund University, Box 118, 22100 Lund, Sweden. \newline
E-mail: stordal@maths.lth.se}, Benjamin~Eichinger\thanks{Departments of Mathematics, Rice University MS-136, Box 1892,
Houston, TX 77251-1892, USA. \newline E-mail: benjamin.eichinger@rice.edu}, and Tom~VandenBoom\thanks{Department of Mathematics,
Yale University, 10 Hillhouse Ave, New Haven, CT 06511, USA. \newline E-mail: thomas.vandenboom@yale.edu}
}
\begin{document}

\maketitle

\begin{abstract}
	We prove a bijective unitary correspondence between 1) the isospectral torus of almost-periodic, absolutely continuous CMV matrices having fixed finite-gap spectrum $\E$ and 2) special periodic block-CMV matrices satisfying a Magic Formula.  This latter class arises as $\E$-dependent operator M\"obius transforms of certain generating CMV matrices which are periodic up to a rotational phase; for this reason we call them ``MCMV''.  Such matrices are related to a choice of orthogonal rational functions on the unit circle, and their correspondence to the isospectral torus follows from a functional model in analog to that of GMP matrices.  As a corollary of our construction we resolve a conjecture of Simon; namely, that Caratheodory functions associated to such CMV matrices arise as quadratic irrationalities.
\end{abstract}

\section{Introduction and main results}

This paper studies two equivalent notions of interest, one in the spectral theory of certain unitary operators and the other in the theory of analytic functions mapping the unit disk into its closure.  This connection between CMV matrices and Schur functions is no mystery to experts in either field.  
We introduce here our results from both perspectives because we find them striking and attractive, as well as not immediately equivalent.

We begin in the context of the title:

\subsection{Finite-gap CMV and MCMV matrices}

Fix a sequence $\{a_k\}_{k \in \Z} \in \D^\Z$ of numbers in the unit disk $\D$, and define a family of $2 \times 2$ unitary matrices by
\begin{align*}
\Theta_k := \begin{bmatrix}
\overline{a_k} & \rho_k \\
\rho_k & -a_k
\end{bmatrix}, \quad \rho_k = \sqrt{1-|a_k|^2}.
\end{align*}
Letting $\Theta_k$ act on the span of $\{\delta_k, \delta_{k+1}\}$ in $\ell^2(\Z)$, define
\begin{align*}
L = \bigoplus_{l \in \Z} \Theta_{2l}, \quad M = \bigoplus_{l \in \Z} \Theta_{2l+1}.
\end{align*}
The (whole-line) CMV matrix associated to the sequence $\{a_k\}_{k \in \Z}$ is the unitary operator
\begin{align*}
C &:= C(\{a_k\}) = LM.
\end{align*}
Represented in the basis $\{\delta_k\}_{k \in \Z}$, $C$ is a five-diagonal matrix with repeating $2 \times 4$ block structure:
\begin{align*}
C=\begin{bmatrix}
		\ddots &\ddots &\ddots & & & \\
		\rho_{2l-1}\overline{a_{2l}} & -\overline{a_{2l}}a_{2l-1} & \overline{a_{2l+1}}\rho_{2l}&\rho_{2l}\rho_{2l+1}& & \\
		\rho_{2l}\rho_{2l-1} & -\rho_{2l}a_{2l-1} & -\overline{a_{2l+1}}a_{2l}&-\rho_{2l+1}a_{2l}& & \\
		& &\rho_{2l+1}\overline{a_{2l+2}} & -\overline{a_{2l+2}}a_{2l+1} & \overline{a_{2l+3}}\rho_{2l+2}&\rho_{2l+2}\rho_{2l+3} \\
		&&\rho_{2l+2}\rho_{2l+1} & -\rho_{2l+2}a_{2l+1} & -\overline{a_{2l+3}}a_{2l+2}&-\rho_{2l+3}a_{2l+2}& & \\
		& & &\ddots &\ddots&\ddots
	\end{bmatrix}.
\end{align*}
We will study those CMV matrices with almost-periodic coefficients $\{a_k\}_{k\in\Z}$ having full a.c. spectrum consisting of a fixed finite union of non-degenerate closed circular arcs  $\E \subset \partial\D$.  We denote the set of all such CMV matrices by $\cT_{\mv}(\E)$.  Topologically, $\cT_{\mv}(\E)$ is a torus of dimension equal to the number of disjoint arcs in $\E$.

CMV matrices are natural objects of interest in the context of orthogonal polynomials on the unit circle (see \cite{SimonOPUC1,SimonOPUC2}).  This in part relies on the interesting fact that half-line CMV matrices $C_+$, formed by setting $a_{-1} = -1$ above and restricting to $\ell^2(\N)$, are universal for cyclic unitary operators in the sense that, for any probability measure $\nu$ with infinite support on the unit circle $\partial\D$, multiplication by the independent variable in $L^2(\dd\nu)$ is unitarily equivalent to some half-line CMV matrix $C_+$.  This discovery was made surprisingly recently by Cantero, Moral, and Vel\'azquez \cite{CanMoVel03} by considering the basis of $L^2(\dd\nu)$ generated by orthonormalizing $\{1, z^{-1}, z, z^{-2}, z^2, \ldots \}$.  In comparison, it has long been known (see, e.g., \cite{MHS32}) that tridiagonal Jacobi matrices are universal models for selfadjoint operators with a cyclic vector.  CMV matrices are also important, e.g., in the theory of random matrices and integrable systems \cite{KiNe04,Ne06}, and for quantum walks \cite{CGMV10}; see also \cite{KiNe07,Gol08}.

The CMV basis is far from the only generating set for $L^2(\dd\nu)$.  Letting $b_w$ be the elementary Blaschke factor for $\D$ vanishing at $w$, i.e.
\begin{align}
b_w(z) := \frac{z - w}{1-\overline{w}z}
\end{align}
and denoting by $w^* = 1/\overline{w}$ reflection with respect to $\partial\D$, one has that $b_w(z^*) = b_{w}(z)^*=c_w\overline{b_{w^*}(z)}$, for some explicit unimodular constant $c_w$.   We can then suggestively rewrite the CMV basis above as instead being generated by orthonormalizing the sequence $\{1, b_\infty, b_0, b_\infty^2, b_0^2, \ldots\}$.  For a fixed sequence of points $\{z_k\}_{k \in \N} \in \D^\N$ with modulus bounded uniformly away from $1$, if we denote by $\{B_k\}$ and $\{B_k^*\}$ the families of Blaschke products
\begin{align}
B_0(z) = 1, \quad B_k(z) = \prod_{j=1}^k b_{z_j}(z), \quad B_k^*(z) = \overline{B_k(z^*)} = \prod_{j=1}^k  b_{z_j}(z)^{-1},
\end{align}
one could just as well have spanned $L^2(\dd\nu)$ by the sequence $\{B_0, B_1^*, B_1, B_2^*, B_2, \ldots\}$. In \cite{Ve08}, Vel\'azquez showed that the structure of multiplication by the independent variable $z$ in $L^2(\dd\nu)$ in the orthonormalization of this new generating set is related to CMV matrices via an operator M\"obius transform; specifically, denoting by $D_+ := \diag_{\N}\{0, z_1, z_1, z_2, z_2, \ldots\}$, he showed that multiplication by $z$ in $L^2(\dd\nu)$ is unitarily equivalent to the operator
\begin{align}\label{eq:VelazquezMCMV}
b_{\minus D_+}(C_+) := \eta_{D_+}(1+C_+D_+^*)^{-1}(D_+ + C_+)\eta_{D_+}^{-1}, \quad \eta_{D_+} = \sqrt{1-D_+D_+^*}
\end{align}
for some half-line CMV matrix $C_+$.  This theorem suggests we should study this new class of operator M\"obius transforms of CMV matrices more closely:
\begin{definition}[MCMV matrices]\label{def:MCMV}
Fix $n \geq 1$ and let $\vz=\{z_k\}_{k=0}^{n-1}\in\D^{n}$ with $z_0 = 0$, $\{a_k\}_{k \in \Z} \in \D^\Z$, and $\vartheta \in \R/2\pi\Z$.  Denote by $\DD$ the $2n$-periodic diagonal matrix
\begin{align}
\label{eq:perdiag}
\DD:=\DD(\vz)=\diag_{\Z}\{ \ldots, z_{n-2}, z_{n-1}, z_{n-1}, z_0 \,|\, z_0, z_1, z_1, z_2, \ldots \},
\end{align}
let $\Lambda_k(\vartheta)$ be the $2n \times 2n$ diagonal matrix $\Lambda_k(\vartheta):=\operatorname{diag}_{2n}\{e^{ik\vartheta},e^{-ik\vartheta},\dots, e^{ik\vartheta},e^{-ik\vartheta}\}$, and define
	\begin{align}
		\Lambda(\vartheta):=\bigoplus_{k \in \Z}\Lambda_k(\vartheta).
	\end{align}
With this notation, the (whole-line) MCMV matrix for $\vz$, $\{a_k\}$, and $\vartheta$ and  is defined by
	\begin{align}
		A:=A(\{a_k\},\vartheta; \vz)=\Lambda(\vartheta)^* b_{\minus \DD}(C)\Lambda(\vartheta),
	\end{align}
where $C = C(\{a_k\})$ is the CMV matrix associated to $\{a_k\}_{k\in\Z}$.
\end{definition}
We shall explain the role of $\Lambda(\vartheta)$ momentarily and be even more specific in Section \ref{sec:3}.  In short, this diagonal matrix enables us to change from periodicity up to a rotational phase to pure periodicity.

Like CMV matrices, an MCMV matrix $A$ is again band-structured. If we split $A$ into $2n\times 2n$ blocks $\mathbf{A_{i j}}$, then $\mathbf{A_{i j}}=\bf{0}$ if $|i-j|>1$. Moreover, the off-diagonal blocks are of the form $\mathbf{A_{i,i-1}}=\mathbf{v^i}\delta_{2n-1}^\intercal$ and $\mathbf{A_{i,i+1}}=\mathbf{u^i}\delta_0^\intercal$ for some explicit vectors $\mathbf{u^i},\mathbf{v^i}\in\C^{2n}$; cf. Lemma \ref{thm:structureA} and the figure below:
\begin{align}
\label{eq:Astructure}
A&=
\begin{tikzpicture}[baseline=-0.5ex]
  \matrix (m) [matrix of math nodes, nodes in empty cells, column sep=0mm, row sep=0mm, nodes={rectangle, 
                minimum size=1.2em, text depth=0.25ex,
                inner sep=0pt, outer sep=0pt,
                fill opacity=0.5, text opacity=1,
                anchor=center},
	left delimiter={[},right delimiter={]},ampersand replacement=\&] {
    \ddots \&   \& 			    \& 			\ddots    \&  \&          	      \&  \ddots\&   	\&			   \&\&\\
    		 \&   \& 			  	\& 		\&          \&       		      \&  \&		\&			   \&\&\\
 \mathbf{0} \; \; \&   \& \mathbf{0} \; \; \& \mathbf{v^i}   	\&          \&\mathbf{A_{ii}}\&  \&\mathbf{u^i}	    \& \; \; \mathbf{0} \&\& \; \; \mathbf{0}\\
   			 \&   \& 			    \& 	    \&          \&       		      \&  \&		\&			   \&\&\\
   			 \&   \& 			    \&               \& \ddots \&       		      \&  \&		\ddots\&			   \&\&\ddots\\
  } ;
 \draw (m-4-2.south west) rectangle (m-2-4.north east);
\draw (m-4-5.south west) rectangle (m-2-7.north east);
\draw (m-4-8.south west) rectangle (m-2-10.north east);
\end{tikzpicture}.
\end{align}
Furthermore, since operator M\"obius transforms preserve unitarity, MCMV matrices are likewise unitary operators.  Thus MCMV matrices can be viewed as being ``block-CMV''.  This special structure does not hold for arbitrary operator M\"obius transforms of CMV matrices; it follows in our case from $D_0$ having periodically repeated zero entries.


We denote the class of all MCMV matrices associated to $\vz \in \D^n$ by
	\begin{align}
	\A(\vz) := \bigl\{A(\{a_k\},\vartheta; \vz) : \{a_k\} \in \D^\Z, \; \vartheta \in \R/2\pi\Z \bigr\}
	\end{align}
and give special consideration to the subset $\A_{\per}(\vz) \subset \A(\vz)$ of periodic operators, i.e.
\begin{align}
\A_{\per}(\vz) := \bigl\{ A \in \A(\vz) : S^{2n}A = AS^{2n} \bigr\},
\end{align}
where, as usual, $S$ is the right shift operator.

Notice that the usual CMV matrices are simply the special case where $z_k = 0$ for all $k$.  This realization gives rise to a natural (if somewhat ill-posed) question: is there a ``best'' generating set of Blaschke products for a given measure $\nu$ on $\partial\D$? In the context of whole-line CMV matrices $\apC \in \cT_{\mv}(\E)$
, we offer an affirmative answer:

\begin{theorem}[Periodic Coordinates for finite-gap CMV matrices]
\label{t:periodiccoords}
Let $\E \subset \partial\D$ be a disjoint union of $g+1$ non-degenerate closed circular arcs.  There exists a sequence $\vz_{\E} := \{z_k\}_{k=0}^g \in \D^{g+1}$ of points depending only on $\E$ such that, denoting
\begin{align}
\cT_{\MV}(\E) := \{A \in \A_{\per}(\vz_{\E}) : \sigma(A) = \E\},
\end{align}
there is a unitary bijection between $\cT_{\mv}(\E)$ and $\cT_{\MV}(\E)$; i.e.
\begin{align}  \label{eq:bij}
\cT_{\mv}(\E) \simeq \cT_{\MV}(\E).
\end{align}
In particular, for an almost-periodic CMV matrix $\apC$ with purely absolutely continuous spectrum $\E$, there exists an associated CMV matrix $C = C(\{a_k\})$ with phase-periodic coefficients
\begin{align}\label{eq:phasePeriodic}
a_{k+2(g+1)} = e^{-2i\vartheta}a_k, \quad k \in \Z
\end{align}
such that $\apC$ is unitarily equivalent to the periodic MCMV matrix $A(\{a_k\}, \vartheta; \vz_{\E}) \in \A_{\per}(\vz_\E)$ and the spectral measures of the one-sided restrictions $\apC_+$ and $A_+$ coincide; cf. \eqref{def:A_+}.
\end{theorem}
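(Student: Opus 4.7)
The plan is to construct a functional model for elements of $\cT_{\mv}(\E)$ in direct analog with the GMP matrix treatment of finite-gap Jacobi matrices, with the Magic Formula for MCMV matrices serving as the essential periodicity mechanism. Concretely, I would first fix the points $\vz_\E = \{z_k\}_{k=0}^g$ as follows: take $z_0 = 0$ and choose $z_1, \dots, z_g$ to be the critical points (one per gap) of the Ahlfors-type Blaschke product on the domain $\Omega := (\C \cup \{\infty\}) \setminus \E$ that best approximates the CMV basis grading. Equivalently, these are the points in the gaps of $\E$ that produce a generalized Blaschke product of degree $g+1$ mapping $\D$ onto itself such that the preimage of $\partial\D$ is exactly $\E$.

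Next, I would realize each $\apC \in \cT_{\mv}(\E)$ through its spectral data. Since $\apC$ is cyclic (by almost-periodicity and purely a.c. spectrum of multiplicity one), we may take $\apC \cong M_z$ acting on $L^2(\dd\nu)$ for an appropriate probability measure $\nu$ supported on $\E$. Rather than using the CMV orthonormalization of $\{1, z^{-1}, z, z^{-2}, z^2, \dots\}$, I would instead orthonormalize the generalized Blaschke basis $\{B_0, B_1^*, B_1, B_2^*, B_2, \dots\}$ associated to the periodically repeated sequence $\vz_\E$. By Vel\'azquez's result \eqref{eq:VelazquezMCMV} (adapted to the whole-line setting and combined with a standard Gram--Schmidt bookkeeping), multiplication by $z$ in this new basis represents as $b_{\minus \DD}(C)$ for a suitable whole-line CMV matrix $C = C(\{a_k\})$. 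Conjugating by $\Lambda(\vartheta)$ for the appropriate phase $\vartheta$ converts the rotational phase-periodicity into honest $2n$-periodicity with $n = g+1$, producing an MCMV matrix $A(\{a_k\},\vartheta;\vz_\E)$ unitarily equivalent to $\apC$ and matching one-sided spectral measures by construction.

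The central step is proving that this $A$ actually lies in $\A_{\per}(\vz_\E)$ and has spectrum $\E$, i.e.\ $A \in \cT_{\MV}(\E)$. Here I would invoke the Magic Formula: one identifies a matrix-valued rational function $\Phi$ built from the Blaschke grading such that $\Phi(A) = S^{2n}$ precisely when $A \in \cT_{\MV}(\E)$. For $\apC \in \cT_{\mv}(\E)$, the fact that $\nu$ lives on $\E$ combined with the choice of $\vz_\E$ makes the relevant rational functions of $z$ coincide with the shift on the orthonormalized basis, which gives the Magic Formula and hence periodicity. This is the analog of the classical fact that periodic Jacobi matrices are characterized by $\Delta(J) = S^p + S^{-p}$, and in the finite-gap setting by the GMP Magic Formula.

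For the reverse direction, given $A \in \cT_{\MV}(\E)$, the Magic Formula and the band structure \eqref{eq:Astructure} allow us to read off a CMV matrix $C$ and phase $\vartheta$ by inverting the operator M\"obius transform $b_{\minus \DD}$ (which is well-defined since $\|\DD\| < 1$). The Magic Formula forces $C$ to have spectrum $\E$, and the periodicity $S^{2n}A = AS^{2n}$ together with almost-periodicity of the inverse transform returns an element of $\cT_{\mv}(\E)$; preservation of half-line spectral measures then yields \eqref{def:A_+}. The main obstacle is the \emph{periodicity} step—verifying that the functional-model output is genuinely $2n$-periodic rather than merely almost-periodic. This will require a careful analysis of the character of the almost-periodic $\apC$ under the Blaschke-product grading, using Sodin--Yuditskii-style machinery to match the character lattice of $\cT_{\mv}(\E)$ with the natural torus structure on $\A_{\per}(\vz_\E)$ cut out by $\sigma(A) = \E$, and will be packaged through Magic-Formula computations for MCMV matrices developed in Section~\ref{sec:3}.
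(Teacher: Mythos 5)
Your plan correctly identifies the high-level ingredients (a functional model for $\cT_{\mv}(\E)$, a Vel\'azquez-style change of basis to orthogonal rational functions, and a Magic Formula as a periodicity certificate), and this is indeed the spirit of the paper's argument. However, there are several substantive errors that would derail the proof if carried out as written.

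First, the description of $\vz_\E$ is incorrect. The paper takes $\vz_\E$ to be $\{0\}$ together with the zeros of the Ahlfors function $w_\infty$ inside $\D$ (equivalently, the poles of $\Delta_\E = (w_0 w_\infty)^{-1} + w_0 w_\infty$ in $\D$). These are interior points of $\D$, generically nowhere near the circle $\partial\D$; they are \emph{not} ``critical points'' and they do \emph{not} lie in the gaps of $\E$ (the gaps are arcs on $\partial\D$, disjoint from $\D$). Identifying the correct $\vz_\E$ matters because the whole periodicity mechanism hinges on the character $\beta$ of the Blaschke product $\mathfrak{B} = \prod_{j} \mathfrak{b}_{\zeta_j}$ being a \emph{half-period} in $\Gamma^*$, which is exactly what makes the iterated decomposition $\mathcal{K}_{\mathfrak{B}\mathfrak{B}^*}(\alpha) = H^2(\alpha) \ominus \mathfrak{B}\mathfrak{B}^*H^2(\alpha)$ exhaust $H^2(\alpha)$ \emph{without shifting the character}, hence yielding periodicity up to a phase. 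Your plan never identifies this half-period condition, and without it the claimed periodicity does not follow.

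Second, the model space is wrong. A whole-line CMV matrix $\apC \in \cT_{\mv}(\E)$ has purely absolutely continuous spectrum of multiplicity \emph{two}, so it is not cyclic and cannot be represented as multiplication by $z$ on a scalar $L^2(\dd\nu)$ with $\nu$ supported on $\E$. The correct functional model (Peherstorfer--Yuditskii) realizes $\apC$ as multiplication by the uniformizing map $\fz$ on the character-automorphic space $L^2(\alpha)$, with the two-sided CMV basis $\{\chi_k^{\alpha,\tau}\}_{k \in \Z}$; the whole-line analog of Vel\'azquez then requires constructing the new basis \eqref{def:Basis}--\eqref{def:extensionBasis2} of $L^2(\alpha)$, proving the pole-shift Lemmas \ref{lem:poleshift1}--\ref{lem:poleshift2}, and verifying that the transition matrices have the CMV $\Theta$-form. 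This is the main technical content of Section~\ref{sec:3}, not a ``standard Gram--Schmidt bookkeeping'' step; your proposal leaves precisely this step unproven.

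Third, you conflate the two CMV matrices in play. In the reverse direction you write ``the Magic Formula forces $C$ to have spectrum $\E$,'' but $C$ is the \emph{generating} phase-periodic CMV matrix, not the almost-periodic CMV matrix $\apC \in \cT_{\mv}(\E)$. The two have different spectra; it is the MCMV matrix $A$ (built from $C$ by the operator M\"obius transform and conjugation by $\Lambda(\vartheta)$) that has $\sigma(A) = \E$. This confusion propagates through the surjectivity argument. The paper's actual surjectivity proof goes through the direct spectral theory of periodic MCMV matrices (the divisor construction in Section~\ref{sec:DirectMCMV}, Proposition~\ref{prop:uniqueDiv}), the commuting-diagram Proposition~\ref{prop:DS1}, and the known bijectivity of the generalized Abel map from \cite{YudPeher06}—none of which is ``inverting the M\"obius transform and reading off $C$.''

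In short: the skeleton is right, but the proposal mislocates $\vz_\E$, uses the wrong model space (cyclic scalar $L^2(\dd\nu)$ versus the two-sided $L^2(\alpha)$ with character-automorphic reproducing kernels), omits the essential half-period mechanism, and collapses the distinction between $C$ and $\apC$. Each of these is a genuine gap that the paper's Section~\ref{sec:3} and Sections~\ref{sec:DST}--\ref{sec:proofs} are specifically designed to fill.
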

\begin{remarks}
\begin{itemize}
\item[(i)] 
    The above theorem shows that a periodic MCMV matrix is naturally related to two different CMV matrices: the almost-periodic CMV matrix in $\cT_{\mv}(\E)$ and the ``generating'' phase-periodic CMV matrix. Throughout, we will denote the former by $\apC$ with parameters $\{\apa_k\}$ (resp.\;$\{\apr_k\}$) and the latter by $C$ with parameters $\{a_k\}$ (resp.\;$\{\rho_k\}$).
\item[(ii)] As a consequence of \eqref{eq:phasePeriodic}, the operator $b_{\minus \DD}(C)$ is periodic up to a phase. By conjugating it with $\Lambda(\vartheta)$ -- and this is the main purpose of introducing such a diagonal matrix -- we get that $A(\{a_k\}, \vartheta; \vz_{\E})$ becomes periodic in the standard sense. This is particularly important in view of Theorem \ref{t:magicformula} below, since by Naiman's lemma \cite{Nai62} an operator satisfying the right-hand side of \eqref{eq:magicformula} is necessarily periodic.
\end{itemize}
\end{remarks}
To summarize, we prove that for CMV matrices in $\cT_{\mv}(\E)$, there is a ``best'' basis of the $L^2$-space related to its half-line truncation $\apC_+$ in the sense that the associated MCMV matrices are periodic.  In fact, we can completely characterize this basis via the vector $\vz_\E$, as well as the MCMV matrices $A$ in $\cT_{\MV}(\E)$, in terms of a ``Magic Formula'' analogous to that of Damanik, Killip, and Simon \cite{DaKiSi10}.  Thus MCMV matrices are a unitary analog of GMP matrices \cite{Eich16, Yud18}.  To formulate our result, we first need to introduce a certain rational function called the discriminant.
%


Fix a finite-gap set $\E \subset \partial\D$, that is, a finite disjoint union of non-degenerate closed circular arcs.  Let us refer to the arc-components as bands and to the connected components of $\partial\D\setminus \E$ as gaps. We assume that the number of gaps (and respectively bands) is $g+1$.  For any point $z_0$ in the domain $\overline{\C} \setminus \E$, there exists an Ahlfors function $w_{z_0}$ which maximizes the modulus of the derivative at $z_0$ (or, in the case $z_0 = \infty$, maximizes $\lim_{z \to \infty}|zw_\infty(z)|$) among all analytic functions on $\overline{\C} \setminus \E$ with modulus bounded by $1$; cf. \cite{Ah47,Gar49}. This extremal property defines $w_{z_0}$ uniquely up to a unimodular multiplier.  In the right normalization, these Ahlfors functions for $\E \subset \partial\D$ have the symmetry property
\begin{align}\label{eq:symmetryAhlfors}
w_{z_0^*}(z^*) = \overline{w_{z_0}(z)};
\end{align}
in particular, the zeros of $w_{\infty}$ can be obtained by reflecting the zeros of $w_0$ with respect to $\partial\D$.  In terms of these functions, we can define a special function, which we call the generalized discriminant, related to the set $\E$:
\begin{definition}[Generalized discriminant]
For a finite union of non-degenerate closed circular arcs $\E \subset \partial\D$, the generalized discriminant is defined by
\begin{align}
\Delta_\E := \frac{1}{w_0w_\infty} + w_0w_\infty.
\end{align}
\end{definition}
By \eqref{eq:symmetryAhlfors}, we see that $\Delta_\E$ is real-valued on $\partial\D$; since $|w_{z_0}(z)| = 1$ for $z \in \E$ in the sense of nontangential limits and $|w_{z_0}(z)| < 1$ for $z\in\overline{\C}\setminus \E$, it follows that
\begin{align}
 \E = \Delta_\E^{-1}([-2,2]).
\end{align}
The function $\Delta_\E$ has $2(g+1)$ poles, half of which lie inside the unit disk. Moreover, there is exactly one critical point (i.e., a zero of $\Delta_\E'$) in each band of $\E$ and each gap of $\E$. While $\Delta_E$ maps all critical points in bands to $-2$, the critical points in gaps have $\Delta_\E$-value strictly greater than $2$.  For more details on the Ahlfors function and the discriminant, we refer to Appendix \ref{sec:AppAhlfors} (where in particular these properties are proven).  As will be crucial for our analysis, we define $\vz_\E$ to be some fixed ordering
of the poles of $\Delta_\E$ inside $\D$, i.e.
\begin{align}\label{eq:zEdef}
\vz_\E := \{z_0 = 0, z_1, \ldots, z_g\} \in \D^{g+1}, \quad z_k \in \D \text{ a pole of } \Delta_\E.
\end{align}

At the same time, there is a natural way of associating a rational function to an MCMV matrix $A = A(\{a_k\}, \vartheta; \vz) \in \A_{\per}(\vz)$. Given a value $a \in \D$, let
\begin{align}\label{eq:cUmat}
\cU(a) := \frac{1}{\rho}\begin{bmatrix}
1 & a \\
\overline{a} & 1
\end{bmatrix}, \quad \rho = \sqrt{1-|a|^2}
\end{align}
and define the monodromy matrix $\cM_A$ by
\begin{multline}\label{def:monodromy}
\cM_A(z) := \cU(a_0)
\begin{bmatrix}
b_{z_1}(z) & 0 \\
0 & 1
\end{bmatrix}
\cU(a_1)
\begin{bmatrix}
b_{z_1}(z) & 0 \\
0 & 1
\end{bmatrix}
\cU(a_2)
\begin{bmatrix}
b_{z_2}(z) & 0 \\
0 & 1
\end{bmatrix}
\cdots \\
\cdots
\cU(a_{2n-3})
\begin{bmatrix}
b_{z_{n-1}}(z) & 0 \\
0 & 1
\end{bmatrix}
\cU(a_{2n-2})
\begin{bmatrix}
b_{z_0}(z) & 0 \\
0 & 1
\end{bmatrix}
\cU(a_{2n-1})
\begin{bmatrix}
b_{z_0}(z) & 0 \\
0 & 1
\end{bmatrix}
\begin{bmatrix}
	e^{-i\vartheta}& 0\\
	0& e^{i\vartheta}
\end{bmatrix}.
\end{multline}
Denoting by $\mj$ the signature matrix $\mj=\left[\begin{smallmatrix} 1& 0\\ 0& -1 \end{smallmatrix}\right]$, we note that
$\cM_A(z)^*\mj\cM_A(z)\leq\mj$ for $z\in\D$ while $\cM_A(z)^*\mj\cM_A(z)=\mj$ when $z\in\partial\D$.
Functions of that type are called $\mj$-inner matrix functions and equation \eqref{def:monodromy} represents a factorization of $\cM_A$ into so-called elementary Blaschke--Potapov factors of the first kind. The study of general $\mj$-contractive matrix functions and their multiplicative structure goes back to Potapov \cite{Pot60}.

Now, let $B(z)=z\prod_{j=1}^{n-1}b_{z_j}(z)=\sqrt{\det \cM_A(z)}$ and consider the rational function
\begin{align}\label{def:discriminant}
\Delta_A(z) := \frac{1}{B(z)}\tr\bigl(\cM_A(z)\bigr).
\end{align}
We will show the following key result:
\begin{theorem}[Magic Formula for MCMV matrices]
\label{t:magicformula}
Fix a finite disjoint union of $g+1$ non-degenerate closed circular arcs $\E \subset \partial\D$, and let $\vz_\E$ be as in \eqref{eq:zEdef}.  Then, for any $A \in \A(\vz_\E)$,
\begin{align}\label{eq:magicformula}
A \in \cT_{\MV}(\E) \iff \Delta_\E(A) = S^{2(g+1)} + S^{-2(g+1)},
\end{align}
and in this case $\Delta_A = \Delta_\E$.

On the other hand, for fixed $\vz \in \D^n$ and $A_0 \in \A_{\per}(\vz)$, one has that $\sigma(A_0) = \Delta_{A_0}^{-1}([-2,2])$, and consequently
\begin{align}
\cT_{\MV}(\sigma(A_0)) = \{ A \in \A_{\per}(\vz) : \Delta_{A} = \Delta_{A_0}\}.
\end{align}
\end{theorem}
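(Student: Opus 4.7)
The plan is to prove the second claim (Floquet characterization of the spectrum for any periodic MCMV matrix) first, and then bootstrap to the magic formula. For $A_0 \in \A_{\per}(\vz)$, I would exploit the band structure displayed in \eqref{eq:Astructure} -- specifically that the inter-block coupling has rank one through $\mathbf{u^i}\delta_0^\intercal$ and $\mathbf{v^i}\delta_{2n-1}^\intercal$ -- to reduce the eigenvalue equation $A_0 u = zu$ to a two-dimensional recursion between consecutive block boundaries. I then identify the monodromy matrix $\cM_{A_0}(z)$ defined by \eqref{def:monodromy} with the one-period transfer matrix of this recursion: the $\cU(a_j)$ factors arise from the local Szeg\H{o}-type recursions inside $C=LM$, the Blaschke diagonal pieces come from the operator M\"obius transform $b_{\minus \DD}(C)$, and the trailing $\diag(e^{-i\vartheta},e^{i\vartheta})$ records the phase \eqref{eq:phasePeriodic} being converted to honest periodicity via $\Lambda(\vartheta)$. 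Standard Floquet theory then says $z \in \sigma(A_0)$ iff $\cM_{A_0}(z)$ has a unimodular eigenvalue; using $\det \cM_{A_0}(z) = B(z)^2$ and $|B|=1$ on $\partial\D$, the Floquet eigenvalues $\mu_\pm$ satisfy $\mu + \mu^{-1} = \Delta_{A_0}(z)$, which yields $\sigma(A_0) = \Delta_{A_0}^{-1}([-2,2])$.

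Next I establish the operator-level identity
\begin{align*}
\Delta_A(A) = S^{2n} + S^{-2n}
\end{align*}
for \emph{every} $A \in \A_{\per}(\vz)$. This is the operator-level avatar of the Cayley--Hamilton relation $\cM_A(z)^2 = \tr\cM_A(z)\cdot \cM_A(z) - B(z)^2 I$: on any Floquet solution of $Au=zu$ the shift $S^{2n}$ acts as multiplication by the quasimomentum $\mu(z)$, so $S^{2n} + S^{-2n}$ acts as $\mu + \mu^{-1} = \Delta_A(z) = \Delta_A(A)$; completeness of the Bloch/Floquet decomposition extends this to all of $\ell^2(\Z)$. Given this, the forward direction of the magic formula is almost immediate: for $A \in \cT_{\MV}(\E)$, the Floquet step gives $\Delta_A^{-1}([-2,2]) = \sigma(A) = \E = \Delta_\E^{-1}([-2,2])$, and uniqueness of a real-valued rational function of degree $g+1$ with prescribed pole set $\vz_\E$ together with its reflection, mapping each of the $g+1$ bands monotonically onto $[-2,2]$ with exactly one critical point per band, forces $\Delta_A = \Delta_\E$; the identity above then produces the magic formula. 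For the backward direction, assume $A \in \A(\vz_\E)$ satisfies $\Delta_\E(A) = S^{2(g+1)} + S^{-2(g+1)}$. By Naiman's lemma \cite{Nai62} this identity forces $A$ to commute with $S^{2(g+1)}$, so $A \in \A_{\per}(\vz_\E)$; combining the hypothesis with the operator identity yields $(\Delta_\E - \Delta_A)(A) = 0$, and since $\sigma(A)$ is infinite while $\Delta_\E - \Delta_A$ is rational we conclude $\Delta_A = \Delta_\E$, so $\sigma(A) = \Delta_A^{-1}([-2,2]) = \E$ and $A \in \cT_{\MV}(\E)$.

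The main obstacle is the rational-function uniqueness step matching $\Delta_A$ with $\Delta_\E$. The two functions are defined by completely different prescriptions -- $\Delta_A$ through a product of elementary Blaschke--Potapov factors controlled by the Verblunsky-type coefficients $\{a_k\}$, whereas $\Delta_\E$ is built from the extremal Ahlfors functions $w_0$ and $w_\infty$ attached to $\E$ -- and reconciling them requires detailed control over the pole, zero, and critical-point structure of both sides. Carrying this out must interlock with the appendix on Ahlfors functions, and will in particular require showing that the poles of $\Delta_A$ lie exactly at $\vz_\E$ (with the correct multiplicities) whenever the spectrum is forced to equal $\E$, closing the loop between the spectral/Floquet data extracted from $\cM_A$ and the purely complex-analytic definition of $\Delta_\E$. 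Once this uniqueness is in hand, the remaining claim $\cT_{\MV}(\sigma(A_0)) = \{A \in \A_{\per}(\vz) : \Delta_A = \Delta_{A_0}\}$ follows directly from the spectral formula $\sigma(A) = \Delta_A^{-1}([-2,2])$.
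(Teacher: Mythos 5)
Your approach is genuinely different from the paper's: you work via Floquet theory and rational-function uniqueness, whereas the paper routes everything through the functional model. Some parts of your plan are sound and arguably cleaner than the paper, but the forward direction has a gap that is not merely technical.

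The backward direction is essentially correct in outline and is a nice simplification: once you have periodicity from Naiman's lemma and the operator identity $\Delta_A(A)=S^{2n}+S^{-2n}$, concluding $\Delta_\E=\Delta_A$ via the spectral theorem (a rational function vanishing on the infinite set $\sigma(A)\subset\partial\D$ vanishes identically) is simpler than the paper's route, which instead uses the purely structural Proposition \ref{prop:DiscrCoincide}. Two caveats: you must check that the poles of $\Delta_\E$ and $\Delta_A$ stay off $\sigma(A)$ so that the expressions make sense (this is Lemma \ref{lem:Resolvents} in the paper, a nontrivial uniform estimate), and your ``Cayley--Hamilton on Floquet solutions'' sketch for $\Delta_A(A)=S^{2n}+S^{-2n}$ would need real work; the paper proves this via the functional model (Proposition \ref{prop:DS1} followed by Corollary \ref{cor:MagicFormulafunc}), which uses that multiplication by $\fB\fB^*$ is $S^{2n}$ in the appropriate basis.

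The forward direction, however, has a genuine gap: the uniqueness statement you invoke is false. A real-on-$\partial\D$ rational function of the claimed pole structure with $\Delta^{-1}([-2,2])=\E$ and one critical point per band mapped to $-2$ is \emph{not} uniquely determined. Consider $\E=\partial\D$ (so $g=0$, $\vz_\E=\{0\}$): every $\Delta_c(z)=cz+(cz)^{-1}$ with $|c|=1$ has poles at $0$ and $\infty$, is real on $\partial\D$, satisfies $\Delta_c^{-1}([-2,2])=\partial\D$, and has one critical point (at $-1/c$) sent to $-2$, but only one value of $c$ gives the Ahlfors-normalized $\Delta_\E$. (Also, your ``degree $g+1$'' and ``mapping each band monotonically onto $[-2,2]$'' are incorrect: $\Delta_\E$ has degree $2(g+1)$ and is two-to-one on each band.) The paper bypasses this uniqueness question entirely by \emph{computing} $\Delta_A$ in the functional model: Corollary \ref{cor:MagicFormulafunc} gives $\Delta_A\circ\fz=\fB\fB^*+1/(\fB\fB^*)$, and with $\vz=\vz_\E$ and $\fB=\fw_\infty=\fz\cdot(w_\infty\circ\fz)$ one has $\fB\fB^*=(w_0w_\infty)\circ\fz$, hence $\Delta_A=\Delta_\E$. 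The complementary tool for arbitrary periodic $A\in\A_\per(\vz)$ is the harmonic-function argument of Lemma \ref{lem:repDeltaT} (maximum principle applied to $H$ in \eqref{eq:Hfn}), which pins down $\Psi$ as a Blaschke product and identifies $\Delta_A=\Psi+\Psi^{-1}$; this is the rigorous replacement for the naive ``same critical points, same preimage'' uniqueness. Without one of those two inputs, your forward direction does not close.
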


The Magic Formula reveals further structure of MCMV matrices relating to the discriminant $\Delta_\E$.  For simplicity, let us assume that the poles of $\Delta_E$ are simple (as this is typically the case). In that case, since $\Delta_\E$ is real-valued on $\partial\D$, 
it can be expressed in the form
\begin{align}\label{eq:DeltaERational}
\Delta_\E(z)=c+\sum_{k=0}^g \Bigl(c_kb_{z_k}(z)+\overline{c_k}b_{z_k}(z)^{-1}\Bigr).
\end{align}
We will abuse the notation for residues and define
\begin{align}\label{eq:coeffDeltaE}
	\Res_{z_k^*}\Delta_\E:=(b_{z_k}^{-1}\Delta_\E)(z_k^*) = c_k.
\end{align}

With \eqref{eq:DeltaERational} in mind, to understand the Magic Formula we need to understand each of the operators $b_{z_k}(A)$.  In Lemma \ref{lem:BlaschkeA} we show that these can be represented by an operator M\"obius transform of the same underlying CMV matrix $C$, but related to the ``shifted'' diagonal operator $D_k=(1-z_k\DD^*)^{-1}(\DD-z_k)$. Since $(D_k)_{2k-1,2k-1}=(D_k)_{2k,2k}=0$, it follows that $S^{-2k}b_{z_k}(A)S^{2k}$ has the same structure as an MCMV matrix. Moreover, since $b_{z_k}(A)$ is unitary, we have that $b_{z_k}(A)^{-1}=b_{z_k}(A)^*$. Hence, at the outermost diagonal of $\Delta_{E}(A)$ only one of the summands is non-vanishing. We have illustrated this for the off-diagonal block of  $c_{1}b_{z_{1}}(A)+\overline{c_1}b_{z_{1}}(A)^{-1}$ in the case $n = 4$ below:
\begin{align}\label{eq:MagicFormBlock}
\bigl(c_{1}b_{z_{1}}(A)+\overline{c_1}b_{z_{1}}(A)^{-1}\bigr)_{i,i+1}=\begin{bmatrix}
0&&&&&&&\\
0&0&&&&&&\\
\ast&\ast&\ast&&&\text{\Large{\textbf{0}}}&&\\
\ast&\ast&\ast&0&&&&&\\
\ast&\ast&\ast&0&0&&\\
\ast,\star&\ast,\star&\ast,\star&\star&\star&\star&&\\
\ast,\star&\ast,\star&\ast,\star&\star&\star&\star&0&\\
\ast,\star&\ast,\star&\ast,\star&\star&\star&\star&0&0\\
\end{bmatrix}.
\end{align}
Here $\ast$ and $\star$ indicate nonvanishing entries of $b_{z_{1}}(A)$ and $b_{z_{1}}(A)^{-1}$, respectively. In general, the outermost nonvanishing entry of $b_{z_k}(A)$ is the $(2k,2(g+1+k))$-entry. Since all the other summands in \eqref{eq:DeltaERational} are  vanishing at this position, the magic formula fixes the corresponding value of $b_{z_k}(A)$, i.e.
\begin{align}\label{eq:blasResidue}
\Delta_{\E}(A)=S^{2(g+1)}+S^{-2(g+1)}\implies b_{z_k}(A)_{2k,2(g+1+k)}=\frac{1}{\Res_{z_k^*}\Delta_{\E}}.
\end{align}
That this is a consequence of the structure of MCMV matrices will be proved in Theorem \ref{t:MCMVstructure}.

The above structure reveals an important property of MCMV matrices compared to their self-adjoint analog, GMP matrices.  The relation \eqref{eq:blasResidue} already indicates the importance of the values $\Res_{z_k^*}\Delta_\E$. In order for $b_{z_k}(A)$ to be well-defined, they should be nonzero. In fact, for a general MCMV matrix $A$ the values $|\Res_{z_k}\Delta_{S^{-2nj}AS^{2nj}}|$ are necessarily bounded away from zero.  We will later see (cf. Lemma \ref{lem:Resolvents}) that
\begin{align*}
\inf\{|\Res_{z_k}\Delta_{A}|:\ A\in\A(\vz)\}>0.
\end{align*}
This should be compared with \cite[Theorem 3.3]{Yud18}, where such a property was part of the definition of GMP matrices and guaranteed the existence of certain resolvents analogous to $b_{z_j}(A)$. It is natural that we do not need this condition since we are in the setting of unitary operators.

We also point out that the generalized discriminant for MCMV matrices involves the Ahlfors functions associated to two different points; in contrast, the analogous object for GMP matrices involves only the Ahlfors function at infinity.  This discrepancy has the consequence that the associated MCMV matrices are even-periodic with half of the gaps closed (cf. Appendix \ref{sec:AppAhlfors}).  While this could be avoided using a different discriminant, doing so would introduce further complications elsewhere.  In particular, the benefits of defining the discriminant as we have are 1) we can treat the even- and odd-periodic CMV cases uniformly, and 2) our discriminant is a rational function.

\subsection{Consequences for Schur and Caratheodory functions} \label{sec:Schur}

Of course, one cannot discuss CMV matrices without discussing Schur functions.  A Schur function is an analytic function $f: \D \to \overline{\D}$ mapping the open unit disk to its closure.  We denote by $\cS$ the class of all Schur functions.  Provided $f \in \cS$ is not a finite Blaschke product, the Schur algorithm
\begin{align*}
	\begin{split}
	f_0(z)&=f(z),\\
	zf_{k+1}(z)&= \frac{f_k(z)-\apa_k}{1-\overline{\apa_k}f_k(z)},\quad \apa_k=f_k(0)
	\end{split}
\end{align*}
determines an infinite sequence of parameters $\{\apa_k\} \in \D^\N$, also known as Schur parameters; conversely, any sequence $\{\apa_k\} \in \D^{\N}$ determines a function $f \in \cS$ by an associated continued fraction expansion (see, e.g., \cite{SimonOPUC1}).  For our purposes, it is more convenient to denote the Schur algorithm in terms of equivalences of projective lines, i.e.
\begin{align}\label{eq:CP1Schur}
\begin{bmatrix}
f_k(z) \\
1
\end{bmatrix}
\sim U(\apa_k)
\begin{bmatrix}
z & 0 \\
0 & 1
\end{bmatrix}
\begin{bmatrix}
f_{k+1}(z) \\
1
\end{bmatrix},
\end{align}
where $v_1 \sim v_2$ if and only if there exists some nonzero $\lambda \in \C$ such that $v_1 = \lambda v_2$. 

In correspondence to Schur functions are Caratheodory functions, analytic functions $F$ from $\D$ to the right half-plane normalized such that $F(0) = 1$.  A Caratheodory function $F$ can be determined from a function $f \in \cS$ by
\begin{align*}
F(z) = \frac{1 + zf(z)}{1-zf(z)},
\end{align*}
or in terms of projective lines by
\begin{align}\label{eq:CarathSchur}
\begin{bmatrix}
F(z) \\
1
\end{bmatrix} \sim
\begin{bmatrix}
1 & 1 \\
-1 & 1
\end{bmatrix}
\begin{bmatrix}
z & 0 \\
0 & 1
\end{bmatrix}
\begin{bmatrix}
f(z) \\
1
\end{bmatrix}.
\end{align}
In this latter language it is clear this process is invertible, so indeed this correspondence is one-to-one.  Caratheodory functions have a Herglotz integral representation as
\begin{align*}
F(z) = \int \frac{e^{it}+z}{e^{it}-z}\dd\nu_F(e^{it})
\end{align*}
for a unique probability measure $\nu_F$ on $\partial\D$, and are thus in correspondence with probability measures on the unit circle.  Consequently, Schur functions can be put into correspondence with half-line CMV matrices, in the sense that for a given half-line CMV matrix $\apC_+$, there exists $f_+ \in \cS$ such that
\begin{align}\label{eq:CaraOnesided}
\left\langle (\apC_+-z)^{-1}(\apC_++z)\delta_0,\delta_0 \right\rangle = \frac{1+zf_+(z)}{1-zf_+(z)},
\end{align}
and in fact one can check this $f_+$ is the Schur function with parameters $\{\apa_k\}_{k \in \N}$ agreeing with the coefficients of $\apC_+ = \apC_+(\{\apa_k\})$.  Similarly, whole-line CMV matrices have two associated Schur functions, one corresponding to each half-line.  Specifically, if $\{\apa_k\}_{k \in \Z} \in \D^\Z$ and $\apC = \apC(\{\apa_k\})$ is the associated CMV matrix, then one has that
\begin{align}
\left\langle (\apC-z)^{-1}(\apC+z)\delta_0, \delta_0 \right\rangle &= \frac{1+zf_+(z)f_-(z)}{1-zf_+(z)f_-(z)},
\end{align}
where $f_+$ is the Schur function with parameters $\{\apa_k\}_{k \in \N}$ and $f_-$ is the Schur function with parameters $\{-\overline{\apa_{- 1}},-\overline{\apa_{-2}}, \ldots\}$; cf. \cite{GeZi06,YudPeher06}.
%
%
Any Schur function $f$ has a natural factorization
\begin{align*}
f(z) = \biggl( \prod_k b_{w_k}(z)\frac{|w_k|}{w_k} \biggr) \exp\left(-\int \frac{e^{it} + z}{e^{it}-z}\dd\nu_f(e^{it}) + i\tau\right),
\end{align*}
where $\nu_f$ is a non-negative measure on $\partial\D$, $\tau\in\R/2\pi\Z$ (in fact, $\tau$ is the argument of $f(0)$), and the sequence $\{w_k\}$ of zeros satisfies the Blaschke condition $\sum (1 - |w_k|) < \infty$.  We define
\begin{align*}
\sigma_{\ess}(f) := \supp(\dd\nu_f) \cup \{w_k\}',
\end{align*}
where $\{w_k\}' \subset \partial\D$ denotes the set of limit points of the sequence $\{w_k\}$ of zeros of $f$; cf \cite[Lecture III]{Nikol86}.  Note that $\sigma_{\ess}(f)$ is a closed subset of $\partial\D$.

For a finite-gap set $\E \subset \partial\D$, a CMV matrix $\apC$ lies in $\cT_{\mv}(\E)$ precisely when its Schur functions $f_+$ and $f_-$ solve the following Riemann--Hilbert problem:
\begin{gather}\label{eq:RHprob1}
f_-(e^{it}) = \overline{e^{it}f_+(e^{it})} \; \text{ for a.e. } e^{it} \in \E, \\
\label{eq:RHprob2}
1-e^{it}f_+(e^{it})f_-(e^{it}) \neq 0 \; \text{ for } e^{it} \in \partial\D \setminus \E, \\
\label{eq:RHprob3}
\sigma_{\ess}(zf_+f_-) \subset \E.
\end{gather}
We denote the class of all such admissible functions $f_+$ by
\begin{align} \label{eq:Schurplus}
\cS_+(\E) := \bigl\{f_+ \in \cS : \exists f_- \in \cS \; \mbox{ s.t. } \eqref{eq:RHprob1}, \eqref{eq:RHprob2}, \eqref{eq:RHprob3} \mbox{ hold}\bigr\}.
\end{align}
The description of $\cS_+(\E)$ as being equivalent to $\cT_{\mv}(\E)$ in the finite-gap setting (and for even more general sets) was known already to Peherstorfer and Yuditskii \cite{YudPeher06}; specifically, they showed that, for $f_+ \in \cS_+(\E)$, the corresponding sequence $\{\apa_k\} \in \D^\Z$ is almost-periodic.

This perspective gives us an alternative way of stating our main results: membership in $\cS_+(\E)$ is equivalent to the existence of an $\E$-dependent Nevanlinna--Pick type interpolation, analogous to \eqref{eq:CP1Schur}, whose coefficients are periodic up to a rotational phase.
\begin{theorem}\label{t:nevPick}
Fix a finite disjoint union of $g+1$ non-degenerate closed circular arcs $\E \subset \partial\D$, and let $\vz_\E$ be as in \eqref{eq:zEdef}.  Then $f_+ \in \cS_+(\E)$ if and only if
\begin{align*}
\begin{bmatrix}
f_+ \\
1
\end{bmatrix}
\sim
\underbrace{
\cU(a_0)
\begin{bmatrix}
b_{z_1} & 0 \\
0 & 1
\end{bmatrix}
\cU(a_1)
\begin{bmatrix}
b_{z_1} & 0 \\
0 & 1
\end{bmatrix}
\cU(a_2)
\begin{bmatrix}
b_{z_2} & 0 \\
0 & 1
\end{bmatrix}
\cdots
\cU(a_{2g+1})
\begin{bmatrix}
b_{z_{0}} & 0 \\
0 & 1
\end{bmatrix}
\begin{bmatrix}
	e^{-i\vartheta}& 0\\
	0&e^{i\vartheta}
\end{bmatrix}}_{\textnormal{\normalsize$=\cM_{A(\{a_k\},\vartheta;\vz_\E)}(z)$}}
\begin{bmatrix}
f_+ \\
1
\end{bmatrix}
\end{align*}
for some $\{a_k\} \in \D^{2(g+1)}$ and some $\vartheta \in \R/2\pi\Z$ such that
\begin{align*}
\frac{1}{B(z)}\tr\Bigl(\cM_{A(\{a_k\},\vartheta;\vz_\E)}(z)\Bigr) = \Delta_\E(z).
\end{align*}
\end{theorem}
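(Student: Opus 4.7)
The strategy is to reduce Theorem \ref{t:nevPick} to Theorems \ref{t:periodiccoords} and \ref{t:magicformula} together with a Floquet-type theorem identifying, for any $A \in \A_{\per}(\vz_\E)$, the half-line Schur function of $A_+$ as the unique Schur-class fixed point of the projective action of $\cM_A(z)$.

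\emph{Reduction via the periodic coordinates.} Suppose $f_+ \in \cS_+(\E)$. By \cite{YudPeher06}, $f_+$ is the right-sided Schur function of some almost-periodic $\apC \in \cT_{\mv}(\E)$. Theorem \ref{t:periodiccoords} then produces phase-periodic Verblunsky coefficients $\{a_k\}$ and a phase $\vartheta$ such that $A := A(\{a_k\},\vartheta;\vz_\E)$ lies in $\cT_{\MV}(\E)$ and $A_+$ shares its spectral measure, and thus its Schur function $f_+$, with $\apC_+$; by Theorem \ref{t:magicformula}, $\Delta_A = \Delta_\E$. Conversely, if $f_+$ satisfies the projective fixed-point equation for some $A = A(\{a_k\},\vartheta;\vz_\E)$ with $\Delta_A = \Delta_\E$, then Theorem \ref{t:magicformula} places $A \in \cT_{\MV}(\E)$, Theorem \ref{t:periodiccoords} yields a matching $\apC \in \cT_{\mv}(\E)$, and the Floquet-type result described below identifies $f_+$ with the common Schur function of $A_+$ and $\apC_+$, so $f_+ \in \cS_+(\E)$. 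In both directions, the crux is the Floquet-type fixed-point relation.

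\emph{Fixed-point equation --- the main technical step.} We must verify that, for every $A \in \A_{\per}(\vz_\E)$, the Schur function $f_+$ of $A_+$ is the unique Schur-class solution on $\D$ of
\begin{align*}
\begin{bmatrix} f_+ \\ 1 \end{bmatrix} \sim \cM_A(z)\begin{bmatrix} f_+ \\ 1 \end{bmatrix}.
\end{align*}
This is the MCMV analog of the classical Floquet theorem for periodic CMV (or Jacobi) matrices. To derive it, iterate the Schur algorithm \eqref{eq:CP1Schur} on the generating phase-periodic $C = C(\{a_k\})$ over one period $2(g+1)$; the identity $f_{k+2g+2}(z) = e^{-2i\vartheta}f_k(z)$ --- a consequence of the phase-periodicity \eqref{eq:phasePeriodic} together with the elementary fact that rotating all Verblunsky coefficients by a common factor $e^{-i\theta}$ rotates the associated Schur function by $e^{-i\theta}$ --- closes up the iteration into a fixed-point relation involving the standard transfer matrix $T_n(z) = \prod_{k=0}^{2g+1}\cU(a_k)\bigl[\begin{smallmatrix} z & 0 \\ 0 & 1\end{smallmatrix}\bigr]$ together with the rotational twist $\diag(e^{-i\vartheta},e^{i\vartheta})$. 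The operator M\"obius identity $A = \Lambda(\vartheta)^* b_{-\DD}(C)\Lambda(\vartheta)$ then replaces each factor $\bigl[\begin{smallmatrix} z & 0 \\ 0 & 1\end{smallmatrix}\bigr]$ by the corresponding $\bigl[\begin{smallmatrix} b_{z_j} & 0 \\ 0 & 1\end{smallmatrix}\bigr]$, yielding precisely the Blaschke--Potapov factorization of $\cM_A$ in \eqref{def:monodromy}; uniqueness of the Schur-class fixed point on $\D$ follows from the $\mj$-inner structure of $\cM_A$ noted after that equation.

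\emph{Main obstacle.} The hardest step is the passage from the classical $z$-based Schur transfer matrix for $C$ to the Blaschke-based monodromy $\cM_A$ for $A$. Although conceptually this is the functional shadow of the operator M\"obius construction of $A$, rigorously effecting it requires setting up (or importing) the Nevanlinna--Pick analog of the Schur algorithm for orthogonal rational functions on $\partial\D$ with interpolation nodes $\{z_j\}$ and verifying its pointwise compatibility with the functional model underlying the passage between $C$ and $A$.
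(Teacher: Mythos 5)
Your overall reduction — to Theorems \ref{t:periodiccoords} and \ref{t:magicformula} plus a Floquet-type fixed-point relation for the Schur function of $A_+$ — is structurally the same as the paper's, which proves the theorem as ``a straightforward consequence of Theorems \ref{t:periodiccoords}, \ref{t:magicformula}, and \ref{thm:monodromy}.'' You have also correctly identified the fixed-point relation as the crux. However, your proposed derivation of that relation has a genuine gap.

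You propose to iterate the classical Schur algorithm \eqref{eq:CP1Schur} on the generating phase-periodic $C = C(\{a_k\})$ and then ``replace'' each factor $\bigl[\begin{smallmatrix}z&0\\0&1\end{smallmatrix}\bigr]$ by $\bigl[\begin{smallmatrix}b_{z_j}&0\\0&1\end{smallmatrix}\bigr]$ using the operator M\"obius identity $A = \Lambda(\vartheta)^*b_{\minus\DD}(C)\Lambda(\vartheta)$. This substitution does not follow. The Schur algorithm applied to $\{a_k\}$ produces a fixed-point relation for the Schur function of $C_+$ (i.e., the function with Schur parameters $\{a_k\}$), whereas the function $f_+$ appearing in the theorem is the Schur function of $\apC_+$ and $A_+$, whose Schur parameters are the almost-periodic $\{\apa_k\}$ --- a different function. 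The operator M\"obius transform acts at the level of operators, not pointwise on the scalar continued-fraction factors, and there is no direct passage from the $z$-based transfer matrix for $C$ to the Blaschke-based monodromy $\cM_A$ by termwise replacement. What is actually needed is the identification of the spectral measure of $A_+$ with the orthogonality measure $\nu$ for the ORF generated by $\{a_k\}$ and $\{z_j\}$ (cf. \cite[Theorem 5.4]{Ve08} and the discussion after \eqref{def:A_+}), and then the ORF coefficient-stripping formula (Theorem \ref{thm:coeffStripping}), whose iterated form under phase-periodicity is precisely the fixed-point relation \eqref{eq:nevPickSchur} of Corollary \ref{cor:periodicInterSchur}. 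The paper also provides an alternative route via the functional model: Theorem \ref{thm:monodromy} establishes the monodromy relation \eqref{eq:monodromy} for the reproducing kernels $x_0^{\a,\tau}, \chi_0^{\a,\tau}$, and the remark following it notes this is, in projective coordinates, exactly the fixed-point equation for $f_+^{\a,\tau}$. Your ``Main obstacle'' paragraph correctly locates this missing ingredient (``the Nevanlinna--Pick analog of the Schur algorithm for orthogonal rational functions''), but this is not a small technical detail to be imported --- it is the central content that your proof must supply, and without it the argument does not close.

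Two smaller points. First, your claim that rotating all Verblunsky coefficients by a common $e^{-i\theta}$ rotates the associated Schur function by $e^{-i\theta}$ is correct and matches the use of $f_\nu = e^{2i\vartheta}f_\nu^{(1)}$ in the proof of Corollary \ref{cor:periodicInterSchur}. Second, your appeal to $\mj$-inner structure of $\cM_A$ to get Schur-class uniqueness of the fixed point is a legitimate observation, though the paper sidesteps an explicit uniqueness argument by working directly with the measure $\nu$, divisor $D$, and Proposition \ref{prop:uniqueDiv}.
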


As an immediate corollary of Theorem \ref{t:nevPick} (cf. \eqref{eq:CarathSchur}), we resolve a conjecture of Simon \cite[Conjecture 11.9.6]{SimonOPUC2}:
\begin{corollary}\label{c:simonConj}
Fix a finite-gap set $\E \subset \partial\mathbb{D}$.  For any $\apC \in \mathcal{T}_{\mv}(\E)$, the Caratheodory function $F_+$ associated to the half-line restriction $\apC_+$ is a quadratic irrationality; i.e., there exist polynomials $a(z), b(z),$ and $c(z)$ such that $F_+$ solves
\begin{align*}
a(z)F_+(z)^2 + b(z)F_+(z) + c(z) = 0
\end{align*}
for all $z \in \mathbb{C} \setminus \E$.
\end{corollary}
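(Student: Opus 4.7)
The plan is to deduce the quadratic irrationality of $F_+$ directly from the Nevanlinna--Pick interpolation in Theorem \ref{t:nevPick}, by reading off a quadratic relation for $f_+$ from the fixed-point equation and then translating to $F_+$.

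First, let $\apC\in\cT_{\mv}(\E)$ and let $f_+$ be the Schur function associated to $\apC_+$ via \eqref{eq:CaraOnesided}. By Theorem \ref{t:nevPick}, there exist $\{a_k\}\in\D^{2(g+1)}$ and $\vartheta\in\R/2\pi\Z$ such that $f_+$ is a Möbius fixed point of the monodromy matrix; writing
\begin{align*}
\cM_{A(\{a_k\},\vartheta;\vz_\E)}(z)=\begin{bmatrix} m_{11}(z) & m_{12}(z) \\ m_{21}(z) & m_{22}(z) \end{bmatrix},
\end{align*}
the projective identity in Theorem \ref{t:nevPick} gives $f_+(z)=\frac{m_{11}(z)f_+(z)+m_{12}(z)}{m_{21}(z)f_+(z)+m_{22}(z)}$, i.e.
\begin{align*}
m_{21}(z)f_+(z)^2+\bigl(m_{22}(z)-m_{11}(z)\bigr)f_+(z)-m_{12}(z)=0.
\end{align*}
Each $m_{ij}$ is a rational function in $z$, built from the factors $b_{z_k}(z)$ and the constant matrices $\cU(a_k)$ and $\operatorname{diag}(e^{\mp i\vartheta})$; in particular, all denominators are products of the linear factors $1-\overline{z_k}z$, so there is a common polynomial denominator $q(z)=\prod_{k}(1-\overline{z_k}z)$ (with multiplicity determined by \eqref{def:monodromy}) such that $q\cdot m_{ij}$ is a polynomial.

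Second, translate to $F_+$. From \eqref{eq:CarathSchur} one has $F_+(z)=\frac{1+zf_+(z)}{1-zf_+(z)}$, equivalently $f_+(z)=\frac{F_+(z)-1}{z\bigl(F_+(z)+1\bigr)}$. Substituting into the quadratic relation above and multiplying through by $z^2(F_++1)^2$, one obtains
\begin{align*}
m_{21}(z)\bigl(F_+(z)-1\bigr)^2+z\bigl(m_{22}(z)-m_{11}(z)\bigr)\bigl(F_+(z)-1\bigr)\bigl(F_+(z)+1\bigr)-z^2 m_{12}(z)\bigl(F_+(z)+1\bigr)^2=0.
\end{align*}
Expanding and grouping by powers of $F_+$ yields an identity $a(z)F_+(z)^2+b(z)F_+(z)+c(z)=0$ with rational coefficients; clearing the common denominator $q(z)$ produces an identical quadratic relation with polynomial $a,b,c$.

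Finally, one must rule out the degenerate case $a\equiv b\equiv c\equiv 0$. Since $F_+$ is a Caratheodory function (and $f_+$ is not a finite Blaschke product, as the spectrum of $\apC$ is $\E$ rather than a finite set), $f_+$ is not identically the fixed point of a constant Möbius transformation; equivalently, $\cM$ is non-scalar as a matrix of rational functions, so in particular $m_{21}$ and $m_{12}$ cannot both vanish identically, which propagates to at least one of $a,b,c$ being a nonzero polynomial. The identity $a(z)F_+(z)^2+b(z)F_+(z)+c(z)=0$ then holds on $\D$, and by analytic continuation on all of $\C\setminus\E$, as required.

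The only mild technical point is the bookkeeping in the last paragraph: confirming that clearing denominators produces genuine polynomials and that the resulting quadratic is nontrivial. Both facts follow transparently from the explicit Blaschke--Potapov factorization \eqref{def:monodromy} and the fact that $f_+$ is not a finite Blaschke product, so no substantive obstacle arises.
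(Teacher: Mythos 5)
Your proposal is correct and follows the route the paper intends: the paper declares the corollary an immediate consequence of Theorem \ref{t:nevPick} together with \eqref{eq:CarathSchur}, and your computation merely spells out the quadratic relation that the projective fixed-point equation imposes on $f_+$ and its translation to $F_+$ by clearing the Blaschke denominators. The only small imprecision is in the nondegeneracy step, where you assert that non-scalarity of $\cM$ forces $m_{12}$ and $m_{21}$ not to vanish simultaneously; in principle $\cM$ could be diagonal but non-scalar, and what non-scalarity actually gives is that $m_{12}$, $m_{21}$, and $m_{22}-m_{11}$ cannot all vanish identically, which is exactly the conclusion you need and do ultimately state.
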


Real numbers which are quadratic irrationalities (with $a, b$, and $c$ above as integers) are precisely those having eventually periodic continued fraction expansions.  If one understands the interpolation of Theorem \ref{t:nevPick} as a special continued fraction expansion for the Schur function $f_+$, Corollary \ref{c:simonConj} should come as no surprise; indeed, our result shows that almost-periodicity of the Schur parameters associated to absolutely continuous finite-gap CMV matrices is actually a consequence of an underlying periodicity which the Schur algorithm was too na\"ive to see.

\subsection{Methods and structure of the paper}

The relationship of CMV matrices to orthogonal polynomials was discovered by Cantero, Moral, and Vel\'azquez in 2003 \cite{CanMoVel03}.   Soon thereafter, the relationship of operator M\"obius transforms of CMV matrices to the study of orthogonal rational functions on the unit circle was studied in work of Vel\'azquez \cite{Ve08}.  We recall these relationships in Section 2 to motivate the following construction, as well as to prove a coefficient stripping formula for Caratheodory functions associated to bases of orthogonal rational functions.

Our approach to MCMV in the context of reflectionless operators is based on the functional model for the same, developed initially for Jacobi matrices by Sodin and Yuditskii \cite{SoYud97} and later adapted for Schur functions and CMV matrices by Peherstorfer and Yuditskii \cite{YudPeher06}.  Using the ideas developed by Eichinger and Yuditskii for GMP matrices (the Jacobi analog of MCMV matrices, cf. \cite{Eich16, Yud18}) and comparing this construction to that of Vel\'azquez proves one direction of the equivalences in Theorems \ref{t:periodiccoords}, \ref{t:magicformula}, and \ref{t:nevPick}.  We review the functional model for CMV matrices and reveal the corresponding MCMV structure in Section 3.

Having motivated our class of MCMV matrices and shown that finite-gap CMV matrices correspond to periodic MCMV matrices, we perform a direct spectral analysis for periodic MCMV matrices after reviewing the corresponding classical analysis of CMV matrices (cf., e.g., \cite{SimonOPUC1, SimonOPUC2, SimonSzego}) in Section 4. We also analyze the structure of a general MCMV matrix in Section \ref{sec:structure}.

Finally, we use the tools developed in Sections 2 through 4 to completely resolve the proofs of Theorems \ref{t:periodiccoords}, \ref{t:magicformula}, and \ref{t:nevPick} in Section \ref{sec:proofs}.

\section{Orthogonal rational functions}
The aim of this section is to establish a coefficient stripping formula for Caratheodory functions associated to bases of orthogonal rational functions (ORF). Our main result, Theorem \ref{thm:coeffStripping}, is an analog of the Stieltjes expansion for $m$-functions of Jacobi matrices \cite[Theorem 3.2.4]{SimonSzego} and of Peherstorfer's formula for orthogonal polynomials on the unit circle (OPUC) \cite[Theorem 3.4.2]{SimonOPUC1}. It will play an important role in Section \ref{sec:DST} where we seek to solve the direct spectral problem for periodic MCMV matrices.
\subsection{The Szeg\H o recursion}
Given a nontrivial (i.e., of infinite support) probability measure $\nu$ on $\partial\D$, one obtains the monic orthogonal polynomials $\Phi_k:=\Phi_k(z,\nu)$ by orthogonalizing the family $\{1,z,z^2,\dots\}$ in $L^2(\dd\nu)$.  The $\Phi_k$'s are known to satisfy a recurrence relation of the form
\begin{align} \label{eq:Szego}
	\Phi_{k+1}(z)=z\Phi_{k}(z)-\overline{\apa_k}\Phi_{k}^*(z)
\end{align}
for some sequence $\{\apa_k\}_{k=0}^\infty$ of numbers in $\D$.  Though it may look strange, we purposely write $-\overline{\apa_k}$ in \eqref{eq:Szego} so that the $\apa_k$'s coincide with the Schur parameters (introduced in Section \ref{sec:Schur}).  Following \cite{SimonOPUC1}, we shall also refer to the $\apa_k$'s as Verblunsky coefficients and recall there is a one-to-one correspondence between such sequences (in $\D^\N$) and nontrivial probability measures on $\partial\D$.

$\Phi_k^*$ is the reversed polynomial, that is,
\[
 \Phi_k^*(z)=z^k\overline{\Phi_k(\rz)}.
\]
While the notation of ${}^*$ is convenient, it is ambiguous.  It depends on the class
\[
\cL_k:=\lin\{1, z, \ldots, z^k\}
\]
and has a different meaning for $\cL_k$ and $\cL_j$ when $k\neq j$.  Note that the operation $\varphi\mapsto z^k\overline{\varphi(\rz)}$ acts as an involution on the subspace $\cL_k$.  We shall also use this abuse of notation for bases of orthogonal rational functions (where naturally $z^k$ is substituted by the Blaschke product corresponding to the poles of the first $k$ basis elements). Applying ${}^*$ for the class $\cL_{k+1}$ to \eqref{eq:Szego} yields the so-called Szeg\H{o} recursion
\begin{align}\label{eq:SzegoRecursion}
	\begin{bmatrix}
		\Phi_{k+1}(z)\\
		\Phi_{k+1}^*(z)
	\end{bmatrix}
	=
	\begin{bmatrix}
		1& -\overline{\apa_k}\\
		-\apa_k& 1
	\end{bmatrix}
	\begin{bmatrix}
		z& 0\\
		0& 1
	\end{bmatrix}
	\begin{bmatrix}
		\Phi_{k}(z)\\
		\Phi_{k}^*(z)
	\end{bmatrix}.
\end{align}
For the orthonormal polynomials $\varphi_k:=\Phi_k/\|\Phi_k\|$, this recursion takes the form
\begin{align}\label{eq:SzegoRecursionNorm}
	\begin{bmatrix}
		\varphi_{k+1}(z)\\
		\varphi_{k+1}^*(z)
	\end{bmatrix}
	=
	\cU(-\overline{\apa_k})
	\begin{bmatrix}
		z& 0\\
		0& 1
	\end{bmatrix}
	\begin{bmatrix}
		\varphi_{k}(z)\\
		\varphi_{k}^*(z)
	\end{bmatrix}
\end{align}
with $\cU(\cdot)$ as in \eqref{eq:cUmat}. 

\subsection{Coefficient stripping for orthogonal rational functions}\label{sec:MorePoles}
Let $z_0=0$ and fix a sequence of points $\{z_k\}_{k=1}^\i$ in $\D$ violating the Blaschke condition, i.e.
\begin{align}\label{eq:blaschkeViolate}
	\sum_k(1-|z_k|)=\i.
\end{align}
Note that \eqref{eq:blaschkeViolate} is trivially satisfied if $\sup_k |z_k|<1$ (which will always be the case in our setting). Recall that by $\{B_k\}$ we denote the family of finite Blaschke products
\begin{align*}
	B_0(z)=1,\quad B_k(z)=\prod_{j=1}^{k}b_{z_j}(z).
\end{align*}

Given a nontrivial probability measure $\nu$ on $\partial\D$, let $\{\varphi_k\}_{k=0}^\infty$ be the corresponding sequence of orthonormal rational functions obtained by orthogonalizing the family $\{B_k\}_{k=0}^\i$ in $L^2(\dd\nu)$.
With $\cL_k:=\lin\{\varphi_j: 0\leq j\leq k\}$, the associated ${}^*$-operator is now defined by $\varphi^*(z)=B_k(z)\overline{\varphi(\rz)}$ for $\varphi\in\cL_k$. Defining
$$
\eta_k^2:=1-|z_k|^2
$$
and choosing the right unimodular constants in the normalization (in particular, $\varphi_0\equiv 1$), the $\varphi_k$'s satisfy the recurrence relation (see \cite[Theorem 4.1.3]{BuGoHe99} and \cite{Ve08})
\begin{align}\label{eq:recurrenceRel1}
	\begin{bmatrix}
		\varphi_{k+1}(z)\\
		\varphi_{k+1}^*(z)
	\end{bmatrix}
	&=
	\frac{1-\overline{z_{k}}z}{1-\overline{z_{k+1}}z}\frac{\eta_{k+1}}{\eta_{k}}\cU(-\overline{a_k})
	\begin{bmatrix}
		b_{z_k}(z)& 0\\
		0& 1
	\end{bmatrix}
\begin{bmatrix}
	\varphi_{k}(z)\\
	\varphi_{k}^*(z)
\end{bmatrix}.
\end{align}
The rational functions $\psi_k$ of the second kind are defined by
\begin{align*}
\psi_0\equiv 1, \quad
\psi_k(z)=\int(\varphi_k(e^{it})-\varphi_k(z))\frac{e^{it}+z}{e^{it}-z}\dd\nu(e^{it}),
\quad k\geq 1
\end{align*}
and satisfy almost the same recurrence relation as $\varphi_k$, namely
\begin{align}\label{eq:recurrenceRel2}
\begin{bmatrix}
\psi_{k+1}(z)\\
-\psi_{k+1}^*(z)
\end{bmatrix}
&=
\frac{1-\overline{z_{k}}z}{1-\overline{z_{k+1}}z}\frac{\eta_{k+1}}{\eta_{k}}\cU(-\overline{a_k})
\begin{bmatrix}
b_{z_k}(z)& 0\\
0& 1
\end{bmatrix}
\begin{bmatrix}
\psi_{k}(z)\\
-\psi_{k}^*(z)
\end{bmatrix}.
\end{align}
Note that the coefficients $a_k$ in \eqref{eq:recurrenceRel1}--\eqref{eq:recurrenceRel2} belong to $\D$ and are explicitly given by
\begin{align*}
a_k={\Bigl\langle \frac{1-\overline{z_k}z}{z-z_{k+1}}, \varphi_{k}\Bigr\rangle_{\nu}}\,\Big/
{\Bigl\langle\frac{z_k-z}{z_{k+1}-z},\varphi_{k}^*\Bigr\rangle_{\nu}};
\end{align*}
cf. \cite[Theorem 4.1.2]{BuGoHe99}. Conversely, starting from arbitrary coefficients $\{a_k\}_{k=0}^\infty\in\D^\N$ one can generate a sequence of rational functions by \eqref{eq:recurrenceRel1} and show that they are orthogonal with respect to some probability measure $\nu$ on $\partial\D$. This is the content of the following known results:


\begin{theorem}{\cite[Theorems 8.1.4 and 9.2.1]{BuGoHe99}}\label{thm:ORFFavard}
  Suppose $\{z_k\}\in \D^\N$ violates the Blaschke condition, i.e. \eqref{eq:blaschkeViolate} holds.  Given a sequence $\{a_k\}\in \D^\N$, define the rational functions $\{\varphi_k\}_{k=0}^\infty$ (with $\varphi_0\equiv 1$) by \eqref{eq:recurrenceRel1} and let $\nu_k$ denote the Bernstein--Szeg\H{o} approximant
   \begin{align}
     \dd\nu_k=\frac{1-|z_k|^2}{|e^{it}-z_k|^2}\frac{1}{|\varphi_k^*(e^{it})|^2}\frac{\dd t}{2\pi}.
   \end{align}
  Then the associated Caratheodory function $F_{\nu_k}$ can be written as
   \begin{align} \label{eq:CaratheodoryRatio}
	 F_{\nu_k}(z)={\psi_k^*(z)}/{\varphi_k^*(z)}
	\end{align}
  and $\nu_k$ converges weakly to some probability measure $\nu$ which, in turn, is the unique measure of orthogonality for $\{\varphi_k\}$. In particular,
	\begin{align}\label{eq:CaratheodoryApprox}
		\lim\limits_{k\to\i}F_{\nu_k}(z)=F_\nu(z)
	\end{align}
	uniformly on compact subsets of $\D$.
\end{theorem}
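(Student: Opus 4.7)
The plan is to mirror the classical Bernstein--Szeg\H{o} strategy for OPUC, translating each step into the ORF setting via the recurrences \eqref{eq:recurrenceRel1}--\eqref{eq:recurrenceRel2}. First I would derive a Christoffel--Darboux type identity linking $\varphi_k,\varphi_k^*,\psi_k,\psi_k^*$. Iterating the matrix recurrence and exploiting that $\cU(-\overline{a_k})$ is $\mj$-unitary on $\partial\D$ (with $\mj=\left[\begin{smallmatrix} 1& 0\\ 0& -1\end{smallmatrix}\right]$), one telescopes to
\begin{align*}
|\varphi_k^*(z)|^2-|\varphi_k(z)|^2 \;=\; (1-|z|^2)\,K_k(z,z), \qquad z\in\D,
\end{align*}
where $K_k$ is the ORF reproducing kernel, together with an analogous mixed identity coupling $\psi_k^*$ to $\varphi_k^*$. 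Two consequences follow: $\varphi_k^*$ is nonvanishing in $\overline{\D}$, so $\psi_k^*/\varphi_k^*$ is a well-defined rational function, and $\Re(\psi_k^*/\varphi_k^*)>0$ on $\D$ with value $1$ at the origin, making it a Carath\'eodory function.

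Next I would verify \eqref{eq:CaratheodoryRatio} by a direct Poisson-type computation: since $\dd\nu_k$ carries the explicit weight $(1-|z_k|^2)|e^{it}-z_k|^{-2}|\varphi_k^*|^{-2}\,\dd t/(2\pi)$, the Cauchy--Szeg\H{o} integral defining $F_{\nu_k}$ can be rewritten, using $|\varphi_k^*|^{-2}=B_k/(\varphi_k^*\overline{\varphi_k^*})$ on $\partial\D$ together with the preceding CD identity, as a residue calculation that produces exactly $\psi_k^*/\varphi_k^*$. The same residue computation, applied to $\int\varphi_j\overline{\varphi_i}\,\dd\nu_k$ for $0\le i,j\le k$, shows that $\{\varphi_0,\ldots,\varphi_k\}$ is an orthonormal family in $L^2(\dd\nu_k)$.

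Compactness then supplies a limit measure: the family $\{F_{\nu_k}\}$ has positive real part with $F_{\nu_k}(0)=1$, so it is normal on $\D$, and Helly's selection theorem extracts a subsequential weak limit $\nu_{k_j}\Rightarrow\nu$ with $F_{\nu_{k_j}}\to F_\nu$ locally uniformly. Passing the finite-dimensional orthonormality to the limit shows $\{\varphi_k\}_{k\ge 0}$ is orthonormal in $L^2(\dd\nu)$. The hard part, and the step that genuinely uses \eqref{eq:blaschkeViolate}, is uniqueness of $\nu$: the moment conditions $\langle\varphi_i,\varphi_j\rangle_\nu=\delta_{ij}$ must determine $\nu$ uniquely among probability measures on $\partial\D$. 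This is the content of the Takenaka--Malmquist completeness theorem, which asserts that $\spa\{\varphi_k\}_{k\ge 0}$ is dense in $L^2(\partial\D,\dd\mu)$ for every finite Borel measure $\mu$ if and only if $\{z_k\}$ violates the Blaschke condition. Density forces every subsequential limit to agree, and \eqref{eq:CaratheodoryApprox} follows from the locally uniform subsequential convergence of $F_{\nu_k}$.
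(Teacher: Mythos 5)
The paper cites this theorem from \cite[Theorems 8.1.4 and 9.2.1]{BuGoHe99} without providing a proof, so there is no in-paper argument to compare against; I will assess your sketch on its own terms. Most of it follows the standard Bernstein--Szeg\H{o} strategy correctly, but the uniqueness step contains a genuine error. The ``Takenaka--Malmquist completeness theorem'' you invoke -- that $\spa\{\varphi_k\}_{k\ge 0}$ is dense in $L^2(\partial\D,\dd\mu)$ for \emph{every} finite Borel measure $\mu$ iff the Blaschke condition fails -- is false. Already in the OPUC case $z_k\equiv 0$ the span is the analytic polynomials, and density of polynomials in $L^2(\dd\mu)$ is governed by Szeg\H{o}'s theorem (it requires $\log\mu_{\mathrm{ac}}'\notin L^1$), whereas $\sum(1-|z_k|)=\infty$ holds trivially; so the asserted equivalence cannot be right. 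The genuine Takenaka--Malmquist statement concerns completeness of $\{B_k\}$ in the Hardy space $H^2(\partial\D,\dd t/2\pi)$, and it does not by itself deliver determinacy of the generalized moment problem. What is needed, and what \cite[Theorem 9.2.1]{BuGoHe99} actually establishes, is that the Carath\'eodory approximants $\psi_k^*/\varphi_k^*$ converge locally uniformly on $\D$ -- a continued-fraction convergence argument in which the Blaschke-condition violation enters as a divergence hypothesis; the limit then has a unique Herglotz measure $\nu$. Alternatively one can argue via density, but the correct density to establish is that of $\lin\bigl(\{B_j\}_{j\ge 0}\cup\{\overline{B_j}\}_{j\ge 0}\bigr)$ in $C(\partial\D)$, a separate and nontrivial fact. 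As written, your proof has a gap at exactly this point.

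A smaller slip: your Christoffel--Darboux identity should read
\[
|\varphi_{k}^*(z)|^2 - |\varphi_{k}(z)|^2 = \bigl(1-|b_{z_{k}}(z)|^2\bigr)\sum_{j=0}^{k-1}|\varphi_j(z)|^2,
\]
with the factor $1-|b_{z_k}(z)|^2$ rather than $1-|z|^2$; compare the form quoted in Lemma~\ref{lem:Resolvents}. The qualitative consequences you draw (nonvanishing of $\varphi_k^*$ on $\D$, the Carath\'eodory property of $\psi_k^*/\varphi_k^*$) survive this correction, but the exact factor is what produces the Poisson kernel $\frac{1-|z_k|^2}{|e^{it}-z_k|^2}$ in the Bernstein--Szeg\H{o} weight, so it matters for the residue computation establishing \eqref{eq:CaratheodoryRatio}. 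Also note that nonvanishing of $\varphi_k^*$ on $\partial\D$ (not just $\D$) needs a supplementary argument, e.g.\ that all zeros of $\varphi_k$ lie strictly inside $\D$.
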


Note that if $z_k\equiv 0$, then \eqref{eq:recurrenceRel1} reduces to the standard Szeg\H o recursion \eqref{eq:SzegoRecursion}. Just as for orthogonal polynomials on the unit circle, there is a one-to-one correspondence between coefficient sequences $\{a_k\}\in\D^\N$ and nontrivial probability measures on $\partial\D$. In fact, the theory for ORF generalizes the one for OPUC. We mention in passing that the assumption \eqref{eq:blaschkeViolate} ensures the measure $\nu$ of orthogonality be unique.


We are now ready to derive the promised coefficient stripping formula. 
Let $z_0=0,z_1,\dots, z_{p-1}$ be a finite number of points in $\D$ and consider the sequence $\{z_k\}_{k=0}^\i$ obtained by periodic extension of the initial $p$ values (i.e., $z_{k+p}=z_k$ for all $k$). Note that in this situation the Blaschke condition is trivially violated. Let
\begin{align*}
	Y_k(z)=\begin{bmatrix}
	\psi_k(z) & \varphi_k(z) \\
	-\psi_k^*(z) & \varphi_k^*(z)
	\end{bmatrix}; \quad\text{in particular}, \; Y_0=\begin{bmatrix}
	1 & 1\\
	-1& 1
	\end{bmatrix}
\end{align*}
and define
\begin{align}\label{def:TransferMatrix}
	\cB(z):=\begin{bmatrix}
	\cB_{11}(z)& \cB_{12}(z)\\
	\cB_{22}(z)& \cB_{21}(z)
	\end{bmatrix}
	=Y_0\begin{bmatrix}b_{z_{0}}(z)&0\\
	0& 1
	\end{bmatrix}\cU(a_{0})
	\cdots
	\begin{bmatrix}b_{z_{p-1}}(z)&0\\
	0& 1
	\end{bmatrix}
	\cU(a_{p-1})
	Y_0^{-1}.
\end{align}
Our result then reads as follows:
\begin{theorem}\label{thm:coeffStripping}
	Let $F_\nu$ be the Caratheodory function associated to the sequence $\{a_k\}_{k=0}^\i$ and suppose $F_\nu^{(1)}$ corresponds to the shifted
    sequence $\{a_{k+p}\}_{k=0}^\i$. Then
	\begin{align}\label{eq:coeffStripping}
	F_{\nu}(z)=\frac{\cB_{11}(z)F_{\nu}^{(1)}(z)+\cB_{12}(z)}{\cB_{21}(z)F_{\nu}^{(1)}(z)+\cB_{22}(z)}
	\end{align}
	and $\cB$ can be expressed in terms of the ORF and the rational functions of the second kind by
	\begin{align}\label{eq:Bmatrix}
		\cB(z)=\frac{1}{2}\begin{bmatrix}
		\psi_p(z)+\psi_p^*(z) & \psi_p^*(z)-\psi_p(z) \\
		\varphi_p^*(z)-\varphi_p(z) & \varphi_p(z)+\varphi_p^*(z)
		\end{bmatrix}.
	\end{align}
\end{theorem}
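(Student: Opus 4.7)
My plan is to introduce the matrix
\begin{align*}
Z_k := \begin{bmatrix} \psi_k & \psi_k^* \\ -\varphi_k & \varphi_k^* \end{bmatrix}
\end{align*}
and to show that it satisfies a \emph{right}-multiplicative recurrence whose transfer matrices coincide exactly with the factors appearing in the product that defines $\cB$. Both \eqref{eq:Bmatrix} and \eqref{eq:coeffStripping} will then follow by iteration together with a telescoping argument exploiting the periodicity.

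Combining the recurrences \eqref{eq:recurrenceRel1}--\eqref{eq:recurrenceRel2} by direct $2\times 2$ computation yields
\begin{align*}
Z_{k+1} = c_k\, Z_k \begin{bmatrix} b_{z_k} & 0 \\ 0 & 1 \end{bmatrix}\cU(a_k),\qquad c_k := \frac{1-\overline{z_k}z}{1-\overline{z_{k+1}}z}\cdot\frac{\eta_{k+1}}{\eta_k}.
\end{align*}
Iterating from $k=0$ to $p-1$ and using the periodicity $z_p=z_0=0$ (hence $\eta_p=\eta_0=1$), the scalar prefactors telescope: $\prod_{k=0}^{p-1} c_k = \frac{1-\overline{z_0}z}{1-\overline{z_p}z}\cdot\frac{\eta_p}{\eta_0} = 1$. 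Thus $Z_p = Y_0\prod_{k=0}^{p-1}\bigl[\begin{smallmatrix} b_{z_k} & 0 \\ 0 & 1 \end{smallmatrix}\bigr]\cU(a_k)$; comparing with \eqref{def:TransferMatrix} gives $\cB = Z_p Y_0^{-1}$, and carrying out this product with $Y_0^{-1} = \tfrac12\bigl[\begin{smallmatrix} 1 & -1 \\ 1 & 1 \end{smallmatrix}\bigr]$ produces exactly \eqref{eq:Bmatrix}.

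For the M\"obius relation, the same recurrence iterated from step $p$ to $p+k$ gives $Z_{p+k} = \tilde c_k\, Z_p\, P_k$, where $\tilde c_k := \prod_{j=0}^{k-1} c_{p+j}$ and $P_k$ is the product of the transfer matrices $\bigl[\begin{smallmatrix} b_{z_{p+j}} & 0 \\ 0 & 1 \end{smallmatrix}\bigr]\cU(a_{p+j})$. By $p$-periodicity of $\{z_j\}$, both $P_k$ and $\tilde c_k$ coincide with the analogous objects for the shifted sequence $\{a_{k+p}\}$; denoting by $Z_k^{(1)}$ the analog of $Z_k$ built from that shifted sequence, we also have $Z_k^{(1)} = \tilde c_k\, Y_0\, P_k$. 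Eliminating $\tilde c_k P_k$ between these two identities gives $Z_{p+k} = Z_p Y_0^{-1} Z_k^{(1)} = \cB\, Z_k^{(1)}$, whence reading off the second columns yields
\begin{align*}
\frac{\psi_{p+k}^*(z)}{\varphi_{p+k}^*(z)} = \frac{\cB_{11}(z)\,r_k(z) + \cB_{12}(z)}{\cB_{21}(z)\,r_k(z) + \cB_{22}(z)},\qquad r_k := \frac{\psi_k^{(1)*}}{\varphi_k^{(1)*}}.
\end{align*}
By \eqref{eq:CaratheodoryRatio}, the left-hand side is the Bernstein--Szeg\H{o} approximant $F_{\nu_{p+k}}$ of $\nu$ at step $p+k$, while $r_k$ is the Bernstein--Szeg\H{o} approximant of $\nu^{(1)}$ at step $k$; passing to the limit $k\to\infty$ using the uniform convergence supplied by Theorem \ref{thm:ORFFavard} produces \eqref{eq:coeffStripping}.

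The main obstacle is recognizing the rearranged matrix $Z_k$ and checking that its recurrence has $\cU(a_k)$ on the \emph{right} (rather than $\cU(-\overline{a_k})$ on the left, as in \eqref{eq:recurrenceRel1}--\eqref{eq:recurrenceRel2}); once this is in place, both parts of the theorem reduce to a telescoping argument followed by the convergence statement already in hand.
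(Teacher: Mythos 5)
Your proof is correct and is essentially the paper's argument, repackaged. The paper works with $Y_k = \left[\begin{smallmatrix}\psi_k & \varphi_k \\ -\psi_k^* & \varphi_k^*\end{smallmatrix}\right]$, whose recurrence places the factor $\cU(-\overline{a_k})\bigl[\begin{smallmatrix}b_{z_k}&0\\0&1\end{smallmatrix}\bigr]$ on the \emph{left}, so that iterating produces the product $W$ in \emph{reverse} order; this forces the paper to undo the mismatch at the end by taking a transpose and conjugating by $\mj$ (using the identities $Y_0\mj = 2\mj Y_0^{-1}$, $Y_0^\intercal = 2Y_0^{-1}$, $\mj\cU(-\overline{a})\mj = \cU(a)^\intercal$). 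Your matrix $Z_k$ is precisely $\mj Y_k^\intercal\mj$, and transposing the recurrence moves the factor to the right and converts $\cU(-\overline{a_k})$ to $\cU(a_k)$, so that iterating directly produces the product in the order appearing in \eqref{def:TransferMatrix} with no cleanup step needed. Both proofs rest on exactly the same inputs --- the recurrences \eqref{eq:recurrenceRel1}--\eqref{eq:recurrenceRel2}, the telescoping of the scalar prefactors, the $p$-periodicity of $\{z_k\}$, and the convergence of the Bernstein--Szeg\H{o} approximants from Theorem \ref{thm:ORFFavard} --- so the mathematical content is identical; yours is marginally cleaner in that it dispenses with the transpose-and-$\mj$-conjugation bookkeeping.

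One small remark: your formula $\frac{\psi_{p+k}^*}{\varphi_{p+k}^*} = \frac{\cB_{11}r_k + \cB_{12}}{\cB_{21}r_k + \cB_{22}}$ uses the standard matrix-entry labeling, whereas the paper's display \eqref{def:TransferMatrix} swaps the $(2,1)$ and $(2,2)$ labels (writing $\cB_{22}$ for the lower-left entry). Your version agrees with what the proof actually produces and with \eqref{eq:Bmatrix}, so it is the intended reading; the labeling in \eqref{def:TransferMatrix} appears to be a typo.
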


\begin{proof}
Due to \eqref{eq:recurrenceRel1} and \eqref{eq:recurrenceRel2}, we have
	\begin{align*}
		Y_{k+1}(z)=\frac{1-\overline{z_{k}}z}{1-\overline{z_{k+1}}z}\frac{\eta_{k+1}}{\eta_{k}}\cU(-\overline{a_k})
		\begin{bmatrix}
		b_{z_k}(z)& 0\\
		0& 1
		\end{bmatrix}
		Y_k(z).
	\end{align*}
Iterating $p$ times, starting from $k=p-1$, the factors in front of the $U$'s cancel (due to telescoping) and it follows that
	\begin{align}\label{eq:ORFTransfer}
		Y_{p}(z)=W(z) Y_0,
	\end{align}
where $W$ is the transfer matrix
	\begin{align}\label{def:transferMatrix2}
		W(z):=W(z,\{z_k\},\{a_k\})=\cU(-\overline{a_{p-1}})
		\begin{bmatrix}
		b_{z_{p-1}}(z)& 0\\
		0& 1
		\end{bmatrix}
		\cdots
		\cU(-\overline{a_{0}})
		\begin{bmatrix}
		b_{z_{0}}(z)& 0\\
		0& 1
		\end{bmatrix}.
	\end{align}
Using a superscript ${(j)}$ for the objects related to to the shifted sequence $\{a_{k+jp}\}_{k=0}^{\infty}$, 
we obtain in a similar way that
	 \begin{align*}
	 	Y_{np}^{(1)}(z)=W^{(n)}(z)\cdots W^{(1)}(z)Y_0
\end{align*}
and	
\begin{align*}
	 		Y_{(n+1)p}(z)=W^{(n)}(z)\cdots W^{(1)}(z)W(z)Y_0.
	 \end{align*}
Hence,
	 \begin{align*}
	 	Y_{(n+1)p}(z)=Y_{np}^{(1)}(z)Y_0^{-1}W(z)Y_0
	 \end{align*}
and	considering the second row of this identity (or rather its transpose) yields
	 \begin{align} \label{eq:psiphi}
	 \begin{bmatrix}
	 \psi_{(n+1)p}^*(z)\\
	 \varphi_{(n+1)p}^*(z)
	 \end{bmatrix}
	 =
	 (\mj Y_0^{-1} W(z) Y_0 \mj)^\intercal
	 \begin{bmatrix}
	 \bigl(\psi_{np}^{(1)}\bigr)^*(z)\\
	 \bigl(\varphi_{np}^{(1)}\bigr)^*(z)
	 \end{bmatrix},
     \quad \mj=\begin{bmatrix}
	 1& 0\\
	 0& -1
	 \end{bmatrix}.
     \end{align}
To see that
\begin{equation} \label{eq:M(z)}
(\mj Y_0^{-1} W(z) Y_0 \mj)^\intercal = M(z)
\end{equation}
with $\cB$ as defined in \eqref{def:TransferMatrix}, we use that $Y_0\mj=2\mj Y_0^{-1}$, $Y_0^\intercal=2Y_0^{-1}$, and $\mj\cU(-\overline{a})\mj=\cU(a)^\intercal$. Due to \eqref{eq:CaratheodoryRatio} and \eqref{eq:CaratheodoryApprox}, we now obtain \eqref{eq:coeffStripping} by passing to the limit as $n\to\infty$ in \eqref{eq:psiphi}. Finally, the identity \eqref{eq:Bmatrix} follows directly from \eqref{eq:ORFTransfer} and \eqref{eq:M(z)}.
\end{proof}

If the sequence $\{a_k\}$ is periodic (or periodic up to a rotational phase) and the period matches the period of the sequence $\{z_k\}$, then our result simplifies and \eqref{eq:coeffStripping} turns into a quadratic equation for $F_\nu$. The result becomes particularly simple if we pass from the relation for Caratheodory functions to the one for Schur functions:

\begin{corollary}\label{cor:periodicInterSchur}
Suppose $\{a_k\}_{k=0}^\i$ is periodic up to a rotational phase with period $p$ and phase $-2\vartheta$, and let $f_\nu$ denote the associated Schur function; see
    \eqref{eq:CarathSchur}.  If
	\begin{align}\label{eq:transferNatrix3}
		\cM(z)=\cU(a_0)\begin{bmatrix}
		b_{z_1}(z) & 0\\
		0& 1
		\end{bmatrix}
        \cU(a_1)
		\cdots
		\begin{bmatrix}
		b_{z_{p-1}}(z) & 0\\
		0& 1
		\end{bmatrix}
		\cU(a_{p-1})
		\begin{bmatrix}
		b_{z_0}(z) & 0\\
		0& 1
		\end{bmatrix}
		\begin{bmatrix}
		e^{-i\vartheta}& 0\\
		0& e^{i\vartheta}
		\end{bmatrix},
	\end{align}
    then
	\begin{align}\label{eq:nevPickSchur}
	\begin{bmatrix}
	f_{\nu}\\
	1
	\end{bmatrix}
	\sim
	\cM(z)
	\begin{bmatrix}
	f_\nu\\
	1
	\end{bmatrix}.
	\end{align}
\end{corollary}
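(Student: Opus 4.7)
The plan is to deduce the Schur-function identity \eqref{eq:nevPickSchur} from the Caratheodory coefficient-stripping formula \eqref{eq:coeffStripping} of Theorem \ref{thm:coeffStripping}. The key input is that periodicity up to a rotational phase forces $f_\nu^{(1)}$ to be a scalar rotation of $f_\nu$; once this is established, the remainder is a purely algebraic matching of matrices.

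First I would show that $f_\nu^{(1)}(z) = e^{-2i\vartheta} f_\nu(z)$. Since $f_\nu^{(1)}$ has Schur parameters $\{a_{k+p}\}_{k \geq 0} = \{e^{-2i\vartheta} a_k\}_{k \geq 0}$, it suffices to verify that rotating every Schur parameter by a common unimodular constant $e^{i\alpha}$ rotates the associated Schur function by that same constant. This follows directly from the Schur algorithm: if $g := e^{i\alpha} f$, then $g(0) = e^{i\alpha} a_0$ and the recursion $z g_1 = (g - g(0))/(1 - \overline{g(0)} g)$ yields $g_1 = e^{i\alpha} f_1$, whence by induction $a_k(g) = e^{i\alpha} a_k(f)$ for all $k$.

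With this identification, the linear fractional relation \eqref{eq:coeffStripping} lifts to the projective form $\begin{bmatrix} F_\nu \\ 1 \end{bmatrix} \sim \cB(z)\begin{bmatrix} F_\nu^{(1)} \\ 1 \end{bmatrix}$. Applying \eqref{eq:CarathSchur} on both sides, together with the observation that $\begin{bmatrix} f_\nu^{(1)} \\ 1 \end{bmatrix} \sim \begin{bmatrix} e^{-i\vartheta} & 0 \\ 0 & e^{i\vartheta} \end{bmatrix}\begin{bmatrix} f_\nu \\ 1 \end{bmatrix}$, and cancelling the common outer factor $Y_0 \bigl[\begin{smallmatrix} z & 0 \\ 0 & 1 \end{smallmatrix}\bigr]$ from the left, the corollary reduces to verifying the matrix identity
\[
\begin{bmatrix} z & 0 \\ 0 & 1 \end{bmatrix}^{-1} Y_0^{-1}\, \cB(z)\, Y_0 \begin{bmatrix} z & 0 \\ 0 & 1 \end{bmatrix}\begin{bmatrix} e^{-i\vartheta} & 0 \\ 0 & e^{i\vartheta} \end{bmatrix} = \cM(z).
\]
Since $\cB(z) = Y_0\,\bigl[\begin{smallmatrix} b_{z_0} & 0 \\ 0 & 1 \end{smallmatrix}\bigr]\cU(a_0)\cdots \bigl[\begin{smallmatrix} b_{z_{p-1}} & 0 \\ 0 & 1 \end{smallmatrix}\bigr]\cU(a_{p-1})\,Y_0^{-1}$, the $Y_0$-conjugation on each side peels off the outer $Y_0$ factors. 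Because $z_0 = 0$ implies $b_{z_0}(z) = z$, the leftmost $\bigl[\begin{smallmatrix} z & 0 \\ 0 & 1 \end{smallmatrix}\bigr]^{-1}$ annihilates the initial $\bigl[\begin{smallmatrix} b_{z_0} & 0 \\ 0 & 1 \end{smallmatrix}\bigr]$ factor, while the $\bigl[\begin{smallmatrix} z & 0 \\ 0 & 1 \end{smallmatrix}\bigr]$ on the right reappears as a $\bigl[\begin{smallmatrix} b_{z_0} & 0 \\ 0 & 1 \end{smallmatrix}\bigr]$ at the end of the product. Multiplying by the phase matrix produces exactly $\cM(z)$ as in \eqref{eq:transferNatrix3}.

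The only genuine content is the identification $f_\nu^{(1)} = e^{-2i\vartheta} f_\nu$; all remaining steps are routine. The main obstacle, such as it is, is the cyclic re-ordering of factors in the last computation, which visibly shifts the product from starting with $b_{z_0}$ to ending with $b_{z_0}$. This bookkeeping works out cleanly precisely because of the normalization $z_0 = 0$ and the placement of the phase matrix at the right end of $\cM(z)$.
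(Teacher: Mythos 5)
Your proposal follows exactly the same route as the paper's one-line proof: establish the phase relation $f_\nu^{(1)} = e^{-2i\vartheta}f_\nu$ and then reduce \eqref{eq:nevPickSchur} to \eqref{eq:coeffStripping} and \eqref{eq:CarathSchur} via a purely algebraic matrix computation. Your algebra in the second half is correct — the cancellation of the outer $Y_0\bigl[\begin{smallmatrix} z & 0 \\ 0 & 1\end{smallmatrix}\bigr]$, the cyclic migration of the $b_{z_0}$ factor to the end of the product, and the absorption of the phase matrix all check out.

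However, your justification of the key fact $f_\nu^{(1)} = e^{-2i\vartheta}f_\nu$ has a genuine gap. You write that $f_\nu^{(1)}$ ``has Schur parameters $\{a_{k+p}\}$'' and then invoke the classical Schur algorithm. But the $\{a_k\}$ are the ORF Verblunsky coefficients from the recursion \eqref{eq:recurrenceRel1}, not the classical Schur parameters of $f_\nu$ (those are the almost-periodic $\{\apa_k\}$ of the paper's notation). These two sequences coincide only in the special case $z_k \equiv 0$. The classical Schur algorithm therefore does not directly tell you how $f_\nu$ responds to a common rotation of the ORF coefficients. A correct argument works from the ORF recursion: since $\cU(-\overline{e^{-2i\vartheta}a_k}) = D\,\cU(-\overline{a_k})\,D^{-1}$ with $D = \diag(e^{i\vartheta},e^{-i\vartheta})$, the rotated $Y_k$-matrices satisfy $\tilde Y_k = D\, Y_k\,(Y_0^{-1}D^{-1}Y_0)$; using \eqref{eq:Y0Commutant} one finds $\tilde F_{\nu_k} = \dfrac{F_{\nu_k}\cos\vartheta + i\sin\vartheta}{iF_{\nu_k}\sin\vartheta + \cos\vartheta}$, and passing to the limit and applying \eqref{eq:CarathSchur} then gives $\tilde f_\nu = e^{-2i\vartheta} f_\nu$. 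The conclusion you assert is true, but the route you take to it is not valid in the ORF setting.
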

\begin{proof}
	Since $a_{k+p}=e^{-2i\vartheta}a_k$, it follows that $f_\nu=e^{2i\vartheta}f_\nu^{(1)}$. Thus \eqref{eq:nevPickSchur} follows from \eqref{eq:coeffStripping} and \eqref{eq:CarathSchur}.
\end{proof}

\section{The functional model}  \label{sec:3}

In the last two decades a significant amount of progress has been made in understanding reflectionless one-dimensional operators as being related to multiplication operators on certain subspaces of Hardy spaces associated to multiply-connected Riemann surfaces.  We broadly refer to this construction as a ``functional model'' for the associated operators.  In this section, we first recall the requisite definitions to develop such models, followed by the specific model of Peherstorfer--Yuditskii for almost-periodic CMV matrices, and finally associate to this our model for MCMV matrices.

\subsection{Hardy spaces of character automorphic functions}
Fix a finite-gap set $\E \subset \partial\D$.  By means of the Koebe--Poincar\'e uniformization theorem, the spectral complement in the Riemann sphere $\overline{\C} \setminus \E$ is uniformized by the disk $\D$; that is, there exists a Fuchsian group $\G$ and a meromorphic function $\fz : \D \to \overline{\C} \setminus \E$ with the following properties:
\begin{align*}
1. \quad & \forall z \in \overline{\C} \setminus \E \;\; \exists \, \z \in \D : \fz(\z) = z, \\
2. \quad & \fz(\z_1) = \fz(\z_2) \iff \exists \, \g \in \G : \z_1 = \g(\z_2).
\end{align*}
We fix this map by requiring $\fz$ map the interval $(-1,1)$ onto a fixed connected component of $\partial\D \setminus \E$.  Due to this choice, there exists a fundamental domain $\cF$ for the action of $\G$ which is symmetric with respect to complex conjugation, i.e.
\begin{align} \label{eq:domain}
\cF = \{\overline{\z} : \z\in\cF\},\quad  \g^{-1}(\z) = \overline{\g}(\z) := \overline{\g(\overline{\z})},\quad \g \in \G.
\end{align}
It follows that
\begin{equation}
\overline{\fz(\overline{\z})} = \fz(\z)^{-1};
\end{equation}
in particular, if $\z_0 \in \cF$ is such that $\fz(\z_0) = 0$, then $\fz(\overline{\z_0}) = \infty$.

We denote by $\G^*$ the group of unitary characters of $\G$; that is, group homomorphisms from $\G$ into $\T := \R/2\pi\Z$.  By the covering space formalism, $\G$ is group isomorphic to the fundamental group $\pi_1(\overline{\C} \setminus \E)$, and so $\G^* \cong \T^g$ (where $g+1$ is the number of gaps of $\E$).

Let $H^2 = H^2(\D)$ denote the usual Hardy space of the unit disk.  For $\a \in \G^*$, we consider the Hardy space of character automorphic functions
\begin{align*}
H^2(\a) := \bigl\{ f \in H^2 : f \circ \g = e^{ i\a(\g)}f \;\; \forall \g \in \G\bigr\}
\end{align*}
equipped with the standard $H^2$ inner product
\begin{align*}
\langle f, g \rangle &= \int_0^{2\pi} f(e^{it})\overline{g(e^{it})} \frac{\dd t}{2\pi}.
\end{align*}
More generally, we define the larger space $L^2(\a)$ as the space of those functions $f : \partial\D \to \C$ which are square integrable and $\a$-automorphic:
\begin{align*}
L^2(\a) := \bigl\{ f : \partial\D \to \C : \|f\|^2 < \infty,  f \circ \g = e^{ i\a(\g)}f \;\; \forall \g \in \G\bigr\}.
\end{align*}
Naturally, $H^2(\a) \subset L^2(\a)$ via the identification of a function $f \in H^2$ with its radial limit function on the boundary.

For finite-gap sets $\E$, a fundamental result of Widom \cite{Wid71} implies that $H^2(\a)$ is nontrivial for all $\a \in \G^*$.  This in fact applies to all subsets $\E\subset\C$ of so-called Parreau--Widom type (see, e.g., \cite{Has83} for details).  By continuity of the point evaluation functional, $H^2(\a)$ admits a family of reproducing kernels $\{k^\a(\z,\z_1)\}_{\z_1 \in \D}$ such that
\begin{align}
\langle f, k^\a(\,\cdot\,, \z_1) \rangle = f(\z_1) \quad \forall f \in H^2(\a).
\end{align}
By the reproducing property and nontriviality of $H^2(\a)$, it follows that $k^\a(\z_1,\z_1) > 0$.  We may thus define the corresponding normalized vectors by
\begin{align*}
K^\a(\z,\z_1) &= \frac{k^\a(\z, \z_1)}{\sqrt{k^\a(\z_1,\z_1)}}
\end{align*}
and note that
\begin{equation} \label{eq:inner}
 \langle f, K^\a(\,\cdot\,, \z_1) \rangle = \frac{f(\z_1)}{K^\a(\z_1,\z_1)} \quad \forall f \in H^2(\a).
\end{equation}
We will sometimes abbreviate $k^\a_{\z_1}(\z) := k^\a(\z, \z_1)$ and $K^\a_{\z_1}(\z) := K^\a(\z, \z_1)$.  Since $f \in H^2(\a)$ implies $\overline{f(\overline{\z})} \in H^2(\a)$, we have
\begin{align}
\label{eq:reprod}
\overline{K^\a(\overline{\z},\z_1)}=K^\a(\z, \overline{\z_1});
\end{align}
in particular, $K^\a(\z_1,\z_1) = K^\a(\overline{\z_1},\overline{\z_1})$.
We shall make frequent use of \eqref{eq:reprod} (as well as \eqref{eq:bsym} below) in our computations.

For finite-gap sets $\E$, the map $\a \mapsto K^\a(\z_1,\z_1)$ is continuous for every $\z_1 \in \D$.   This property is in fact responsible for the almost-periodic structure of the CMV matrices in $\cT_\mv(\E)$, compare with Theorem \ref{t:PY} below. We mention in passing that this type of continuity in the character is known to hold for all Parreau--Widom sets $\E\subset\C$ satisfying the so-called Direct Cauchy Theorem (see, e.g., \cite{Has83}).

For a fixed $\z_1 \in \D$ we denote by
\begin{align}\label{def:Blaschke}
\fb(\z, \z_1) := e^{i\phi}\prod_{\g \in \G}\frac{\z - \g(\z_1)}{1-\overline{\g(\z_1)}\z}
\end{align}
the Blaschke product with zeros at the orbit of $\z_1$ under $\G$, and with $\phi = \phi(\z_1)$ normalized such that $\fb(\z_1, \overline{\z_1}) > 0$. As follows directly from \eqref{def:Blaschke} and \eqref{eq:domain}, we have
\begin{equation} \label{eq:bsym}
 \overline{\fb(\overline{\z}, \z_1)} = \fb(\z, \overline{\z_1}).
\end{equation}
If we want to suppress the dependence on $\z$, we may also write $\fb_{\z_1}(\z)=\fb(\z,\z_1)$.  Note that $\fb_{\z_1}$ is related to the potential-theoretic Green's function $G_{\,\overline{\C}\setminus \E}(z,z_1)$ of the domain $\overline{\C} \setminus \E$ with pole at $z_1 = \fz(\z_1)$ by
\begin{align}\label{eq:btoGreens}
-\log\bigl|\fb_{\z_1}(\z)\bigr| &= G_{\,\overline{\C}\setminus \E}\bigl(\fz(\z),z_1\bigr).
\end{align}
Moreover, $\fb_{\z_1}$ is character automorphic with some character $\mu_{\z_1}$, i.e.
\begin{align}
\fb_{\z_1}(\g(\z)) = e^{i\mu_{\z_1}(\g)}\fb_{\z_1}(\z) \quad \forall \g \in \G.
\end{align}

Interestingly, since $\E\subset\partial\D$,
\begin{align*}
\log|z| &= G_{\,\overline{\C}\setminus \E}(z,\infty) - G_{\,\overline{\C}\setminus \E}(z,0).
\end{align*}
Thus we may represent the uniformization $\fz$ as a ratio of distinguished Blaschke products:
\begin{align}\label{eq:phase}
\fz(\z) &= e^{i\phi_0} \frac{\fb(\z, \z_0)}{\fb(\z, \overline{\z_0})},
\end{align}
where $\fz(\z_0) = 0$ as before and $\phi_0\in\T$ is some phase.  Since $\fz$ is automorphic, it follows that $\mu_{\z_0} = \mu_{\overline{\z_0}}$; we will abbreviate this common character by $\mu_0$.

In the coming subsections, we will study multiplication by this uniformization map $\fz$ as a linear operator on $L^2(\a)$ with respect to different bases.  To this end, we require a technical lemma on reproducing kernels which allows us to effectively compute residues.  We first recall the following orthogonal decomposition of $H^2(\a)$:
\begin{lemma}[\cite{SoYud97}]\label{lem:orthogonalDecomp}
	For $\z_1\in \D$, we have
	\begin{align}
		\cK_{\fb_{\z_1}}(\a):=H^2(\a)\ominus \fb_{\z_1}H^2(\a-\mu_{\z_1})=\lin\{k^\a_{\z_1}\}.
	\end{align}
\end{lemma}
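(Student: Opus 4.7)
The plan is to prove the two inclusions separately. The inclusion $\lin\{k^\a_{\z_1}\}\subseteq \cK_{\fb_{\z_1}}(\a)$ is essentially a one-line reproducing-kernel computation: any element of $\fb_{\z_1}H^2(\a-\mu_{\z_1})$ vanishes at $\z_1$ because $\fb_{\z_1}(\z_1)=0$, so by \eqref{eq:inner} it pairs to zero against $k^\a_{\z_1}$.

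For the reverse (and main) inclusion, I would take $f\in\cK_{\fb_{\z_1}}(\a)$ and form $\tilde f := f - c\,k^\a_{\z_1}$ with $c = f(\z_1)/k^\a_{\z_1}(\z_1)$, so that $\tilde f(\z_1)=0$. Character automorphicity then upgrades this single vanishing to
\begin{align*}
\tilde f(\g(\z_1)) = e^{i\a(\g)}\tilde f(\z_1) = 0 \quad\text{for all } \g\in\G,
\end{align*}
i.e.\ $\tilde f$ vanishes on the entire $\G$-orbit of $\z_1$, which is precisely the zero set of the Blaschke product $\fb_{\z_1}$. Standard Hardy-space division then gives $\tilde f/\fb_{\z_1}\in H^2$, and since $|\fb_{\z_1}|\equiv 1$ on $\partial\D$ this division is an isometry. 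A direct character computation using \eqref{def:Blaschke} confirms $\tilde f/\fb_{\z_1}\in H^2(\a-\mu_{\z_1})$, so $\tilde f\in \fb_{\z_1}H^2(\a-\mu_{\z_1})$. On the other hand, $\tilde f$ also belongs to $\cK_{\fb_{\z_1}}(\a)$, being a difference of two elements of that subspace (the first by assumption on $f$, the second by the forward inclusion applied to $c\,k^\a_{\z_1}$). Since any vector lying simultaneously in a closed subspace and its orthogonal complement must vanish, $\tilde f=0$, and hence $f=c\,k^\a_{\z_1}\in\lin\{k^\a_{\z_1}\}$.

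The main technical point to flag is the legitimacy of the division $\tilde f/\fb_{\z_1}\in H^2$. Because $\fb_{\z_1}$ is an infinite Blaschke product with zeros accumulating on $\E\subset\partial\D$, one needs convergence of the product \eqref{def:Blaschke} -- i.e.\ that $\fb_{\z_1}$ is a genuine bounded analytic inner function with precisely the claimed zero set and character. This is exactly the Widom condition, automatic for the finite-gap sets considered here. Once $\fb_{\z_1}$ is established as a convergent inner function, the step of dividing an $H^2$ function by a Blaschke inner factor whose zeros are contained in the zeros of the numerator is the classical inner--outer factorization argument, and the rest of the proof is the orthogonality bookkeeping sketched above.
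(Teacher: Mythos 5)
Your proof is correct and follows essentially the same route as the paper's: establish $k^\a_{\z_1}\in\cK_{\fb_{\z_1}}(\a)$ by the reproducing property and vanishing of $\fb_{\z_1}$ at $\z_1$, then show that a function in $H^2(\a)$ vanishing at $\z_1$ (equivalently orthogonal to $k^\a_{\z_1}$) vanishes on the whole $\G$-orbit, and divide out $\fb_{\z_1}$ via the inner factorization, comparing characters. The only cosmetic difference is that the paper argues directly with an $f\perp k^\a_{\z_1}$, whereas you first subtract $c\,k^\a_{\z_1}$ from an arbitrary $f\in\cK_{\fb_{\z_1}}(\a)$ to achieve $\tilde f(\z_1)=0$ and then invoke the disjointness of a closed subspace and its complement — two equivalent pieces of orthogonal bookkeeping. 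You are also right that the single nontrivial analytic input is the convergence of the Blaschke product $\fb_{\z_1}$ as a genuine inner function with the claimed zero set, which is guaranteed by the Widom property of finite-gap $\E$; the paper invokes this implicitly via the ``standard factorization theorem.'' (Incidentally, the paper's proof contains a small typo, writing $f(0)$ where it means $f(\z_1)$; your version has it right.)
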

\begin{proof}
	Using the reproducing kernel property, it is clear that $k^\a_{\z_1}\in\cK_{\fb_{\z_1}}(\a)$.  Conversely, let $f\in H^2(\a)$ and suppose $f\perp k^\a_{\z_1}$. Then
	\begin{align*}
		0=\langle f,k^\a_{\z_1}\rangle=f(0)
	\end{align*}
	and since $f$ is character automorphic, $f(\g(0))=0$ for all $\g\in\G$. The standard factorization theorem for $H^2$ functions and a comparison of the characters now imply that $f\in\fb_{\z_1}H^2(\a-\mu_{\z_1})$.
\end{proof}

\begin{lemma}\label{lem:EvaluatePole}
	For $\z_1 \in \D$, let $\fb_{\z_1}$ and $\mu_{\z_1}$ be as above.  If $f\in L^2(\a)$ is such that $\fb_{\z_1}f\in H^2(\a+\mu_{\z_1})$, then for $\z_2 \neq \z_1$ we have
	\begin{align}
		\langle f,K^\a_{\z_2}\rangle=\frac{f(\z_2)}{K^\a(\z_2,\z_2)}-\frac{(\fb_{\z_1}f)(\z_1)}{K^{\a+\mu_{\z_1}}(\z_1,\z_1)}\frac{K^{\a+\mu_{\z_1}}(\z_2,\z_1)}{\fb_{\z_1}(\z_2)K^\a(\z_2,\z_2)}.
	\end{align}
\end{lemma}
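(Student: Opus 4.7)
The plan is to use the orthogonal decomposition in Lemma \ref{lem:orthogonalDecomp}, applied with character $\a+\mu_{\z_1}$, to peel off a ``pole part'' of $f$ at $\z_1$ and reduce the pairing against $K^\a_{\z_2}$ to the reproducing property on $H^2(\a)$ at $\z_2$ plus a single residue-like term at $\z_1$. The hypothesis that $\fb_{\z_1}f \in H^2(\a+\mu_{\z_1})$ is exactly what makes this decomposition available.

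First I would invoke Lemma \ref{lem:orthogonalDecomp} with $\a$ replaced by $\a+\mu_{\z_1}$ to obtain
\begin{equation*}
H^2(\a+\mu_{\z_1}) = \lin\{k^{\a+\mu_{\z_1}}_{\z_1}\} \oplus \fb_{\z_1} H^2(\a),
\end{equation*}
and correspondingly decompose $\fb_{\z_1}f = c\, k^{\a+\mu_{\z_1}}_{\z_1} + \fb_{\z_1}h$ with $h \in H^2(\a)$ and some $c \in \C$. Evaluating at $\z_1$ and using $\fb_{\z_1}(\z_1)=0$ determines $c = (\fb_{\z_1}f)(\z_1)/k^{\a+\mu_{\z_1}}(\z_1,\z_1)$. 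Since $|\fb_{\z_1}|=1$ on $\partial\D$, dividing through by $\fb_{\z_1}$ gives the boundary identity $f = c\, k^{\a+\mu_{\z_1}}_{\z_1}/\fb_{\z_1} + h$ in $L^2(\a)$, which is what I take the inner product of with $K^\a_{\z_2}$.

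The $h$ piece is handled directly by \eqref{eq:inner}: $\langle h, K^\a_{\z_2}\rangle = h(\z_2)/K^\a(\z_2,\z_2)$, with $h(\z_2) = f(\z_2) - c\, k^{\a+\mu_{\z_1}}_{\z_1}(\z_2)/\fb_{\z_1}(\z_2)$ (valid because $\z_2 \neq \z_1$, so $\fb_{\z_1}(\z_2)\neq 0$). For the ``pole'' piece, I use $1/\fb_{\z_1} = \overline{\fb_{\z_1}}$ on $\partial\D$ to rewrite it as $\bigl\langle k^{\a+\mu_{\z_1}}_{\z_1},\, \fb_{\z_1} K^\a_{\z_2}\bigr\rangle$. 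Observing that $\fb_{\z_1} K^\a_{\z_2} \in H^2(\a+\mu_{\z_1})$, the reproducing property of $k^{\a+\mu_{\z_1}}_{\z_1}$ collapses this inner product to $\overline{\fb_{\z_1}(\z_1)\,K^\a(\z_1,\z_2)} = 0$.

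Assembling the two terms and converting the unnormalized kernel at $\z_1$ to the normalized one via $k^{\a+\mu_{\z_1}}(\z,\z_1)/k^{\a+\mu_{\z_1}}(\z_1,\z_1) = K^{\a+\mu_{\z_1}}(\z,\z_1)/K^{\a+\mu_{\z_1}}(\z_1,\z_1)$ then yields the claimed identity. I expect the only non-routine step to be the vanishing of the pole piece: the subtlety is that $k^{\a+\mu_{\z_1}}_{\z_1}/\fb_{\z_1}$ itself is only in $L^2(\a)$ (it has a pole at the orbit of $\z_1$), so the computation must go through the $L^2$ pairing with $1/\fb_{\z_1}$ realized on the boundary as $\overline{\fb_{\z_1}}$ before the reproducing property can be invoked on the genuine $H^2$ element $\fb_{\z_1} K^\a_{\z_2}$.
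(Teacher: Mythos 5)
Your proof is correct and follows essentially the same route as the paper: both decompose $\fb_{\z_1}f$ via Lemma \ref{lem:orthogonalDecomp} applied at character $\a+\mu_{\z_1}$ into a multiple of $k_{\z_1}^{\a+\mu_{\z_1}}$ plus an element of $\fb_{\z_1}H^2(\a)$, divide by $\fb_{\z_1}$ on the boundary, and apply the reproducing property at $\z_2$ and at $\z_1$. The paper organizes the same computation by naming the projected part $g$ and evaluating $\langle g,\fb_{\z_1}K^\a_{\z_2}\rangle$ two ways, but the decomposition, the use of $\overline{\fb_{\z_1}}=1/\fb_{\z_1}$ on $\partial\D$, and the vanishing mechanism at $\z_1$ are identical in substance.
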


\begin{proof}
By our assumptions and Lemma \ref{lem:orthogonalDecomp},
\begin{align*}
g: = \fb_{\z_1}f - \langle \fb_{\z_1}f, K_{\z_1}^{\a + \mu_{\z_1}}\rangle K_{\z_1}^{\a + \mu_{\z_1}} \in \fb_{\z_1}H^2(\a).
\end{align*}
Since $\langle K_{\z_1}^{\a + \mu_{\z_1}}, \fb_{\z_1}K_{\z_2}^\a \rangle = 0$, we have on the one hand that
\begin{align*}
\langle g, \fb_{\z_1}K_{\z_2}^\a \rangle = \langle \fb_{\z_1}f, \fb_{\z_1}K_{\z_2}^\a \rangle - 0
= \langle f, K_{\z_2}^\a \rangle.
\end{align*}
On the other hand, as $g/\fb_{\z_1}\in H^2(\a)$, 
we also have
\begin{align*}
\langle g, \fb_{\z_1}K_{\z_2}^\a \rangle = \langle g/\fb_{\z_1}, K_{\z_2}^\a \rangle
= \frac{f(\z_2)}{K^\a(\z_2,\z_2)}-\frac{(\fb_{\z_1}f)(\z_1)}{K^{\a+\mu_{\z_1}}(\z_1,\z_1)}\frac{K^{\a+\mu_{\z_1}}(\z_2,\z_1)}{\fb_{\z_1}(\z_2)K^\a(\z_2,\z_2)}. 
\end{align*}
This completes the proof.
\end{proof}

\subsection{The Peherstorfer--Yuditskii model for CMV matrices} \label{sec:PY}

To motivate the MCMV functional model, we first recall the functional model for the usual CMV matrices.  Everything that follows in this section is in some way already presented in the literature. We try to be quite precise anyway, because we feel that the meaning of the additional parameter $\tau\in \R/2\pi\Z$ has not really been discussed yet in terms of the functional model. Moreover, it will give us an understanding of the notion of \textit{periodicity up to a phase} in CMV matrices, which will be important in the later part of our paper.

Let $(\a,\tau)\in\G^*\times \T$ and define
\begin{align*}
	x_0^{\a,\tau}=K^{\a}_{\overline{\z_0}},\quad x_1^{\a,\tau}=e^{i\tau}\fb_{\overline{\z_0}} K^{\a-\mu_0}_{\z_0}, \\
	\chi_0^{\a,\tau} = e^{i\tau}K^\a_{\z_0}, \quad \chi_1^{\a,\tau} = \fb_{\z_0}K^{\a - \mu_0}_{\overline{\z_0}}.
\end{align*}
For $\phi_0$ given by \eqref{eq:phase} we define, for every $l\in\Z$,
\begin{align}\label{def:CMVbases1}
x_{2l}^{\a,\tau}=e^{-il\phi_0}\fb_{\z_0}^l\fb_{\overline{\z_0}}^lx_0^{\a-2l\mu_0,\tau},\quad x_{2l+1}^{\a,\tau}=e^{il\phi_0}\fb_{\z_0}^l\fb_{\overline{\z_0}}^{l+1}x_1^{\a-(2l+1)\mu_0,\tau}, \\
\label{def:CMVbases2}
\chi_{2l}^{\a,\tau} = e^{il\phi_0}\fb_{\z_0}^l\fb_{\overline{\z_0}}^l\chi_0^{\a-2l\mu_0,\tau}, \quad \chi_{2l+1}^{\a,\tau}=e^{-il\phi_0}\fb_{\z_0}^{l+1}\fb_{\overline{\z_0}}^{l}\chi_1^{\a-(2l+1)\mu_0,\tau}.
\end{align}
It is straightforward to see that for any $\tau \in \T$, $\{x^{\a,\tau}_0, x^{\a,\tau}_1\}$ and $\{\chi^{\a,\tau}_0, \chi^{\a,\tau}_1\}$ form two distinct orthonormal bases of the two-dimensional subspace
\begin{align}\label{eq:CMVKspace}
\cK_{\fb_{\z_0}\fb_{\overline{\z_0}}}(\a) := \spa\{ K_{\z_0}^\a, K_{\overline{\z_0}}^\a\}
 = H^2(\a) \ominus b_{\z_0}b_{\overline{\z_0}} H^2(\a - 2\mu_0).
\end{align}
Iterating this decomposition exhausts $H^2(\a)$ (and in fact, the larger space $L^2(\a)$); in particular, we have the following:
\begin{proposition}
The systems $\{x_k^{\a,\tau}\}$ and $\{\chi_k^{\a,\tau}\}$ for $k \in \N$ (resp., $k \in \Z$) form orthonormal bases for $H^2(\a)$ (resp., $L^2(\a)$).
\end{proposition}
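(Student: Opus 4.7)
The plan is to establish the proposition by iterating the orthogonal decomposition already implicit in \eqref{eq:CMVKspace}, namely
\begin{align*}
H^2(\a) \;=\; \cK_{\fb_{\z_0}\fb_{\overline{\z_0}}}(\a) \;\oplus\; \fb_{\z_0}\fb_{\overline{\z_0}}\, H^2(\a - 2\mu_0).
\end{align*}
The base step is to check that $\{x_0^{\a,\tau}, x_1^{\a,\tau}\}$ is an orthonormal basis of the two-dimensional slice $\cK_{\fb_{\z_0}\fb_{\overline{\z_0}}}(\a)$; the $\chi$-basis follows by the symmetry $\z_0 \leftrightarrow \overline{\z_0}$. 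Unit norms are immediate from the normalization of the reproducing kernels $K^{\a}_{\z}$ together with $|\fb_{\z_0}| = |\fb_{\overline{\z_0}}| = 1$ on $\partial\D$. To show $x_1^{\a,\tau} \in \cK_{\fb_{\z_0}\fb_{\overline{\z_0}}}(\a)$, I would take an arbitrary $f \in H^2(\a - 2\mu_0)$ and use $\fb_{\overline{\z_0}}\overline{\fb_{\overline{\z_0}}} \equiv 1$ on $\partial\D$ to reduce $\langle x_1^{\a,\tau}, \fb_{\z_0}\fb_{\overline{\z_0}}f\rangle$ to $e^{-i\tau}\langle K_{\z_0}^{\a - \mu_0}, \fb_{\z_0}f\rangle$; the reproducing identity \eqref{eq:inner} combined with $\fb_{\z_0}(\z_0) = 0$ kills this inner product. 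Orthogonality $\langle x_0^{\a,\tau}, x_1^{\a,\tau}\rangle = 0$ follows analogously from $\fb_{\overline{\z_0}}(\overline{\z_0}) = 0$ and \eqref{eq:inner}.

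Next I would iterate: since $\fb_{\z_0}\fb_{\overline{\z_0}}$ is inner with character $2\mu_0$, multiplication by it is a character-shifting isometry $H^2(\a - 2\mu_0) \hookrightarrow H^2(\a)$. Applying the base step at character $\a - 2l\mu_0$ and then multiplying by $(\fb_{\z_0}\fb_{\overline{\z_0}})^l$ produces an orthonormal basis of the $l$-th summand $(\fb_{\z_0}\fb_{\overline{\z_0}})^l \cK_{\fb_{\z_0}\fb_{\overline{\z_0}}}(\a - 2l\mu_0)$; the unimodular factors $e^{\pm il\phi_0}$ appearing in \eqref{def:CMVbases1}--\eqref{def:CMVbases2} are merely a choice of normalization. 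Induction then gives
\begin{align*}
\spa\{x_0^{\a,\tau}, \ldots, x_{2N+1}^{\a,\tau}\} \;=\; H^2(\a) \ominus (\fb_{\z_0}\fb_{\overline{\z_0}})^{N+1} H^2(\a - 2(N+1)\mu_0),
\end{align*}
and analogously for the $\chi$-family.

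Completeness in $H^2(\a)$ then reduces to showing $\bigcap_{N \geq 0} (\fb_{\z_0}\fb_{\overline{\z_0}})^N H^2(\a - 2N\mu_0) = \{0\}$: for $f$ in this intersection, writing $f = (\fb_{\z_0}\fb_{\overline{\z_0}})^N g_N$ with $\|g_N\| = \|f\|$ and bounding $|g_N(\z)|$ by the $H^2$ reproducing kernel at a fixed interior point, the strict contraction $|\fb_{\z_0}(\z)\fb_{\overline{\z_0}}(\z)| < 1$ on $\D$ forces $f(\z) = 0$ pointwise. For the $L^2(\a)$ statement, the negative-index vectors live in $(\fb_{\z_0}\fb_{\overline{\z_0}})^{-l}\cK_{\fb_{\z_0}\fb_{\overline{\z_0}}}(\a + 2l\mu_0)$; these are mutually orthogonal and orthogonal to $H^2(\a)$ because $\fb_{\z_0}\fb_{\overline{\z_0}}$ is inner, so taking its inverse on $\partial\D$ lands one in the orthogonal complement of $H^2$ in $L^2$. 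It then remains to establish $L^2(\a) = \overline{\bigcup_{N \geq 0} (\fb_{\z_0}\fb_{\overline{\z_0}})^{-N} H^2(\a + 2N\mu_0)}$, which I would obtain by lifting the classical density of trigonometric polynomials in $L^2(\partial\D)$ and carefully tracking the character shifts. I expect this last $L^2$ exhaustion to be the main technical obstacle, as it requires an additional approximation argument respecting $\a$-automorphy; the $H^2$ portion is essentially a single reproducing kernel computation followed by induction.
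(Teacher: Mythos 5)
Your $H^2$ half is correct and is the same argument the paper sketches: the slice $\cK_{\fb_{\z_0}\fb_{\overline{\z_0}}}(\a)$ is two-dimensional with orthonormal basis $\{x_0^{\a,\tau}, x_1^{\a,\tau}\}$, the decomposition iterates because $\fb_{\z_0}\fb_{\overline{\z_0}}$ is inner with character $2\mu_0$, and triviality of $\bigcap_N (\fb_{\z_0}\fb_{\overline{\z_0}})^N H^2(\a-2N\mu_0)$ follows from the uniform (in character) bound on point evaluations via the classical $H^2$ kernel combined with $|\fb_{\z_0}(\z)\fb_{\overline{\z_0}}(\z)|<1$ on $\D$. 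So far this matches what the paper treats as ``straightforward to see.''

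The $L^2(\a)$ claim is where the genuine content lies, and there you have a gap. You correctly reduce to showing
\begin{align*}
L^2(\a) \;=\; \clos\Bigl(\bigcup_{N\geq 0}(\fb_{\z_0}\fb_{\overline{\z_0}})^{-N} H^2(\a+2N\mu_0)\Bigr),
\end{align*}
and you propose to get this by ``lifting the classical density of trigonometric polynomials in $L^2(\partial\D)$ and carefully tracking the character shifts.'' That plan does not work as stated: trigonometric polynomials on $\partial\D$ are not $\a$-automorphic for a nontrivial character, and there is no elementary pushforward/pullback that converts density in $L^2(\partial\D)$ into density in $L^2(\a)$. The correct mechanism is structural: one must show that $L^2(\a)$ splits as $H^2(\a)$ plus a conjugate Hardy-type space $\check H^2(\a)$ (the analog of $\overline{H^2_0}$), and that the negative-index slices exhaust $\check H^2(\a)$ by the mirror of your $H^2$ argument. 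The existence of this decomposition is \emph{not} automatic for arbitrary Widom domains — it is exactly what the Direct Cauchy Theorem governs. In the finite-gap setting it does hold (DCT is automatic there), but it still requires an argument; the paper deliberately does not re-derive it and instead invokes \cite[Lemma 3.5]{Eich16}, which is where this $L^2$ exhaustion is actually proved. So your proof is essentially half of the paper's proof: the $H^2$ induction is right, but the $L^2$ step — the part the paper delegates to a cited lemma precisely because it is not a one-line reduction — is left unresolved, and your sketched route for it would not close the gap.
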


Almost-periodic absolutely continuous whole-line CMV matrices with spectrum $\E$ arise exactly as multiplication by $\fz$ in the basis $\{\chi_k^{\a,\tau}\}_{k \in \Z}$:

\begin{theorem}[Peherstorfer--Yuditskii \cite{YudPeher06}]\label{t:PY}
Multiplication by $\fz$ in the basis $\{\chi_k^{\a,\tau}\}_{k \in \Z}$ is a CMV matrix $\apC(\a,\tau)$ with almost-periodic Verblunsky coefficients given by
\begin{align}\label{eq:CMVCoeffs}
	\apa_k(\a,\tau)=e^{-ik\phi_0}\mathbf{A}(\a-k\mu_0,\tau),\quad \apr_k(\a)=\mathbf{R}(\a-k\mu_0),
\end{align}
where
\begin{align}
	\mathbf{A}(\alpha,\tau)=e^{-i\tau}\frac{K^{\a}(\z_0,\overline{\z_0})}{K^{\a}(\z_0,\z_0)}, \quad
	\mathbf{R}(\a)=\fb(\overline{\z_0},\z_0)\frac{K^{\a-\mu_0}(\z_0,\z_0 )}{K^{\a}(\z_0,\z_0)}=\sqrt{1-|\mathbf{A}(\a,\tau)|^2}.
\end{align}
\end{theorem}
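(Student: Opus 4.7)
My plan is to verify the CMV band structure of multiplication by $\fz$ in the $\chi$-basis via an intermediate decomposition involving the $x$-basis, and then to extract the Verblunsky coefficients by explicit reproducing-kernel computations. The key geometric fact is that, for each $l \in \Z$, both $\{\chi^{\a,\tau}_{2l}, \chi^{\a,\tau}_{2l+1}\}$ and $\{x^{\a,\tau}_{2l}, x^{\a,\tau}_{2l+1}\}$ are orthonormal bases of a common 2D subspace---namely a phase- and Blaschke-twisted copy of $\cK_{\fb_{\z_0}\fb_{\overline{\z_0}}}(\a - 2l\mu_0)$ sitting inside $L^2(\a)$. The CMV factorization $C = LM$ then decomposes along these 2D blocks: one factor is the unitary change of basis between the $\chi$'s and $x$'s within each subspace, and the other factor encodes how multiplication by $\fz$ carries the level-$l$ subspace to the level-$(l+1)$ one (obtained by multiplying by $\fb_{\z_0}\fb_{\overline{\z_0}}$ and shifting the character by $-2\mu_0$).

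To identify $\apa_0$, I compute the diagonal matrix entry relating $x^{\a,\tau}_0$ and $\chi^{\a,\tau}_0$ inside the level-$0$ 2D subspace. Using the reproducing property \eqref{eq:inner},
\[
\langle x^{\a,\tau}_0, \chi^{\a,\tau}_0\rangle = \langle K^\a_{\overline{\z_0}}, e^{i\tau}K^\a_{\z_0}\rangle = e^{-i\tau}\frac{K^\a(\z_0, \overline{\z_0})}{K^\a(\z_0, \z_0)} = \mathbf{A}(\a, \tau),
\]
and matching against the CMV block pattern identifies $\apa_0 = \mathbf{A}(\a,\tau)$. The shift formula $\apa_k(\a,\tau) = e^{-ik\phi_0}\mathbf{A}(\a - k\mu_0, \tau)$ then follows by repeating this computation inside each 2D subspace at level $l$ and tracking the phase factors $e^{\pm il\phi_0}$ and character shift $-2l\mu_0$ introduced by \eqref{def:CMVbases1}--\eqref{def:CMVbases2}; the analogous computation for $\apr_k$ yields $\mathbf{R}(\a - k\mu_0)$.

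For the off-diagonal entry $\apr_0 = \langle x^{\a,\tau}_0, \chi^{\a,\tau}_1\rangle = \langle K^\a_{\overline{\z_0}}, \fb_{\z_0}K^{\a-\mu_0}_{\overline{\z_0}}\rangle$, I would apply Lemma \ref{lem:EvaluatePole} to the function $K^\a_{\overline{\z_0}}/\fb_{\z_0}$, whose simple pole structure at $\z_0$ matches the lemma's hypothesis. Simultaneously matching the resulting expression with $\mathbf{R}(\a) = \fb(\overline{\z_0}, \z_0)K^{\a-\mu_0}(\z_0,\z_0)/K^\a(\z_0,\z_0)$ and verifying $\apr_0 = \sqrt{1 - |\apa_0|^2}$ reduces to the reproducing-kernel identity
\[
\fb(\overline{\z_0}, \z_0)^2 K^{\a-\mu_0}(\z_0, \z_0)^2 = K^\a(\z_0, \z_0)^2 - |K^\a(\z_0, \overline{\z_0})|^2,
\]
which reflects the two orthogonal decompositions of $\cK_{\fb_{\z_0}\fb_{\overline{\z_0}}}(\a)$ supplied by Lemma \ref{lem:orthogonalDecomp}. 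I expect the main obstacle to be the careful bookkeeping of characters, phases, and Blaschke factors when stepping from the level-$l$ 2D block to the level-$(l+1)$ one; once this is settled, almost-periodicity of $\{\apa_k\}$ and $\{\apr_k\}$ follows directly from the continuity of $\a \mapsto K^\a(\z_0, \z_0)$ on the compact character group $\G^*$, a standard consequence of the nontriviality of $H^2(\a)$ for finite-gap $\E$.
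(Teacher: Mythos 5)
Your proposal is correct and follows essentially the same route as the paper: you identify the two alternating orthonormal bases $\{x_k^{\a,\tau}\}$ and $\{\chi_k^{\a,\tau}\}$ of the 2D subspaces $\cK_{\fb_{\z_0}\fb_{\overline{\z_0}}}(\a - 2l\mu_0)$, read off the $LM$ factorization from the block change-of-basis matrices $\Theta_k$, and compute the matrix entries by reproducing-kernel evaluation. The one place you overcomplicate matters is the off-diagonal entry: there is no need to invoke Lemma \ref{lem:EvaluatePole} for $\apr_0 = \langle x_0^{\a,\tau}, \chi_1^{\a,\tau}\rangle$ since $x_0^{\a,\tau} = K_{\overline{\z_0}}^\a$ is itself a (normalized) reproducing kernel, so $\langle x_0^{\a,\tau}, \chi_1^{\a,\tau}\rangle = \overline{\chi_1^{\a,\tau}(\overline{\z_0})}/K^\a(\overline{\z_0},\overline{\z_0}) = \fb(\overline{\z_0},\z_0)K^{\a-\mu_0}(\z_0,\z_0)/K^\a(\z_0,\z_0) = \mathbf{R}(\a)$ is immediate from \eqref{eq:inner} and the symmetry \eqref{eq:reprod}. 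In the paper this is precisely how both 2D block relations are obtained: each of $\chi_0$ and $x_0$ is a reproducing kernel, so its expansion in the other ONB is a one-line consequence of the reproducing property, and the Pythagorean relation $\mathbf{R}^2 = 1 - |\mathbf{A}|^2$ then follows automatically from $\|x_0^{\a,\tau}\| = 1$ rather than requiring the separate kernel identity you invoke. Your route via Lemma \ref{lem:EvaluatePole} and the explicit Pythagorean identity (which indeed holds — it is a special case of Lemma \ref{lem:pythagoreanid} with $\z_k = \z_l = \z_0$) does arrive at the same answer, just by a detour. The rest — the phase- and character-bookkeeping when stepping to block $l$, and almost-periodicity from continuity of $\a \mapsto K^\a(\z_0,\z_0)$ on $\G^*$ — is exactly as in the paper.
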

\begin{remark}
Peherstorfer and Yuditskii actually studied the family of Schur functions $f^{\a,\tau}$ given by
\begin{align}\label{eq:functSchur}
f^{\a,\tau}\circ\fz := e^{-i\tau}\frac{K^\a_{\overline{\z_0}}}{K^\a_{\z_0}},
\end{align}
but this is equivalent by equality of Schur parameters and Verblunsky coefficients.  This perspective explains the necessity of including the parameter $\tau$; we wish to completely classify such Schur functions, not merely classify them up to a rotation.
\end{remark}

We can see this theorem via the $LM$ structure by alternating between the basis $\{\chi_k^{\a,\tau}\}$ and the dual basis $\{x_k^{\a,\tau}\}$.  Denoting
\begin{align*}
\Theta_k(\a,\tau) := \begin{bmatrix}
\,\overline{\apa_k(\a,\tau)} & \apr_k(\a) \\
\,\apr_k(\a) & -\apa_k(\a,\tau)
\end{bmatrix},
\end{align*}
we have the following
\begin{lemma}
With notation as above,
\begin{align}
\begin{bmatrix}
\chi_0^{\a,\tau} \\
\chi_1^{\a,\tau}
\end{bmatrix}
=
\Theta_0(\a,\tau) \begin{bmatrix}
x_0^{\a,\tau} \\
x_1^{\a,\tau}
\end{bmatrix}, \quad
\fz\begin{bmatrix}
x_1^{\a,\tau} \\
x_2^{\a,\tau}
\end{bmatrix}
=
\Theta_1(\a,\tau)
\begin{bmatrix}
\chi_1^{\a,\tau}\\
\chi_2^{\a,\tau}
\end{bmatrix}.
\end{align}
\end{lemma}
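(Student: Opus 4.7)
The plan is to reduce both identities to inner-product computations in the two-dimensional subspace $\cK_{\fb_{\z_0}\fb_{\overline{\z_0}}}(\a)$ from \eqref{eq:CMVKspace}. For the first identity, I would observe that by \eqref{eq:CMVKspace} and the preceding proposition, $\{x_0^{\a,\tau}, x_1^{\a,\tau}\}$ and $\{\chi_0^{\a,\tau}, \chi_1^{\a,\tau}\}$ are two orthonormal bases of this same subspace; hence the transition matrix is $2\times 2$ unitary, and the identity follows once one identifies its four entries with those of $\Theta_0(\a,\tau)$.

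Three of the four entries yield to a direct application of the reproducing property \eqref{eq:inner}, since in each case the first argument lies in $H^2(\a)$ and the second is a normalized reproducing kernel for $H^2(\a)$. Specifically, this gives $\langle \chi_0^{\a,\tau}, x_0^{\a,\tau}\rangle = \overline{\mathbf{A}(\a,\tau)}$ (after invoking the symmetry \eqref{eq:reprod}), and the two entries $\langle \chi_1^{\a,\tau}, x_0^{\a,\tau}\rangle = \langle \chi_0^{\a,\tau}, x_1^{\a,\tau}\rangle = \mathbf{R}(\a)$ (using the normalization of $\fb$ together with \eqref{eq:bsym} to see that $\fb(\z_0,\overline{\z_0}) = \fb(\overline{\z_0},\z_0)$ is positive real). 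The remaining entry $\langle \chi_1^{\a,\tau}, x_1^{\a,\tau}\rangle = -\mathbf{A}(\a,\tau)$ is then forced by orthonormality of the $\chi$-basis combined with the defining relation $\mathbf{R}(\a)^2 = 1-|\mathbf{A}(\a,\tau)|^2$, which pins down both its modulus and its phase.

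For the second identity the plan is to exploit a shift symmetry. Using $\fz = e^{i\phi_0}\fb_{\z_0}/\fb_{\overline{\z_0}}$ from \eqref{eq:phase} together with the definitions \eqref{def:CMVbases1}--\eqref{def:CMVbases2}, direct inspection shows that each of $\fz x_1^{\a,\tau}$, $\fz x_2^{\a,\tau}$, $\chi_1^{\a,\tau}$, $\chi_2^{\a,\tau}$ factors as $\fb_{\z_0}$ times, respectively, $\chi_0^{\a',\tau'}$, $\chi_1^{\a',\tau'}$, $x_0^{\a',\tau'}$, $x_1^{\a',\tau'}$, where $\a':=\a-\mu_0$ and $\tau':=\tau+\phi_0$. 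Combined with the verification via \eqref{eq:CMVCoeffs} that $\apa_1(\a,\tau) = e^{-i\phi_0}\mathbf{A}(\a-\mu_0,\tau) = \mathbf{A}(\a',\tau')$ and $\apr_1(\a) = \mathbf{R}(\a')$—so that $\Theta_1(\a,\tau) = \Theta_0(\a',\tau')$—this reduces the second identity, upon dividing by $\fb_{\z_0}$, to the first identity applied at the shifted parameters $(\a',\tau')$. The main obstacle is the careful bookkeeping of characters and phases in this shift, particularly verifying that the $e^{-i\phi_0}$ factor in $\apa_1(\a,\tau)$ is exactly what is needed to absorb the $e^{-i\tau}$ in $\mathbf{A}(\a-\mu_0,\tau)$ into the $e^{-i\tau'}$ in $\mathbf{A}(\a',\tau')$; beyond this, everything reduces to routine manipulations with reproducing kernels and Blaschke products.
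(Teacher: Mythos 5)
Your proposal is correct and follows essentially the same route as the paper: expanding elements of the two-dimensional space $\cK_{\fb_{\z_0}\fb_{\overline{\z_0}}}(\a)$ in the dual orthonormal bases via the reproducing property, then using $\fz = e^{i\phi_0}\fb_{\z_0}/\fb_{\overline{\z_0}}$ to reduce the second identity to the first at the shifted data $(\a-\mu_0,\tau+\phi_0)$, which the paper compresses into ``algebraic manipulations.'' One small imprecision: for the entry $\langle\chi_0^{\a,\tau},x_1^{\a,\tau}\rangle$, the second argument $x_1^{\a,\tau}$ is \emph{not} a normalized reproducing kernel of $H^2(\a)$ (it is $e^{i\tau}\fb_{\overline{\z_0}}$ times a kernel for $H^2(\a-\mu_0)$), so you need to conjugate first so that $\chi_0^{\a,\tau}$ plays the role of the kernel; this does not affect the conclusion.
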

\begin{proof}
Since $K^\a_{\z_0}, K^{\a}_{\overline{\z_0}}\in\cK_{\fb_{\z_0}\fb_{\overline{\z_0}}}(\a)$, it follows from the reproducing kernel property that 
\begin{align*}
\chi_0^{\a,\tau}=\overline{\mathbf{A}(\a,\tau)}x_0^{\a,\tau}+\mathbf{R}(\a)x_1^{\a,\tau},\quad x_0^{\a,\tau}=\mathbf{A}(\a,\tau)\chi_0^{\a,\tau}+\mathbf{R}(\a)\chi_1^{\a,\tau}.
\end{align*}
Using \eqref{eq:phase}, the lemma follows by algebraic manipulations.
\end{proof}

\begin{proof}[Proof of Theorem \ref{t:PY}]
We can shift the relations in the previous lemma to see that, taking
\begin{align*}
L := L(\a,\tau) = \bigoplus_{l \in \Z}\Theta_{2l}(\a,\tau), \quad
M := M(\a,\tau) = \bigoplus_{l \in \Z}\Theta_{2l+1}(\a,\tau),
\end{align*}
then $M$ sends the basis $\{\chi_k^{\a,\tau}\}_{k \in \Z}$ to $\{\fz(x_k^{\a,\tau})\}_{k \in \Z}$ and $L$ sends $\{x_k^{\a,\tau}\}_{k\in\Z}$ to $\{\chi_k^{\a,\tau}\}_{k\in\Z}$.  Thus, we have that multiplication by $\fz$ in the basis $\{\chi_k^{\a,\tau}\}$ is given by $\apC = LM$, which is a CMV matrix with precisely the Verblunsky coefficients $\apa_k(\a,\tau)$ as above.
\end{proof}

We conclude by pointing out that the CMV matrix $\apC(\a,\tau)$ is periodic if and only if $\phi_0\in2\pi \mathbb Q$ and there exists $N\geq 1$ such that $\mu_0N=\mathbf{0}_{\G^*}$.  If only the latter holds (i.e., $\phi_0\notin2\pi \mathbb Q$), then $\apC(\a,\tau)$ is periodic up to a phase with phase $e^{-iN\phi_0}$.
%

\subsection{A modified basis suited for periodicity}\label{sec:modifiedBasis}
We have seen in the previous subsection that whether the isospectral torus of CMV matrices consists of periodic or almost periodic operators is related to whether there exists $N\geq 1$ such that $\bigl(\fb_{\z_0}\fb_{\overline{\z_0}}\bigr)^N$ can be lifted to a single valued function on $\overline\C\setminus \E$. In this section  we will study a basis associated to Blaschke products which have this property, and by definition the corresponding multiplication operator in this basis will be periodic.
To fix the notation, let
\[
\vz := \{z_0=0,z_1,\dots, z_{n-1}\} \in\D^n
\]
and take a point $\z_l\in\fz^{-1}(z_l)$ for $l=0,1,\ldots,n-1$. Define
\begin{align} \label{eq:BB}
\fB := \fB_{\vz} =\prod_{l=0}^{n-1}\fb_{\z_l}
\end{align}
and let $\beta := \beta_{\vz}$ denote its character.  Our condition on the vector $\vz$ is that $\b$ is a half-period (i.e., $2\b=\mathbf{0}_{\Gamma^*}$).

\begin{remark}
The Ahlfors function shows by example that this condition always can be met and a function as in \eqref{eq:BB} indeed exists. Recall that $w_\i$ denotes the Ahlfors function of $\overline\C\setminus\E$ and the point $\i$. If $\E$ has $g+1$ gaps, $w_\i$ has exactly $g$ zeros in $\D$, say $z_1,\dots, z_g$, and one zero at $\i$. Moreover, $|w_\i|=1$ on $\E$ and $|w_\i|<1$ in $\overline\C\setminus\E$. From this it follows that the pullback of $zw_\i$, that is, $\mathfrak w_\i:=\fz (w_\i\circ\fz)$ is a function with the properties mentioned above, with $n = g+1$; see Appendix \ref{sec:AppAhlfors} for a more detailed discussion.
\end{remark}

Denoting by $\fz_l$ the pullback of $b_{z_l}$ to the uniformization, i.e.
\begin{align}
\fz_l := b_{z_l} \circ \fz,
\end{align}
we see that there exists a certain phase $\phi_l$ such that
\begin{align}\label{eq:BlaschkeUniformization}
	\fz_l(\z) =e^{i\phi_l}\frac{\fb(\z,\z_l)}{\fb(\z,\overline{\z_l})},
\end{align}
cf. \eqref{eq:phase}. Hence the characters of $\fb_{\z_l}$ and $\fb_{\overline{\z_l}}$ coincide. Let us abbreviate them by $\mu_l$. If we denote $\fB^*(\z):=\overline{\fB(\overline{\z})}=\prod_{j=0}^{n-1}\fb_{\overline{\z_j}}(\z)$, then this implies that the character of $\fB\fB^*$ is $2\b$. By our assumption,
\begin{align*}
	2\b=2(\mu_0+\mu_1+\dots+\mu_{n-1})=\mathbf{0}_{\G^*}.
\end{align*}
This allows us to decompose $H^2(\a)$ by iterations of the finite-dimensional subspace
\begin{align}\label{eq:exhaust}
\cK_{\fB\fB^*}(\a) := \spa\{K^\a_{\z_0}, K^\a_{\overline{\z_0}},K^\a_{\z_1}, K^\a_{\overline{\z_1}}, \cdots, K^\a_{\z_{n-1}}, K^\a_{\overline{\z_{n-1}}}\} = H^2(\a) \ominus \fB\fB^* H^2(\a),
\end{align}
\textit{without shifting the character}.  This lack of shift is ultimately what will lead to periodicity up to a phase.

Our strategy is as follows: suppose we have a vector $\vz \in \D^n$ with associated Blaschke product $\fB$ as above having character $\b$ a half-period.  Similar to CMV matrices, we will have one step comparing symmetric pairs $\z_l, \overline{\z_l}$ corresponding to shifting from a pole $z_l$ inside the disk to its symmetric point $z_l^*$ outside the disk; this corresponds to the representation
\begin{align} \label{eq:spanKK}
\spa\{K^\a_{\z_l}, K^\a_{\overline{\z_l}}\} = H^2(\a) \ominus \fb_{\z_l}\fb_{\overline{\z_l}}H^2(\a - 2\mu_l),
\end{align}
which we can iterate to exhaust $\cK_{\mathfrak{B}\mathfrak{B}^*}(\a)$ as follows:
\begin{align*}
\cK_{\mathfrak{B}\mathfrak{B}^*}(\a) = H^2(\a) \ominus \fb_{\z_0}\fb_{\overline{\z_0}}\left(H^2(\a - 2\mu_0) \ominus \fb_{\z_1}\fb_{\overline{\z_1}}\bigl(H^2(\a - 2(\mu_0 + \mu_1)) \ominus \cdots\bigr)\right).
\end{align*}
As in the CMV case, we will be able to act on even steps by a $2\times 2$ block-diagonal operator $M$ to alternate between dual bases respecting the symmetric poles on each two-dimensional subspace $H^2(\a) \ominus \fb_{\z_l}\fb_{\overline{\z_l}}H^2(\a - 2\mu_l)$.  However -- and this is the difference relative to CMV matrices -- in the odd steps we wish to pass from the pole $\overline{\z_l}$ to the new pole $\z_{l+1}$.  Of course, since $K^\a_{\z_k} \notin H^2(\a) \ominus \fb_{\z_l}\fb_{\overline{\z_l}}H^2(\a - 2\mu_l)$ when $\z_k \neq \z_l$, something new is required to perform this shift.  In this sense, the fundamental lemma allowing for our analysis is the following simple realization:
\begin{lemma}\label{lem:poleshift1}
For any $\a \in \G^*$ and with $z_l, z_k\in\D$ and $\z_l, \z_k$ as above, we have
\begin{align}
\frac{\fz - z_k}{\fz- z_l}\fb_{\z_l}K^{\a - \mu_l}_{\overline{\z_k}} \in H^2(\a) \ominus \fb_{\z_l}\fb_{\overline{\z_l}}H^2(\a - 2\mu_l).
\end{align}
\end{lemma}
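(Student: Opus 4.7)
The plan is to show, for $h := \frac{\fz - z_k}{\fz - z_l}\fb_{\z_l}K^{\a - \mu_l}_{\overline{\z_k}}$, two things: (i) $h \in H^2(\a)$; and (ii) $\langle h, \fb_{\z_l}\fb_{\overline{\z_l}}g\rangle = 0$ for every $g \in H^2(\a - 2\mu_l)$.

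For (i), note that $\fz - z_l$ and $\fb_{\z_l}$ both have simple zeros on the orbit of $\z_l$, so their quotient $\fb_{\z_l}/(\fz - z_l)$ is holomorphic and non-vanishing there, making the prefactor $\frac{\fz - z_k}{\fz - z_l}\fb_{\z_l}$ holomorphic on $\D$. On $\partial\D$, $|\fz| = 1$ keeps $|\fz - z_l|, |\fz - z_k|$ bounded above and below (as $|z_l|, |z_k| < 1$) while $|\fb_{\z_l}| = 1$, so the prefactor is bounded and $h$ inherits $L^2$ boundary values from $K^{\a-\mu_l}_{\overline{\z_k}}$. The character is $\mu_l + (\a - \mu_l) = \a$.

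For (ii), the key trick is a conjugation identity. Fix $g \in H^2(\a - 2\mu_l)$. Using $|\fb_{\z_l}| = 1$ to cancel $\fb_{\z_l}\overline{\fb_{\z_l}} = 1$ on $\partial\D$, and the identity $\overline{(\fz - z_k)/(\fz - z_l)} = (1 - \overline{z_k}\fz)/(1 - \overline{z_l}\fz)$ coming from $\overline{\fz} = 1/\fz$ on $\partial\D$, conjugating the inner product yields
\begin{align*}
\overline{\langle h, \fb_{\z_l}\fb_{\overline{\z_l}}g\rangle} \;=\; \langle \tilde F, K^{\a - \mu_l}_{\overline{\z_k}}\rangle, \quad \tilde F(\z) := \frac{(1 - \overline{z_k}\fz(\z))\fb_{\overline{\z_l}}(\z)g(\z)}{1 - \overline{z_l}\fz(\z)}.
\end{align*}
The auxiliary function $\tilde F$ lies in $H^2(\a - \mu_l)$: the zero of $1 - \overline{z_l}\fz$ at the orbit of $\overline{\z_l}$ (when $z_l \neq 0$) is canceled by the zero of $\fb_{\overline{\z_l}}$ there, and the pole of $\fz$ at the orbit of $\overline{\z_0}$ creates no singularity after simplification; boundedness on $\partial\D$ and the character $\mu_l + (\a - 2\mu_l) = \a - \mu_l$ are immediate. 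The reproducing property \eqref{eq:inner} then collapses the inner product to $\tilde F(\overline{\z_k})/K^{\a - \mu_l}(\overline{\z_k}, \overline{\z_k})$.

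Thus it suffices to check $\tilde F(\overline{\z_k}) = 0$, which rests on the symmetry $\fz(\overline{\z_k}) = 1/\overline{z_k}$ (with $1/\overline{0} = \infty$). When $z_k \neq 0$, the numerator factor $1 - \overline{z_k}\fz(\overline{\z_k}) = 0$ vanishes immediately; when $z_k = 0$, one instead uses $\fz(\overline{\z_0}) = \infty$, which forces $1/(1 - \overline{z_l}\fz) \to 0$ if $z_l \neq 0$, or $\fb_{\overline{\z_0}}(\overline{\z_0}) = 0$ in the subcase $z_l = 0$. The main obstacle is recognizing the conjugation rewriting that turns orthogonality into a single evaluation at $\overline{\z_k}$; once this is in place, the edge cases at $\overline{\z_0}$ require a bit of care to verify that $\tilde F$ genuinely lies in $H^2$, after which the reproducing property finishes the proof.
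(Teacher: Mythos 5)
Your proposal is correct and takes essentially the same route as the paper: conjugate the inner product using $\overline{\fz}=1/\fz$ and $\overline{\fb_{\z_l}}=\fb_{\z_l}^{-1}$ on $\partial\D$, observe that the resulting test function $\tilde F = \frac{1-\overline{z_k}\fz}{1-\overline{z_l}\fz}\fb_{\overline{\z_l}}g$ lies in $H^2(\a-\mu_l)$, and apply the reproducing property with the numerator vanishing at $\overline{\z_k}$. You are merely more explicit than the paper about the $H^2(\a)$ membership of $h$ and the edge cases involving $\overline{\z_0}$.
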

\begin{proof}
For $f \in H^2(\a - 2\mu_l)$, we have
\begin{align*}
\frac{1 - \overline{z_k}\fz}{1 - \overline{z_l}\fz}\fb_{\overline{\z_l}}f \in H^2(\a - \mu_l).
\end{align*}
Since $\fb_{\z_l}$ is unimodular on the boundary, the adjoint in $H^2$ of multiplication by $\fb_{\z_l}$ (and consequently $\fz$) is multiplication by $\fb_{\z_l}^{-1}$ (respectively $\fz^{-1}$).  Thus, by computing adjoints and applying the reproducing property, one has
\begin{align*}
\left\langle \fb_{\z_l}\fb_{\overline{\z_l}}f, \frac{\fz - z_k}{\fz- z_l}\fb_{\z_l}K^{\a - \mu_l}_{\overline{\z_k}} \right\rangle = \left\langle \frac{1 - \overline{z_k}\fz}{1 - \overline{z_l}\fz}\fb_{\overline{\z_l}}f, K^{\a - \mu_l}_{\overline{\z_k}} \right\rangle = 0,
\end{align*}
as claimed.
\end{proof}

Now the way ahead is clear: we apply Lemma \ref{lem:poleshift1} to expand the shifted reproducing kernel in terms of the reproducing kernels for the previous pole.  Let $\a \in \G^*$ and $\z_l, \z_k \in \D$ be as above and define
\begin{align}
\label{eq:c1}
c^\a_1(\z_k,\z_l) &= e^{-i\phi_l}\frac{z_k - z_l}{1-|z_l|^2}\frac{K^\a(\z_k,\z_l)}{\fb_{\z_l}(\z_k)K^{\a - \mu_l}(\z_k,\z_k)}, \\
\label{eq:c2}
c^\a_2(\z_k,\z_l) &= -c^\a_1(\z_k,\z_l)\frac{K^\a(\z_k,\overline{\z_l})}{K^\a(\z_k,\z_l)},
\end{align}
where at the removable singularity $\z_k = \z_l$ we take
\begin{align}
c_1^\a(\z_l,\z_l) &= \frac{K^\a(\z_l,\z_l)}{\fb_{\overline{\z_l}}(\z_l)K^{\a - \mu_l}(\z_l,\z_l)},
\end{align}
cf. \eqref{eq:BlaschkeUniformization}.  Then
\begin{lemma}\label{lem:poleshift2}
\begin{align}
\label{eq:poleshift1}
\frac{\fz - z_k}{\fz- z_l}\fb_{\z_l}K^{\a - \mu_l}_{\overline{\z_k}} &= c_1^\a(\z_k,\z_l)K^\a_{\overline{\z_l}} + c_2^\a(\z_k,\z_l)K^\a_{\z_l}, \\
\label{eq:poleshift2}
\frac{1-\overline{z_k}\fz}{1-\overline{z_l}\fz}\fb_{\overline{\z_l}}K^{\a - \mu_l}_{\z_k} &= \overline{c_2^\a(\z_k,\z_l)}K^\a_{\overline{\z_l}} + \overline{c_1^\a(\z_k,\z_l)}K^\a_{\z_l}.
\end{align}
\end{lemma}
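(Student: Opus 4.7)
The plan is to establish \eqref{eq:poleshift1} directly by identifying the two-term expansion coefficients, and then derive \eqref{eq:poleshift2} from \eqref{eq:poleshift1} via the antiholomorphic involution on $L^2(\a)$. By Lemma \ref{lem:poleshift1} together with the $n=1$ case of \eqref{eq:exhaust}, the left-hand side
\[
F(\z) := \frac{\fz(\z) - z_k}{\fz(\z) - z_l}\fb_{\z_l}(\z) K^{\a - \mu_l}(\z, \overline{\z_k})
\]
lies in the two-dimensional space $\spa\{K^\a_{\z_l}, K^\a_{\overline{\z_l}}\}$, so $F = \lambda_1 K^\a_{\z_l} + \lambda_2 K^\a_{\overline{\z_l}}$ for uniquely determined $\lambda_1, \lambda_2 \in \C$. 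Since $\z_l \neq \overline{\z_l}$ (otherwise $z_l = \fz(\z_l)$ would lie on $\partial\D$), the kernels $K^\a_{\z_l}$ and $K^\a_{\overline{\z_l}}$ are linearly independent, and the reproducing property yields a nonsingular $2 \times 2$ system
\begin{align*}
F(\z_l) &= \lambda_1 K^\a(\z_l,\z_l) + \lambda_2 K^\a(\z_l,\overline{\z_l}),\\
F(\overline{\z_l}) &= \lambda_1 K^\a(\overline{\z_l},\z_l) + \lambda_2 K^\a(\overline{\z_l},\overline{\z_l}).
\end{align*}

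Next I would compute both values. Evaluation at $\overline{\z_l}$ is direct: since $\fz(\overline{\z_l}) = 1/\overline{z_l}$, one finds $F(\overline{\z_l}) = \frac{1 - z_k \overline{z_l}}{1 - |z_l|^2}\fb_{\z_l}(\overline{\z_l}) K^{\a - \mu_l}(\overline{\z_l}, \overline{\z_k})$, where the last factor equals $\overline{K^{\a-\mu_l}(\z_l, \z_k)}$ by \eqref{eq:reprod}. Evaluation at $\z_l$ is a $0/0$ limit; writing $\fb_{\z_l}(\z)/(\fz(\z) - z_l) \to \fb'_{\z_l}(\z_l)/\fz'(\z_l)$ and differentiating the identity $b_{z_l}\circ \fz = e^{i\phi_l}\fb_{\z_l}/\fb_{\overline{\z_l}}$ at $\z_l$ (with $b_{z_l}'(z_l) = 1/(1-|z_l|^2)$) gives $\fb'_{\z_l}(\z_l)/\fz'(\z_l) = e^{-i\phi_l}\fb_{\overline{\z_l}}(\z_l)/(1-|z_l|^2)$, hence
\[
F(\z_l) = e^{-i\phi_l}\frac{(z_l - z_k)\fb_{\overline{\z_l}}(\z_l)}{1-|z_l|^2}\, K^{\a - \mu_l}(\z_l, \overline{\z_k}).
\]
Using the symmetries $K^\a(\z_l,\z_l) = K^\a(\overline{\z_l},\overline{\z_l})$ and $K^\a(\overline{\z_l},\z_l) = \overline{K^\a(\z_l,\overline{\z_l})}$ from \eqref{eq:reprod} and noting $\fb_{\z_l}(\overline{\z_l})\fb_{\overline{\z_l}}(\z_l) = |\fb_{\z_l}(\overline{\z_l})|^2$, solving the linear system and matching with \eqref{eq:c1}--\eqref{eq:c2} should yield $\lambda_1 = c_2^\a(\z_k,\z_l)$ and $\lambda_2 = c_1^\a(\z_k,\z_l)$.

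Finally, to obtain \eqref{eq:poleshift2}, I would apply the involution $g(\z) \mapsto \overline{g(\overline{\z})}$ to both sides of \eqref{eq:poleshift1}. Using $\overline{\fz(\overline{\z})} = \fz^{-1}$, the Blaschke symmetry \eqref{eq:bsym}, and the reproducing-kernel symmetry \eqref{eq:reprod}, the rational prefactor transforms as $\frac{\fz^{-1}-\overline{z_k}}{\fz^{-1}-\overline{z_l}} = \frac{1 - \overline{z_k}\fz}{1 - \overline{z_l}\fz}$, the Blaschke factor $\fb_{\z_l}$ becomes $\fb_{\overline{\z_l}}$, and $K^{\a-\mu_l}_{\overline{\z_k}}$ becomes $K^{\a-\mu_l}_{\z_k}$; on the right-hand side the roles of $K^\a_{\z_l}$ and $K^\a_{\overline{\z_l}}$ interchange and the coefficients $c_1^\a, c_2^\a$ appear conjugated and swapped, yielding exactly \eqref{eq:poleshift2}.

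The main obstacle will be the bookkeeping in the limit calculation at $\z_l$ and the subsequent algebraic reduction of the $2 \times 2$ system — in particular, keeping track of the unimodular phase $e^{-i\phi_l}$ coming from \eqref{eq:BlaschkeUniformization} and verifying that the ratios of reproducing kernel values that arise match precisely the closed form of $c_1^\a(\z_k, \z_l)$ in \eqref{eq:c1}.
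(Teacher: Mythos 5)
Your outline matches the paper's in its two bookends: you both start from the observation that the function lies in the two-dimensional span $\spa\{K^\a_{\z_l},K^\a_{\overline{\z_l}}\}$, and you both derive \eqref{eq:poleshift2} from \eqref{eq:poleshift1} by the involution $g\mapsto\overline{g(\overline{\,\cdot\,})}$. The middle step is where you diverge, and that's where your proposal has a real gap.

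The paper does \emph{not} evaluate at $\z_l$ and $\overline{\z_l}$ and solve a $2\times 2$ system. Instead it observes that $K^\a_{\z_l}$ is orthogonal to $\fb_{\z_l}K^{\a-\mu_l}_{\z_k}$ (the latter lies in $\fb_{\z_l}H^2(\a-\mu_l)$, to which $K^\a_{\z_l}$ is orthogonal by Lemma \ref{lem:orthogonalDecomp}); hence $c_1^\a$ is the ratio
$\left\langle F,\fb_{\z_l}K^{\a-\mu_l}_{\z_k}\right\rangle \big/ \left\langle K^\a_{\overline{\z_l}},\fb_{\z_l}K^{\a-\mu_l}_{\z_k}\right\rangle.$
The numerator is then evaluated using Lemma \ref{lem:EvaluatePole}, the residue lemma, and the denominator via the reproducing property and the symmetries \eqref{eq:reprod}, \eqref{eq:bsym}. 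Crucially, because the test vector $\fb_{\z_l}K^{\a-\mu_l}_{\z_k}$ is centred at $\z_k$, this calculation produces \eqref{eq:c1} in exactly the $\z_k$-centred form ($K^\a(\z_k,\z_l)$, $\fb_{\z_l}(\z_k)$, $K^{\a-\mu_l}(\z_k,\z_k)$) in which it is defined. Then $c_2^\a$ falls out immediately by substituting $\z=\z_k$ into \eqref{eq:poleshift1}: the left side vanishes (since $\fz(\z_k)=z_k$), giving $0=c_1^\a K^\a(\z_k,\overline{\z_l})+c_2^\a K^\a(\z_k,\z_l)$, which is \eqref{eq:c2}.

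Your alternative — evaluating $F$ at $\z_l$ and $\overline{\z_l}$ and inverting the Gram matrix of $\{K^\a_{\z_l},K^\a_{\overline{\z_l}}\}$ — is valid in principle, and your pointwise computations (including the removable-singularity limit via \eqref{eq:BlaschkeUniformization}, which is essentially the same residue computation the paper performs) look correct. But the closed form you would obtain,
\[
c_1^\a = \frac{K^\a(\z_l,\z_l)\,F(\overline{\z_l}) - \overline{K^\a(\z_l,\overline{\z_l})}\,F(\z_l)}{K^\a(\z_l,\z_l)^2 - |K^\a(\z_l,\overline{\z_l})|^2},
\]
is expressed entirely through reproducing-kernel data anchored at $\z_l$, whereas \eqref{eq:c1} is anchored at $\z_k$. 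Equating the two is not ``bookkeeping'': it is a genuine identity between reproducing-kernel values involving three distinct base points ($\z_l$, $\z_k$, $\overline{\z_k}$) and two distinct characters, and its proof would amount to a second application of Lemma \ref{lem:EvaluatePole}–type residue manipulations that you have not carried out. You flag this at the end, but without resolving it your proof does not actually establish \eqref{eq:poleshift1}. The paper's route is structured precisely so that the result comes out in the right centring from the start, which is why it bypasses this difficulty.
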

\begin{proof}
That such a decomposition exists is precisely the content of Lemma \ref{lem:poleshift1} and \eqref{eq:spanKK}.  Since $K^\a_{\z_l}$ is orthogonal to $\fb_{\z_l}K^{\a - \mu_l}_{\z_k}$, we find that the coefficient in front of $K^\a_{\overline{\z_l}}$ in \eqref{eq:poleshift1} is given by
\begin{align*}
\frac{\left\langle \frac{\fz - z_k}{\fz- z_l}\fb_{\z_l}K^{\a - \mu_l}_{\overline{\z_k}}, \fb_{\z_l}K^{\a - \mu_l}_{\z_k} \right\rangle}{\langle K^\a_{\overline{\z_l}}, \fb_{\z_l}K^{\a - \mu_l}_{\z_k} \rangle}.
\end{align*}
Using Lemma \ref{lem:EvaluatePole} and \eqref{eq:BlaschkeUniformization}, the numerator can be written as
\begin{align*}
	\left\langle \frac{\fz - z_k}{\fz- z_l}K^{\a - \mu_l}_{\overline{\z_k}}, K^{\a - \mu_l}_{\z_k} \right\rangle &= -\frac{(z_l-z_k)K^{\a - \mu_l}(\z_l,\overline{\z_k})}{K^{\a}(\z_l,\z_l)} \bigg(\frac{\fb_{\z_l}}{\fz- z_l}\bigg)(\z_l)\frac{K^\a(\z_k,\z_l)}{\fb_{\z_l}(\z_k)K^{\a-\mu_l}(\z_k,\z_k)}\\
	&=e^{-i\phi_l}\frac{z_k-z_l}{1-|z_l|^2}\frac{\fb_{\overline{\z_l}}(\z_l) K^{\a - \mu_l}(\z_l,\overline{\z_k})}{K^{\a}(\z_l,\z_l)}\frac{K^\a(\z_k,\z_l)}{\fb_{\z_l}(\z_k)K^{\a-\mu_l}(\z_k,\z_k)}
\end{align*}
and we thus arrive at the expression for $c_1^\a(\z_k,\z_l)$ in \eqref{eq:c1}. Plugging in $\z_k$ to \eqref{eq:poleshift1} makes the left-hand side vanish and we deduce that the coefficient in front of $K^\a_{\z_l}$ is given by $c_2^\a(\z_k,\z_l)$ as in \eqref{eq:c2}.  Equation \eqref{eq:poleshift2} follows by applying the operation $f(\z)\mapsto \overline{f(\overline{\z})}$ to \eqref{eq:poleshift1}.
\end{proof}
Since the decomposition in the previous lemma is not orthogonal, we do not immediately get a nice Pythagorean identity; however, if we define
\begin{align}
\eta_l^2 := 1 - |z_l|^2,
\end{align}
then we have that
\begin{lemma}\label{lem:pythagoreanid}
\begin{align}\label{eq:pythid}
|K^{\a}(\z_{k},\z_l)|^2+|\fb_{\z_l}(\z_{k})K^{\a-\mu_l}(\z_{k},\z_k)|^2=|K^{\a}(\z_{k},\overline{\z_l})|^2+|\fb_{\overline{\z_l}}(\z_{k})K^{\a-\mu_l}(\z_{k},\z_k)|^2.
\end{align}
In particular, for $c_1^\a$, $c_2^\a$ defined in \eqref{eq:c1}--\eqref{eq:c2},
\begin{align}\label{eq:detRel}
|c_1^\a(\z_k,\z_l)|^2 - |c_2^\a(\z_k,\z_l)|^2 &= \eta_k^2/\eta_l^2.
\end{align}
\end{lemma}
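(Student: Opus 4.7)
The plan is to view both identities as instances of a Christoffel--Darboux decomposition for reproducing kernels, derived from the orthogonal splitting of Lemma \ref{lem:orthogonalDecomp}. The splitting $H^2(\a) = \spa\{k^\a_{\z_l}\} \oplus \fb_{\z_l}H^2(\a-\mu_l)$, combined with the fact that multiplication by $\fb_{\z_l}$ is a unitary from $H^2(\a-\mu_l)$ onto $\fb_{\z_l}H^2(\a-\mu_l)$ carrying reproducing kernels in the standard way, yields
$$k^\a(\z,\z') = \frac{k^\a(\z,\z_l)\,k^\a(\z_l,\z')}{k^\a(\z_l,\z_l)} + \fb_{\z_l}(\z)\,\overline{\fb_{\z_l}(\z')}\,k^{\a-\mu_l}(\z,\z').$$
Specializing to the diagonal $\z=\z'=\z_k$ and renormalizing via $k^\a(\z_1,\z_1)=K^\a(\z_1,\z_1)^2$ and $|k^\a(\z_k,\z_l)|^2/k^\a(\z_l,\z_l)=|K^\a(\z_k,\z_l)|^2$ produces
$$K^\a(\z_k,\z_k)^2 = |K^\a(\z_k,\z_l)|^2 + |\fb_{\z_l}(\z_k)\,K^{\a-\mu_l}(\z_k,\z_k)|^2.$$
Because $\fb_{\overline{\z_l}}$ carries the same character $\mu_l$ as $\fb_{\z_l}$, the same argument applied with $\overline{\z_l}$ in place of $\z_l$ gives the analogous formula with $\fb_{\overline{\z_l}}$. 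Equating the two expressions for $K^\a(\z_k,\z_k)^2$ yields \eqref{eq:pythid}.

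For \eqref{eq:detRel} the plan is direct algebra. From \eqref{eq:c1}--\eqref{eq:c2} one computes
$$|c_1^\a(\z_k,\z_l)|^2 - |c_2^\a(\z_k,\z_l)|^2 = \frac{|z_k-z_l|^2\bigl(|K^\a(\z_k,\z_l)|^2 - |K^\a(\z_k,\overline{\z_l})|^2\bigr)}{(1-|z_l|^2)^2\,|\fb_{\z_l}(\z_k)|^2\, K^{\a-\mu_l}(\z_k,\z_k)^2}.$$
Substituting \eqref{eq:pythid} into the bracketed numerator cancels the factor $K^{\a-\mu_l}(\z_k,\z_k)^2$ and leaves an expression involving only $|\fb_{\overline{\z_l}}(\z_k)|^2 - |\fb_{\z_l}(\z_k)|^2$. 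The relation $|\fb_{\z_l}/\fb_{\overline{\z_l}}|^2 = |b_{z_l}\circ\fz|^2$ coming from \eqref{eq:BlaschkeUniformization}, combined with the elementary identity $1-|b_{z_l}(z_k)|^2 = \eta_l^2\eta_k^2/|1-\overline{z_l}z_k|^2$, then collapses everything to $\eta_k^2/\eta_l^2$.

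I anticipate no serious obstacle. The only conceptual point is to recognize that \eqref{eq:pythid} is precisely the statement that the Christoffel--Darboux remainder $|\fb_{\z_l}(\z_k)\,K^{\a-\mu_l}(\z_k,\z_k)|^2$ is invariant under the involution $\z_l\leftrightarrow\overline{\z_l}$, which is possible because the two Blaschke factors share the character $\mu_l$; once this is spotted, \eqref{eq:detRel} reduces to bookkeeping with standard Blaschke identities.
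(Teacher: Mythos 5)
Your proof is correct and essentially identical to the paper's. For \eqref{eq:pythid} the paper expands $k^\a_{\z_k}$ in the two orthonormal bases $\{K^\a_{\z_l},\fb_{\z_l}K^{\a-\mu_l}_{\z_k}\}$ and $\{K^\a_{\overline{\z_l}},\fb_{\overline{\z_l}}K^{\a-\mu_l}_{\z_k}\}$ and applies the Pythagorean identity, which yields the same two expressions for $k^\a(\z_k,\z_k) = K^\a(\z_k,\z_k)^2$ that your Christoffel--Darboux splitting produces; and for \eqref{eq:detRel} the algebraic manipulations using \eqref{eq:BlaschkeUniformization} and the identity $1-|b_{z_l}(z_k)|^2 = \eta_l^2\eta_k^2/|1-\overline{z_l}z_k|^2$ match the paper's computation modulo the order in which the Blaschke-factor substitutions are made.
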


\begin{proof}
Note that $k^\a_{\z_k}$ simultaneously lives in both $H^2(\a) \ominus \fb_{\z_k}\fb_{\z_l}H^2(\a - \mu_l - \mu_k)$ and $H^2(\a) \ominus \fb_{\z_k}\fb_{\overline{\z_l}}H^2(\a - \mu_l - \mu_k)$.  Equation \eqref{eq:pythid} follows immediately from the Pythagorean identity after expanding $k^\a_{\z_k}$ in the two orthonormal bases $\{K^\a_{\z_l},\fb_{\z_l}K^{\a-\mu_l}_{\z_{k}}\}$ and $\{K^\a_{\overline{\z_l}},\fb_{\overline{\z_l}}K^{\a-\mu_l}_{\z_{k}}\}$.

It remains to show \eqref{eq:detRel}. With \eqref{eq:BlaschkeUniformization} in mind, we see that \eqref{eq:pythid} is equivalent to
\begin{align*}
 \frac{|K^\a(\z_{k},\z_l)|^2-|K^\a(\z_{k},\overline{\z_l})|^2}{|\fb_{\overline{\z_l}}(\z_{k})K^{\a-\mu_l}(\z_{k},\z_{k})|^2}=1-|b_{z_l}(z_{k})|^2.
\end{align*}
A simple calculation shows that
\begin{align} \label{eq:etaRelation}
 1-|b_{z_l}(z_k)|^2 = \frac{\eta_l^2 \eta_k^2}{|1-\overline{z_l}z_k|^2} 
\end{align}
and \eqref{eq:BlaschkeUniformization} implies
\begin{align*}
\frac{|z_k - z_l|^2}{|\fb_{\z_l}(\z_k)|^2} = \frac{|1-\overline{z_l}z_k|^2}{|\fb_{\overline{\z_l}}(\z_k)|^2}.
\end{align*}
Thus we have
\begin{align*}
|c_1^\a(\z_k,\z_l)|^2 - |c_2^\a(\z_k,\z_l)|^2 = \frac{|1-\overline{z_l}z_k|^2}{\eta_l^4}\frac{|K^\a(\z_{k},\z_l)|^2-|K^\a(\z_{k},\overline{\z_l})|^2}{|\fb_{\overline{\z_l}}(\z_{k})K^{\a-\mu_l}(\z_{k},\z_{k})|^2} = \frac{\eta_k^2}{\eta_l^2},
\end{align*}
	as claimed.
\end{proof}

Combining all of the above results, we arrive at
\begin{proposition}
\begin{multline}
\label{eq:ThetamatOdd}
\qquad \fz_l^{-1}\begin{bmatrix}
\fz - z_l & 0 \\
0 & \fz - z_k
\end{bmatrix}
\begin{bmatrix}
K_{\z_l}^\a \\
\fb_{\z_l}K^{\a - \mu_l}_{\overline{\z_k}}
\end{bmatrix} \\
=
\frac{1}{\,\overline{c_1^\a(\z_k,\z_l)}\,}\begin{bmatrix}
-\overline{c_2^\a(\z_k,\z_l)} & 1 \\
\eta_k^2/\eta_l^2 & c_2^\a(\z_k,\z_l)
\end{bmatrix}
\begin{bmatrix}
1- \overline{z_l}\fz & 0\\
0 & 1-\overline{z_k}\fz
\end{bmatrix}
\begin{bmatrix}
K_{\overline{\z_l}}^\a \\
\fb_{\overline{\z_l}}K^{\a - \mu_l}_{\z_k}
\end{bmatrix}. \qquad
\end{multline}
When $\z_l = \z_k$, this simplifies to
\begin{align}\label{eq:ThetamatEven}
\begin{bmatrix}
K_{\z_l}^\a \\
\fb_{\z_l}K^{\a - \mu_l}_{\overline{\z_l}}
\end{bmatrix}
=
\frac{1}{\,\overline{c_1^\a(\z_l,\z_l)}\,}\begin{bmatrix}
-\overline{c_2^\a(\z_l,\z_l)} & 1 \\
1 & c_2^\a(\z_l,\z_l)
\end{bmatrix}
\begin{bmatrix}
K_{\overline{\z_l}}^\a \\
\fb_{\overline{\z_l}}K^{\a - \mu_l}_{\z_l}
\end{bmatrix}.
\end{align}
\end{proposition}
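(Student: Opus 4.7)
The plan is to derive both identities directly from the two expansions provided by Lemma \ref{lem:poleshift2}, with the Pythagorean identity of Lemma \ref{lem:pythagoreanid} supplying the crucial cancellation. First I would unpack the definition $\fz_l = b_{z_l}\circ\fz = (\fz - z_l)/(1-\overline{z_l}\fz)$, which gives $\fz_l^{-1}(\fz-z_l) = 1-\overline{z_l}\fz$ and $\fz_l^{-1}(\fz-z_k) = (1-\overline{z_l}\fz)\cdot(\fz - z_k)/(\fz - z_l)$. Applying \eqref{eq:poleshift1} to the lower entry of the left-hand side of \eqref{eq:ThetamatOdd} rewrites it as
$$\bigl((1-\overline{z_l}\fz) K^\a_{\z_l},\ (1-\overline{z_l}\fz)\bigl[c_1^\a(\z_k,\z_l)K^\a_{\overline{\z_l}} + c_2^\a(\z_k,\z_l)K^\a_{\z_l}\bigr]\bigr)^\intercal.$$

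Next I would simplify the right-hand side the same way. Using \eqref{eq:poleshift2} one rewrites
$$(1-\overline{z_k}\fz)\fb_{\overline{\z_l}}K^{\a-\mu_l}_{\z_k} = (1-\overline{z_l}\fz)\bigl[\overline{c_2^\a(\z_k,\z_l)}K^\a_{\overline{\z_l}} + \overline{c_1^\a(\z_k,\z_l)}K^\a_{\z_l}\bigr],$$
and substituting this into both entries turns the right-hand side into a linear combination of $K^\a_{\z_l}$ and $K^\a_{\overline{\z_l}}$. In the first entry the $K^\a_{\overline{\z_l}}$ contributions cancel because of the $-\overline{c_2^\a}$ in the upper-left matrix entry, leaving exactly $(1-\overline{z_l}\fz)K^\a_{\z_l}$. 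In the second entry, the coefficient of $K^\a_{\overline{\z_l}}$ collects to $\bigl(\eta_k^2/\eta_l^2 + |c_2^\a(\z_k,\z_l)|^2\bigr)/\overline{c_1^\a(\z_k,\z_l)}$, and here I would invoke \eqref{eq:detRel} to replace this with $|c_1^\a(\z_k,\z_l)|^2/\overline{c_1^\a(\z_k,\z_l)} = c_1^\a(\z_k,\z_l)$, while the coefficient of $K^\a_{\z_l}$ telescopes to $c_2^\a(\z_k,\z_l)$. This matches the left-hand side and proves \eqref{eq:ThetamatOdd}.

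Finally, for \eqref{eq:ThetamatEven} I would specialize $\z_k = \z_l$. In that case the rational factors $(\fz-z_k)/(\fz - z_l)$ and $(1-\overline{z_k}\fz)/(1-\overline{z_l}\fz)$ are both identically $1$ and $\eta_k^2/\eta_l^2 = 1$, so both diagonal factors on each side of \eqref{eq:ThetamatOdd} collapse to the scalar $(1-\overline{z_l}\fz)$; dividing through by this scalar yields precisely \eqref{eq:ThetamatEven}.

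The arithmetic is essentially symbol pushing once \eqref{eq:poleshift1}--\eqref{eq:poleshift2} are in hand; the one genuine ingredient is the Pythagorean identity \eqref{eq:detRel}, which is what converts the mismatched $\eta_k^2/\eta_l^2$ entry in the transfer matrix into the correct coefficient $c_1^\a(\z_k,\z_l)$. The only delicate point is the bookkeeping of rational factors, and the reason the statement is phrased with $\fz_l^{-1}$, $(\fz-z_l)$, $(\fz - z_k)$, $(1-\overline{z_l}\fz)$, $(1-\overline{z_k}\fz)$ on both sides is precisely so that no spurious zeros or poles at $\z = \z_l, \z_k$ (or their reflections) appear in the equality—these cancel cleanly provided one carefully tracks them through Lemma \ref{lem:poleshift2}.
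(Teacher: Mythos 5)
Your proof is correct and uses the same ingredients as the paper's: Lemma \ref{lem:poleshift2} for the pole shifts and the Pythagorean identity \eqref{eq:detRel} for the crucial cancellation. The paper arranges the bookkeeping slightly differently---multiplying through by $(1-\overline{z_l}\fz)^{-1}$, reading off the first row as \eqref{eq:poleshift2} verbatim, and deriving the second row from \eqref{eq:poleshift1}, the first row, and \eqref{eq:detRel}---but this is a cosmetic rearrangement of the same computation.
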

\begin{proof}
Multiplying the identity \eqref{eq:ThetamatOdd} through by $(1 - \overline{z_l}\fz)^{-1}$, the first line of the identity is simply \eqref{eq:poleshift2}.  The second line follows from \eqref{eq:poleshift1}, the first line, and an application of \eqref{eq:detRel}.
\end{proof}

We now have all the tools to show that multiplication by $\fz$ in $L^2(\a)$ has the appropriate structure.  Fix $(\a,\tau) \in \G^* \times \T$ and define the following quantities
\begin{align}
A(\a,\tau;\z_k,\z_l) &:= -e^{-i\tau}\frac{c_2^\a(\z_k,\z_l)}{c_1^\a(\z_k,\z_l)} = e^{-i\tau}\frac{K^\a(\z_k, \overline{\z_l})}{K^\a(\z_k,\z_l)},\\
\label{eq:RfunctionMCMV}
R(\a;\z_k,\z_l) &:= \frac{1}{|c_1^\a(\z_k,\z_l)|}\frac{\eta_k}{\eta_l} = \sqrt{1-|A(\a,\tau;\z_k,\z_l)|^2},
\end{align}
and
\begin{align}
\Theta(\a,\tau;\z_k,\z_l) := \begin{bmatrix}
\,\overline{A(\a,\tau;\z_k,\z_l)} & R(\a;\z_k,\z_l) \\
\,R(\a;\z_k,\z_l) & -A(\a,\tau;\z_k,\z_l)
\end{bmatrix}.
\end{align}
Define also
\begin{align}\label{def:thetal}
\omega^\a_{k,l} := \arg\bigl(c_1^\a(\z_k,\z_l)\bigr)
\end{align}
and note that $\omega^\a_{l,l} = 0$ due to our normalization $\fb_{\overline{\z_l}}(\z_l) > 0$.  Then the content of the previous proposition is that, considering the cases $\z_k = \z_l$ and $\z_k = \z_{l+1}$, respectively,
\begin{align}\label{eq:ThetaMatEven2}
\begin{bmatrix}
e^{i\tau}K_{\z_l}^\a \\
\fb_{\z_l}K^{\a - \mu_l}_{\overline{\z_l}}
\end{bmatrix}
=
\Theta(\a,\tau;\z_l,\z_l)
\begin{bmatrix}
K_{\overline{\z_l}}^\a \\
e^{i\tau}\fb_{\overline{\z_l}}K^{\a - \mu_l}_{\z_l}
\end{bmatrix}
\end{align}
and
\begin{multline}\label{eq:ThetaMatOdd2}
\qquad e^{-i\phi_l}
\begin{bmatrix}
\frac{\fz - z_l}{\eta_l} & 0 \\
0 & \frac{\fz - z_{l+1}}{\eta_{l+1}}
\end{bmatrix}
\begin{bmatrix}
e^{i\tau}\fb_{\overline{\z_l}}K_{\z_l}^\a \\
e^{-i\omega^\a_{l+1,l}}\fb_{\z_l}\fb_{\overline{\z_l}}K^{\a - \mu_l}_{\overline{\z_{l+1}}}
\end{bmatrix} \\
=
\Theta(\a,\tau;\z_{l+1},\z_l)
\begin{bmatrix}
\frac{1- \overline{z_l}\fz}{\eta_l} & 0\\
0 & \frac{1-\overline{z_{l+1}}\fz}{\eta_{l+1}}
\end{bmatrix}
\begin{bmatrix}
\fb_{\z_l}K_{\overline{\z_l}}^\a \\
e^{i\tau}e^{i\omega^\a_{l+1,l}}\fb_{\z_l}\fb_{\overline{\z_l}}K^{\a - \mu_l}_{\z_{l+1}}
\end{bmatrix}. \qquad
\end{multline}

We are finally ready to establish our basis.  Let
\begin{align*}
\a_l := \a_{l-1} - 2\mu_l, \quad \a_0 := \a - \mu_0, \quad
\vartheta_l^\a := \sum_{j=0}^{l} \omega_{j+1,j}^{\a_{j}+\mu_{j}} + \phi_j,
\end{align*}
and
\begin{align*}
\mathfrak{B}_l := \prod_{j=1}^l \fb_{\z_j}, \quad
\mathfrak{B}_l^\ast := \prod_{j=1}^l \fb_{\overline{\z_j}}, \quad
\mathfrak{B}_0 = \mathfrak{B}_0^\ast = 1.
\end{align*}
Taking as convention $\vartheta^\a_{-1} = 0$, $\mathfrak{B}_{-1} = \fb_{\z_0}^{-1}$, $\mathfrak{B}_{-1}^\ast = \fb_{\overline{\z_0}}^{-1}$, and $\z_{n} = \z_0$, we define for $0\leq l \leq n-1$ the functions
\begin{align}\label{def:Basis}
x_{2l}^{\a,\tau} := e^{-i\vartheta_{l-1}^\a}\fb_{\overline{\z_0}}\mathfrak{B}_l\mathfrak{B}_{l-1}^\ast K_{\overline{\z_l}}^{\a_{l-1} - \mu_l},  \quad x_{2l+1}^{\a,\tau} := e^{i\tau}e^{i\vartheta_{l}^\a}\fb_{\overline{\z_0}}\mathfrak{B}_{l}\mathfrak{B}_{l}^\ast K_{\z_{l+1}}^{\a_{l}}, \\
\label{def:Basis2}
\chi_{2l}^{\a,\tau} := e^{i\tau}e^{i\vartheta_{l-1}^\a}\fb_{\z_0}\mathfrak{B}^*_l\mathfrak{B}_{l-1}K_{\z_l}^{\a_{l-1}-\mu_l}, \quad \chi_{2l+1}^{\a,\tau} := e^{-i\vartheta_l^\a}\fb_{\z_0}\mathfrak{B}_l^*\mathfrak{B}_lK^{\a_l}_{\overline{\z_{l+1}}}.
\end{align}
In analog to the CMV case \eqref{eq:CMVKspace}, they form two different bases of the $2n$ dimensional subspace $\cK_{\fB\fB^*}$; cf. \eqref{eq:exhaust}.
Letting $p = 2n$, we extend this family of functions (for $j\in\Z$) by
\begin{align}\label{def:extensionBasis}
x^{\a,\tau}_{2l+jp} = (\fB\fB^*)^je^{-ij\vartheta_{n-1}^\a}x_{2l}^{\a,\tau}, \quad x^{\a,\tau}_{2l+1+jp} = (\fB\fB^*)^je^{ij\vartheta_{n-1}^\a}x_{2l+1}^{\a,\tau}, \\\label{def:extensionBasis2}
\chi^{\a,\tau}_{2l+jp} = (\fB\fB^*)^je^{ij\vartheta_{n-1}^\a}\chi^{\a,\tau}_{2l}, \quad \chi^{\a,\tau}_{2l+1+jp} = (\fB\fB^*)^je^{-ij\vartheta_{n-1}^\a}\chi^{\a,\tau}_{2l+1}.
\end{align}
By iterating the exhaustion \eqref{eq:exhaust}, it isn't difficult to see that the systems of functions $\{x_k^{\a,\tau}\}_{k \in I}$ and $\{\chi_k^{\a,\tau}\}_{k\in I}$ each form an orthonormal basis of $H^2(\a)$ for $I=\N$. By \cite[Lemma 3.5]{Eich16}, it follows that they  also form a basis of $L^2(\a)$ when $I=\Z$.

Let us comment on the meaning of the unimodular constants appearing in the definitions above. First of all, we can choose the unimodular constant freely in the normalization of the basis functions $\{x_0,x_1\}$. This explains the meaning of the additional parameter $\tau$. Once this normalization is fixed, the normalization of the following basis functions is already determined: comparing \eqref{eq:ThetamatOdd} and \eqref{eq:ThetaMatOdd2}, we see that -- apart from the additional parameter $\tau$ -- the main difference between the constant matrix on the right-hand side of \eqref{eq:ThetamatOdd} and the matrix $\Theta(\a,\tau;\z_l,\z_{l+1})$ in \eqref{eq:ThetaMatOdd2} is that the latter has positive off-diagonal entries. This has been achieved by adding the phase $e^{i\omega^\a_{l+1,l}}$ to the reproducing kernels.  These phases accumulate with each step as the phases $e^{i\vartheta_{l}^\a}$.  

Define now the periodic up to a phase Verblunsky coefficients $\{a_k(\a,\tau;\vz)\}$ by
\begin{align}
\left.\begin{aligned}
a_{2l-1}(\a,\tau;\vz) &:= e^{-2i\vartheta^\a_{l-1}}A(\a_{l-1},\tau;\z_{l},\z_l), \\
a_{2l}(\a,\tau;\vz) &:= e^{-i(\phi_l + 2\vartheta_{l-1}^\a)}A(\a_{l-1} - \mu_l,\tau;\z_{l+1},\z_{l}),
\end{aligned}
\right. \quad\quad 0 \leq l \leq n-1,
\end{align}
and
\begin{align}
a_{m+jp}(\a,\tau;\vz) &= e^{-2ij\vartheta^\a_{n-1}}a_m(\a,\tau;\vz), \quad -1\leq m\leq 2n-2, \; \; j \in \Z, \\
\rho_k(\a;\vz) &:= \sqrt{1-|a_k(\a,\tau;\vz)|^2}, \quad k\in\Z.
\end{align}
Then multiplication by $\fz$ in our modified basis is represented by an MCMV matrix with the above parameters:
\begin{theorem}\label{thm:multiplicationMCMV}
Let $C = C(\a,\tau;\vz)$ be the periodic up to a phase CMV matrix with Verblunsky coefficients $a_k(\a,\tau;\vz)$ and let $\DD$ be the $2n$-periodic diagonal matrix given by \eqref{eq:perdiag}.  Then with respect to the basis $\{{\chi}^{\a,\tau}_k\}_{k\in\Z}$ of $L^2(\a)$, multiplication by $\fz$ is represented by the MCMV matrix $b_{\minus \DD}(C)$. 
\end{theorem}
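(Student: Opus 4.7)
The plan is to mirror the proof of Theorem \ref{t:PY}: assemble multiplication by $\fz$ in the basis $\{\chi_k^{\a,\tau}\}$ from $2\times 2$ blocks coming from the two fundamental identities \eqref{eq:ThetaMatEven2} and \eqref{eq:ThetaMatOdd2}, recognize the blocks as CMV Szeg\H{o} factors $\Theta_k$ with Verblunsky coefficients $a_k(\a,\tau;\vz)$, and observe that the extra diagonal factors $b_{z_l}(\fz) = (\fz - z_l)/(1-\overline{z_l}\fz)$ that appear in \eqref{eq:ThetaMatOdd2} at the odd steps are exactly the content of Vel\'azquez's operator M\"obius transform \eqref{eq:VelazquezMCMV}.

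First I would substitute the basis definitions \eqref{def:Basis}--\eqref{def:extensionBasis2} into \eqref{eq:ThetaMatEven2} (with $\z_k = \z_l$) and \eqref{eq:ThetaMatOdd2} (with $\z_k = \z_{l+1}$). The unimodular prefactors $e^{i\tau}$, $e^{\pm i\vartheta_l^\a}$, $e^{-i\omega^\a_{l+1,l}}$ and the Blaschke products $\fb_{\z_0}, \fb_{\overline{\z_0}}, \mathfrak{B}_l, \mathfrak{B}_l^*$ in these definitions are arranged precisely so that the resulting block identities take the CMV form $(\chi_{2l}^{\a,\tau},\chi_{2l+1}^{\a,\tau})^{\intercal} = \Theta_{2l-1}(x_{2l-1}^{\a,\tau},x_{2l}^{\a,\tau})^{\intercal}$ at even steps, and the MCMV-modified form in which $\mathrm{diag}\{(\fz - z_l)/\eta_l,\,(\fz - z_{l+1})/\eta_{l+1}\}$ applied to the pair $(x_{2l}^{\a,\tau},x_{2l+1}^{\a,\tau})^{\intercal}$ equals $\Theta_{2l}$ times $\mathrm{diag}\{(1-\overline{z_l}\fz)/\eta_l,\,(1-\overline{z_{l+1}}\fz)/\eta_{l+1}\}$ applied to $(\chi_{2l}^{\a,\tau},\chi_{2l+1}^{\a,\tau})^{\intercal}$ at odd steps. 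Unpacking the $\Theta$ entries using the definitions of $A(\cdot)$, $R(\cdot)$, and the normalization $\omega^\a_{l,l} = 0$ directly produces the identifications $\Theta_{2l-1} = \Theta(\a_{l-1},\tau;\z_l,\z_l)$ and $\Theta_{2l} = \Theta(\a_{l-1}-\mu_l,\tau;\z_{l+1},\z_l)$ with the claimed parameters $a_k(\a,\tau;\vz)$.

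Second, let $L := \bigoplus_l \Theta_{2l}$ and $M := \bigoplus_l \Theta_{2l+1}$ act in the $\chi$-basis, so that $C := LM = C(\a,\tau;\vz)$ is the promised phase-periodic CMV matrix. Collecting the block relations from the previous step, the even-step identities read ``$L$ sends $\{x_k^{\a,\tau}\}$ to $\{\chi_k^{\a,\tau}\}$,'' while the odd-step identities read that in the $\chi$-basis, $\eta_{\DD}^{-1}(\text{mult}_{\fz} - \DD)$ equals $C\,\eta_{\DD}^{-1}(1 - \DD^*\text{mult}_{\fz})$. Rearranging gives $\text{mult}_{\fz} = \eta_{\DD}(1 + C\DD^*)^{-1}(\DD + C)\eta_{\DD}^{-1} = b_{\minus\DD}(C)$, which is exactly \eqref{eq:VelazquezMCMV}.

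The main obstacle is the phase bookkeeping. Each basis vector carries a carefully chosen unimodular phase built from $\tau$, $\vartheta_l^\a$, $\phi_l$, and $\omega^\a_{l+1,l}$; these must combine under \eqref{eq:ThetaMatEven2}--\eqref{eq:ThetaMatOdd2} to produce exactly the prefactors $e^{-2i\vartheta^\a_{l-1}}$ and $e^{-i(\phi_l + 2\vartheta^\a_{l-1})}$ appearing in the claimed $a_{2l-1}(\a,\tau;\vz)$ and $a_{2l}(\a,\tau;\vz)$, and no others. Once verified over a single period $0 \leq l \leq n-1$, the periodic extension rules \eqref{def:extensionBasis}--\eqref{def:extensionBasis2} together with the half-period hypothesis $2\b = \mathbf{0}_{\G^*}$ (so that $\fB\fB^*$ is single-valued and $\a_n = \a - 2\b = \a$) immediately yield the phase-periodicity $a_{k+2n}(\a,\tau;\vz) = e^{-2i\vartheta^\a_{n-1}}a_k(\a,\tau;\vz)$, completing the identification of $C$ as a CMV matrix periodic up to a phase in the sense required by Definition \ref{def:MCMV}.
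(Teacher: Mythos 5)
Your proposal is correct and follows essentially the same route as the paper: derive the $2\times 2$ block relations from \eqref{eq:ThetaMatEven2} and \eqref{eq:ThetaMatOdd2} by clearing denominators with $\fb_{\z_0}\mathfrak B_{l-1}\mathfrak B_{l-1}^*$, identify the resulting matrices as $\Theta_{2l-1}$ and $\Theta_{2l}$ with the claimed Verblunsky parameters, assemble $L=\oplus\Theta_{2l}$, $M=\oplus\Theta_{2l+1}$, and combine the two families of block identities into $L(1-\fz\DD^*)\eta_{\DD}^{-1}M\vec{\chi}=\eta_{\DD}^{-1}(\fz-\DD)\vec{\chi}$, which rearranges to $\mathrm{mult}_{\fz}=b_{\minus\DD}(C)$. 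One small slip: the block identity coming from \eqref{eq:ThetaMatEven2} involves $\Theta_{2l-1}$, an odd-indexed factor belonging to $M$ (and it carries a factor of $e^{-i\phi_0}\fz$ on the left), so the ``even step'' is the one that feeds $M$, not $L$; likewise the closed form must be obtained by combining both families of identities rather than the odd-step ones alone, but neither point affects the validity of the argument.
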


\begin{proof}
Denote by
\begin{align}
\Theta_k(\a,\tau;\vz) := \begin{bmatrix}
\, \overline{a_k(\a,\tau;\vz)} & \rho_k(\a;\vz) \\
\, \rho_k(\a;\vz) & -a_k(\a,\tau;\vz)
\end{bmatrix}.
\end{align}
We use liberally the following two simple observations: that diagonal matrices commute, and that, for $a \in \D$, $\rho = \sqrt{1-|a|^2}$, and $\theta_0, \theta_1 \in \R/2\pi\Z$,
\begin{align*}
\begin{bmatrix}
e^{-i\theta_0} & 0 \\
0 & e^{i\theta_1}
\end{bmatrix}
\begin{bmatrix}
\overline{a} & \rho \\
\rho & -a
\end{bmatrix}
\begin{bmatrix}
e^{-i\theta_1} & 0 \\
0 & e^{i\theta_0}
\end{bmatrix}
&=
\begin{bmatrix}
e^{-i(\theta_0 + \theta_1)}\overline{a} & \rho \\
\rho & -e^{i(\theta_0 + \theta_1)}a
\end{bmatrix}.
\end{align*}
Let $0 \leq l \leq n-1$.  With the above facts in hand, it is then clear that \eqref{eq:ThetaMatEven2} is equivalent to
\begin{align*}
\begin{bmatrix}
e^{i\tau}e^{i\vartheta_{l-1}^\a}K_{\z_l}^{\a_{l-1}} \\
e^{-i\vartheta_{l-1}^\a}\fb_{\z_l}K^{\a_{l-1}-\mu_l}_{\overline{\z_l}}
\end{bmatrix}
=
\Theta_{2l-1}(\a,\tau;\vz)
\begin{bmatrix}
e^{-i\vartheta_{l-1}^\a}K_{\overline{\z_l}}^{\a_{l-1}} \\
e^{i\tau}e^{i\vartheta_{l-1}^\a}\fb_{\overline{\z_l}}K^{\a_{l-1}-\mu_l}_{\z_l}
\end{bmatrix}.
\end{align*}
Multiplying both sides by $\fb_{\z_0}\mathfrak{B}_{l-1}\mathfrak{B}_{l-1}^\ast = e^{-i\phi_0}\fz\fb_{\overline{\z_0}}\mathfrak{B}_{l-1}\mathfrak{B}_{l-1}^\ast$, we get that
\begin{align}\label{eq:ThetaMatEven3}
e^{-i\phi_0}\fz \begin{bmatrix}
x_{2l-1}^{\a,\tau} \\
x_{2l}^{\a,\tau}
\end{bmatrix}
&=
\Theta_{2l-1}(\a,\tau;\vz)
\begin{bmatrix}
\chi_{2l-1}^{\a,\tau} \\
\chi_{2l}^{\a,\tau}
\end{bmatrix}.
\end{align}
Since in the context of \eqref{eq:ThetaMatOdd2} we have $e^{i\phi_l} = e^{i\phi_l}I$ (where $I$ is the $2 \times 2$ identity matrix), that equation can also be written as
\begin{align*}
\begin{bmatrix}
\frac{\fz - z_l}{\eta_l} & 0 \\
0 & \frac{\fz - z_{l+1}}{\eta_{l+1}}
\end{bmatrix}
\begin{bmatrix}
e^{i\tau}e^{i\vartheta_{l-1}^\a}\fb_{\overline{\z_l}}K_{\z_l}^{\a_{l-1} - \mu_l} \\
e^{-i\vartheta_l^\a}\fb_{\z_l}\fb_{\overline{\z_l}}K^{\a_l}_{\overline{\z_{l+1}}}
\end{bmatrix}
=
\Theta_{2l}(\a,\tau;\vz)
\begin{bmatrix}
\frac{1- \overline{z_l}\fz}{\eta_l} & 0\\
0 & \frac{1-\overline{z_{l+1}}\fz}{\eta_{l+1}}
\end{bmatrix}
\begin{bmatrix}
e^{-i\vartheta^\a_{l-1}}\fb_{\z_l}K_{\overline{\z_l}}^{\a_{l-1} - \mu_l} \\
e^{i\tau}e^{i\vartheta^\a_l}\fb_{\z_l}\fb_{\overline{\z_l}}K^{\a_l}_{\z_{l+1}}
\end{bmatrix}.
\end{align*}
Multiplying both sides by $\fb_{\overline{\z_0}}\mathfrak{B}_{l-1}\mathfrak{B}_{l-1}^\ast = e^{i\phi_0}\fz^{-1}\fb_{\z_0}\mathfrak{B}_{l-1}\mathfrak{B}_{l-1}^\ast$ and rearranging yields
\begin{align}\label{eq:ThetaMatOdd3}
e^{i\phi_0}\fz^{-1}\begin{bmatrix}
\frac{\fz - z_l}{\eta_l} & 0 \\
0 & \frac{\fz - z_{l+1}}{\eta_{l+1}}
\end{bmatrix}
\begin{bmatrix}
\chi_{2l}^{\a,\tau} \\
\chi_{2l+1}^{\a,\tau}
\end{bmatrix}
=
\Theta_{2l}(\a,\tau;\vz)
\begin{bmatrix}
\frac{1- \overline{z_l}\fz}{\eta_l} & 0\\
0 & \frac{1-\overline{z_{l+1}}\fz}{\eta_{l+1}}
\end{bmatrix}
\begin{bmatrix}
x_{2l}^{\a,\tau} \\
x_{2l+1}^{\a,\tau}
\end{bmatrix}.
\end{align}
Extending to all $l$ follows similarly from the definitions.

Denote now by $\DD := \DD(\vz)$ the $2n$-periodic diagonal matrix in \eqref{eq:perdiag}, let $\eta_{\DD} = \sqrt{1-\DD\DD^*}$, and fix
\begin{align*}
L := \bigoplus_{l \in \Z} \Theta_{2l}(\a,\tau;\vz), \quad
M := \bigoplus_{l \in \Z} \Theta_{2l+1}(\a,\tau;\vz),
\end{align*}
where $\Theta_k$ acts on the two-dimensional subspace $\{\delta_k, \delta_{k+1}\}$.  Combining the statements \eqref{eq:ThetaMatEven3} and \eqref{eq:ThetaMatOdd3} above, we have shown the following:
\begin{align*}
L(1-\fz \DD^*)\eta_{\DD}^{-1} M\vec{\, \chi}^{\a,\tau} = \eta_{\DD}^{-1}(\fz - {\DD})\vec{\, \chi}^{\a,\tau},
\end{align*}
where $\vec{\, \chi}^{\a,\tau}$ is shorthand notation for the vector $(\chi^{\a,\tau}_k)_{k\in\Z}$.
Since the operators $\eta_{\DD}^{-1}$, $\fz - \DD$, and $1-\fz {\DD}^*$ commute with $M$ (for they are orthogonal sums of multiples of $I$ along the odd terms), taking $C = LM$ we have
\begin{align}\label{eq:MCMV1}
(1 - \eta_{\DD}(\fz - {\DD})^{-1}C(1 - \fz {\DD}^*)\eta_{\DD}^{-1})\vec{\, \chi}^{\a,\tau} = 0
\end{align}
which can be rearranged as
\begin{align*}
(\fz - \eta_{\DD}(1+C{\DD}^*)^{-1}(C+{\DD})\eta_{\DD}^{-1})\vec{\, \chi}^{\a,\tau} = 0.
\end{align*}
Thus, in the basis $\{\chi^{\a,\tau}_k\}$, multiplication by $\fz$ is given by $b_{\minus {\DD}}(C)$.
\end{proof}

Of course, \eqref{eq:ThetamatOdd} and \eqref{eq:ThetamatEven} in combination with the exhaustion \eqref{eq:exhaust} without shifted character imply a transfer matrix relation in terms of the reproducing kernels.  To see explicitly this relation, first note that, denoting as shorthand $a_k = a_k(\a,\tau;\vz)$, $\rho_k = \rho_k(\a;\vz)$, and $\cU(a_k)$ as in \eqref{eq:cUmat}, we can rewrite \eqref{eq:ThetaMatEven3}--\eqref{eq:ThetaMatOdd3} in the following way:
\begin{align}
\label{eq:transfMatOdd}
\begin{bmatrix}
e^{-i\phi_0}\fz & 0 \\
0 & 1
\end{bmatrix}
\cU(a_{2l+1})&
\begin{bmatrix}
1 & 0 \\
0 & e^{i\phi_0}\fz^{-1}
\end{bmatrix}
\begin{bmatrix}
x^{\a,\tau}_{2l+2} \\
\chi^{\a,\tau}_{2l+2}
\end{bmatrix}
=
\begin{bmatrix}
\chi_{2l+1}^{\a,\tau} \\
x_{2l+1}^{\a,\tau}
\end{bmatrix}, \\
\label{eq:transfMatEven}
\frac{1-\overline{z_{l+1}}\fz}{1-\overline{z_l}\fz}\frac{\eta_l}{\eta_{l+1}}
\begin{bmatrix}
1 & 0\\
0 & \fz_{l}^{-1}
\end{bmatrix}
\begin{bmatrix}
e^{i\phi_0}\fz^{-1} & 0\\
0 & 1
\end{bmatrix}
\cU&(a_{2l})
\begin{bmatrix}
1 & 0\\
0 & e^{-i\phi_0}\fz
\end{bmatrix}
\begin{bmatrix}
\fz_{l+1} & 0\\
0 & 1
\end{bmatrix}
\begin{bmatrix}
\chi^{\a,\tau}_{2l+1} \\
x^{\a,\tau}_{2l+1}
\end{bmatrix}
=
\begin{bmatrix}
x^{\a,\tau}_{2l} \\
\chi^{\a,\tau}_{2l}
\end{bmatrix}.
\end{align}
Again using the notation that $z_{n} = z_0 = 0$, $\eta_{n} = \eta_0$, etc., and denoting
\begin{multline}
\cM(z;\vz,\{a_k\}) :=
\cU(a_0)
\begin{bmatrix}
b_{z_1}(z) & 0 \\
0 & 1
\end{bmatrix}
\cU(a_1)
\begin{bmatrix}
b_{z_1}(z) & 0 \\
0 & 1
\end{bmatrix}
\cU(a_2)
\begin{bmatrix}
b_{z_2}(z) & 0 \\
0 & 1
\end{bmatrix}
\cdots  \\
\cdots
\cU(a_{2n-1})
\begin{bmatrix}
b_{n-1}(z) & 0 \\
0 & 1
\end{bmatrix}
\cU(a_{2n})
\begin{bmatrix}
z & 0 \\
0 & 1
\end{bmatrix}
\cU(a_{2n+1})
\begin{bmatrix}
z & 0 \\
0 & 1
\end{bmatrix}
\begin{bmatrix}
e^{-i\vartheta_{n-1}^{\a}} & 0 \\
0 & e^{i\vartheta_{n-1}^{\a}}
\end{bmatrix}
\end{multline}
and
\begin{align}
B(z)=z\prod_{j=1}^{n-1}b_{z_j}(z)=\sqrt{\det\cM(z;\vz,\{a_k\})},
\end{align}
we arrive at the following monodromy relation:
\begin{theorem}\label{thm:monodromy}
The reproducing kernels satisfy the phased monodromy relation
\begin{align}\label{eq:monodromy}
\frac{1}{B(\fz)}\cM(\fz;\vz,\{a_k\})
\begin{bmatrix}
x_0^{\a,\tau} \\
e^{i\phi_0}\chi_0^{\a,\tau}
\end{bmatrix}
&= (\fB\fB^*)^{-1}\begin{bmatrix}
x_0^{\a,\tau} \\
e^{i\phi_0}\chi_0^{\a,\tau}
\end{bmatrix}.
\end{align}
\end{theorem}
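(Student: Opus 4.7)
The plan is to establish the monodromy relation by iterating the per-step transfer matrix identities \eqref{eq:transfMatOdd} and \eqref{eq:transfMatEven} $n$ times, and then invoking the definitions \eqref{def:extensionBasis}--\eqref{def:extensionBasis2} of the periodically extended basis to close up the loop. Concretely, I would first rearrange both identities so that the vector $[\chi_{2l+1}^{\a,\tau},x_{2l+1}^{\a,\tau}]^\intercal$ (resp.\ $[x_{2l+2}^{\a,\tau},\chi_{2l+2}^{\a,\tau}]^\intercal$) is expressed as a matrix acting on $[x_{2l}^{\a,\tau},\chi_{2l}^{\a,\tau}]^\intercal$ (resp.\ on the vector one step later). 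Composing these yields a single transfer relation
\[
\begin{bmatrix} x_{2l+2}^{\a,\tau}\\ \chi_{2l+2}^{\a,\tau}\end{bmatrix} = \frac{1}{B_l(\fz)}\,T_l(\fz)\begin{bmatrix} x_{2l}^{\a,\tau}\\ \chi_{2l}^{\a,\tau}\end{bmatrix},
\]
where $T_l$ is a product of two $\cU(a_\bullet)$ factors interleaved with the $\text{diag}(\fz_{l+1},1)$-type factors appearing in \eqref{def:monodromy}, and $B_l(\fz)$ absorbs the scalar factors $\tfrac{1-\overline{z_{l+1}}\fz}{1-\overline{z_l}\fz}\tfrac{\eta_l}{\eta_{l+1}}$, the $e^{\pm i\phi_0}$ phases, and the $\fz$ factor from the $\fz\fz^{-1}$ cancellations in the two diagonal matrices that sandwich the $\cU$'s.

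Next, I would iterate for $l=0,1,\ldots,n-1$. The scalar prefactors telescope: the ratios $(1-\overline{z_{l+1}}\fz)/(1-\overline{z_l}\fz)$ and $\eta_l/\eta_{l+1}$ cancel completely as $l$ ranges over a full period (using $z_n=z_0=0$, $\eta_n=\eta_0=1$), while the accumulated $\fz$ factors together with the $\fz_j=b_{z_j}\circ\fz$ factors combine to produce exactly $B(\fz)=\fz\prod_{j=1}^{n-1}\fz_j$ in the denominator. The product of the $T_l$'s is, by construction, the matrix $\cM(\fz;\vz,\{a_k\})$ up to the rightmost diagonal factor $\text{diag}(e^{-i\vartheta_{n-1}^\a},e^{i\vartheta_{n-1}^\a})$, which appears naturally from the phase convention in \eqref{def:extensionBasis}--\eqref{def:extensionBasis2}.

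Finally, I would invoke the extension rule \eqref{def:extensionBasis}--\eqref{def:extensionBasis2} with $j=1$: $x_{2n}^{\a,\tau}=\fB\fB^\ast e^{-i\vartheta_{n-1}^\a}x_0^{\a,\tau}$ and $\chi_{2n}^{\a,\tau}=\fB\fB^\ast e^{i\vartheta_{n-1}^\a}\chi_0^{\a,\tau}$. Substituting into the iterated identity rearranges into
\[
\frac{1}{B(\fz)}\,\cM(\fz;\vz,\{a_k\})\begin{bmatrix} x_0^{\a,\tau}\\ e^{i\phi_0}\chi_0^{\a,\tau}\end{bmatrix}=(\fB\fB^\ast)^{-1}\begin{bmatrix} x_0^{\a,\tau}\\ e^{i\phi_0}\chi_0^{\a,\tau}\end{bmatrix},
\]
where the extra $e^{i\phi_0}$ on the $\chi_0$-component records the asymmetry between the two rows of \eqref{eq:transfMatOdd} and \eqref{eq:transfMatEven} coming from the factor $e^{i\phi_0}\fz^{-1}$ there; this amounts to conjugating by $\text{diag}(1,e^{i\phi_0})$, which commutes through the telescoping and can be absorbed into the second basis function.

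The main obstacle I anticipate is bookkeeping rather than conceptual: tracking the interplay of the three independent phase/scalar streams (the $(1-\overline{z_l}\fz)/\eta_l$ ratios, the $e^{i\phi_0}$ and $\fz$ factors, and the accumulating $e^{i\vartheta_l^\a}$ phases) and verifying that the surviving product of diagonal factors assembles correctly into the $B(\fz)$ normalization and the terminal phase matrix in the definition of $\cM$. Once the telescoping is verified at the level of a single period, closing the identity with the extension rule is immediate.
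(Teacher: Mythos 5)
Your proposal is correct and follows the same approach as the paper's own proof, which is itself a single sentence: iterate \eqref{eq:transfMatOdd}--\eqref{eq:transfMatEven} over a full period, telescope the scalar multipliers, and close the loop using $x_{2n}^{\a,\tau}=\fB\fB^*e^{-i\vartheta_{n-1}^\a}x_0^{\a,\tau}$ and $\chi_{2n}^{\a,\tau}=\fB\fB^*e^{i\vartheta_{n-1}^\a}\chi_0^{\a,\tau}$. You have merely spelled out the bookkeeping (the telescoping of the $(1-\overline{z_l}\fz)/\eta_l$ ratios, the assembly of $B(\fz)$ from the accumulated $\fz$ and $\fz_j$ factors, and the origin of the $e^{i\phi_0}$ phase) that the paper glosses over.
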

\begin{proof}
This follows from iterating \eqref{eq:transfMatOdd}--\eqref{eq:transfMatEven} over a full period of size $p = 2n$, since the multiplier terms telescope and
\[
x_{2n}^{\a,\tau} = \fB\fB^* e^{-i\vartheta_{n-1}^{\a}}x_0^{\a,\tau}, \quad
\chi_{2n}^{\a,\tau} = \fB\fB^* e^{i\vartheta_{n-1}^{\a}}\chi_0^{\a,\tau}.
\qedhere
\]
\end{proof}
\begin{remark}
In terms of projective lines, \eqref{eq:monodromy} is, up to a phase, precisely the relation \eqref{eq:nevPickSchur} for the Schur functions \eqref{eq:functSchur}.
\end{remark}

We are now ready to give a detailed explanation for introducing the matrix $\L(\vartheta)$ in Definition \ref{def:MCMV}. In our extension \eqref{def:extensionBasis2} of the vectors $\{y_l^{\a,\tau}\}_{l=0}^{2n-1}$ to a basis of $L^2(\a)$, we added a phase $e^{i\vartheta_{n-1}^\alpha}$ in order to represent the multiplication operator by $\fz$ as an operator M\"obius transform of a CMV matrix $C(\a,\tau;\vz)$; the phase was needed to have the off-diagonal entries of $\Theta_{l}(\alpha,\tau;\vz)$ in its $LM$ factorization positive. The price we paid is that the corresponding matrix is merely periodic up to a phase. If we had chosen the extension of multiplying by $(\fB\fB^*)^j$ without the phase, then the corresponding operator would have been periodic. This alternative basis, say $\{y^{\alpha,\tau}_{\per, l}\}$, is related to $\{y_l^{\a,\tau}\}$ in the following way:
\begin{align*}
	\vec{\, \chi}_{\per}^{\a,\tau}=\Lambda(\vartheta_{n-1}^{\a})^*\vec{\, \chi}^{\a,\tau}.
\end{align*}
\indent To sum up, we have obtained a map from $\Gamma^*\times\T$ to $\cT_\MV(\E)$:
\begin{corollary}\label{cor:MagicFormulafunc}
	Let $C, \DD$ be as in Theorem \ref{thm:multiplicationMCMV} and set
	\begin{align}\label{def:Aalpha}
		A:=A({\a,\tau})=\Lambda(\vartheta_{n-1}^{\a})^*b_{\minus {\DD}}\bigl(C(\a,\tau)\bigr)\Lambda(\vartheta_{n-1}^{\a}).
	\end{align}
	Then $A \in \A_{\per}(\vz)$ and $\sigma(A) = \E$.  Moreover, with $\Delta_A$ as in \eqref{def:discriminant},
	\begin{align}\label{eq:MagicFormula1}
		\Delta_{A}\circ\fz=\fB\fB^*+\frac{1}{\fB\fB^*}
	\end{align}
	and
	\begin{align}\label{eq:MagicFormula2}
		\Delta_{A}(A)=S^{2n}+S^{-2n}.
	\end{align}
	In particular, in the special case $\vz = \vz_\E$ we have $A \in \cT_{\MV}(\E)$ and $\Delta_A = \Delta_\E$.
\end{corollary}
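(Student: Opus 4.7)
The plan is to dispatch the corollary's four claims in turn, relying on Theorems \ref{thm:multiplicationMCMV} and \ref{thm:monodromy}.  First, for periodicity: Theorem \ref{thm:multiplicationMCMV} identifies $b_{\minus\DD}(C(\a,\tau))$ with multiplication by $\fz$ in the basis $\{\chi_k^{\a,\tau}\}_{k\in\Z}$.  Because the extension \eqref{def:extensionBasis2} of this basis to all of $\Z$ introduces phases $e^{\pm ij\vartheta_{n-1}^\a}$ on blocks of size $p=2n$, the resulting operator is only $2n$-periodic up to that rotational phase.  Conjugating by $\Lambda(\vartheta_{n-1}^\a)^*$ is designed precisely to absorb these phases, so that $A$ becomes the matrix of multiplication by $\fz$ in the genuinely periodic basis $\vec{\chi}_{\per}^{\a,\tau} = \Lambda(\vartheta_{n-1}^\a)^*\vec{\chi}^{\a,\tau}$; hence $A\in\A_{\per}(\vz)$.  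Since multiplication by $\fz$ on $L^2(\a)$ is unitary with essential range $\fz(\partial\D) = \E$, unitary equivalence gives $\sigma(A) = \E$.

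For the functional identity \eqref{eq:MagicFormula1}, invoke the monodromy relation of Theorem \ref{thm:monodromy}, which exhibits $[x_0^{\a,\tau}, e^{i\phi_0}\chi_0^{\a,\tau}]^\intercal$ as an eigenvector of $\cM(\fz;\vz,\{a_k\})/B(\fz)$ with eigenvalue $(\fB\fB^*)^{-1}$.  Because $B=\sqrt{\det \cM}$, the matrix $\cM/B$ has determinant one, so its two eigenvalues are reciprocal; they must therefore be $\fB\fB^*$ and $(\fB\fB^*)^{-1}$.  The trace equals their sum, and since $\Delta_A = \tr(\cM_A)/B$ by \eqref{def:discriminant}, this is exactly \eqref{eq:MagicFormula1}.

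For the operator-valued Magic Formula \eqref{eq:MagicFormula2}, note that $A$ represents multiplication by $\fz$ in the periodic basis, so any rational function of $A$ represents the corresponding rational function of $\fz$; thus $\Delta_A(A)$ represents multiplication by $\fB\fB^* + (\fB\fB^*)^{-1}$.  Because $\fB\fB^*$ has trivial character (from the standing assumption $2\beta = \mathbf{0}_{\G^*}$), the periodic extension forces $\fB\fB^*\cdot\chi_{\per,k}^{\a,\tau} = \chi_{\per,k+2n}^{\a,\tau}$, so multiplication by $\fB\fB^*$ is the shift $S^{2n}$.  Since $|\fB\fB^*|=1$ on $\partial\D$, multiplication by $(\fB\fB^*)^{-1}$ is the adjoint $S^{-2n}$, and \eqref{eq:MagicFormula2} follows.

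Finally, in the case $\vz = \vz_\E$, the remark following \eqref{eq:BB} and Appendix \ref{sec:AppAhlfors} identify $\fB$ (up to a unimodular constant) with the pullback $\mathfrak{w}_\i = \fz\cdot(w_\i\circ\fz)$, and symmetrically $\fB^*$ with $\fz^{-1}(w_0\circ\fz)$; combining gives $\fB\fB^* = (w_0 w_\i)\circ\fz$.  Substituting into \eqref{eq:MagicFormula1} yields
\[
\Delta_A\circ\fz = (w_0 w_\i)\circ\fz + 1/\bigl((w_0 w_\i)\circ\fz\bigr) = \Delta_\E\circ\fz.
\]
Since $\fz$ surjects onto $\overline{\C}\setminus\E$ and both $\Delta_A$ and $\Delta_\E$ are rational on $\overline{\C}$, this forces $\Delta_A = \Delta_\E$, and together with $A\in\A_{\per}(\vz_\E)$ and $\sigma(A) = \E$ we conclude $A\in\cT_{\MV}(\E)$.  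The main technical obstacle is this last identification: matching the unimodular normalizations in \eqref{def:Blaschke} with those of the Ahlfors functions so that the proportionalities become exact equalities.  This bookkeeping is what the appendix handles, and is the only piece not already assembled from the preceding development.
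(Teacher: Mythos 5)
Your proof is correct and follows essentially the same approach as the paper's: periodicity and $\sigma(A)=\E$ from the identification of $A$ as multiplication by $\fz$ in the genuinely periodic basis $\vec{\chi}_{\per}^{\a,\tau}$; \eqref{eq:MagicFormula1} from the monodromy relation of Theorem \ref{thm:monodromy} together with $\det(\cM/B)=1$ so that the two eigenvalues are reciprocal and sum to the trace; \eqref{eq:MagicFormula2} from the fact that multiplication by $\fB\fB^*$ (single-valued since $2\beta=\mathbf{0}_{\G^*}$) is exactly the shift $S^{2n}$ in the periodic basis; and the specialization $\vz=\vz_\E$ via $\fB=\fw_\infty$ and $\fB\fB^*=(w_0w_\infty)\circ\fz$. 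The extra attention you give to the unimodular normalizations is a reasonable bit of bookkeeping, but it is handled the same way in the paper via Appendix~\ref{sec:AppAhlfors}.
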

\begin{proof}
	The first statement follows from the discussion above. The fact that $\sigma(A)=\E$ is clear since $A$ is the matrix of multiplication by $\fz$. If we set $\tilde{\cM}=B^{-1}\cM$, then Theorem \ref{thm:monodromy} states that $(\fB\fB^*)^{-1}$ is an eigenvalue of $\tilde \cM$. As $\det\tilde\cM=1$, we obtain \eqref{eq:MagicFormula1}. Finally, \eqref{eq:MagicFormula2} is a direct consequence of the fact that multiplication by $\fB\fB^*$ corresponds to the action of $S^{2n}$ in the basis $\{{\chi}_{\per, l}^{\a,\tau}\}_{l\in\Z}$.
\end{proof}

\section{Direct spectral theory}\label{sec:DST}

In the previous section, we saw that the functional model developed by Peherstorfer and Yuditskii to represent finite-gap almost-periodic CMV matrices has corresponding representations as periodic MCMV matrices.  In this section, we develop the necessary tools to address the converse: that any periodic MCMV matrix arises from such a functional model.

For periodic CMV matrices $\apC \in \cT_\mv(\E)$, the bijective nature of this correspondence is by now classical (cf. \eqref{fig:1} below); we recall the elements of its construction in Section \ref{sec:DSTCMV}.  A key component of this correspondence is a set of spectral data associated to the one-sided restriction $\apC_+$, called the \emph{divisor} or \emph{Dirichlet data}, which, together with the discriminant, allows one to uniquely recover the spectral measure and hence the operator $\apC_+$.  We will adapt this construction to periodic MCMV matrices in Section \ref{sec:DirectMCMV}, culminating in the uniqueness statement Proposition \ref{prop:uniqueDiv}.  Finally, Section \ref{sec:structure} explores the block structure \eqref{eq:Astructure} of general MCMV matrices and its invariance under certain M\"obius transformations.

%

\subsection{The isospectral torus of periodic CMV matrices}\label{sec:DSTCMV}
Let $\{a_k\}_{k \in\Z}$ be a periodic sequence with even period $p=2n$ and let $\apC$ be the corresponding whole-line CMV matrix. If we define the discriminant by
\begin{align*}
\Delta_\apC(z)=\tr\bigg(\cU(a_{0})\begin{bmatrix}
z & 0 \\
0 & 1
\end{bmatrix}
\cU(a_{1})
\begin{bmatrix}
1 & 0           \\
0 & \frac{1}{z}
\end{bmatrix}
\cdots
\cU(a_{p-2})
\begin{bmatrix}
z & 0           \\
0 & 1
\end{bmatrix}
\cU(a_{p-1})
\begin{bmatrix}
1 & 0           \\
0 & \frac{1}{z}
\end{bmatrix}
\bigg),
\end{align*}
then the spectrum of $\apC$ is given by
\[
 \E:=\sigma(\apC)=\Delta_{\apC}^{-1}\bigl([-2,2]\bigr).
\]
This spectrum is purely absolutely continuous and of multiplicity two. Moreover, there are $p$ critical points $\{c_i\}_{i=1}^p$ on $\partial \D$ (i.e., zeros of  $\Delta_{\apC}^\prime$)  which all satisfy $|\Delta_\apC(c_i)|\geq 2$. Therefore, the set $\Delta_\apC^{-1}\bigl((-2,2)\bigr)$ can be partitioned into $p$ non-intersecting open arcs. The connected components of the complement of this set on $\partial\D$ are called the gaps. If a gap consists only of a single point (which is the case if $|\Delta_{\apC}(c_i)|=2$), we refer to it as a closed gap. Otherwise the gap is called open. Let $g+1$ denote the number of open gaps and let us fix a labeling of the open gaps. That is, let $\l_j^-,\l_j^+$ denote the gap edges of an open gap such that you can pass from $\l_j^-$ to $\l_j^+$ by traversing the gap counterclockwise. Moreover, let $[\l_j^-,\l_j^+]$ denote the closed arc induced by this order.

As we have seen, the isospectral torus  $\cT_{\mv}(\E)$ is a $g+1$-dimensional torus. In particular, the spectrum does not uniquely determine the operator $\apC$. In order to get the full spectral data to solve the inverse problem, we consider the half-line operator $\apC_+$ with spectral measure $\nu$. One can show there are explicit rational functions $u,v$ such that, for a suitable branch of the square root, the associated Caratheodory function is given by
\begin{align}\label{eq:CarathCMV}
F_\nu(z)=\frac{v(z)+\sqrt{\Delta_{\apC}^2(z)-4}}{u(z)}.
\end{align}
It is known that $u$ has precisely one zero in each gap of $\E$, and if $u(z)=0$, then $\sqrt{\Delta_{\apC}^2(z)-4}$ is either $-v(z)$ or $v(z)$.  A zero of $u$ for which the numerator in \eqref{eq:CarathCMV} does not vanish corresponds to an eigenvalue of $\apC_+$.  Note that for closed gaps, the numerator always vanishes. Let $\{x_j\}_{j = 0}^g$ be the set of all zeros of $u$ which lie in open gaps, and let us write $(x_j,1)$ if $x_j$ is an eigenvalue of $\apC_+$ and $(x_j,-1)$ otherwise. Then the spectrum together with the divisor $D=\{(x_j,\e_j)\}_{j=0}^g$ form the full spectral data and determine $\apC$ completely. In fact, if we define
\begin{align}\label{def:Divisors}
\cD(\E)=\{(x_j,\e_j): \; x_j\in[\l_j^-,\l_j^+],\; \e_j=\pm 1, \; 0\leq j\leq g\}/\sim
\end{align}
with the identifications $(\l^\pm_j,-1)\sim(\l^\pm_j,1)$, then $\cD(\E)$ equipped with the product topology of circles is homeomorphic to $\G^*\times \T$ and hence also to $\cT_{\mv}(\E)$. Inspired by results in the framework of Jacobi matrices \cite{SoYud97}, this has been generalized in \cite{YudPeher06} to the much more general class of Parreau--Widom sets $\E\subset\partial\D$ satisfying the Direct Cauchy Theorem. 
The homeomorphism
\begin{align}\label{def:AbelMao}
\mathfrak A:\cD(\E)\to\G^*\times \T
\end{align}
is called the generalized \textit{Abel map}; it is the map which completes the following diagram:
\begin{equation}
\begin{aligned}\label{fig:1}
\xymatrix{
\cT_{\mv}(\E)\ar[dr]
&&\ar[ll]_{\mbox{Thm\ \ref{t:PY}}}\G^*\times \T\\
&\cD(\E)\ar[ur]_{\mbox{$\mathfrak A$}}&
}
\end{aligned}
\end{equation}
In the finitely connected setting, the Abel map is well understood (see, e.g., \cite{MumTata1,Schlag14}). The connection to spectral theory of Jacobi matrices goes back to Akhiezer \cite{Akh60}; see also \cite{Aptek84,Kri78,MumTata2}.

\subsection{Spectral theory for periodic MCMV matrices}\label{sec:DirectMCMV}
In this section we will perform a spectral analysis for MCMV matrices that are periodic up to a phase. The spectral data will be given by the discriminant and zeros of a certain function which is explicitly defined in terms of the orthogonal rational functions. For CMV matrices it is easy to see that the leading coefficient of the discriminant is positive, and the discriminant is always of maximal degree. For MCMV matrices, however, the situation is more involved; we shall clarify the degree issue in Lemma \ref{lem:Resolvents}.

First we have to clarify what we mean by a half-line MCMV matrix. Recall that given a vector $\vz=\{z_0,\dots,z_{n-1}\}$, an MCMV matrix $A=A(\{a_k\},\vartheta; \vz)\in\A(\vz)$ is defined by
\begin{align*}
		A=\Lambda(\vartheta)^* b_{\minus \DD}(C)\Lambda(\vartheta),
\end{align*}
cf. Definition \ref{def:MCMV}. Given $A\in\A_{\per}(\vz)$ we will study the half-line MCMV matrix
\begin{align}\label{def:A_+}
A_+= b_{\minus D_+}(C_+),
\end{align}
associated to the sequence of Verblunsky coefficients $\{a_k\}_{k=0}^\infty$ and $\vz$.  Specifically, in \eqref{def:A_+}, $C_+=C_+(\{a_k\}_{k=0}^\i)$ is the half-line CMV matrix with Verblunsky coefficients $\{a_k\}_{k=0}^\infty$ and $D_+$ denotes the diagonal operator $D_+ = \diag\{z_0, z_1, z_1, \cdots, z_{n-1}, z_0 | z_0, \cdots\}$.  Since $A\in \A_\per$ and $D_+$ is diagonal, it follows that $C_+$ is periodic up to a phase with phase $e^{-2i\vartheta}$. 
Due to \cite[Theorem 5.4.]{Ve08}, the measure $\nu$ of orthogonality for the family of orthonormal rational functions related to the poles $\{z_0,z_1,z_1,\dots, z_{n-1},z_0|z_0,\dots\}$ and Verblunsky coefficients $\{a_k\}_{k=0}^\i$ is precisely the spectral measure for $A_+$ (and the cyclic vector $\d_0$).  The main result of this section will be an explicit expression for the Caratheodory function $F_\nu$ analogous to \eqref{eq:CarathCMV}.

Let $p=2n$ and fix a sequence $\{a_j\}_{j=0}^{p-1}$ and a phase $\vartheta$. Let $\cM$ be the monodromy matrix defined by \eqref{eq:transferNatrix3}. Moreover, let us define
\begin{align}
\cB_\vartheta(z):=\cB(z)\begin{bmatrix}
\cos\vartheta& i\sin\vartheta\\
i\sin\vartheta&\cos\vartheta
\end{bmatrix},\quad
W_\vartheta(z):=\begin{bmatrix}
e^{-i\vartheta}&0\\
0& e^{i\vartheta}
\end{bmatrix}
W(z),
\end{align}
where $\cB$ and $W=W^{(0)}$ are the matrices defined by \eqref{def:TransferMatrix} and \eqref{def:transferMatrix2}, respectively. The shape of $M_\vartheta$ is such that
\begin{align}
Y_0\begin{bmatrix}
z& 0\\
0& 1
\end{bmatrix}
T(z)=M_\vartheta(z)Y_0
\begin{bmatrix}
z& 0\\
0& 1
\end{bmatrix}.
\end{align}
This follows from the commutant relation
\begin{align}\label{eq:Y0Commutant}
Y_0\begin{bmatrix}
e^{-i\vartheta}& 0\\
0& e^{i\vartheta}
\end{bmatrix}
=
\begin{bmatrix}
\cos\vartheta& i\sin\vartheta\\
i\sin\vartheta&\cos\vartheta
\end{bmatrix}
Y_0.
\end{align}
Hence, if we consider the sequence $\{a_j\}_{j=0}^\i$ which is obtained by extending $\{a_j\}_{j=0}^{p-1}$ in such a way that $a_{j+kp}=e^{-2ik\vartheta}a_j$, then due to Corollary \ref{cor:periodicInterSchur} and \eqref{eq:CarathSchur}, the associated Caratheodory function satisfies
\begin{align}\label{eq:quadraticCara}
\begin{bmatrix}
F_\nu(z)\\
1
\end{bmatrix}
\sim
\cB_\vartheta(z)
\begin{bmatrix}
F_\nu(z)\\
1
\end{bmatrix}.
\end{align}
Using again the simple observation
\begin{align*}
\cU(e^{2i\vartheta}\overline{a})=\begin{bmatrix}
e^{i\vartheta}& 0\\
0& e^{-i\vartheta}
\end{bmatrix}
\cU(\overline{a})
\begin{bmatrix}
e^{-i\vartheta}& 0\\
0& e^{i\vartheta}
\end{bmatrix},
\end{align*}
we find that
\begin{align*}
W^{(k)}(z)=\begin{bmatrix}
e^{ik\vartheta}& 0\\
0& e^{-ik\vartheta}
\end{bmatrix}W^{(0)}(z)\begin{bmatrix}
e^{-ik\vartheta}& 0\\
0& e^{ik\vartheta}
\end{bmatrix}
\end{align*}
and thus
\begin{align}\label{eq:productTransfer}
W^{(k-1)}(z)\cdots W^{(0)}(z)=\begin{bmatrix}
e^{ik\vartheta}& 0\\
0& e^{-ik\vartheta}
\end{bmatrix}
W_\vartheta(z)^k.
\end{align}
Moreover, \eqref{eq:M(z)} and \eqref{eq:Y0Commutant} show that
$\mj Y_0^{-1}W_\vartheta(z)^\intercal Y_0\mj=\cB_\vartheta(z)$.
So we conclude that $\tr W_\vartheta=\tr \cB_\vartheta=\tr\cM$.

Recalling that
\begin{align*}
\cB(z)=\frac{1}{2}\begin{bmatrix}
\psi_p(z)+\psi_p^*(z) & \psi_p^*(z)-\psi_p(z) \\
\varphi_p^*(z)-\varphi_p(z) & \varphi_p(z)+\varphi_p^*(z)
\end{bmatrix},
\end{align*}
a direct computation shows that
\begin{align}
\cB_\vartheta(z)=\begin{bmatrix}
\cB^\vartheta_{11}(z)&\cB^\vartheta_{12}(z)\\
\cB^\vartheta_{21}(z)&\cB^\vartheta_{22}(z)
\end{bmatrix}=\frac{1}{2}\begin{bmatrix}
\psi_{p,\vartheta}(z)+\psi_{p,\vartheta}^*(z) & \psi_{p,\vartheta}^*(z)-\psi_{p,\vartheta}(z) \\
\varphi_{p,\vartheta}^*(z)-\varphi_{p,\vartheta}(z) & \varphi_{p,\vartheta}(z)+\varphi_{p,\vartheta}^*(z)
\end{bmatrix}
\end{align}
for the rotated rational functions $\varphi_{p,\vartheta}=e^{-i\vartheta}\varphi_p$, $\psi_{p,\vartheta}=e^{-i\vartheta}\psi_p$, $\varphi_{p,\vartheta}^*=e^{i\vartheta}\varphi_p^*$, and $\psi_{p,\vartheta}^*=e^{i\vartheta}\psi_p^*$. 
The discriminant $\Delta_A$ defined by \eqref{def:discriminant} can therefore be written as
\begin{align}
\Delta_{A}(z)=\frac{1}{B(z)}\tr\cB_\vartheta(z)=\frac{\psi_{p,\vartheta}(z)+\psi_{p,\vartheta}^*(z)+\varphi_{p,\vartheta}(z)+\varphi_{p,\vartheta}^*(z)}{2B(z)},
\end{align}
where
\begin{align}
B(z)=z\prod_{j=1}^{n-1}b_{z_j}(z)=\sqrt{\det \cB_\vartheta(z)}.
\end{align}
What follows is a detailed study of properties of $\Delta_A$. This will enable us to give a complete description of the spectral measure of $A_+$ by means of a uniquely associated divisor $D$.

With \eqref{eq:productTransfer} in mind, the analog of the Lyapunov exponent (see, e.g., \cite{PaFi92}) in our periodic setting is given by
\begin{align}
L(z)=\lim_{k\to\infty}\frac{1}{kp}\log \|W_\vartheta(z)^k\|.
\end{align}
provided the limit exists.
We shall shortly relate the Lyaponov exponent to the discriminant. The lemma below is critical when showing that $\Delta_{A}^{-1}([-2,2])\subset\partial\D$.
\begin{lemma}
For every $z\in \C$ with $z\notin \{z_j^*: 0\leq j\leq n-1\}$ the limit exists and satisfies $L(z)\geq 0$.
\end{lemma}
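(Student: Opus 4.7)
My plan is to reduce both assertions to eigenvalue analysis of the $2\times 2$ matrix $W_\vartheta(z)$. For existence, Gelfand's spectral radius formula gives $\lim_{k\to\infty}\|M^k\|^{1/k}=\rho(M)$ in any matrix norm, so $L(z)=\frac{1}{p}\log\rho(W_\vartheta(z))$; the exclusion $z\notin\{z_j^*\}$ is precisely what makes $W_\vartheta(z)$ a well-defined finite matrix, since each $b_{z_j}$ has its sole pole at $z_j^*$. From the identities $\det W_\vartheta(z)=B(z)^2$ and $\tr W_\vartheta(z)=\tr\cM(z)=B(z)\Delta_A(z)$ already recorded before the lemma, I would factor the eigenvalues as $\lambda_\pm(z)=B(z)\mu_\pm(z)$, where $\mu_\pm$ are the two roots of the Joukowski equation $\mu+\mu^{-1}=\Delta_A(z)$; in particular $\mu_+\mu_-=1$, so $\max(|\mu_+|,|\mu_-|)\geq 1$.

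For the bound $L(z)\geq 0$, equivalently $\rho(W_\vartheta(z))\geq 1$, I would split into cases. When $|z|\geq 1$ we have $|B(z)|\geq 1$, and combined with the above, $\rho(W_\vartheta(z))=|B(z)|\max(|\mu_+|,|\mu_-|)\geq 1$. For $z\in\D$, where $|B(z)|<1$, I would exploit the $\mj$-inner structure of $W_\vartheta$: a direct $2\times 2$ check shows each $\cU(-\overline{a_k})$ is $\mj$-unitary, each $\operatorname{diag}(b_{z_k}(z),1)$ satisfies $\mj-\operatorname{diag}(b_{z_k},1)^*\mj\operatorname{diag}(b_{z_k},1)=\operatorname{diag}(1-|b_{z_k}|^2,0)\geq 0$ on $\D$, and the phase $\operatorname{diag}(e^{-i\vartheta},e^{i\vartheta})$ is $\mj$-unitary. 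Since products of $\mj$-contractions are $\mj$-contractive, $W_\vartheta(z)^*\mj W_\vartheta(z)\leq \mj$ throughout $\D$. Consequently the induced M\"obius action on projective space preserves the closed negative cone $\{[v]:v^*\mj v\leq 0\}$, which via the chart $[v_1:v_2]\mapsto v_1/v_2$ is homeomorphic to $\overline\D$; a fixed-point argument (Brouwer, with the $\mj$-invariance ensuring the denominator of the M\"obius map does not vanish on the closed cone) produces an eigenvector $v$ of $W_\vartheta(z)$ with $v^*\mj v\leq 0$, and the corresponding eigenvalue satisfies $|\lambda|^2 v^*\mj v=v^*W_\vartheta^*\mj W_\vartheta v\leq v^*\mj v$, forcing $|\lambda|\geq 1$.

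The main obstacle will be the degenerate subcases of the interior argument: the null-cone borderline $v^*\mj v=0$ (to be resolved either by approximating $W_\vartheta$ by strictly $\mj$-contractive perturbations, or by a Julia-type lemma showing $|\lambda|=1$ automatically on the null cone), and the isolated points $z=z_j\in\D$ where $\det W_\vartheta(z_j)=B(z_j)^2=0$ and the projective map degenerates. For the latter I would either compute $\rho(W_\vartheta(z_j))$ directly from the rank-one decomposition of $W_\vartheta(z_j)$ together with the $\mj$-contractivity constraint on its range, or use upper semicontinuity of $L(z)$, viewed as a pointwise limit of the subharmonic functions $\frac{1}{kp}\log\|W_\vartheta(z)^k\|$, to extend the bound from a punctured neighborhood of $z_j$ to $z_j$ itself.
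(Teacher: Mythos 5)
Your approach is a genuinely different route from the paper's. For $z\in\D$ the paper reads off $\|W_\vartheta(z)^k\|\geq |\varphi_{kp,\vartheta}^*(z)|/\sqrt2$ from the explicit formula \eqref{eq:WTheta} and then invokes the Christoffel--Darboux relation to get the uniform lower bound $|\varphi_{kp,\vartheta}^*(z)|^2\geq 1-|z|^2$, whereas you propose to use the $\mj$-contractivity of $W_\vartheta(z)$ and a Brouwer fixed-point argument on the M\"obius action. Your observation that $W_\vartheta(z)^*\mj W_\vartheta(z)\leq \mj$ on $\overline\D$ is correct and is the right structural ingredient; the treatment for $|z|\geq 1$ via $\det W_\vartheta=B^2$ also matches the paper.

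However, the interior case has a genuine gap. The inequality $|\lambda|^2 v^*\mj v\leq v^*\mj v$ is vacuous when the Brouwer fixed point lands on the null cone, and neither of your proposed repairs closes this. The ``Julia-type lemma'' asserting $|\lambda|=1$ for null-cone eigenvectors is simply false: take
\[
W=\begin{bmatrix}1/2&0\\-3/2&2\end{bmatrix},\qquad
\mj-W^{*}\mj W=\begin{bmatrix}3&-3\\-3&3\end{bmatrix}\geq 0,
\]
so $W$ is $\mj$-contractive, yet the eigenvector $(1,1)^{\intercal}$ has $v^{*}\mj v=0$ and eigenvalue $1/2$, which is neither $1$ nor $\geq 1$. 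The perturbation route is also problematic because the elementary factors $\operatorname{diag}(b_{z_j}(z),1)$ give $\mj-\operatorname{diag}(b_{z_j},1)^{*}\mj\operatorname{diag}(b_{z_j},1)$ of rank one, so there is no canonical strictly $\mj$-contractive approximant and the $\mj$-defect of $W_\vartheta(z)$ can itself be rank-deficient. The fixed-point strategy can be salvaged, but only after an additional argument: if $v^{*}\mj v=0$ then $v$ is annihilated by $\mj-W^{*}\mj W\geq 0$, so $\mj v$ is an eigenvector of $W^{*}$ with eigenvalue $1/\lambda$, forcing either $|\lambda|=1$ or $\lambda\overline{\lambda'}=1$ for the other eigenvalue $\lambda'$ of $W$, and in both cases $\rho(W)\geq 1$. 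None of this is in your write-up. Incidentally, once one knows $W_\vartheta(z)^k$ is $\mj$-contractive, the shortest route avoids Brouwer entirely: the $(2,2)$-entry of $\mj-(W_\vartheta^{k})^{*}\mj W_\vartheta^{k}\geq 0$ gives $|(W_\vartheta^{k})_{22}|^{2}\geq 1+|(W_\vartheta^{k})_{12}|^{2}\geq 1$, hence $\|W_\vartheta(z)^{k}\|\geq 1$ for all $k$ and $L(z)\geq 0$, which is the $\mj$-contractive analogue of the lower bound the paper extracts from Christoffel--Darboux.
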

\begin{remark}
In fact, we will see that $L(z)=0$ if and only if $z\in\Delta_{A}^{-1}([-2,2])=\sigma(A)\subset\partial\D$.
\end{remark}
\begin{proof}
Let $\l_1(z),\l_2(z)$ be the eigenvalues of $W_\vartheta(z)$. Then by the spectral radius formula we have
\begin{align}\label{eq:limitexists}
\lim\limits_{k\to\infty}\|W_\vartheta(z)^k\|^{1/kp}=\max\{|\l_1(z)|,|\l_2(z)|\}^{1/p}.
\end{align}
Moreover, since the inequality $|\det N|\leq \|N\|^2$ holds for every $2\times 2$-matrix $N$ and $\det W_\vartheta=B^2$, we see that on $\C\setminus \D$ the limit exists and satisfies $L\geq 0$. To show that this also holds inside $\D$, we first note that due to \eqref{eq:ORFTransfer} and \eqref{eq:productTransfer},
\begin{align}\label{eq:WTheta}
W_\vartheta^k=\frac{1}{2}
\begin{bmatrix}
\varphi_{kp,\vartheta}+\psi_{kp,\vartheta}     & \varphi_{kp,\vartheta}-\psi_{kp,\vartheta}     \\
\varphi_{kp,\vartheta}^*-\psi_{kp,\vartheta}^* & \varphi_{kp,\vartheta}^*+\psi_{kp,\vartheta}^*
\end{bmatrix}.
\end{align}
Hence,
\begin{align*}
|\varphi_{kp,\vartheta}^*(z)|\leq \|W_\vartheta(z)^k\left[\begin{smallmatrix}
	1\\
	1
\end{smallmatrix}\right]\|
\leq \|W_\vartheta(z)^k\|\sqrt{2}
\end{align*}
and we now apply the Christoffel--Darboux formula (see \cite[Theorem 3.1.3]{BuGoHe99})
\begin{align*}
\sum_{j=0}^{l-1}|\varphi_{j,\vartheta}(z)|^2=\frac{|\varphi^*_{l,\vartheta}(z)|^2-|\varphi_{l,\vartheta}(z)|^2}{1-| b_{z_l}(z)|^2}
\end{align*}
to deduce that
\begin{align*}
1-|z|^2\leq |\varphi^*_{kp,\vartheta}(z)|^2,
\end{align*}
recalling that $b_{z_{kp}}(z)=z$ and $|\varphi_0|=1$.  So for fixed $z\in\D$ we have a uniform lower bound on $\|W_\vartheta(z)^k\|$, and this implies that $L(z)\geq 0$.
\end{proof}

The following lemma collects several important properties of $\Delta_{A}$. It is the analog of \cite[Theorem 11.1.1]{SimonOPUC2} for CMV matrices and thus we seek to keep the proofs rather short by merely indicating where adaptations are needed.
\begin{lemma}\label{lem:PropDelta}
\begin{itemize}
\item[(i)] $\overline{\Delta_A(1/\overline z)}=\Delta_A(z)$,
\item[(ii)] ${\displaystyle L(z)=\frac{1}{p}\log|B(z)|+\frac{1}{p}\log\left|\frac{\Delta_A(z)+\sqrt{\Delta_A(z)^2-4}}{2}\right|}$,
\item[(iii)] $\Delta_A(z)\in[-2,2]$ implies that $z\in\partial\D$,
\item[(iv)] for all critical points $c\in \partial \D$ (i.e., zeros of $\Delta_A')$, we have that $|\Delta_A(c)|\geq 2$.
\end{itemize}
\end{lemma}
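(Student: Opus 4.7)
The plan is to prove the four parts sequentially, using the $\mj$-inner structure of the monodromy matrix for (i), direct eigenvalue computation for (ii), these combined with the nonnegativity of $L$ already established for (iii), and a strict-monotonicity argument across bands for (iv).

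For (i), the key observation is that $\cM_A$ is $\mj$-inner. A direct calculation using $\rho_k^2 = 1-|a_k|^2$ shows $\cU(a_k)^*\mj\cU(a_k) = \mj$; each diagonal Blaschke factor $\diag(b_{z_k}, 1)$ is $\mj$-contractive on $\D$ with equality on $\partial\D$; and the final diagonal phase twist commutes with $\mj$. Hence $\cM_A(z)^*\mj\cM_A(z) = \mj$ on $\partial\D$, which extends meromorphically to $\cM_A(z)^{-1} = \mj\,\overline{\cM_A(1/\overline z)}^\intercal \mj$. For any $2\times 2$ matrix $M$ one has $\tr M^{-1} = \tr M/\det M$ via the adjugate identity, and here $\det\cM_A = B^2$ with $\overline{B(1/\overline z)} = 1/B(z)$ since $B$ is a finite Blaschke product. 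Combining these identities produces (i).

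For (ii), I would invoke the spectral radius formula \eqref{eq:limitexists}: $L(z) = \frac{1}{p}\log\max\{|\lambda_1(z)|,|\lambda_2(z)|\}$. The eigenvalues of $W_\vartheta(z)$ solve $\lambda^2 - B\Delta_A \lambda + B^2 = 0$, since $\tr W_\vartheta = \tr\cM_A = B\Delta_A$ and $\det W_\vartheta = B^2$, giving $\lambda_\pm = B(\Delta_A \pm \sqrt{\Delta_A^2-4})/2$; fixing the branch of the square root so that $\lambda_+$ is the larger eigenvalue produces the claim. For (iii), if $\Delta_A(z_0) \in [-2,2]$ then $(\Delta_A \pm \sqrt{\Delta_A^2-4})/2$ is unimodular, so by (ii) $L(z_0) = \frac{1}{p}\log|B(z_0)|$. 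When $z_0 \in \D$, $|B(z_0)| < 1$ forces $L(z_0) < 0$, contradicting the preceding lemma; the case $|z_0| > 1$ reduces to this via (i) applied at $1/\overline{z_0}$.

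The main obstacle is (iv). On $\partial\D \cap \Delta_A^{-1}((-2,2))$, one has $|B| = 1$ and the eigenvalues of $W_\vartheta(e^{i\theta})$ take the unimodular form $B(e^{i\theta}) e^{\pm i\phi(\theta)}$ for a continuous branch $\phi \in (0,\pi)$, so that $\Delta_A(e^{i\theta}) = 2\cos\phi(\theta)$ and $\tfrac{d}{d\theta}\Delta_A(e^{i\theta}) = -2\sin\phi(\theta)\,\phi'(\theta)$. At a putative critical point $c$ with $|\Delta_A(c)| < 2$, $\sin\phi(c) \neq 0$, so we would need $\phi'(c) = 0$. To rule this out I would adapt the approach of \cite[Theorem 11.1.1]{SimonOPUC2}: express $\phi'$ up to a positive factor as the Wronskian of two linearly independent Floquet solutions of the eigenvalue equation $A\psi = e^{i\phi}\psi$, and, using the representation \eqref{eq:WTheta} together with the Christoffel--Darboux formula already invoked in the preceding proof, rewrite this Wronskian as a strictly positive sum of squared moduli of the orthonormal rational functions $\varphi_{j,\vartheta}$. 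The chief technical difficulty is identifying the Floquet solutions cleanly in the MCMV setting -- the diagonal Blaschke-factor twists in \eqref{def:transferMatrix2} complicate the telescoping relative to standard CMV -- but the necessary ingredients are already at hand.
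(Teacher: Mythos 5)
Your proposal is correct and follows essentially the same route as the paper: part (i) via the $\mj$-unitarity of $B^{-1}W_\vartheta$ on $\partial\D$ and reflection, part (ii) via the spectral-radius formula and the characteristic polynomial $\tilde\lambda^2 - \Delta_A\tilde\lambda + 1 = 0$, part (iii) by combining (ii) with $L\geq 0$ and the reflection symmetry (i), and part (iv) by deferring to the argument of \cite[Theorem 11.1.1]{SimonOPUC2}. The only difference is cosmetic — you unpack the reflection identity in (i) more explicitly and sketch the Wronskian/Christoffel--Darboux argument for (iv) where the paper simply cites Simon — so there is nothing substantive to distinguish them.
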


\begin{proof}
(i) The key is that $B(z)^{-1}W_\vartheta(z)\in \mathbb S\mathbb U(1,1)$ if $z\in\partial\D$  and to use that the trace of a matrix in this class is real. The statement then follows by analytic continuation.

(ii) Let again denote $\l_1(z),\l_2(z)$ the eigenvalues of $W_\vartheta(z)$ and let $\tilde \l_1(z),\tilde \l_2(z)$ denote the eigenvalues of $B(z)^{-1}W_\vartheta(z)$. Then we have
\begin{align*}
B(z)\tilde \l_i(z)=\l_i(z), \quad i=1, 2
\end{align*}
and thus by \eqref{eq:limitexists},
\begin{align*}
L(z)=\log\lim\limits_{k\to\infty}\|W_\vartheta(z)^k\|^{1/kp}
=\frac{1}{p}\log|B(z)|+\frac{1}{p}\log\Bigl(\max\{|\tilde \l_1(z)|,|\tilde \l_2(z)|\}\Bigr).
\end{align*}
Since $\tilde \l_1, \tilde \l_2$ are solutions of $\tilde \l^2-\Delta_A(z)\tilde \l+1=0$, we obtain (ii).

(iii) Suppose $\Delta_A(z)\in[-2,2]$. Then $\Bigl|\frac{\Delta_A(z)+\sqrt{\Delta_A(z)^2-4}}{2}\Bigr|=1$ and so, by (ii),
$$
L(z)=\frac{1}{p}\log|B(z)|.
$$
Since $L(z)\geq 0$, this implies that $|z|\geq 1$. But if $|z|>1$ then, by (i), we have
$$
\Delta_A(z)=\overline{\Delta_A(1/\overline z)}\notin[-2,2].
$$

(iv) This is the same as to say that for $x\in(-2,2)$, the roots of $\Delta_A-x$ are simple. The proof is identical to that of \cite[Theorem 11.1.1]{SimonOPUC2}.
\end{proof}

It follows from the lemma that $\E:=\Delta_A^{-1}([-2,2]) \subset \partial\D$ is a finite-gap set with at most $p$ gaps. As before, we denote by $g+1$ the number of open gaps in $\E$ and by $\l_j^\pm$ their gap edges.
In the following it will be important to know that $\Delta_{A}$ is a rational function of degree $p$. But, in fact, an even stronger statement is true:

\begin{lemma}\label{lem:Resolvents}
Fix $j$ and let $q$ be the number of times $z_j$ appears in the vector $\vz$. Then
\begin{align}\label{eq:openGap}
|(b_{z_j}^q\Delta_A)(z_j)|>C>0,
\end{align}
where the constant $C$ depends only on $\vz$. Moreover,
\begin{align}\label{eq:ResConjugated}
(b_{z_j}^{-q}\Delta_A)(z_j^*)=\overline{(b_{z_j}^q\Delta_A)(z_j)}.
\end{align}
\end{lemma}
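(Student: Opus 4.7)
The plan is to reduce the lower bound on $|(b_{z_j}^q \Delta_A)(z_j)|$ to a uniform lower bound on $|\tr \cM_A(z_j)|$ and then exploit the $\mj$-contractive structure of the factors making up the monodromy; the conjugation identity \eqref{eq:ResConjugated} will then follow directly from Lemma \ref{lem:PropDelta}(i). To begin, $\cM_A(z)$ is holomorphic at $z_j$ while $B(z) = \prod_{l=0}^{n-1} b_{z_l}(z)$ has a zero of order exactly $q$ there, so setting $P(z) := \prod_{l:\, z_l \neq z_j} b_{z_l}(z)$ (a nonzero quantity at $z_j$ depending only on $\vz$), one has
\begin{align*}
(b_{z_j}^q \Delta_A)(z_j) = \frac{\tr \cM_A(z_j)}{P(z_j)}.
\end{align*}

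Next I would compute $\tr \cM_A(z_j)$ via the rank-one collapse of the monodromy at $z_j$. Let $k_1 < \ldots < k_q$ be the positions where $z_{k_i} = z_j$ and set $P_2 := e_2 e_2^\intercal$. Each factor $\left[\begin{smallmatrix} b_{z_{k_i}}(z_j) & 0 \\ 0 & 1 \end{smallmatrix}\right]$ equals $P_2$, and a direct computation gives $\cU(a_{2k_i-2}) P_2 \cU(a_{2k_i-1}) P_2 = \frac{1}{\rho_{2k_i-1}} \cU(a_{2k_i-2}) P_2$. Grouping the remaining $\cU(a_l)$'s, the diagonal factors $B_l(z_j) = \diag(b_{z_l}(z_j), 1)$ with $z_l \neq z_j$, and the trailing $W_\vartheta$ into interstitial blocks $L_0, L_1, \ldots, L_q$, the telescoping identity $P_2 X P_2 = X_{22} P_2$ yields
\begin{align*}
\tr \cM_A(z_j) = \frac{\bigl(L_q L_0\, \cU(a_{2k_1-2})\bigr)_{22}}{\prod_{i=1}^q \rho_{2k_i-1}} \prod_{i=1}^{q-1} \bigl(L_i\, \cU(a_{2k_{i+1}-2})\bigr)_{22}.
\end{align*}

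The decisive observation is that every elementary factor above is $\mj$-contractive at $z_j$ (that is, satisfies $M^*\mj M \leq \mj$): the matrices $\cU(a_l)$ and $W_\vartheta$ lie in $SU(1,1)$, while $B_l(z_j)^* \mj B_l(z_j) = \diag(|b_{z_l}(z_j)|^2, -1) \leq \mj$ since $z_l, z_j \in \D$. This property is preserved under products, so each $L_i\, \cU(\cdot)$ and the wrap-around $L_q L_0\, \cU(\cdot)$ is again $\mj$-contractive. For any $\mj$-contractive $M$, applying $M^* \mj M \leq \mj$ to the vector $e_2$ gives $|M_{12}|^2 - |M_{22}|^2 \leq -1$, hence $|M_{22}| \geq 1$; combined with $1/\rho_k \geq 1$, this yields $|\tr \cM_A(z_j)| \geq 1$ and therefore $|(b_{z_j}^q \Delta_A)(z_j)| \geq 1/|P(z_j)|$, a positive constant depending only on $\vz$. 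Finally, to obtain \eqref{eq:ResConjugated} I would combine $\overline{\Delta_A(1/\bar z)} = \Delta_A(z)$ from Lemma \ref{lem:PropDelta}(i) with the elementary identity $b_{z_j}(1/\bar z) = 1/\overline{b_{z_j}(z)}$, giving $b_{z_j}^{-q}(1/\bar z)\,\Delta_A(1/\bar z) = \overline{b_{z_j}^q(z)\,\Delta_A(z)}$; letting $z \to z_j$ (so $1/\bar z \to z_j^*$) yields the claim. I expect the main work to be the careful bookkeeping in the rank-one factorization; once the telescoped expression for $\tr \cM_A(z_j)$ is in hand, the $\mj$-contractive bound $|M_{22}| \geq 1$ closes the argument in one stroke.
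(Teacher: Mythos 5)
Your proof is correct, and the route is genuinely different from the paper's. Both proofs begin identically: they observe that $\eqref{eq:ResConjugated}$ is just the statement that $\Delta_A$ is real, and they reduce $\eqref{eq:openGap}$ to a uniform lower bound on $|\tr\cM_A(z_j)|$ by evaluating the monodromy at $z_j$, where the $2q$ diagonal factors for $z_j$ collapse to $\left[\begin{smallmatrix}0&0\\0&1\end{smallmatrix}\right]$ and telescope (the paper arranges this via the conjugate $\cM_r$, you via the rank-one identity $P_2 X P_2 = X_{22}P_2$ and cyclicity of the trace — same computation). The divergence is in how the resulting quantity is bounded from below. The paper identifies the interstitial blocks as transfer matrices of ORF, rewrites the critical entry as $\tilde\varphi^*(z_j)\bigl|(1+a)+\tilde{F}(z_j)(1-a)\bigr|$, and then deploys three function-theoretic inputs: a Christoffel--Darboux-type lower bound $|\tilde\varphi^*(z_j)|\ge c_3$, the Herglotz growth bounds $c_1<|F(z_j)|,\ \Re F(z_j)$, and a case split on $|1-a|$. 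You instead observe that each interstitial block is a product of $\mj$-contractive factors ($\cU(a_l)$ and $W_\vartheta$ are $\mj$-unitary, $\diag(b_{z_l}(z_j),1)$ is $\mj$-contractive because $z_l,z_j\in\D$), that $\mj$-contractivity is preserved under products, and that any $\mj$-contractive $2\times 2$ matrix $M$ has $|M_{22}|\ge 1$ (apply $M^*\mj M\le\mj$ to $e_2$). Together with $1/\rho_k\ge 1$ this gives the clean explicit bound $|\tr\cM_A(z_j)|\ge 1$ and hence $|(b_{z_j}^q\Delta_A)(z_j)|\ge 1/|P(z_j)|>1$. Your argument is shorter, avoids the Caratheodory/Christoffel--Darboux machinery entirely, delivers a sharper and fully explicit constant, and fits naturally with the $\mj$-inner/Potapov viewpoint that the paper already emphasizes around \eqref{def:monodromy}; the paper's proof, by contrast, keeps the argument anchored in the ORF framework that the surrounding section is built on, at the cost of three auxiliary uniform estimates and a case distinction.
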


\begin{proof}
Note that \eqref{eq:ResConjugated} follows from the fact that $\Delta_A$ is real. Let us first assume that $q=1$ and $z_j=z_0=0$. In that case, \eqref{eq:openGap} is equivalent to $|(\tr \cM)(0)|> C$.  It will be more convenient to consider the product
\begin{align*}
\cM_r(z):=\begin{bmatrix}
z& 0\\
0& 1
\end{bmatrix}
\cM(z)
\begin{bmatrix}
z& 0\\
0& 1
\end{bmatrix}^{-1}
\end{align*}
which clearly has the same trace as $\cM$. We have
\begin{align}\label{eq:AatZero}
\cM_r(0)=
\begin{bmatrix}
0&0\\
0& 1
\end{bmatrix}	
\cU(a_0)
\begin{bmatrix}
-z_1& 0\\
0& 1
\end{bmatrix}
\cdots
\begin{bmatrix}
0&0\\
0& 1
\end{bmatrix}
\cU(a_{p-1})
\begin{bmatrix}
e^{-i\vartheta}& 0\\
0& e^{i\vartheta}
\end{bmatrix}.
\end{align}
Set
\begin{align*}
V(z)=\cU(a_0)
\begin{bmatrix}
b_{z_1}(z) & 0 \\
0          & 1
\end{bmatrix}
\cdots
\cU(a_{p-3})
\begin{bmatrix}
b_{z_{n-1}}(z) & 0 \\
0          & 1
\end{bmatrix}
\end{align*}
and notice that $V$ is a transfer matrix associated to the poles $z_1,\dots, z_{n-1}$ and with coefficients $-\overline{a_0}, \dots, -\overline{a_{p-3}}$ (in reverse order). Therefore, by \eqref{eq:ORFTransfer}, a short computation shows that
\begin{align*}
V=\frac12\begin{bmatrix}
\tilde\varphi+\tilde\psi     & \tilde\varphi-\tilde\psi     \\
\tilde\varphi^*-\tilde\psi^* & \tilde\varphi^*+\tilde\psi^*
\end{bmatrix}
\end{align*}
for the corresponding ORF of degree $p-2$. Due to \eqref{eq:AatZero}, it suffices to show that
\begin{align*}
\left|
\begin{bmatrix}
0 & 1
\end{bmatrix}
V(0)\cU(a_{p-2})
\begin{bmatrix}
0 \\
1
\end{bmatrix}\right|=\left|\frac{1}{2\rho_{p-2}}\Bigl(\tilde\varphi^*(0)+\tilde\psi^*(0)+a_{p-2}\bigl(\tilde\varphi^*(0)-\tilde\psi^*(0)\bigr)\Bigr)\right|
\end{align*}
is uniformly bounded from below.

For arbitrary $z_j$, due to cyclic rotation, we would have obtained the same, just with orthogonal rational functions associated to different coefficients, respectively a different measure. Recall that we can always ``push'' the matrix $\left[\begin{smallmatrix}
e^{i\vartheta}& 0\\
0& e^{-i\vartheta}
\end{smallmatrix}\right]$
to the end of the product by means of the commutant relation
\begin{align*}
\cU(e^{2i\vartheta}a)=\begin{bmatrix}
e^{i\vartheta}& 0\\
0& e^{-i\vartheta}
\end{bmatrix}
\cU(a)
\begin{bmatrix}
e^{-i\vartheta}& 0\\
0& e^{i\vartheta}
\end{bmatrix}.
\end{align*}
Hence, it suffices to show that
\begin{align}\label{eq:CaratheodoryInequ}
\Bigl|\tilde\varphi^*(z_j)+\tilde\psi^*(z_j)+a\bigl(\tilde\varphi^*(z_j)-\tilde\psi^*(z_j)\bigr)\Bigr|
\end{align}
is uniformly bounded from below for arbitrary orthogonal rational functions whose poles are supported on the set $\{z_i^*: 0\leq i\leq n-1, i\neq j\}$.

It is well-known that any Caratheodory function $F$ satisfies the uniform bounds
\begin{align*}
\frac{1-|z|}{1+|z|}\leq|F(z)|\leq \frac{1+|z|}{1-|z|},\quad \frac{1-|z|}{1+|z|}\leq\Re F(z)\leq \frac{1+|z|}{1-|z|}.
\end{align*}
Since $\sup_j|z_j|<1$, this gives positive constants $c_1,c_2$ such that for every Caratheodory function $F$ and every $z_j$, we have that $c_1<|F(z_j)|<c_2$. By the same reasoning, using the Christoffel--Darboux relation it is not hard to see that there exists a function $m(r)$ such that for all measures $\nu$, all $n$, and all $z$ obeying $|z|<r$, we have  $|\varphi_n^*(z,\nu)|>m(r)>0$; cf. \cite[Lemma 9.3.1]{BuGoHe99}.  Hence we obtain a constant $c_3$ such that $|\tilde\varphi^*(z_j)|>c_3$ uniformly.

Writing $\tilde{F}$ as shorthand notation for the Caratheodory function $\tilde\psi^*/\tilde\varphi^*$, we see that
\begin{align*}
\Bigl|\tilde\varphi^*(z_j)+\tilde\psi^*(z_j)+a\bigl(\tilde\varphi^*(z_j)-\tilde\psi^*(z_j)\bigr)\Bigr|=\bigl|\tilde\varphi^*(z_j)\bigr|\bigl|(1+a)+\tilde{F}(z_j)(1-a)\bigr|.
\end{align*}
Let us first assume that $|1-a|<(1+c_2)^{-1}\leq 1/2$. Then standard estimates show that
\begin{align*}
\bigl|\tilde\varphi^*(z_j)\bigr|\bigl|(1+a)+\tilde{F}(z_j)(1-a)\bigr|>c_3.
\end{align*}
If $|1-a|\geq (1+c_2)^{-1}$, we obtain the estimate
\begin{align*}
\bigl|\tilde\varphi^*(z_j)(1-a)\bigr|\left|\frac{1+a}{1-a}+\tilde{F}(z_j)\right|&\geq \bigl|\tilde\varphi^*(z_j)(1-a)\bigr|\Re\left(\frac{1+a}{1-a}+\tilde{F}(z_j)\right)\\
&\geq \bigl|\tilde\varphi^*(z_j)(1-a)\bigr|\Re F(z_j)>\frac{c_1c_3}{1+c_2}>0
\end{align*}
since the real part of $\frac{1+a}{1-a}$ is positive.

Finally, if $q>1$ then \eqref{eq:AatZero} splits into shorter products of the form
\begin{align*}
\begin{bmatrix}
0& 0\\
0& 1
\end{bmatrix}
\tilde V(z)
\begin{bmatrix}
0& 0\\
0& 1
\end{bmatrix}
\end{align*}
and the same arguments can be applied to each single factor.
\end{proof}


Combining the two previous lemmas, 
we obtain the following explicit representation for $\Delta_A$:
\begin{lemma}\label{lem:repDeltaT}
Let $\fz$ be the uniformization of $\overline\C\setminus \E$.
Following the notation of \eqref{eq:BB}, we have
\begin{align}
\Delta_A(z)=\Psi(z)+\Psi(z)^{-1},
\end{align}
where $\Psi\circ\fz=\prod_{j=0}^{n-1}\fb_{\z_j}\fb_{\overline{\z_j}}$.
\end{lemma}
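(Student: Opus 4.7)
The plan is to identify $\Psi$ and $\Psi^{-1}$ with the two Floquet multipliers $\psi_\pm := (\Delta_A \pm \sqrt{\Delta_A^2 - 4})/2$ (the eigenvalues of the normalized monodromy $B^{-1}\cM$), from which $\Delta_A = \psi_+ + \psi_- = \Psi + \Psi^{-1}$ is immediate.

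First I would check that $\Psi$ is well-defined: the assumption $2\beta = \mathbf{0}_{\G^*}$ ensures $\fB\fB^*$ has trivial character and thus descends via $\fz$ to a single-valued holomorphic function $\Psi$ on $\overline{\C}\setminus\E$. Its divisor consists of $2n$ zeros located at $\vz \cup \vz^*$ (with the multiplicities inherited from $\vz$), it has no poles, $|\Psi|=1$ on $\E$, and the symmetry $\Psi(z^*) = \overline{\Psi(z)}$ is inherited from \eqref{eq:bsym}. Since $\Psi$ is non-constant with unimodular boundary values, the maximum modulus principle on $\overline{\C}\setminus\E$ yields $|\Psi(z)| < 1$ strictly on $\overline{\C}\setminus\E$.

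Next I would show that $\Phi := \Psi + \Psi^{-1}$ extends to a rational function on $\overline{\C}$ of degree $2n$ with the same pole locations and orders as $\Delta_A$ (cf. Lemma \ref{lem:Resolvents}). On $\overline{\C}\setminus\E$, $\Phi$ is meromorphic with poles at $\vz \cup \vz^*$ coming from the zeros of $\Psi$, and it is bounded near $\E$ since $|\Phi| \leq 2$ there; Painlev\'e's removable singularity theorem applied across the planar-null set $\E \subset \partial\D$ then provides a meromorphic extension to $\overline{\C}$. A further consequence of $|\Psi(z)| < 1$ strictly off $\E$ is that $\Phi^{-1}([-2,2]) = \E = \Delta_A^{-1}([-2,2])$, since $\Phi(z) \in [-2,2]$ would force $\Psi(z)$ to lie on the unit circle, contradicting the strict inequality.

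For the identification step, let $\psi_+$ denote the branch of $(\Delta_A + \sqrt{\Delta_A^2-4})/2$ on $\overline{\C}\setminus\E$ normalized by $|\psi_+|\geq 1$, and consider $h := \psi_+\Psi$. By Lemma \ref{lem:Resolvents}, the pole orders of $\psi_+$ at the points of $\vz\cup\vz^*$ match the zero orders of $\Psi$ there, so these singularities cancel and $h$ is holomorphic and nowhere vanishing on $\overline{\C}\setminus\E$. Since $|h| = |\psi_+|\cdot|\Psi| = 1$ on $\E$, the maximum and minimum modulus principles force $h$ to be a unimodular constant; the symmetry $h(z^*) = \overline{h(z)}$ further restricts $h \in \{\pm 1\}$. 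To pin down the sign, I would compare leading coefficients at $z = \infty$: the leading behavior of $\Delta_A$ at $\infty$, computed from the explicit expressions for $\tr\cM$ and $B$, combined with the leading behavior of $\Psi^{-1}$ at $\infty$ (determined by the Blaschke product data near $\overline{\z_0}$ and the pole of $\fz$ there), show that the coefficients are reciprocals, i.e., $h=1$. Consequently $\psi_+ = \Psi^{-1}$, $\psi_- = \Psi$, and $\Delta_A = \psi_+ + \psi_- = \Psi + \Psi^{-1}$.

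The main obstacle is the final sign determination. The maximum modulus and symmetry arguments reduce the indeterminacy to $h \in \{\pm 1\}$, but confirming $h=1$ requires an explicit residue or leading-coefficient computation at one convenient point. This is elementary but involves tracking normalizations of Blaschke products through the uniformization, and is the only truly computational step of the proof.
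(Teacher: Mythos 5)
Your plan---identify $\Psi$ and $\Psi^{-1}$ with the Floquet multipliers via a maximum-principle argument for $h=\psi_+\Psi$---is the right core idea and matches the paper in spirit. However, there is a genuine circularity at the very first step. You begin by invoking ``the assumption $2\beta=\mathbf{0}_{\G^*}$'' to ensure $\fB\fB^*$ descends to a single-valued $\Psi$ on $\overline{\C}\setminus\E$. But in the context where this lemma is invoked (Proposition \ref{prop:DS1}), the half-period property of $\beta$ is \emph{not} a hypothesis: the lemma is applied to an arbitrary $A\in\A_\per(\vz)$ with $\E:=\sigma(A)$, and Proposition \ref{prop:DS1} then \emph{deduces} that $\beta$ is a half-period precisely from the fact that Lemma \ref{lem:repDeltaT} produces a single-valued $\Psi$. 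If you take $2\beta=\mathbf{0}_{\G^*}$ as given, the later argument becomes circular.

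The paper avoids this by working with the harmonic function
\begin{align*}
H(z)=\log\left|\tfrac{\Delta_A(z)+\sqrt{\Delta_A(z)^2-4}}{2}\right| - \sum_{j=0}^{n-1}\bigl(G_{\overline{\C}\setminus\E}(z,z_j)+G_{\overline{\C}\setminus\E}(z,z_j^*)\bigr),
\end{align*}
which is well-defined regardless of any character hypothesis; Lemma \ref{lem:Resolvents} cancels the logarithmic singularities, and vanishing boundary values plus the maximum principle give $H\equiv 0$. Pulling back to the disk via \eqref{eq:btoGreens}, one obtains $|(\psi_+\circ\fz)\cdot\fB\fB^*|\equiv 1$. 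Since $\psi_+\circ\fz$ is $\Gamma$-automorphic and $\fB\fB^*$ is character automorphic with character $2\beta$, the product is a nonvanishing holomorphic function of constant modulus---hence a constant---which forces $2\beta=\mathbf{0}_{\G^*}$ as a \emph{byproduct}; single-valuedness of $\Psi$ follows. Your observations about pole cancellation and the symmetry reducing the constant to $\pm1$ remain valid once the argument is reorganized this way, and the final sign identification is indeed a short computation left implicit in both treatments. The fix is to not assume anything about $\beta$; instead derive $|(\psi_+\circ\fz)\,\fB\fB^*|\equiv1$ first (e.g.\ via the harmonic $H$ or equivalently via the modulus of your $h$ pulled back to $\D$), and then read off triviality of $2\beta$ from the fact that a character-automorphic inner function of constant modulus must have trivial character.
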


\begin{proof}
Consider the function
\begin{align}\label{eq:Hfn}
H(z) &= \log\left|\frac{\Delta_A(z) + \sqrt{\Delta_A^2(z) - 4}}{2}\right| - \sum_{j=0}^{n-1} \Bigl( G_{\overline{\C}\setminus \E}(z,z_j) + G_{\overline{\C}\setminus \E}(z,z_j^*) \Bigr).
\end{align}
It follows from Lemma \ref{lem:Resolvents} that $H(z)$ has no poles and is thus harmonic in $\overline{\C}\setminus \E$.  Furthermore, since $\E := \Delta_A^{-1}([-2,2])$, we have $H(z) = 0$ for $z \in \E$.  By the maximum principle, it follows that $H(z)\equiv 0$.  In particular, by an application of \eqref{eq:btoGreens} we have
\begin{align*}
\log\left|\frac{\Delta_A\bigl(\fz(\z)\bigr) + \sqrt{\Delta_A^2\bigl(\fz(\z)\bigr) - 4}}{2}\right| = -\log\left|\Psi \circ \fz\right|,
\end{align*}
from which the lemma follows.
\end{proof}

We are now ready to characterize the spectrum of $A_+$. Since $\det M_\vartheta=B^2$, \eqref{eq:quadraticCara} shows that $F_\nu$ can be written as
\begin{align}\label{eq:caratheordory}
F_\nu(z)=\frac{v(z)+\sqrt{\Delta_A(z)^2-4}}{u(z)},
\end{align}
where $u, v$ are explicitly given by
\begin{align*}
v(z)=\frac{\psi_{p,\vartheta}(z)+\psi_{p,\vartheta}^*(z)-\varphi_{p,\vartheta}(z)-\varphi^*_{p,\vartheta}(z)}{2B(z)},\quad u(z)=\frac{\varphi_{p,\vartheta}^*(z)-\varphi_{p,\vartheta}(z)}{B(z)},
\end{align*}
and where the branch of the square root is chosen such that $F_\nu(0)=1$ and then extended analytically to $\overline{\C}\setminus \E$.
Since $\varphi^*=B^2\overline{\varphi}$ on $\partial\D$, it follows that 
\begin{align}\label{eq:normalizationUV}
v(e^{it})\in\R \quad \text{and} \quad u(e^{it})\in i\R.
\end{align}
Write $\dd\nu(t)=\nu_{ac}(t)\frac{\dd t}{2\pi}+\dd\nu_s(t)$, with $\nu_s$ singular to $\dd t/2\pi$. 
Using the standard inversion formula, we see that $\dd\nu_s$ is a finite sum of point masses and $\nu_{ac}$ is explicitly given by
\begin{align}\label{eq:acMeasure}
\nu_{ac}(t)=\lim\limits_{r \uparrow 1}\Re F_{\nu}(re^{it})=\frac{\sqrt{\Delta_A(e^{it})^2-4}}{u(e^{it})}\geq 0,\quad e^{it}\in \E.
\end{align}
The following lemma will be important to characterize the point masses of $\nu$. 

\begin{lemma}\label{lem:uzeros}
$u(z)$ has all its zeros in the set of gaps of $\E$, one in each gap.
\end{lemma}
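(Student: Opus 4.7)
The plan is to mirror the classical three-step argument for periodic CMV matrices \cite[Theorem 11.1.1]{SimonOPUC2}, adapting it to our rational-function setting: first locate zeros on bands, then exclude zeros off $\partial\D$, and finally count zeros in gaps.

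The first step is to show $u$ does not vanish on the interior of any band of $\E$. This is immediate from \eqref{eq:acMeasure}: on a band interior $|\Delta_A(e^{it})|<2$ strictly, so $\sqrt{\Delta_A^2-4}$ is nonzero, and the non-negativity and finiteness of the spectral density $\nu_{ac}$ forces $u(e^{it})\neq 0$.

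For the second step, the two roots of the quadratic \eqref{eq:quadraticCara},
\[
F_\pm(z) = \frac{v(z) \pm \sqrt{\Delta_A(z)^2-4}}{u(z)},
\]
are the Carathéodory functions associated to the two half-line restrictions of the whole-line MCMV; in particular, both $F_\pm$ and their reciprocals are Carathéodory, and hence analytic and nonvanishing on $\D$. Consequently, on $\D$ each of $v\pm\sqrt{\Delta_A^2-4}$ shares its zeros with $u$, so at any hypothetical zero $z_0\in\D$ of $u$, adding and subtracting gives $v(z_0)=0$ and $\Delta_A(z_0)^2=4$. By Lemma \ref{lem:PropDelta}(iii), $\Delta_A(z_0)=\pm 2$ forces $z_0\in\partial\D$, a contradiction. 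The symmetry $u(z)=-\overline{u(1/\overline{z})}$ (which follows from $u\in i\R$ on $\partial\D$) then extends the conclusion to $\overline{\C}\setminus\overline{\D}$.

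The third step, and the main obstacle, is to pin down exactly one zero per gap. From the first two steps, all zeros of $u$ lie in the closed union of gaps of $\E$. The plan is to combine a total degree count for $u$ as a rational function—obtained by tracking the pole and zero structure of $\varphi_{p,\vartheta}^*-\varphi_{p,\vartheta}$ against that of $B$, which produces a total zero count matching the number of gaps of $\E$—with a sign-change argument on $\partial\D$: writing $u(e^{it})=iR(t)$, Step 1 gives $R>0$ throughout each band interior, and analyzing the sign of $R$ at the gap edges $\l_j^\pm$ together with the consistent branch of $\sqrt{\Delta_A^2-4}$ chosen by $F_\nu(0)=1$ forces $u$ to vanish at least once in each gap. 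The degree count then forces equality, with closed gaps accounting for any multiplicity at the corresponding critical points of $\Delta_A$.
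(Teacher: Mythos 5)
Your plan correctly identifies the three tasks, and your Steps~1 and~3 are in the spirit of the paper's closing paragraph. The problem lies in Step~2, and the paper's proof is built precisely to avoid what you are assuming there.

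You assert that the second root
\[
F_-(z) = \frac{v(z) - \sqrt{\Delta_A(z)^2-4}}{u(z)}
\]
of \eqref{eq:quadraticCara} is also a Carath\'eodory-type function, analytic and nonvanishing in $\D$, and from there deduce that a hypothetical zero $z_0\in\D$ of $u$ would force both numerators to vanish, so $v(z_0)=0$ and $\Delta_A(z_0)^2=4$. But at this point in the development only $F_+=F_\nu$ is known to be a Carath\'eodory function (it is the $\delta_0$-resolvent function of the cyclic half-line operator $A_+$). Analyticity of $F_+$ alone at a zero $z_0\in\D$ of $u$ only tells you that $v(z_0)+\sqrt{\Delta_A(z_0)^2-4}=0$, i.e.\ $v(z_0)^2=\Delta_A(z_0)^2-4$ --- but this identity already follows from $\det\bigl(B(z_0)^{-1}\cB_\vartheta(z_0)\bigr)=1$ whenever $u(z_0)=0$, so you have gained nothing. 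The assertion that the other branch is also analytic on $\D$ is established only in Lemma \ref{lem:DivisorCoinc}, and there only for matrices $A(\a,\tau)$ already known to arise from the functional model; the whole point of Section \ref{sec:DirectMCMV} is to show that an arbitrary $A\in\A_\per(\vz)$ admits such a representation (Proposition \ref{prop:DS1}), so invoking that analyticity here is circular.

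The paper's argument is purely algebraic and sidesteps this. To exclude zeros of $u$ off $\partial\D$ it uses the strict ORF inequalities $|\varphi_{p,\vartheta}|<|\varphi_{p,\vartheta}^*|$ on $\D$ and $|\varphi_{p,\vartheta}|>|\varphi_{p,\vartheta}^*|$ on $\overline\C\setminus\overline\D$ \cite[Corollary 3.1.4]{BuGoHe99}; since $u=0$ is equivalent to $\varphi_{p,\vartheta}=\varphi_{p,\vartheta}^*$, all zeros of $u$ lie on $\partial\D$. To push those zeros into the gaps it then shows $|\Delta_A(z_0)|\geq 2$ at any boundary zero $z_0$: the normalized transfer matrix $B(z_0)^{-1}\cB_\vartheta(z_0)$, conjugated by $R=\diag(i,1)$, lands in $\mathbb{SL}(2,\R)$ with vanishing $(2,1)$ entry (because $u(z_0)=0$), and any real unimodular upper-triangular matrix has trace of modulus at least $2$ by AM--GM. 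If you wish to rescue your Step~2 you would first need an independent proof, not relying on the functional model, that the second branch of \eqref{eq:quadraticCara} is analytic on $\D$ for a general periodic MCMV matrix --- which is essentially as much work as the paper's direct argument.
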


\begin{proof}
First we show that all zeros of $u$ lie on $\partial\D$. Since $|\varphi_{p,\vartheta}|<|\varphi_{p,\vartheta}^*|$ on $\D$ and $|\varphi_{p,\vartheta}|>|\varphi_{p,\vartheta}^*|$ on $\overline\C\setminus\overline{\D}$ (see \cite[Corollary 3.1.4.]{BuGoHe99}), the assertion $\varphi_{p,\vartheta}(z)=\varphi_{p,\vartheta}^*(z)$ (i.e., $u(z)=0$) implies that $z\in\partial\D$.


Next we show that, at a point $z_0\in\partial\D$ with $\varphi_{p,\vartheta}(z_0)=\varphi_{p,\vartheta}^*(z_0)$, we have that $|\tr\Delta_{A}(z_0)|\geq 2$. First observe that this holds for a  matrix $N\in\mathbb S\mathbb L(2,\mathbb R)$ with $N_{21} = 0$;
indeed, an application of the inequality of arithmetic and geometric means shows that $|\tr N|\geq 2\det N=2$.  Set $M=B(z_0)^{-1}M_\vartheta(z_0)$. Due to \eqref{def:TransferMatrix}, $M$ can be written as $M=Y_0UY_0^{-1}$ for some $U\in\mathbb S\mathbb U(1,1)$.  Viewed as fractional linear transformations, $Y_0$ maps the unit disc $\D$ into the right half plane $\mathbb H_+$ and $U$ preserves the unit disc; thus, conjugating $M$ further by
\begin{align*}
R = \begin{bmatrix}
i & 0 \\
0 & 1
\end{bmatrix},
\end{align*}
we can transform $M$ into an element of $\mathbb S\mathbb L(2,\mathbb R)$.  It remains to note that, due to \eqref{eq:Bmatrix}, $M_{21}$ is zero, and that this property (in addition to the determinant and the trace) is preserved under conjugation by $R$.

Since $\Delta_A$ is of degree $p$, there are exactly $p$ gaps of $\E$. As the square root in \eqref{eq:acMeasure} is analytically extended, it changes sign in every gap. In order to retain the positivity, the denominator must also admit a sign change and this implies there is a zero of $u(z)$ in every gap. Since $u(z)$ is of degree at most $p$, we find that there is exactly one zero in each gap.
\end{proof}

Inspired by the above lemma, let us define a divisor
\begin{align}\label{eq:divisor}
D=\bigl\{(x_j,\e_j)\bigr\}_{j=0}^g\in\cD(\E)
\end{align}
with $\{x_j\}$ accounting for the zeros of $u(z)$ in the open gaps and where $\e_j=1$ if $x_j$ is a mass point of $\nu$ and $-1$ otherwise.

\begin{proposition}\label{prop:uniqueDiv}
We can uniquely recover the measure $\nu$ from the divisor $D$ and the discriminant $\Delta_A$. Specifically, the absolutely continuous part of $\nu$ is given by \eqref{eq:acMeasure} and if $\e_j=1$, the point mass at $x_j$ has the weight
\begin{align}\label{eq:pointMass}
\nu(\{x_j\})=\frac{\sqrt{|\Delta_A(x_j)^2-4|}}{|u'(x_j)|}.
\end{align}
\end{proposition}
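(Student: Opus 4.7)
The plan is to split the proof into two parts: establishing the point-mass formula \eqref{eq:pointMass}, and then arguing that the pair $(\Delta_A, D)$ contains all the data appearing in the formulas \eqref{eq:acMeasure} and \eqref{eq:pointMass}. The ac part is already described by \eqref{eq:acMeasure}, so the main novelty to verify is the weight formula.

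First I would derive the key algebraic identity relating $v^2$ to $\Delta_A^2 - 4$. Using the already-established $\det(\cB_\vartheta) = B^2$ and $\tr(\cB_\vartheta) = B\Delta_A$, together with the identifications $\cB^\vartheta_{11} - \cB^\vartheta_{22} = Bv$ and $\cB^\vartheta_{21} = Bu/2$, the general $2\times 2$ relation $(\tr \cB_\vartheta)^2 - 4\det(\cB_\vartheta) = (\cB^\vartheta_{11} - \cB^\vartheta_{22})^2 + 4\cB^\vartheta_{12}\cB^\vartheta_{21}$ yields
\begin{align*}
\Delta_A(z)^2 - 4 - v(z)^2 = \frac{2u(z)\,\cB^\vartheta_{12}(z)}{B(z)}.
\end{align*}
At a zero $x_j$ of $u$ the right-hand side vanishes, so $v(x_j) = \pm\sqrt{\Delta_A(x_j)^2 - 4}$. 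This sign dichotomy is precisely what $\epsilon_j$ records: if the signs cancel in $v+\sqrt{\Delta_A^2-4}$ then $F_\nu$ extends analytically across $x_j$ (no mass, $\epsilon_j = -1$), while if they reinforce, $F_\nu$ has a simple pole at $x_j$ (mass, $\epsilon_j = 1$).

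For $\epsilon_j = 1$, I would apply the standard residue-to-mass identity $\nu(\{x_j\}) = -\tfrac{1}{2 x_j}\Res_{z=x_j} F_\nu(z)$ for Caratheodory functions (obtained by expanding $\tfrac{e^{it}+z}{e^{it}-z}$ near $z = x_j$). The reinforcing-sign case gives $\Res_{z=x_j} F_\nu = 2\sqrt{\Delta_A(x_j)^2-4}/u'(x_j)$, and using $|x_j|=1$ and positivity $\nu(\{x_j\})\geq 0$, absolute values produce exactly \eqref{eq:pointMass}.

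For the unique-recovery statement: the rational function $u$ has its $p$ zeros determined by $(\Delta_A, D)$ — the $g+1$ open-gap zeros are the $x_j$'s from $D$, and the remaining zeros sit at the critical points of $\Delta_A$ in closed gaps — while its poles lie at the reflections $z_j^\ast$ of the prescribed vector $\vz$. Hence $u$ is determined up to a scalar, pinned down by the normalization $F_\nu(0)=1$. The formulas \eqref{eq:acMeasure} and \eqref{eq:pointMass} then reconstruct $\nu$ completely. The main obstacle I anticipate is the bookkeeping with branches and signs: one must verify that the global branch of $\sqrt{\Delta_A^2-4}$ fixed by $F_\nu(0)=1$ is compatible with the sign of $v(x_j)$ implied by the label $\epsilon_j$, so that cancellation/reinforcement in the numerator of \eqref{eq:caratheordory} really matches the presence or absence of a point mass, and so that the residue computation produces a positive weight.
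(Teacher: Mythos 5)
Your proposal is correct and follows essentially the same route as the paper's proof: the identity $\det\cB_\vartheta = B^2$ forces $v(x_j)^2 = \Delta_A(x_j)^2 - 4$ at zeros of $u$, the sign decides whether $x_j$ carries mass, the weight is a residue computation, and $u$ is determined up to a scalar by its zeros (with poles fixed by $\vz$ and scale by normalization). The only difference is that you spell out the $2\times 2$ trace--determinant identity and the residue-to-mass formula explicitly, whereas the paper treats both as routine and leaves them implicit.
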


\begin{proof}
If $x_j$ is a zero of $u$, then -- using again that $\det\cB_\vartheta=B^2$ -- we see that $\Delta_A(x_j)^2-4=v(x_j)^2$. Hence the sign of the square root determines whether or not $x_j$ is a point mass of $\nu$. If the numerator in \eqref{eq:caratheordory} does not vanish, it is given by $2\sqrt{\Delta_A^2-4}$ and the weight of the point mass can be computed as in \eqref{eq:pointMass}.

Note that up to a multiplicative constant, $u$ is defined by its zeros. This follows from \eqref{eq:normalizationUV} and the fact that $\nu$ is a probability measure. Hence, $D$ and $\Delta_A$ determine $\nu$ uniquely.
\end{proof}

For finite-gap sets $\E$, Lemma \ref{lem:uzeros} (and the comment thereafter) defines a map from $\{A\in\A_\per(\vz): \sigma(A)=\E\}$ to $\cD(\E)$: namely, given $A \in \A_\per(\vz)$, we consider the associated half-line operator $A_+$, compute its Caratheodory function $F_\nu$ as \eqref{eq:caratheordory}, and find and label the zeros of $u(z)$ as \eqref{eq:divisor}.   Proposition \ref{prop:uniqueDiv} shows this map is one-to-one.  That this map is also onto is true in general, and will be shown for the special choice $\vz=\vz_\E$ in Section \ref{sec:proofs}.

\subsection{The structure of a general MCMV matrix}\label{sec:structure}

In this section we demonstrate the ``block-CMV'' nature of MCMV matrices as in \eqref{eq:Astructure}, as well as a structural stability under taking M\"obius transformations related to the points in the generating vector $\vz$.  This structure will be critical to understanding our main theorems; indeed, in light of viewing the discriminant as in \eqref{eq:DeltaERational}, the Magic Formula would be a complete mystery without developing some understanding of the structure of $b_{z_j}(A)$ for a general MCMV matrix $A \in \A(\vz)$.

Our analysis further illustrates the similarities between our MCMV matrices and their self-adjoint analog, GMP matrices.  First, GMP matrices are block-Jacobi; below, we show the band structure \eqref{eq:Astructure} of an MCMV matrix in Lemma \ref{thm:structureA}.  Additionally, one of the characteristic properties of GMP matrices is that they are stable under taking resolvents (cf. \cite[Definition 1.12]{Yud18}); the analogous statement for MCMV is Proposition \ref{lem:BlaschkeA}.   The consequences for the Magic Formula in the setting of \eqref{eq:DeltaERational} are the content of Theorem \ref{t:MCMVstructure}.  Finally, we use all of this structure to prove a uniqueness result for certain rational functions of our MCMV matrices in Proposition \ref{prop:DiscrCoincide}; this will be used to prove the Magic Formula in the end.

Let us again fix a vector $\vz\in\D^n$ having $z_0 = 0$, and recall that $D_0$ denotes the diagonal operator defined in \eqref{eq:perdiag} and $C$ denotes a general CMV matrix.  We begin by proving the block structure of an MCMV matrix $A$; up to conjugation by diagonal matrices, we may consider instead the simpler operator
\begin{align}
	\tilde A:=(1+CD_0^*)^{-1}(C+D_0).
\end{align}
Let us split $\tilde A$ into matrix blocks of size $2n\times 2n$ and denote the blocks by $\mathbf {\tilde{A}_{ij}}$.  For simplicity, we assume throughout this section that $\vz$ is such that $z_j\neq z_k$ for $j\neq k$.  Without this assumption, the block structure below will split into smaller blocks.

\begin{lemma}
\label{thm:structureA}
$\tilde A$ is band structured and $\mathbf {\tilde{A}_{ij}}=0$ if $|i-j|>1$. Moreover, there exists vectors $\mathbf{u^i},\mathbf{v^i}\in\C^{2n}$ such that
\begin{align}\label{eq:offDiagEntries}
\mathbf {\tilde{A}_{i-1,i}}=\mathbf{v^i}\delta_{2g+1}^\intercal,\quad \mathbf{\tilde{A}_{i,i+1}}=\mathbf{u^i}\delta_{0}^\intercal.
\end{align}
In particular,
\begin{align}\label{eq:offDiagfzero}
\tilde A_{0,2n}=
\bigg(
\frac{1}{\rho_{2n-1}}
\begin{bmatrix}
1& 0
\end{bmatrix}
\cU(-\overline{a_{2n-2}})
\begin{bmatrix}
1&0\\
0& -\overline{z_{n-1}}
\end{bmatrix}
\cdots
\begin{bmatrix}
1& 0\\
0& -\overline{z_1}
\end{bmatrix}
\cU(-\overline{a_1})
\begin{bmatrix}
-\frac{1}{\overline{z_1}}& 0\\
0& 1
\end{bmatrix}
\cU(-\overline{a_0})
\begin{bmatrix}
1\\
0
\end{bmatrix}
\bigg)^{-1}.
\end{align}
\end{lemma}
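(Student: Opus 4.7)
The proof rests on rewriting
\[
\tilde A = T^{-1}(M + L^*D_0), \qquad T := L^* + MD_0^*,
\]
using $C = LM$ and unitarity of $L$. In this form both factors on the right are tridiagonal. The structural input is that $D_0$ has paired zeros at positions $kp - 1, kp$ for every $k \in \Z$ (where $p = 2n$), a direct consequence of $z_0 = 0$ and the $2n$-periodicity of \eqref{eq:perdiag}.

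The first step is to show that $T$ is block-diagonal with respect to the partition $\mathcal{I}_k := [kp, (k+1)p - 1]$. Since the $L$-pairs $(2l, 2l+1)$ all lie inside single blocks, $L^*$ is block-diagonal; and while $M$ has straddling pairs $(kp-1, kp)$, the corresponding columns of $MD_0^*$ vanish because $d_{kp-1} = d_{kp} = 0$. Consequently $T$, and hence $T^{-1}$, are block-diagonal. In contrast, $(M + L^*D_0)$ differs from a block-diagonal matrix only by the two straddling entries $(M)_{kp-1, kp} = (M)_{kp, kp-1} = \rho_{kp-1}$ at each boundary. Multiplying $T^{-1}$ by $(M + L^*D_0)$ therefore yields $\tilde A$ in block-tridiagonal form with rank-one off-diagonal blocks: the contribution
\[
T^{-1}\bigl(\rho_{kp-1}\delta_{kp-1}\delta_{kp}^*\bigr) = \rho_{kp-1}(T^{-1}\delta_{kp-1})\,\delta_{kp}^*
\]
is supported only in the column corresponding to $\delta_{kp}$, giving $\mathbf{\tilde A_{k-1, k}} = \mathbf{u^{k-1}}\delta_0^\intercal$ with $\mathbf{u^{k-1}} = \rho_{kp-1}\,(T^{-1}\delta_{kp-1})|_{\mathcal I_{k-1}}$, and symmetrically $\mathbf{\tilde A_{k, k-1}} = \mathbf{v^k}\delta_{2n-1}^\intercal$.

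For the explicit formula \eqref{eq:offDiagfzero}, applying the above with $k = 1$ gives $\tilde A_{0, 2n} = \rho_{2n-1}\,(T|_{\mathcal I_0})^{-1}_{0, 2n-1}$, so it suffices to solve $T_0 w = \delta_{2n-1}$ for $w_0$. Writing out the $p$ row equations, each consecutive pair of rows $(2l, 2l+1)$ combines the $L^*$-pair $(2l, 2l+1)$ with the $MD_0^*$-pair $(2l+1, 2l+2)$ (with diagonal factor $\bar z_{l+1}$) into a $2\times 2$ linear relation between $(w_{2l}, w_{2l+1})$ and $(w_{2l+2}, w_{2l+3})$. Direct algebraic manipulation converts each such step into a factor of the form $\cU(-\overline{a_\cdot})\mathrm{diag}(1, -\overline{z_\cdot})$ appearing in \eqref{eq:offDiagfzero}. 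Iterating from the boundary row $2n - 1$ --- where the equation $\rho_{2n-2}w_{2n-2} - \overline{a_{2n-2}}w_{2n-1} = 1$ provides the outer $1/\rho_{2n-1}$ normalization --- down to row $0$ telescopes to the claimed product. The innermost anomalous factor $\mathrm{diag}(-1/\overline{z_1}, 1)$ arises from the different normalization required at the initial pair $(0, 1)$ due to $d_0 = 0$, and the outer inverse appears because $w_0$ equals the reciprocal of the scalar obtained by contracting the telescoped product with $[1, 0]^\intercal$ on the right and $[1, 0]$ on the left.

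The main obstacle is the careful bookkeeping in this final transfer-matrix calculation: aligning the $\cU$-factors and the $\mathrm{diag}(1, -\overline{z_\cdot})$ factors in the order prescribed by \eqref{eq:offDiagfzero}, verifying the anomalous innermost factor, and reconciling the outer $\rho_{2n-1}$-normalization with the boundary condition at row $2n-1$. The structural claims --- band-tridiagonality and rank-one off-diagonal blocks --- follow cleanly once the block-diagonality of $T$ is established.
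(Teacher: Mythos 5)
Your decomposition $\tilde A = T^{-1}(M + L^*D_0)$ with $T = L^* + MD_0^*$ is correct (factor $L^*$ out on the left of $1 + LMD_0^*$ and use $L^*L = 1$) and makes the block structure essentially self-evident: both $T$ and $M + L^*D_0$ are tridiagonal; $T$ is moreover block-diagonal with respect to $\mathcal I_k = [kp, (k+1)p-1]$ because both columns of the $M$-block straddling $\{kp-1,kp\}$ are annihilated by $D_0^*=0$ there, while $M + L^*D_0$ deviates from block-diagonal only in the two scalars $\rho_{kp-1}$ at each boundary. From this the rank-one off-diagonal blocks and the reduction $\tilde A_{0,2n} = \rho_{2n-1}(T|_{\mathcal I_0})^{-1}_{0,2n-1}$ follow cleanly. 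This is a genuinely different, and arguably tidier, organization than the paper's, which works row-pair by row-pair directly with the pentadiagonal operator $1 + CD_0^*$ and packages its coefficients into $2\times 2$ matrices $M_j$, $N_j$; factoring out $L^*$ trades that pentadiagonal system for a tridiagonal one, though the underlying observation (repeated zeros of $D_0$ decouple the recursion at block boundaries) is the same.

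For the explicit formula \eqref{eq:offDiagfzero}, however, your sketch contains a misattribution and a small index slip, and stops short of the actual computation. The boundary equation $\rho_{2n-2}w_{2n-2} - \overline{a_{2n-2}}w_{2n-1} = 1$ contains no $\rho_{2n-1}$ at all; that factor enters once and for all through the coupling entry $M_{2n-1,2n}=\rho_{2n-1}$, which you have already absorbed in the identity $\tilde A_{0,2n}=\rho_{2n-1}(T|_{\mathcal I_0})^{-1}_{0,2n-1}$, so the transfer-matrix iteration must produce $1/w_0$ equal to the scalar product in \eqref{eq:offDiagfzero} \emph{without} the prefactor $1/\rho_{2n-1}$. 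Also, since rows $(2l, 2l+1)$ of $T$ involve $w_{2l-1},\dots,w_{2l+2}$, the natural two-step transfer relation carries $(w_{2l-1},w_{2l})$ to $(w_{2l+1},w_{2l+2})$, not $(w_{2l},w_{2l+1})$ to $(w_{2l+2},w_{2l+3})$ as written. These are fixable, but the $2\times 2$ algebra that actually produces the alternating $\cU(-\overline{a_\cdot})$ and diagonal factors in \eqref{eq:offDiagfzero} --- which you rightly flag as the main bookkeeping obstacle --- is where nearly all of the paper's proof is spent and is not carried out here.
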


\begin{proof}
Let\begin{align*}
u=(1+CD_0^*)^{-1}(C+D_0)\delta_k
\end{align*}
and note that $u$ satisfies the recursion relation
\begin{align}\label{eq:recf1}
(1+C\DD^*)u=(C+\DD)\delta_k.
\end{align}
The left-hand side of this identity can be written as
\begin{align}
\begin{bmatrix}
u_{2j} \\
u_{2j+1}
\end{bmatrix}
& +\overline{z_j}
\begin{bmatrix}
\overline{a_{2j}} \\
\rho_{2j}
\end{bmatrix}
\begin{bmatrix}
\rho_{2j-1} & -a_{2j-1}
\end{bmatrix}
\begin{bmatrix}
u_{2j-1} \\
u_{2j}
\end{bmatrix}
+\overline{z_{j+1}}
\begin{bmatrix}
\rho_{2j} \\
-a_{2j}
\end{bmatrix}
\begin{bmatrix}
\overline{a_{2j+1}} & \rho_{2j+1}
\end{bmatrix}
\begin{bmatrix}
u_{2j+1} \\
u_{2j+2}
\end{bmatrix}.
\label{eq:fRecursion1}
\end{align}
Introduce the matrices
\begin{align*}
M_j=\begin{bmatrix}
1 \\
0
\end{bmatrix}
\begin{bmatrix}
0 & 1
\end{bmatrix}
+\overline{z_j}
\begin{bmatrix}
\overline{a_{2j}} \\
\rho_{2j}
\end{bmatrix}
\begin{bmatrix}
\rho_{2j-1} & -a_{2j-1}
\end{bmatrix}
\end{align*}
and
\begin{align*}
N_j=-
\begin{bmatrix}
0 \\
1
\end{bmatrix}
\begin{bmatrix}
1 & 0
\end{bmatrix}
-\overline{z_{j+1}}
\begin{bmatrix}
\rho_{2j} \\
-a_{2j}
\end{bmatrix}
\begin{bmatrix}
\overline{a_{2j+1}} & \rho_{2j+1}
\end{bmatrix}
\end{align*}
so that \eqref{eq:fRecursion1} becomes
\begin{align*}
M_j
\begin{bmatrix}
u_{2j-1} \\
u_{2j}
\end{bmatrix}
& -
N_j
\begin{bmatrix}
u_{2j+1} \\
u_{2j+2}
\end{bmatrix}.
\end{align*}
Since $z_{kn}=0$, we see that for $j=kn-1$ and $j=kn$ this becomes
\begin{align*}
M_{kn-1}
\begin{bmatrix}
u_{2kn-3}\\
u_{2kn-2}
\end{bmatrix}
+
\begin{bmatrix}
0\\
u_{2kn-1}
\end{bmatrix}
\quad\text{and}\quad
\begin{bmatrix}
u_{2kn}\\
0
\end{bmatrix}
-N_{kn}
\begin{bmatrix}
u_{2kn+1}\\
u_{2kn+2}
\end{bmatrix},
\end{align*}
respectively. Hence we see that the recursion for the blocks $\{u_{2kn},\dots,u_{2(k+1)n-1} \}$ is decoupled. The finite band structure and \eqref{eq:offDiagEntries} is now a direct consequence of the structure of CMV matrices.

It remains to prove \eqref{eq:offDiagfzero}.
Let now $u=(1+C\DD^*)^{-1}(C+\DD)\delta_{2n}$. We are interested in the block $\{u_0,\dots, u_{2n-1}\}$. In this case, \eqref{eq:recf1} leads to
\begin{align*}
\begin{bmatrix}
0\\
0
\end{bmatrix}
=
\begin{bmatrix}
u_0\\
0
\end{bmatrix}
-N_0
\begin{bmatrix}
u_1\\
u_2
\end{bmatrix},
\quad
M_{n-1}
\begin{bmatrix}
u_{2n-3}\\
u_{2n-2}
\end{bmatrix}
+
\begin{bmatrix}
0\\
u_{2n-1}
\end{bmatrix}
=\begin{bmatrix}
\rho_{2n-1}\rho_{2n-2}\\
-\rho_{2n-1}a_{2n-2}
\end{bmatrix}
\end{align*}
and for $1\leq j<n-1$,
\begin{align*}
\begin{bmatrix}
0\\
0
\end{bmatrix}
=
M_j
\begin{bmatrix}
u_{2j-1} \\
u_{2j}
\end{bmatrix}
-
N_j
\begin{bmatrix}
u_{2j+1} \\
u_{2j+2}
\end{bmatrix}.
\end{align*}
Iterating this leads to
\begin{align*}
\begin{bmatrix}
u_0\\
0
\end{bmatrix}
=
N_0M_1^{-1}N_1\cdots N_{n-2}M_{n-1}^{-1}
\bigg(
\begin{bmatrix}
\rho_{2n-1}\rho_{2n-2}\\
-\rho_{2n-1}a_{2n-2}
\end{bmatrix}
-
\begin{bmatrix}
0\\
u_{2n-1}
\end{bmatrix}
\bigg).
\end{align*}
Using the fact that
\begin{align*}
M_0\begin{bmatrix}
u_{2n-1}\\
u_0
\end{bmatrix}
=
\begin{bmatrix}
u_0\\
0
\end{bmatrix} \; \mbox{ and } \;
N_{n-1}\begin{bmatrix}
u_{2n-1}\\
u_0
\end{bmatrix}
=
\begin{bmatrix}
0\\
u_{2n-1}
\end{bmatrix},
\end{align*}
we arrive at
\begin{align*}
\begin{bmatrix}
\rho_{2n-1}\rho_{2n-2}\\
-\rho_{2n-1}a_{2n-2}
\end{bmatrix}
=
\bigg(M_{n-1}N_{n-2}^{-1}M_{n-2}\cdots N_0^{-1}M_0
-	
N_{n-1}
\bigg)
\begin{bmatrix}
u_{2n-1}\\
u_0
\end{bmatrix}.
\end{align*}
It is straightforward to see that
\begin{align*}
N_j^{-1}&=\frac{-1}{\rho_{2j}\rho_{2j+1}}\bigg(\frac{1}{\overline{z_{j+1}}}\begin{bmatrix}
0\\
1
\end{bmatrix}
\begin{bmatrix}
1& 0
\end{bmatrix}
+
\begin{bmatrix}
\rho_{2j+1}\\
-\overline{a_{2j+1}}
\end{bmatrix}
\begin{bmatrix}
a_{2j}& \rho_{2j}
\end{bmatrix}
\bigg).
\end{align*}
Let
\begin{align*}
M_{n-1}N_{n-2}^{-1}M_{n-2}\cdots N_0^{-1}=C=\begin{bmatrix}
c_{11}& c_{12}\\
c_{21}& c_{22}
\end{bmatrix}.
\end{align*}
Due to the simple structure of $M_0$ and $N_{n-1}$, it follows that
\begin{align*}
u_0=\frac{\rho_{2n-2}\rho_{2n-1}}{c_{11}}
\end{align*}
and hence it suffices to study
$
C\left[\begin{smallmatrix}
1\\
0
\end{smallmatrix}\right].
$
A direct computation shows that
\begin{align*}
N_0^{-1}\begin{bmatrix}
1\\
0
\end{bmatrix}
=
\frac{-1}{\rho_0\rho_1}
\begin{bmatrix}
0& \rho_1\\
1& -\overline{a_1}
\end{bmatrix}
\begin{bmatrix}
\frac{1}{\overline{z_1}}& 0\\
0& 1
\end{bmatrix}
\begin{bmatrix}
1\\
a_0
\end{bmatrix}
\end{align*}
and combined with the identities
\begin{align*}
M_j\begin{bmatrix}
0& \rho_{2j-1}\\
1& -\overline{a_{2j-1}}
\end{bmatrix}
=\begin{bmatrix}
1& \overline{a_{2j}}\\
0& \rho_{2j}
\end{bmatrix}
\begin{bmatrix}
1& 0\\
0&\overline{z_j}
\end{bmatrix}
\begin{bmatrix}
1&-\overline{a_{2j-1}}\\
-a_{2j-1}& 1
\end{bmatrix}
\end{align*}
and
\begin{align*}
N_j^{-1}\begin{bmatrix}
1& \overline{a_{2j}}\\
0& \rho_{2j}
\end{bmatrix}
=
\frac{-1}{\rho_{2j}\rho_{2j+1}}
\begin{bmatrix}
0& \rho_{2j+1}\\
1& -\overline{a_{2j+1}}
\end{bmatrix}
\begin{bmatrix}
\frac{1}{\overline{z_{j+1}}}& 0\\
0& 1
\end{bmatrix}
\begin{bmatrix}
1& \overline{a_{2j}}\\
a_{2j}& 1
\end{bmatrix},
\end{align*}
this allows us to iterate the procedure. It only remains to comment on the sign and for this we note that
\begin{align*}
-\begin{bmatrix}
\frac{1}{\overline{z_{j+1}}}& 0\\
0& 1
\end{bmatrix}
\begin{bmatrix}
1& \overline{a_{2j}}\\
a_{2j}& 1
\end{bmatrix}
\begin{bmatrix}
1& 0\\
0&\overline{z_j}
\end{bmatrix}
=
\begin{bmatrix}
\frac{-1}{\overline{z_{j+1}}}& 0\\
0& 1
\end{bmatrix}
\begin{bmatrix}
1& -\overline{a_{2j}}\\
-a_{2j}& 1
\end{bmatrix}
\begin{bmatrix}
1& 0\\
0&-\overline{z_j}
\end{bmatrix}.
\end{align*}
Therefore,
\begin{align*}
C\begin{bmatrix}
1\\
0
\end{bmatrix}
=
\frac{1}{\rho_0}
\begin{bmatrix}
1& \overline{a_{2n-2}}\\
0&\rho_{2n-2}
\end{bmatrix}
\begin{bmatrix}
1&0\\
0& \overline{z_{n-1}}
\end{bmatrix}
\dots
\begin{bmatrix}
1& 0\\
0& -\overline{z_1}
\end{bmatrix}
\cU(-\overline{a_1})
\begin{bmatrix}
-\frac{1}{\overline{z_1}}& 0\\
0& 1
\end{bmatrix}
\begin{bmatrix}
1\\
a_0
\end{bmatrix}
\end{align*}
and hence
\begin{align*}
\begin{bmatrix}
1& 0
\end{bmatrix}
C
\begin{bmatrix}
1\\
0
\end{bmatrix}
=
\rho_{2n-2}
\begin{bmatrix}
1& 0
\end{bmatrix}
\cU(-\overline{a_{2n-2}})
\begin{bmatrix}
1&0\\
0& -\overline{z_{n-1}}
\end{bmatrix}
\dots
\begin{bmatrix}
1& 0\\
0& -\overline{z_1}
\end{bmatrix}
\cU(-\overline{a_1})
\begin{bmatrix}
-\frac{1}{\overline{z_1}}& 0\\
0& 1
\end{bmatrix}
\cU(-\overline{a_0})
\begin{bmatrix}
1\\
0
\end{bmatrix}.
\end{align*}
This concludes the proof.
\end{proof}

A key feature of the MCMV structure is its stability under M\"obius transformations.  Following the notation of Appendix \ref{sec:operatorMoebius}, 
we can write the operator M\"obius transform defined in \eqref{eq:VelazquezMCMV} as
\begin{align*}
b_{S}(C)=\eta_S(1-CS^*)^{-1}(C-S)\eta_S^{-1}=\Phi_U(C),
\end{align*}
where
\begin{align*}
U=\begin{bmatrix}
1 & -S^* \\
-S & 1
\end{bmatrix}
\begin{bmatrix}
\eta_S^{-1} & 0           \\
0           & \eta_S^{-1}
\end{bmatrix}.
\end{align*}
In particular, this holds for $S=z_j1$, where $1$ denotes the identity matrix in $\ell^2$, i.e., $b_{z_j}(A)$ is the standard Blaschke factor evaluated at $A\in\overline{\D_{\ell^2}}$:
\begin{align}
b_{z_j}(A) = (1-\overline{z_j}A)^{-1}(A-z_j).
\end{align}
In addition to the usual diagonal operator $D_0$, we likewise define shifted diagonal operators
\begin{align}
D_j=(1-z_j\DD^*)^{-1}(\DD-z_j) \; \mbox{ and } \; V_j=\sqrt{(1-\overline{z_j}\DD)^{-1}(1-z_j\DD^*)}.
\end{align}
With this notation, we can explicitly describe how the Blaschke factors associated to the generating vector $\vz$ ``shift'' MCMV matrices:
\begin{proposition}\label{lem:BlaschkeA}
Let $D_j$ and $V_j$ be defined as above.  Then for any $C\in\overline{\D_{\ell^2}}$, we have
\begin{align}
b_{z_j}\bigl(b_{\minus \DD}(C)\bigr)=V_jb_{\minus D_j}(C)V_j.
\end{align}
\end{proposition}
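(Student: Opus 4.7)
The statement is an algebraic identity that one can establish by reducing both sides to the same explicit expression in $C$, $D_0$, and $D_j$. The key observation is that all of the relevant diagonal operators — $D_0$, $D_j$, $V_j$, $\eta_{D_0}$, and $\eta_{D_j}$ — commute with one another (though not with $C$), so they may be freely rearranged in products provided one never moves them past a factor of $C$.

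First I would record two algebraic identities, which follow immediately from the definition $D_j = (1-z_j D_0^*)^{-1}(D_0 - z_j)$ together with diagonal commutativity:
\begin{align*}
	D_0 - z_j = D_j(1-z_j D_0^*), \qquad D_0^* - \overline{z_j} = D_j^*(1-\overline{z_j}D_0).
\end{align*}
Next, setting $A := b_{\minus D_0}(C) = \eta_{D_0}(1+CD_0^*)^{-1}(C+D_0)\eta_{D_0}^{-1}$ and expanding $1 - \overline{z_j}A$ and $A - z_j$ by pulling out the common outer factors $\eta_{D_0}(1+CD_0^*)^{-1}$ on the left and $\eta_{D_0}^{-1}$ on the right, one substitutes the two relations above to identify $D_j$ and $D_j^*$ in the middle terms. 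The quotient collapses the interior factor $(1+CD_0^*)$, yielding
\begin{align*}
	b_{z_j}(A) = \eta_{D_0}(1-\overline{z_j}D_0)^{-1}(1+CD_j^*)^{-1}(C+D_j)(1-z_j D_0^*)\eta_{D_0}^{-1}.
\end{align*}

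Finally, I would massage $V_j b_{\minus D_j}(C) V_j$ into the same form. Since $V_j$ and $\eta_{D_j}$ are diagonal they may be absorbed into outer factors $V_j \eta_{D_j}$ and $\eta_{D_j}^{-1} V_j$. A short computation using $\eta_{D_j}^2 = 1 - D_j D_j^*$ gives
\begin{align*}
	\eta_{D_j}^2 = \frac{(1-|z_j|^2)\,\eta_{D_0}^2}{(1-\overline{z_j}D_0)(1-z_j D_0^*)},
\end{align*}
and combined with the definition of $V_j$ one finds
\begin{align*}
	V_j\eta_{D_j} = \sqrt{1-|z_j|^2}\,\eta_{D_0}(1-\overline{z_j}D_0)^{-1}, \qquad \eta_{D_j}^{-1}V_j = (1-z_j D_0^*)\bigl(\sqrt{1-|z_j|^2}\,\eta_{D_0}\bigr)^{-1}.
\end{align*}
The scalar $\sqrt{1-|z_j|^2}$ cancels between these two factors and the resulting expression coincides with the formula derived for $b_{z_j}(A)$.

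The main obstacle, such as it is, is careful bookkeeping: one must resist commuting diagonal operators past $C$, and one must fix a convention for the operator square root $V_j = \sqrt{(1-\overline{z_j}D_0)^{-1}(1-z_j D_0^*)}$. Entrywise the quantity $(1-\overline{z_j}D_0)^{-1}(1-z_j D_0^*) = \overline{(1-\overline{z_j}D_0)}/(1-\overline{z_j}D_0)$ is a diagonal unitary, and the natural principal-branch choice $V_j = (1-z_j D_0^*)|1-\overline{z_j}D_0|^{-1}$ makes the simplifications above unambiguous. Conceptually, the identity simply reflects that the composition of the scalar Möbius transform $b_{z_j}$ with the operator Möbius transform $b_{\minus D_0}$ is again an operator Möbius transform, which is what one would expect from the Potapov/Blaschke--Potapov framework already invoked earlier in the paper.
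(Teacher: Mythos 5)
Your proof is correct, and it is carried out by direct algebra rather than by the route the paper takes. The paper (see Section \ref{sec:structure}) invokes the operator M\"obius transform formalism from Appendix \ref{sec:operatorMoebius}: it writes $b_{z_j}\bigl(b_{\minus D_0}(C)\bigr)=\Phi_{U_1}\bigl(\Phi_{U_2}(C)\bigr)=\Phi_{U_2U_1}(C)$ for explicit $\mj$-unitary matrices $U_1$ (scalar Blaschke) and $U_2$ (operator Blaschke for $D_0$), and then computes the product $\eta_j U_2 U_1$ and re-factors it as $\left[\begin{smallmatrix}1 & D_j^*\\ D_j & 1\end{smallmatrix}\right]\left[\begin{smallmatrix}\eta_{D_j}^{-1} & 0\\ 0 & \eta_{D_j}^{-1}\end{smallmatrix}\right]\left[\begin{smallmatrix}V_j & 0\\ 0 & V_j^{-1}\end{smallmatrix}\right]$, which identifies $\Phi_{U_2U_1}(C)$ as $V_j b_{\minus D_j}(C)V_j$. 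Your approach instead expands $1-\overline{z_j}A$ and $A-z_j$ directly, cancels the inner $(1+CD_0^*)$ factor, and independently simplifies $V_j b_{\minus D_j}(C)V_j$. Both routes feed on exactly the same two ingredients: the factorizations $D_0-z_j = D_j(1-z_jD_0^*)$, $D_0^*-\overline{z_j}=D_j^*(1-\overline{z_j}D_0)$ and the $\eta$ identity $\eta_{D_j}^2(1-\overline{z_j}D_0)(1-z_jD_0^*)=\eta_j^2\eta_{D_0}^2$, which is the operator version of \eqref{eq:etaRelation}. What the paper buys with its formalism is a one-line conceptual reason (composition of M\"obius transforms is a M\"obius transform, and $\mj$-unitaries are closed under multiplication); what your version buys is elementarity, since it needs none of Appendix B, and a more careful treatment of the branch of $V_j$, which the paper leaves implicit. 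Your normalization $V_j=(1-z_jD_0^*)\,|1-\overline{z_j}D_0|^{-1}$ is indeed the choice that makes both halves of the computation go through without stray unimodular factors.
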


\begin{proof}
Let
\begin{align*}
U_1=\begin{bmatrix}
1    & -\overline{z_j} \\
-z_j & 1
\end{bmatrix} \; \mbox{ and } \;
U_2=
\begin{bmatrix}
1 & \DD^* \\
\DD & 1
\end{bmatrix}
\begin{bmatrix}
\eta_{\DD}^{-1} & 0           \\
0           & \eta_{\DD}^{-1}
\end{bmatrix}.
\end{align*}
Then we have
\begin{align*}
b_{z_j}\bigl(b_{\minus \DD}(C)\bigr)=\Phi_{U_1}(\Phi_{U_2}(C))=\Phi_{U_2U_1}(C).
\end{align*}
Recalling that $\eta_j=\sqrt{1-|z_j|^2}$, it is straightforward to see that
\begin{multline*}
\qquad \qquad \eta_j^{-1}U_2U_1=\begin{bmatrix}
1                               & (\DD^*-\overline{z_j})(1-\overline{z_j}\DD)^{-1} \\
(\DD-z_j)(1-z_j\DD^*)^{-1} & 1
\end{bmatrix} \\
\times \begin{bmatrix}
\eta_j^{-1}\eta_{\DD}^{-1}(1-z_j\DD^*) & 0                                          \\
0                                & \eta_j^{-1}\eta_{\DD}^{-1}(1-\overline{z_j}\DD)
\end{bmatrix}. \qquad\qquad
\end{multline*}
Due to \eqref{eq:etaRelation}, we have
\begin{align*}
(1-\overline{z_j}\DD)(1-z_j\DD^*)\eta_{D_j}^2=\eta_j^2\eta_{\DD}^2
\end{align*}
and hence
\begin{align*}
\eta_jU_2U_1=\begin{bmatrix}
1   & D_j^* \\
D_j & 1
\end{bmatrix}
\begin{bmatrix}
\eta_{D_j}^{-1} & 0               \\
0               & \eta_{D_j}^{-1}
\end{bmatrix}
\begin{bmatrix}
V_j & 0        \\
0   & V_j^{-1}
\end{bmatrix}.
\end{align*}
This concludes the proof.
\end{proof}

\begin{remark}
So far we haven't used that $z_j \in \vz$; however, this assumption is important to retain the banded structure of an MCMV matrix.  In particular, since $b_{z_j}(z_j)=0$, applying $b_{z_j}$ to $A$ shifts the zeros in $D_0$ by $2j$.  It follows that $S^{-2j}b_{z_j}(A)S^{2j}$ is again MCMV-structured (with a new generating vector).

\end{remark}

We have analyzed the block structure and Blaschke shifts of MCMV matrices in order to understand the Magic Formula in the context of the representation \eqref{eq:DeltaERational}.  Critical to this understanding is computing off-diagonal blocks of selfadjoint operators of the form $\Re(c_j b_{z_j}(A))$ (cf. \eqref{eq:MagicFormBlock}); this is essentially the content of the final theorem of this section:


\begin{theorem}\label{t:MCMVstructure}
Let $\vz$ be such that $z_j\neq z_k$ for $j\neq k$ and $A \in \A(\vz)$. Then
\begin{multline}\label{eq:diagonal1}
\bigl(b_{z_k}(A)\bigr)_{2(n+k),2k}=e^{i\vartheta}\tr\bigg(\begin{bmatrix}
1& 0\\
0& 0
\end{bmatrix}
U(a_{2k})
\begin{bmatrix}
b_{z_{k+1}}(z_k^*)& 0\\
0& 1
\end{bmatrix}
\cU(a_{2k+1})
\cdots \\
\cdots \begin{bmatrix}
1& 0\\
0& b_{z_{k-1}}(z_k^*)^{-1}
\end{bmatrix}
\cU(a_{2(n+k)-2})
\begin{bmatrix}
1& 0\\
0& 0
\end{bmatrix}
U(a_{2(n+k)-1})
\bigg)^{-1}.
\end{multline}
In particular,
\begin{align}\label{eq:diagonal2}
A_{2n,0}=e^{i\vartheta}\tr\bigg(\begin{bmatrix}
1& 0\\
0& 0
\end{bmatrix}
U(a_0)
\begin{bmatrix}
b_{z_1}(\i)& 0\\
0& 1
\end{bmatrix}
\cU(a_1)
\cdots
\begin{bmatrix}
1& 0\\
0& b_{z_{n-1}}(\i)^{-1}
\end{bmatrix}
\cU(a_{n-2})
\begin{bmatrix}
1& 0\\
0& 0
\end{bmatrix}
U(a_{n-1})
\bigg)^{-1}.
\end{align}
\end{theorem}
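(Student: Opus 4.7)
The strategy is to reduce the claim to an application of Lemma~\ref{thm:structureA} via Proposition~\ref{lem:BlaschkeA}. Since $\Lambda(\vartheta)$ is diagonal, it commutes with the scalar M\"obius transform $b_{z_k}(\,\cdot\,)$, so Proposition~\ref{lem:BlaschkeA} gives
\[
b_{z_k}(A)=\Lambda(\vartheta)^*\,V_k\,b_{\minus D_k}(C)\,V_k\,\Lambda(\vartheta).
\]
Because $\vz$ has distinct entries and $(\DD)_{mm}=z_k$ precisely at $m\in\{2k-1,2k\}+2n\Z$, the diagonal factor $V_k$ equals unity at both $m=2k$ and $m=2(n+k)$. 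The $\Lambda(\vartheta)$ contributions at these two positions leave a net phase whose reciprocal is the $e^{i\vartheta}$ prefactor in the statement, so the problem reduces to computing the single entry $\bigl[b_{\minus D_k}(C)\bigr]_{2(n+k),2k}$.

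The remark following Proposition~\ref{lem:BlaschkeA} now implies that $S^{-2k}b_{\minus D_k}(C)S^{2k}$ has the standard MCMV block structure, with block boundaries at $\{-1,0\}+2n\Z$, because $D_k$'s zero diagonal entries sit precisely at $\{2k-1,2k\}+2n\Z$. After this $S^{2k}$-shift, the entry of interest lands at the corner position $(2n,0)$ of an off-diagonal $2n\times 2n$-block, precisely the position computed by formula~\eqref{eq:offDiagfzero} in Lemma~\ref{thm:structureA}. Re-running that proof's recursive argument for $b_{\minus D_k}(C)$, with the decoupling driven by the pair $(D_k)_{2k-1,2k-1}=(D_k)_{2k,2k}=0$, gives the entry as the reciprocal of a $2\times 2$ transfer-matrix product. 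Its $\cU$-factors carry the Verblunsky coefficients $\{a_{2k},\dots,a_{2(n+k)-1}\}$ of one full period starting at $2k$, while its diagonal factors are the nonzero values of $D_k$ on this block, which are unimodular multiples of $b_{z_k}(z_l)$ for $z_l\in\vz\setminus\{z_k\}$.

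It remains to rewrite the product in the form~\eqref{eq:diagonal1}. Standard Blaschke-factor identities (notably $b_w(z^*)=1/\overline{b_w(z)}$) convert the ``$-\overline{z_j}$''- and ``$-1/\overline{z_j}$''-entries of~\eqref{eq:offDiagfzero} into the $b_{z_l}(z_k^*)^{-1}$- and $b_{z_l}(z_k^*)$-entries appearing in~\eqref{eq:diagonal1}; the signs and conjugations in the $\cU(-\overline{a_j})$'s of~\eqref{eq:offDiagfzero} are absorbed using matrix-transpose identities together with the cyclic property of the trace. The endpoint vectors $[1,0]$ and $\left[\begin{smallmatrix}1\\0\end{smallmatrix}\right]$ in~\eqref{eq:offDiagfzero}, combined with the boundary $\rho^{-1}$ normalizations, collapse into the rank-one projectors $\left[\begin{smallmatrix}1&0\\0&0\end{smallmatrix}\right]$ flanked by the unnormalized endpoint factors $U(a_{2k})$ and $U(a_{2(n+k)-1})$ via $\cU(a)=\rho^{-1}U(a)$ and the identity $[1,0]X\left[\begin{smallmatrix}1\\0\end{smallmatrix}\right]=\tr\bigl(\left[\begin{smallmatrix}1&0\\0&0\end{smallmatrix}\right]X\bigr)$, which yields the trace formulation. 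Specializing to $k=0$, where $z_k^*=\infty$ and the Blaschke factors specialize to $b_{z_l}(\infty)$, recovers~\eqref{eq:diagonal2}.

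The main obstacle is the careful bookkeeping: tracking the cyclic relabeling $a_j\leftrightarrow a_{j+2k}$ induced by the $S^{2k}$-shift, assigning each $b_{z_l}(z_k^*)$-factor to the correct pole $z_l$ along the transfer product, and matching the ``upper-left to lower-right-inverse'' transition of the $b$-matrix form in~\eqref{eq:diagonal1} against the corresponding asymmetry in Lemma~\ref{thm:structureA}'s product. This asymmetry reflects the opposite roles of the two endpoints of the shifted off-diagonal block and requires a careful sign/phase analysis to reconcile.
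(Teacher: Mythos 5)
Your proposal matches the paper's proof in all essential respects: both reduce to the $k=0$ base case by Proposition~\ref{lem:BlaschkeA}, identify the relevant entry with the corner entry of the off-diagonal block computed in Lemma~\ref{thm:structureA} (formula~\eqref{eq:offDiagfzero}), rewrite the bra-ket form $[1,0]\,X\,\left[\begin{smallmatrix}1\\0\end{smallmatrix}\right]$ as a trace against rank-one projectors, and then use $\tr(N)=\tr(\mj N^\intercal\mj)$ together with $\mj\,\cU(-\overline a)^\intercal\mj=\cU(a)$ to massage the product into the form~\eqref{eq:diagonal1}--\eqref{eq:diagonal2}. The only cosmetic difference is order of operations — you shift via $S^{2k}$ first and then compute, while the paper computes the $k=0$ entry and then shifts — so this is the same proof.
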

\begin{proof}
Equation \eqref{eq:diagonal1} follows from shifting \eqref{eq:diagonal2} by Lemma \ref{lem:BlaschkeA}; thus, we only need to prove \eqref{eq:diagonal2}.  Note that
\begin{align*}
\tilde A_{2n,0}&=\tr\bigg(
\frac{1}{\rho_{2n-1}}
\begin{bmatrix}
1& 0\\
0& 0
\end{bmatrix}
\cU(-\overline{a_{2n-2}})
\begin{bmatrix}
1&0\\
0& -\overline{z_{n-1}}
\end{bmatrix}
\cdots  \\
& \qquad\qquad \cdots
\begin{bmatrix}
1& 0\\
0& -\overline{z_1}
\end{bmatrix}
\cU(-\overline{a_1})
\begin{bmatrix}
-\frac{1}{\overline{z_1}}& 0\\
0& 1
\end{bmatrix}
\cU(-\overline{a_0})
\begin{bmatrix}
1& 0\\
0& 0
\end{bmatrix}
\bigg)^{-1}\\
&=
\tr\bigg(
U(-\overline{a_{2n-1}})
\begin{bmatrix}
1& 0\\
0& 0
\end{bmatrix}
\cU(-\overline{a_{2n-2}})
\begin{bmatrix}
1&0\\
0& -\overline{z_{n-1}}
\end{bmatrix}
\cdots \\
& \qquad\qquad \cdots
\begin{bmatrix}
1& 0\\
0& -\overline{z_1}
\end{bmatrix}
\cU(-\overline{a_1})
\begin{bmatrix}
-\frac{1}{\overline{z_1}}& 0\\
0& 1
\end{bmatrix}
\cU(-\overline{a_0})
\begin{bmatrix}
1& 0\\
0& 0
\end{bmatrix}
\bigg)^{-1}.
\end{align*}
Since $\tr(N)=\tr(\mj N^\intercal \mj)$, we obtain \eqref{eq:diagonal2} by inserting $\mj^2$ between all factors.
\end{proof}

For the final result of this section, the adaptation of the proof for repeated points in the vector $\vz$ is not straightforward, so we will drop our assumption of distinct $z_j$'s at this place.  By reordering the entries of $\vz$, if needed, we can assume that the entries of $\vz$ with higher multiplicity are ordered consecutively; if $z_j=z_{j+1}$ we will denote both by $z_j$.  For a given vector $\vz$, suppose there are $m \leq n$ distinct entries $z_j$, and let $m_j$ denote the multiplicity with which the point $z_j$ appears in $\vz$, such that $m_0+\dots+m_{m-1}=n$. We call a rational function $\Delta$ \emph{suitable} for $\vz$ if it is of the form
\[
\Delta(z)=c+\sum_{j=0}^{m-1}\sum_{i=1}^{m_j}\bigl(c_{ij}b_{z_j}(z)^i+\overline{c_{ij}}b_{z_j}(z)^{-i}\bigr).
\]
The following result will be used in the proof of Theorem \ref{t:magicformula} and relies fundamentally on the structure of MCMV matrices obtained in Theorem \ref{t:MCMVstructure}.
\begin{proposition}\label{prop:DiscrCoincide}
Let $A\in\A(\vz)$ and suppose $\Delta$ is a rational functions which is suitable for $\vz$. Then $\Delta(A)=0$ implies that $\Delta(z)=0$.
\end{proposition}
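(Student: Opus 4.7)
The plan is to read off the coefficients of $\Delta$ by inspecting far off-diagonal entries of $\Delta(A)$, exploiting the band structure of Lemma \ref{thm:structureA} and the shifted MCMV structure of $b_{z_j}(A)$ given by Proposition \ref{lem:BlaschkeA}.

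First, I would observe that $b_{z_j}(A)$ has band width $2n$ and, after conjugation by the appropriate shift, is MCMV-structured with the ``seam'' shifted by $2j$; in particular, its outermost nonvanishing entries in the upper triangle occupy the positions $(2j + 2nk, 2j + 2n(k+1))$, $k\in\Z$. Iterating, $b_{z_j}(A)^i$ has band width $2ni$, and its outermost upper-triangular entries at $(2j + 2nk, 2j + 2n(k+i))$ are given by a product of $i$ consecutive outermost entries of $b_{z_j}(A)$. By Theorem \ref{t:MCMVstructure}, each such factor is a nonzero trace of the form \eqref{eq:diagonal1}, so these outermost entries are themselves nonzero. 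The analogous statements hold for $b_{z_j}(A)^{-i}$ in the lower triangle, since $b_{z_j}(A)$ is unitary.

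With this in hand, set $M := \max_j m_j$ and proceed by descending induction on $i$. At the matrix position $(2j, 2j + 2nM)$ of $\Delta(A)$ the constant $c$ contributes nothing (it is diagonal), the terms $b_{z_{j'}}(A)^{-i'}$ vanish above the diagonal, operators $b_{z_{j'}}(A)^{i'}$ with $i' < M$ have insufficient band width to reach this distance, and for $j' \neq j$ the outermost entries of $b_{z_{j'}}(A)^M$ lie in the residue class $2j' \pmod{2n}$ and are therefore zero at this position. Thus only $c_{M,j}\bigl(b_{z_j}(A)^M\bigr)_{2j,2j+2nM}$ survives, forcing $c_{M,j} = 0$ whenever $m_j = M$. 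Peeling off these terms, the same reasoning at $(2j, 2j + 2n(M-1))$ forces $c_{M-1, j} = 0$ for all $j$ with $m_j \geq M-1$, and so on inductively. Repeating in the lower triangle yields $\overline{c_{ij}} = 0$ as well. Having annihilated every $c_{ij}$, the relation $\Delta(A) = c I = 0$ finally forces $c = 0$.

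The main technical obstacle, as flagged immediately before the statement, is the presence of repeated points in $\vz$: when some $z_j$ has multiplicity $m_j > 1$, the $2n$-periodic block structure of $A$ refines into finer sub-blocks, and the entry formula of Theorem \ref{t:MCMVstructure} must be re-derived to account for the shorter products of $\cU(a_k)$'s lying between consecutive occurrences of the pole $z_j$. Verifying that these shortened traces remain bounded away from zero, so that the outermost entries of $b_{z_j}(A)^i$ for $1 \leq i \leq m_j$ are nonzero, can be accomplished by an adaptation of the Caratheodory-function estimates already used in the proof of Lemma \ref{lem:Resolvents}. Once this is established, the descending induction above goes through unchanged.
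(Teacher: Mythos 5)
Your proposal is sound in the generic case when every $z_j$ appears exactly once in $\vz$ (so $M = \max_j m_j = 1$), where it matches the paper's argument: inspect the $2n$th diagonal, observe that only the $b_{z_j}(A)$ terms can reach it, and use disjointness of positions modulo $2n$ to annihilate each $c_{1,j}$, then $c = 0$. But the key structural claim---``$b_{z_j}(A)^i$ has band width $2ni$''---fails precisely in the repeated-points case, and that failure is not a detail that can be patched afterward; it determines the diagonal at which the induction must start.

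Here is the issue concretely. If $z_j$ has multiplicity $m_j$ in $\vz$, then after shifting by $S^{-2j}$ the diagonal operator $D_j$ has $m_j$ ``zero'' positions per period, so by the block-splitting observed in the proof of Lemma \ref{thm:structureA} (and spelled out in the paper's own proof of this proposition), the $2n \times 2n$ block of $b_{z_j}(A)$ refines into $m_j - 1$ blocks of size $2$ plus one block of size $2(n-m_j+1)$. Consequently, for $1 \leq i \leq m_j$ the band width of $b_{z_j}(A)^i$ is $2(n - m_j) + 2i$, \emph{not} $2ni$. In particular, the maximal power $b_{z_j}(A)^{m_j}$ appearing in a suitable $\Delta$ has band width exactly $2n$. (Your formula $2ni$ is correct only when $m_j = 1$; e.g., when $\vz$ is all zeros, $A$ is a bare CMV matrix with band width $2$, not $2n$.) Since every term of a suitable $\Delta$ has $i \leq m_j$, \emph{every} summand of $\Delta(A)$ has band width at most $2n$. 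Thus when $M > 1$, your inductive anchor $(2j, 2j + 2nM)$ lies strictly outside the band of every summand, $\Delta(A)_{2j, 2j + 2nM} = 0$ holds vacuously, and the claimed conclusion $c_{M,j} = 0$ does not follow. The assertion at the end---``Once this is established, the descending induction above goes through unchanged''---is therefore not correct: the induction must instead start at the $2n$th diagonal, where one must show that only the maximal powers $b_{z_j}(A)^{\pm m_j}$ contribute and that the $m_j$ non-vanishing entries of $b_{z_j}(A)^{m_j}$ per period occupy positions in residue classes mod $2n$ that are disjoint across distinct $j$. That is exactly the finer structural analysis carried out in the paper's proof, and it is not a cosmetic re-derivation but the core of the argument in the general case.
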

\begin{proof}
Let us assume for the sake of simplicity that $e^{i\vartheta}=1$ (the general case is analogous). The key of the proof will be to understand the structure of the powers $A^i$ for $1\leq i\leq m_0$. Due to Proposition \ref{lem:BlaschkeA}, the structure of the powers $b_{z_j}(A)^i$ will then follow by shifting.  Recall that $m_0$ denotes the multiplicity with which $z_0 = 0$ is represented in the vector $\vz$.  We analyze the structure of the $2n \times 2n$ block of $A$ formed by the entries $\{A_{ij}\}_{i, j=0}^{2n-1}$.

Since we have seen that the block structure is obtained by repeated $z_0$ entries, our $2n \times 2n$ block splits up into $m_0-1$ diagonal blocks of size $2\times 2$ and a possibly bigger diagonal block of size $2(n-m_0+1)\times 2(n-m_0+1)$.  Let us denote the $2\times 2$ block matrices by $\mathbf{A_0^1}, \dots, \mathbf{A_0^{m_0-1}}$ and the larger block by $\mathbf A$. On each side of the diagonal blocks $\mathbf{A_0^l}$, there is a column vector of size $2$; we write $\mathbf{v_0^l}$ for the vector to the left of $\mathbf{A_0^l}$ and $\mathbf{u_0^l}$ for the vector to the right.  Similarly, on each side of $\mathbf A$ there is a column vector of size $2(n-m_0+1)$, which we will denote analogously by $\mathbf v$ and $\mathbf u$. In particular, for $0 \leq l \leq m_0-2$ we find due to \eqref{eq:diagonal1} and \eqref{eq:diagonal2} that
$A_{2l,2l+2}=\rho_{2l}\rho_{2l+1}>0$. Moreover, the first entry of $\mathbf u$ is the nonzero value $A_{2(m_0-1),2n} = \rho \neq 0$; this entry is the only non-vanishing entry of $A$ on the $2(n-m_0)+2$th diagonal.  If we consider $A^2$, we get exactly two non-vanishing entries on the $2(n-m_0)+4$th diagonal, given by $A_{2(m_0-2),2n}=\rho_{2(m_0-2)}\rho_{2(m_0-2)+1}\rho$ and $A_{2(m_0-1),2(n+1)}=\rho\rho_{2n}\rho_{2n+1}$.  Similarly, $A^i$ will have $i$ non-vanishing entries on the outermost $2(n-m_0)+2i$th diagonal; in particular, $A^{m_0}$ will have $m_0$ non-vanishing entries on the $2n$th diagonal given by
\begin{align*}
A^{m_0}_{0,2n}&=\rho_0\rho_1\cdots\rho_{2(m_0-2)}\rho_{2(m_0-2)+1}\rho,\\
A^{m_0}_{2j,2(n+j)}&=\rho_{2j}\rho_{2j+1}\cdots\rho_{2(m_0-2)}\rho_{2(m_0-2)+1}\rho\rho_{2n}\rho_{2n+1}\cdots\rho_{2(n+j)}\rho_{2(n+j)+1}, \\
A^{m_0}_{2(m_0-1),2(n+m_0-1)}&=\rho\rho_{2n}\rho_{2n+1}\cdots\rho_{2(n+m_0-2)}\rho_{2(n+m_0-2)+1}
\end{align*}
for $1\leq j < m_0-1$.  A similar structure, but shifted, is obtained for all the matrices $b_{z_j}(A)^i$.

With this structure in mind we can finish the proof. We first consider the entries $\Delta(A)_{j,2n+j}$. On this diagonal, only the operators $b_{z_j}(A)^{\pm m_j}$ have non-vanishing entries (and we can guarantee that they are non-vanishing). But all of them are at different positions. Hence, we see that $c_{m_j,j}=0$ for all $0\leq j \leq m$. In the next step, we consider the diagonal $\Delta(A)_{j,2n-2+j}$ and obtain analogously that $c_{m_j-1,j}=0$. Inductively we see that all coefficients vanish, and consequently $\Delta\equiv0$.
\end{proof} 
\section{Proofs of the Main Theorems}\label{sec:proofs}
We have laid nearly all the groundwork necessary to complete the proofs of our main theorems.  Before we proceed, we recall the general strategy: in Section \ref{sec:3}, we showed that, in the presence of a function $\mathfrak{B} = \mathfrak{B}_{\vz}$ having half-period character, there is a map $\mathbf{F} : \Gamma^* \times \T \to \mathbb{A}_\per(\vz)$ taking unimodular characters and a phase $(\alpha,\tau)$ to a periodic MCMV matrix $\mathbf{F}(\alpha,\tau) := A(\alpha,\tau)$ satisfying a Magic Formula.  In Section \ref{sec:DST}, we defined a map $\mathbf G$ assigning a divisor to a periodic MCMV matrix.  In this section we glue these constructions together via a third and final map, the Abel map $\mathfrak{A} : \cD(\E) \to \Gamma^* \times \T$, and show that together they in fact form a commuting diagram in analog to \eqref{fig:1}:
\begin{equation}\label{eq:commDiag}
\begin{aligned}
\xymatrix{
\A_\per(\vz)\ar[dr]_{\mathbf{G}}
&&\ar[ll]_{\mathbf F}\G^*\times \T\\
&\cD(\E)\ar[ur]_{\fA}&
}
\end{aligned}
\end{equation}
Once this is done, our main theorems will follow as special cases for a particular choice of function $\mathfrak{B}$ and vector $\vz_\E$.

To properly introduce the Abel map, we briefly recall the construction from \cite{YudPeher06} of the bijective correspondences between the isospectral torus $\cT_\mv(\E)$, the set of Schur functions $\cS_+(\E)$ defined in \eqref{eq:Schurplus}, and the set of divisors $\cD(\E)$ given by \eqref{def:Divisors}.


We first set up the correspondence $\cS_+(E)\simeq\cD(E)$.  Consider a function $f_+\in\cS_+(\E)$ and let
\begin{align*}
F_+(z):=\frac{1+zf_+(z)}{1-zf_+(z)}
\end{align*}
denote the associated Caratheodory function. Due to \eqref{eq:RHprob1}, we have
\begin{align*}
\frac{1+\overline{e^{it}f_+(e^{it})}}{1-\overline{e^{it}f_+(e^{it})}}=\frac{1+f_-(e^{it})}{1-f_-(e^{it})}
\; \mbox{ for a.e. $e^{it}\in\E$}.
\end{align*}
Strictly speaking,
\begin{align*}
F_-(z):=\frac{1+f_-(z)}{1-f_-(z)}
\end{align*}
is not a Caratheodory function since it does not admit the normalization $F_-(0)=1$; however, it still maps $\D$ analytically into the right half-plane. Using \eqref{eq:RHprob2}, we see directly that the function
$$
F(z):= \frac{1}{2}\Bigl(F_+(z)+F_-(z)\Bigr)=\frac{1-zf_+(z)f_-(z)}{\bigl(1-zf_+(z)\bigr)\bigl(1-f_-(z)\bigr)}
$$
has no zeros in the gaps of $\E$. Hence, since $t\mapsto \Im F(e^{it})$ is decreasing on each gap, this function can have at most one sign change per gap, caused by a possible pole $x_j$ (where $j$ indexes the $j$-th gap).  The measure $\nu$ in the integral representation of $F$ is purely absolutely continuous on $\E$, and condition \eqref{eq:RHprob1} implies that $\nu_{ac}$ is split equally between $F_+$ and $F_-$.  But due to \eqref{eq:RHprob2}, the point mass at $x_j$ can only correspond to either $F_+$ or $F_-$ and not to both.  We write $(x_j,1)$ if $x_j$ is a pole of $F_+$ and $(x_j,-1)$ if $x_j$ is a pole of $F_-$.  Special consideration is needed for the endpoints of the gaps: by convention, we write $(\l_j^-,1)$ if $\Im F\leq 0$ in the closed gap $[\l_j^-,\l_j^+]$ and $(\l_j^+,1)$ if $\Im F\geq 0$. With these choices, the collection
$$D=\{(x_j,\e_j)\}_{j=0}^g, \quad \e_j\in \{\pm 1\}$$
is the divisor in $\cD(\E)$ associated to $f_+$.  Conversely, one can show that any divisor $D\in\cD(\E)$ leads to a function $f_+\in\cS_+(\E)$ (see \cite[Theorem 1.4]{YudPeher06} for details).

The correspondence $\cT_\mv(\E)\simeq\cS_+(E)$ is implicitly given in Section \ref{sec:PY}.  The half-line restriction $\apC_+$ of an element $\apC=\apC(\alpha, \tau)\in\cT_\mv(\E)$ is linked to the Schur function $f_+^{\a,\tau}$ given by
\begin{align} \label{eq:CaraReprod}
f_+^{\a,\tau}\circ\fz=e^{-i\tau}\frac{K^{\a}_{\overline{\z}_0}}{K^{\a}_{\z_0}}
\end{align}
through the relation
\begin{align*}
\frac{1+zf^{\a,\tau}_+(z)}{1-zf^{\a,\tau}_+(z)}=\Bigl\langle \bigl(\apC_+(\a,\tau)-z\bigr)^{-1}\bigl(\apC_+(\a,\tau)+z\bigr)\d_0,\d_0\Bigr\rangle.
\end{align*}
In fact, the map $(\a,\tau)\mapsto f^{\a,\tau}_+$ sets up a bijection between $\G^*\times\T$ and $\cS_+(\E)$. This also enables us to define the Abel map $\fA: \cD(\E)\to\G^*\times\T$ by
\begin{align}\label{eq:AbelMap}
\fA(D):=(\alpha,\tau),
\end{align}
where $(\alpha,\tau)$ is the character and phase of the function $f_+$ which corresponds to the divisor $D$.  Note that \eqref{eq:AbelMap} generalizes the definition of the Abel map for periodic CMV matrices; cf. \eqref{fig:1}.
%

The first lemma of this section demonstrates that the diagram \eqref{eq:commDiag} commutes if we replace $\A_\per(\vz)$ with the image $\mathbf{F}(\Gamma^* \times \T)$. 
Let $\tilde D$ be the divisor associated to the periodic operator $A(\a,\tau)\in\A_\per(\vz)$ by our construction in Section \ref{sec:DirectMCMV}, that is, $\tilde D=\mathbf G(\mathbf F(\a,\tau))$. 
\begin{lemma}\label{lem:DivisorCoinc}
Let $\tilde f_+$ and $\tilde F_+$ be the Schur and Caratheodory functions associated to the periodic operator $A(\a,\tau)$ as in Section \ref{sec:DST}, and let $f_+^{\a,\tau}$ and $F_+^{\a,\tau}$ be the functions associated to $(\a,\tau)$ by \eqref{eq:CaraReprod} above. Then we have
\begin{align}
\tilde f_+ = f_+^{\a,\tau}, \; \; \tilde F_+=F_+^{\a,\tau}, \; \mbox{ and } \; \mathfrak{A}(\tilde{D}) = (\alpha,\tau).
\end{align}
\end{lemma}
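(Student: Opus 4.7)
The three equalities in the lemma are intrinsically linked: by the bijection \eqref{eq:CarathSchur} between Caratheodory and Schur functions, $\tilde{F}_+ = F_+^{\a,\tau}$ is equivalent to $\tilde{f}_+ = f_+^{\a,\tau}$; moreover, $\mathfrak{A}(\tilde{D}) = (\a,\tau)$ follows immediately from $\tilde f_+ = f_+^{\a,\tau}$, since by construction $\tilde D$ is the divisor in $\cD(\E)$ associated to $\tilde f_+$, and $\mathfrak{A}$ sends such a divisor to the character-phase pair of the corresponding Schur function in $\cS_+(\E)$. Thus the proof will reduce to establishing the single equality $\tilde{f}_+ = f_+^{\a,\tau}$.

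My plan is to exploit the monodromy structure developed in Sections \ref{sec:3} and \ref{sec:DST}. By construction of the functional model, the Verblunsky coefficients $\{a_k(\a,\tau;\vz)\}_{k\in\Z}$ underlying $A(\a,\tau)$ are periodic up to the rotational phase $-2\vartheta_{n-1}^{\a}$ with period $2n$, so Corollary \ref{cor:periodicInterSchur} implies that $\tilde{f}_+$ is a fixed point of the M\"obius action on $\C\P^1$ of the monodromy matrix $\cM(z;\vz,\{a_k\})$. On the other hand, Theorem \ref{thm:monodromy} produces an explicit eigenvector of $\cM(\fz;\vz,\{a_k\})$ coming from the functional model, namely the projective class of $[x_0^{\a,\tau} : e^{i\phi_0}\chi_0^{\a,\tau}] = [K^{\a}_{\overline{\z_0}} : e^{i(\phi_0+\tau)}K^{\a}_{\z_0}]$; after unfolding the definitions and accounting for the $\Lambda(\vartheta_{n-1}^{\a})$-conjugation in the definition of $A(\a,\tau)$ from Corollary \ref{cor:MagicFormulafunc}, this projective ratio matches the Peherstorfer-Yuditskii Schur function $f_+^{\a,\tau}\circ\fz$ given by \eqref{eq:functSchur}.

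The remaining step is to verify that these two fixed points in $\C\P^1$ actually coincide. Since the characteristic polynomial of $\cM(\fz)/B(\fz)$ has two roots whose square-root branches yield the two solutions of the quadratic equation \eqref{eq:quadraticCara} for the Caratheodory function, there are exactly two projective fixed points of the monodromy action, corresponding respectively to $f_+$ and $f_-$ (distinguished by the sign of the square root). Both $\tilde f_+$ and $f_+^{\a,\tau}$ are Schur functions taking values in $\overline{\D}$ throughout $\D$, so they must select the same branch and hence coincide. With $\tilde f_+ = f_+^{\a,\tau}$ established, both remaining assertions follow at once.

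The main obstacle is the careful bookkeeping of phases throughout these identifications: in particular, the $e^{i\phi_0}$ twist appearing in the monodromy eigenvector of Theorem \ref{thm:monodromy} must be reconciled with the $\Lambda(\vartheta_{n-1}^{\a})$-conjugation in the construction of $A(\a,\tau)$, and the branch selection has to be argued rigorously so that $\tilde f_+$ and $f_+^{\a,\tau}$ genuinely pick out the same of the two projective fixed points, rather than accidentally being swapped into the pair $(f_+,f_-)$. Once this bookkeeping is in order, the lemma reduces to the elementary statement that a non-trivial M\"obius transformation of $\C\P^1$ has at most two fixed points, and an unambiguous rule (values in $\overline{\D}$) singles one of them out.
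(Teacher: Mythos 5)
Your treatment of the first two equalities is in the spirit of the paper's argument: both reduce $\tilde f_+ = f_+^{\a,\tau}$ to identifying the monodromy fixed point from Theorem \ref{thm:monodromy} with the fixed point characterizing $\tilde f_+$ via Corollary \ref{cor:periodicInterSchur}, and your observation about branch selection is a helpful supplement. (The discrimination that actually pins down the branch in the paper is the normalization $F(0)=1$ rather than ``Schur-valued''; in general both fixed points are related to Schur functions, so your stated criterion does not by itself rule out the other branch, though the gap is easily repaired since $\Delta_A(0)\notin[-2,2]$ forces the two branches of $F_\nu(0)$ to differ.)

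However, the claim that $\mathfrak A(\tilde D) = (\a,\tau)$ ``follows immediately'' from $\tilde f_+ = f_+^{\a,\tau}$ contains a genuine gap. You assert that ``by construction $\tilde D$ is the divisor in $\cD(\E)$ associated to $\tilde f_+$,'' but $\tilde D$ is defined in Section \ref{sec:DirectMCMV} by a \emph{different rule} than the one used to set up the Abel map in Section \ref{sec:proofs}: the Section \ref{sec:DirectMCMV} convention assigns $\e_j = 1$ if and only if $x_j$ is a mass point of $\nu$ (equivalently, a pole of $\tilde F_+$), while $\e_j = -1$ simply means ``not a pole of $\tilde F_+$.'' The divisor that $\mathfrak A$ acts on, constructed via the $\cS_+(\E)\simeq\cD(\E)$ correspondence, instead labels $\e_j = -1$ if and only if $x_j$ is a pole of the auxiliary function $F_-$ solving the Riemann--Hilbert problem \eqref{eq:RHprob1}--\eqref{eq:RHprob3}. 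That these two negative-sign conventions agree is not definitional: one has to exhibit an explicit $\tilde F_-$, verify the boundary relation $\overline{\tilde F_+(e^{it})}=\tilde F_-(e^{it})$ on $\E$ so that it is indeed the $F_-$ from the Riemann--Hilbert pair, and then check that $\tilde F_-$ has poles precisely at those zeros of $u$ where the numerator of $\tilde F_+$ vanishes. This reconciliation of the two divisor constructions is the content of the second half of the paper's proof, and without it your argument for the third equality is incomplete.
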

\begin{proof}
By \eqref{def:Basis} and \eqref{def:Basis2}, we have
\begin{align*}
f_+^{\a,\tau}\circ\fz=\frac{x_0^{\a,\tau}}{y_{0}^{\a,\tau}}.
\end{align*}
Therefore, the identities $\tilde f_+ = f_+^{\a,\tau}$ and $\tilde F_+=F_+^{\a,\tau}$ follow by comparing \eqref{eq:quadraticCara} with Theorem \ref{thm:monodromy}.

We've now seen that $(\tilde x_j,1)$ corresponds to poles of $\tilde F_+$.  To see $\mathfrak{A}(\tilde{D}) = (\alpha,\tau)$, we only need to show that $(\tilde x_j,-1)$ as defined in Section \ref{sec:DST} corresponds to poles of the function $\tilde F_-$. Due to \eqref{eq:caratheordory}, we have
\begin{align}\label{eq:Fplus}
\tilde F_+(z)=\frac{v(z)+\sqrt{\Delta_A(z)^2-4}}{u(z)},
\end{align}
where
\begin{align*}
v(e^{it})\in\R\quad\text{and}\quad u(e^{it})\in i\R.
\end{align*}
Consider now the function
\begin{align}\label{eq:Fminus}
\tilde{F}_-(z) := -\frac{v(z)-\sqrt{\Delta_A(z)^2-4}}{u(z)}.
\end{align}
Since $\sqrt{\Delta_A(e^{it})^2-4}\in i\R$ on $\E$, we obtain that $\overline{\tilde F_+(e^{it})}=\tilde F_-(e^{it})$ for all $e^{it} \in \E$.
Hence, if $\tilde x_j$ is a zero of $u$ and the numerator in \eqref{eq:Fplus} vanishes (i.e., $\e_j=-1$), then $\tilde F_-$ has a pole at $\tilde x_j$. This concludes the proof.
\end{proof}
\begin{remark}
Note that \eqref{eq:Fplus} and \eqref{eq:Fminus} show that the absolutely continuous parts of the corresponding measures agree and are given by \eqref{eq:acMeasure}.
\end{remark}

In order to show commutativity of \eqref{eq:commDiag}, we still have to show that for a given $\vz$, there is a functional model construction $\mathbf{F}$ which surjects onto those MCMV matrices in $\A_\per(\vz)$ having a fixed spectrum; that is, for an arbitrary $\vz$ and $A\in\A_\per(\vz)$ with spectrum $\sigma(A) = \E$, there exists a function $\mathfrak B=\prod_{j=0}^{n-1}\fb_{\z_j}$ whose character is a half-period such that $A=A(\a,\tau)$ corresponding to the functional model associated to $\mathfrak B$.
To show that the character of $\mathfrak B$ is a half-period, the representation of $\Delta_A$ from Lemma \ref{lem:repDeltaT} will be crucial.
\begin{proposition}\label{prop:DS1}
Let $A\in\A_{\per}(\vz)$ be a periodic MCMV matrix with associated discriminant $\Delta_A$, and set $\E=\Delta_A^{-1}([-2,2])$.  Then there exists a unique divisor $D\in\cD(\E)$ such that
\begin{align}\label{eq:periodicFuncModel}
A=A(\a,\tau),
\end{align}
where $\fA(D)=(\a,\tau)$ and $A(\a,\tau)$ is defined by \eqref{def:Aalpha} for the functional model associated to $\mathfrak B=\prod_{j=0}^{n-1}\fb_{\z_j}$. Moreover, the diagram \eqref{eq:commDiag} commutes.
\end{proposition}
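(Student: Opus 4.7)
The plan is to realize $A$ as $A(\a,\tau)$ for an appropriate functional model, thereby inverting the map $\mathbf F$ of Section~\ref{sec:3} on its image, and then tie everything together via $\fA$. First I would set $\E = \Delta_A^{-1}([-2,2])$ and bring in the uniformization $\fz$ of $\overline\C\setminus\E$ with its Fuchsian group $\G$. The crucial preliminary is that the generating Blaschke product $\mathfrak B = \prod_{j=0}^{n-1}\fb_{\z_j}$ associated to any lifts $\z_j \in \fz^{-1}(z_j)$ has half-period character. This follows from Lemma~\ref{lem:repDeltaT}: $\Delta_A = \Psi + \Psi^{-1}$ with $\Psi$ single-valued on $\overline\C\setminus\E$ and $\Psi \circ \fz = \mathfrak B\mathfrak B^*$, so $\mathfrak B\mathfrak B^*$ descends to a single-valued function on $\overline\C\setminus\E$. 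Hence its character $2\b$ is trivial, and the construction of Section~\ref{sec:modifiedBasis} applies, producing the functional-model assignment $\mathbf F : \G^*\times\T \to \A_\per(\vz)$, $(\a,\tau)\mapsto A(\a,\tau)$, whose image lies in $\{A' \in \A_\per(\vz) : \sigma(A') = \E\}$ by Corollary~\ref{cor:MagicFormulafunc}.

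Next, using the direct spectral theory of Section~\ref{sec:DirectMCMV} I would extract a divisor $D := \mathbf G(A) \in \cD(\E)$ and set $(\a,\tau) := \fA(D)$. I claim $A = A(\a,\tau)$. Indeed, both sides are periodic MCMV matrices for the same $\vz$ with spectrum $\E$; both have the same discriminant, since Corollary~\ref{cor:MagicFormulafunc} combined with Lemma~\ref{lem:repDeltaT} gives $\Delta_{A(\a,\tau)} = \Delta_A$; and by Lemma~\ref{lem:DivisorCoinc} applied to $A(\a,\tau)$ together with bijectivity of $\fA$, the divisor $\mathbf G(A(\a,\tau))$ must equal $D$. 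Thus $A$ and $A(\a,\tau)$ share the same divisor and discriminant, so by Proposition~\ref{prop:uniqueDiv} their half-line spectral measures agree. From that common measure one reads off the Verblunsky coefficients $\{a_k\}_{k\ge 0}$ via Theorem~\ref{thm:ORFFavard}, and the periodicity-up-to-phase relation in $\A_\per(\vz)$ then recovers both $\vartheta$ (by comparing $a_p$ with $a_0$) and the extension $\{a_k\}_{k\in\Z}$; Definition~\ref{def:MCMV} assembles these data into a unique MCMV matrix, so $A = A(\a,\tau)$.

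Uniqueness of $D$ is then automatic: if $D'\in\cD(\E)$ also realizes $A = A(\fA(D'))$, applying $\mathbf G$ and Lemma~\ref{lem:DivisorCoinc} gives $D' = \mathbf G(A(\fA(D'))) = \mathbf G(A) = D$. For commutativity of \eqref{eq:commDiag}, Lemma~\ref{lem:DivisorCoinc} already provides $\fA \circ \mathbf G \circ \mathbf F = \id_{\G^*\times\T}$, while the existence result just established is precisely $\mathbf F \circ \fA \circ \mathbf G = \id$ on $\{A\in\A_\per(\vz):\sigma(A)=\E\}$, closing the loop.

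The main obstacle is the half-period check in the first paragraph: one must verify that the particular lifts $\z_j$ of the poles $z_j$ produced by the functional-model machinery are compatible with the representation of $\Delta_A$ from Lemma~\ref{lem:repDeltaT}, so that $\Psi\circ\fz$ really equals $\mathfrak B\mathfrak B^*$ rather than some character twist thereof. Once that bookkeeping is settled, the remainder amounts to repackaging Corollary~\ref{cor:MagicFormulafunc}, Proposition~\ref{prop:uniqueDiv}, and Lemma~\ref{lem:DivisorCoinc}.
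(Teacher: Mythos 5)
Your argument is correct and mirrors the paper's proof step by step: extract $D = \mathbf G(A)$, set $(\a,\tau) = \fA(D)$, use Lemma~\ref{lem:repDeltaT} to show $\mathfrak B$ has half-period character, and conclude $A = A(\a,\tau)$ by comparing discriminants and divisors (the paper phrases this last step as coincidence of Caratheodory functions via Lemma~\ref{lem:DivisorCoinc}, while you invoke Proposition~\ref{prop:uniqueDiv} directly; these are equivalent). The ``obstacle'' you flag at the end is not actually an issue: Lemma~\ref{lem:repDeltaT} already produces $\Psi\circ\fz = \prod_j \fb_{\z_j}\fb_{\overline{\z_j}} = \mathfrak B\mathfrak B^*$ for the same lifts $\z_j$, and since $\Delta_A$ is rational its lift is $\G$-automorphic, so the quadratic $\Psi^2 - (\Delta_A\circ\fz)\Psi + 1 = 0$ forces $\Psi\circ\fz$ to be $\G$-invariant, giving $2\beta = \mathbf{0}_{\G^*}$ with no further bookkeeping.
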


\begin{proof}
Suppose $A$ is periodic and let $D$ be the associated divisor. Write $(\a,\tau)=\fA(D)$ for the corresponding character and phase. Due to Lemma \ref{lem:repDeltaT}, we have
\begin{align*}
\Delta_A\circ \fz=\Psi+\frac{1}{\Psi}, \quad \Psi=\mathfrak B\mathfrak B^*.
\end{align*}
Since the characters of $\mathfrak B$ and $\mathfrak B^*$ coincide and $\Psi$ is single-valued, the character of $\mathfrak B$ must be a half-period.

Suppose now that $A(\a,\tau)$ is the matrix representing multiplication by $\fz$ in the basis $\{\chi_k^{\a,\tau}\}$ for the functional model associated to $\mathfrak B$. Then, by Lemma \ref{lem:DivisorCoinc}, the Caratheodory functions of $A$ and $A(\alpha,\tau)$ coincide and hence we obtain \eqref{eq:periodicFuncModel}. Since it was already shown in \cite{YudPeher06} that the Abel map is a bijection, this also proves the commutativity of \eqref{eq:commDiag}.
\end{proof}

We are now in position to complete the proofs of our main results stated in the introduction.  As already mentioned in the first remark of Section \ref{sec:modifiedBasis}, the existence of the Ahlfors function for an arbitrary finite-gap set $\E$ ensures that there always is a function $\fB$ with the property that its character is a half-period.  To be specific, the function $\fw_\infty = \fz (w_\i\circ \fz)$ is an explicit choice of such a function.  Our first main result, Theorem \ref{t:periodiccoords}, now simply follows as the special case of Proposition \ref{prop:DS1} with $\vz = \vz_\E$ (and $\mathfrak B=\fw_\infty$).


\begin{proof}[Proof of Theorem \ref{t:periodiccoords}]
For the special choice of $\vz=\vz_\E$, we define the map $\mathbf F:\Gamma^*\times \T\to \cT_\MV(\E)$ in the same way as was done previously. That is, $$\mathbf F(\a,\tau):=A(\a,\tau), $$
where $A(\a,\tau)$ is the matrix representation of multiplication by $\fz$ in the basis $\{y_k^{\a,\tau}\}$ for the functional model associated to the function $\fB=\fw_\infty$.  Our considerations have shown this $\mathbf F$ is a bijection and that concludes the proof of \eqref{eq:bij}.

Tracing through our construction will also lead to the more explicit version of the one-to-one correspondence between an element $\apC\in\cT_\MV(\E)$ and its counterpart $C\in \cT_\mv(\E)$.
\end{proof}

Our analysis also leads to a quick proof of the Magic Formula, our second main result. 

\begin{proof}[Proof of Theorem \ref{t:magicformula}]
Let $A$ be an element of $\A(\vz_\E)$.  If $A\in \cT_{\MV}(\E)$, then we know from Proposition \ref{prop:DS1} that $A=A(\a,\tau)$ for some choice of character and phase.  Hence the Magic Formula
\[ 
 \Delta_\E(A)=S^{2(g+1)}+S^{-2(g+1)}
\]  
follows by Corollary \ref{cor:MagicFormulafunc} with $n=g+1$.  

Conversely, if $A$ satisfies the Magic Formula then $A$ must be periodic with period $2(g+1)$ due to Naiman's Lemma (see, e.g., \cite[Lemma 8.2.4]{SimonSzego}).  Moreover, as 
\[
 \Delta_\E(A)=S^{2(g+1)}+S^{-2(g+1)}=\Delta_A(A),
\] 
Proposition \ref{prop:DiscrCoincide} implies that $\Delta_A\equiv\Delta_\E$.  Since $\sigma(A)=\Delta_A^{-1}([-2,2])$, it therefore follows that $\sigma(A)=\E$ and thus $A\in\cT_{\MV}(\E)$.
The second part of the theorem follows along the same lines as in the proof of Theorem \ref{t:periodiccoords}.
\end{proof}

Finally, Theorem \ref{t:nevPick} is immediate:

\begin{proof}[Proof of Theorem \ref{t:nevPick}]
This is a straightforward consequence of Theorems \ref{t:periodiccoords}, \ref{t:magicformula}, and \ref{thm:monodromy}.
\end{proof}

\appendix
\section{The Ahlfors functions of finitely connected Denjoy domains}\label{sec:AppAhlfors}
We begin with an existence theorem:
\begin{theorem}
Let $\Omega$ be a region in $\overline{\C}$ and fix a point $z_0\in\Omega$. Suppose there exist nonconstant bounded analytic functions defined on $\Omega$. Then there is a unique analytic function $w_{z_0}:\Omega\to\D$ which solves the Ahlfors problem, that is,
\begin{align}
w'_{z_0}(z_0)=\sup\{|g'(z_0)|:\, g:\Omega\to\D \text{ analytic}, \, g(z_0)=0\}.
\end{align}
\end{theorem}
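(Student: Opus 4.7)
The plan is to apply the classical normal families method: compactify the admissible class via Montel's theorem to extract an extremal from a maximizing sequence, upgrade its range to $\D$ via the maximum modulus principle, and settle uniqueness through a convexity-plus-perturbation argument.

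I would begin by setting up the admissible class
\[
\mathcal F := \{g\colon \Omega \to \D\ \text{analytic},\ g(z_0) = 0\}
\]
and the extremal value $M := \sup\{|g'(z_0)| : g \in \mathcal F\}$, and first verify $0 < M < \infty$. Finiteness is immediate from Cauchy's estimate on any disk $D(z_0, r) \subset \Omega$: $|g'(z_0)| \le 1/r$ for every $g \in \mathcal F$. Positivity uses the standing hypothesis: given any nonconstant bounded analytic $h$ on $\Omega$, the rescaled function $(h - h(z_0))/(2\|h\|_\infty)$ lies in $\mathcal F$ with nonzero derivative at $z_0$, so $M > 0$.

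For existence, $\mathcal F$ is uniformly bounded by $1$ and therefore normal by Montel's theorem. I would choose a maximizing sequence $g_n \in \mathcal F$ with $|g_n'(z_0)| \to M$, multiply each by an appropriate unimodular constant so that $g_n'(z_0) \to M$ is approached through positive reals, and pass to a locally uniformly convergent subsequence. The limit $w_{z_0}$ is analytic on $\Omega$, satisfies $w_{z_0}(z_0) = 0$ and $w_{z_0}'(z_0) = M$, and a priori $w_{z_0}(\Omega) \subset \overline{\D}$. Since $M > 0$ forces $w_{z_0}$ to be nonconstant, the maximum modulus principle upgrades this inclusion to $w_{z_0}(\Omega) \subset \D$, so $w_{z_0} \in \mathcal F$ attains the supremum.

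For uniqueness, given two normalized extremals $w_1, w_2$ (with $w_i'(z_0) = M > 0$), I form $F := (w_1+w_2)/2$ and $\phi := (w_1-w_2)/2$. Convexity of $\overline{\D}$ yields $F\colon \Omega \to \overline{\D}$, and the same maximum-principle argument used for existence (noting $F'(z_0) = M > 0$) upgrades this to $F \in \mathcal F$, so $F$ is also extremal. The parallelogram identity together with $|w_j| \le 1$ then gives $|F|^2 + |\phi|^2 \le 1$ pointwise on $\Omega$; combined with $\phi(z_0) = 0$ and $\phi'(z_0) = 0$, a classical Schwarz-lemma-type perturbation argument due to Ahlfors \cite{Ah47} (see also \cite{Gar49}) forces $\phi \equiv 0$, hence $w_1 = w_2$. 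The main obstacle is precisely this uniqueness step: existence reduces to a textbook application of Montel's theorem and the maximum principle, but ruling out a second extremal relies on the more delicate fact that any extremal Ahlfors function is inner on $\Omega$, and hence an extreme point of the unit ball of $H^\infty(\Omega)$, which by convexity considerations excludes a distinct extremal with the same normalization.
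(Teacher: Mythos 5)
The paper does not prove this statement; it is presented in Appendix~\ref{sec:AppAhlfors} as background and attributed to Ahlfors \cite{Ah47} and Fisher \cite{Fish69}, with a further reference to \cite[Section 8]{Sim15}. Your route --- Montel's theorem plus the maximum modulus principle for existence, then a convexity argument for uniqueness --- is the standard one, and the existence half is essentially complete.

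Two of your steps, however, do not close as written. First, in the positivity argument you claim that $(h - h(z_0))/(2\|h\|_\infty)$ has nonzero derivative at $z_0$; but its derivative is $h'(z_0)/(2\|h\|_\infty)$, which vanishes whenever $h'(z_0)=0$, and the hypothesis only gives you \emph{some} nonconstant bounded $h$, not one with nonvanishing derivative at the prescribed $z_0$. You need the (true, but not obvious) fact that the functional $h\mapsto h'(z_0)$ is nonzero on $H^\infty(\Omega)$ once $H^\infty(\Omega)$ contains a nonconstant function; as it stands you have not shown $M>0$, and without $M>0$ the uniqueness assertion is actually false. Second, and more seriously, the uniqueness step is a genuine gap. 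From $|F|^2+|\phi|^2\le 1$ together with $\phi(z_0)=\phi'(z_0)=0$, no ``Schwarz-lemma-type perturbation'' forces $\phi\equiv 0$: because $\phi$ and $\phi^2$ vanish to high order at $z_0$, the natural perturbations (e.g.\ $F+\epsilon\phi\psi$, $F(1+\lambda\phi^2)$, or $F\exp(\lambda\phi^2)$) remain in $\mathcal F$ with derivative \emph{exactly} $M$ at $z_0$; none improves on $M$, so extremality yields no contradiction. What actually drives uniqueness is the assertion you defer to your final sentence --- that any extremal is an extreme point of the closed unit ball of $H^\infty(\Omega)$ --- and for a general, possibly infinitely connected region this is the hard core of the theorem, established by Fisher via duality rather than by a pointwise Schwarz argument. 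Invoking ``inner, hence extreme point'' without proof leaves the uniqueness claim unproved.
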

The function $w_{z_0}$ is called the \emph{Ahlfors function} of $\Omega$ (and $z_0$) and we always have that $w_{z_0}(z_0)=0$.  These functions were first studied for finitely connected domains by Ahlfors \cite{Ah47}, hence the name. Existence and uniqueness for arbitrary domains was later established by Fisher \cite{Fish69}, see also \cite[Section 8]{Sim15}.

In \cite{EYud17}, an explicit expression for the Ahlfors function of finitely connected domains with certain symmetries was presented. Let $0<a_j<b_j$ for $1\leq j\leq g$ and define
\begin{align}\label{eq:Ereal}
	\E_\R:=\R_+\setminus\bigcup_{j=1}^g(a_j,b_j), \quad
	\Omega_\R:=\C\setminus \E_\R.
\end{align}
Since the Ahlfors problem is conformally invariant, providing a solution for $\Omega_\R$ also leads to a solution for all conformally equivalent domains. In particular, for any finite systems of arcs $\E_\T$ on the unit circle, we can map $\D$ conformally onto $\C_+$ such that $\E_\T$ corresponds to some set $\E_\R$ of the form \eqref{eq:Ereal}. For notational simplicity, we merely present the results for sets of the form \eqref{eq:Ereal}. To any such set, we associate the function
\begin{align}
	H(z)=\frac{1}{\sqrt{-z}}\prod_{j=1}^{g}\sqrt{\frac{z-a_j}{z-b_j}},
\end{align}
where the square root is chosen in such a way that $H(z)$ becomes a Nevanlinna function (i.e., maps $\C_+$ analytically into $\C_+$). The following theorem generalizes a result of Pommerenke \cite{Pom1} (who dealt with the case of $z_0\in\R\setminus E_\R$).
\begin{theorem}{\cite[Theorem 2.3]{EYud17}}
The Ahlfors function of $\Omega_\R$ and $z_0$ is given explicitly by
\begin{align}\label{eq:propertyDiscrApp}
w_{z_0}(z)=\frac{z-z_0}{z-\overline{z_0}}\frac{H(z)-H(\overline{z_0})}{H(z)+H(z_0)}.
\end{align}
If $\Im z_0>0$, then $w_{z_0}$ has precisely $g$ zeros, say $\overline{z_1},\dots,\overline{z_g}$, in the lower half-plane $\C_-$ and together with $z_0$, these points account for all the zeros of $w_{z_0}$. Moreover,
\begin{align}
	\log|w_{z_0}(z)|=-G_{\Omega_\R}(z,z_0)-\sum_{j=1}^{g}G_{\Omega_\R}(z,\overline{z_j}).
\end{align}
\end{theorem}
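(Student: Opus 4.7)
My approach is to verify directly that the proposed explicit formula defines an inner function on $\Omega_\R$ with exactly $g+1$ zeros (one at $z_0$ and $g$ others in $\C_-$), and then invoke the classical uniqueness result for Ahlfors functions on finitely connected planar domains: on a domain whose complement has $g+1$ connected components, the Ahlfors function at $z_0$ is uniquely characterized (up to a unimodular multiplier) as the inner function vanishing at $z_0$ having the maximal number $g+1$ of zeros counted with multiplicity. The whole argument hinges on several properties of the auxiliary function $H$, which I would establish first: that $H$ is Nevanlinna with $H(\C_+)\subset \C_+$; that $H$ obeys the conjugation symmetry $H(\overline{z})=\overline{H(z)}$ via Schwarz reflection across the gaps $\R\cap\Omega_\R=(-\infty,0)\cup\bigcup_{j=1}^g(a_j,b_j)$, where $H$ takes real values; that on $\E_\R\setminus\{0,a_j,b_j\}$ the boundary values of $H$ from above are purely imaginary, $H(x+i0)=ih(x)$ with $h(x)>0$; and crucially, that $H$ realizes $\Omega_\R$ as a $(g+1)$-fold branched cover of the open right half-plane $\{\operatorname{Re}>0\}$, with $\C_\pm$ mapping onto the first and fourth quadrants respectively.

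Granted these facts, analyticity and boundedness of $w_{z_0}$ on $\Omega_\R$ follow by inspection. The only candidate singularity is at $z=\overline{z_0}$, where both factors $(z-\overline{z_0})$ and $H(z)-H(\overline{z_0})$ have simple zeros (off the branch points of $H$, so $H'(\overline{z_0})\neq 0$), making the singularity removable with nonzero value; and the denominator factor $H(z)+H(z_0)$ never vanishes on $\Omega_\R$ because $-H(z_0)$ lies in the left half-plane, outside $H(\Omega_\R)=\{\operatorname{Re}>0\}$. On $\E_\R$, substituting $H(x+i0)=ih(x)$ and using $|x-z_0|=|x-\overline{z_0}|$ together with the elementary identity $|ih(x)-\overline{H(z_0)}|=|ih(x)+H(z_0)|$ yields $|w_{z_0}(x)|=1$; combined with the limit $|w_{z_0}(\infty)|=|\overline{H(z_0)}/H(z_0)|=1$, the maximum modulus principle then gives $|w_{z_0}|\leq 1$ throughout $\Omega_\R$.

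The zero count completes the identification with the Ahlfors function. Since $w_{z_0}(z)=0$ precisely when $z=z_0$ or $H(z)=\overline{H(z_0)}$, and since $\overline{H(z_0)}$ lies in the fourth quadrant (hence in the image $H(\C_-)$) with all $g+1$ of its preimages in $\C_-$, the zeros of $w_{z_0}$ are exactly $z_0$ together with $g$ points $\overline{z_1},\dots,\overline{z_g}\in\C_-$; the remaining preimage of $\overline{H(z_0)}$ is $\overline{z_0}$ itself, whose would-be zero is cancelled by the $(z-\overline{z_0})$ in the denominator. Thus $w_{z_0}$ has the maximal count of $g+1$ zeros and unit modulus on $\E_\R$, identifying it as the Ahlfors function up to a unimodular multiplier, which is pinned down by the normalization $w_{z_0}'(z_0)>0$. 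The logarithmic identity then follows from standard potential theory: both $-\log|w_{z_0}|$ and $G_{\Omega_\R}(\,\cdot\,,z_0)+\sum_{j=1}^g G_{\Omega_\R}(\,\cdot\,,\overline{z_j})$ are harmonic on $\Omega_\R$ away from logarithmic singularities at $\{z_0,\overline{z_1},\dots,\overline{z_g}\}$, vanish on $\partial\Omega_\R=\E_\R$, and share the same local behavior at each singularity, so they agree by the maximum principle.

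The principal obstacle is establishing the $(g+1)$-to-$1$ branched covering property of $H$ and the precise location of preimages of $\overline{H(z_0)}$ in $\C_-$. The cleanest route is to lift $H$ to the hyperelliptic Riemann surface of $H^2$ (branched over $\{0,a_1,b_1,\dots,a_g,b_g\}$, of genus $g$), on which $H$ becomes single-valued and meromorphic of degree exactly $g+1$; the half-plane localization of preimages is then controlled by the argument principle applied to the boundary image of $\partial \C_+$ under $H$, which traces $\partial\{\operatorname{Re}>0\}$ exactly $g+1$ times, once per band--gap pair. The remaining verifications, while occasionally computational, are routine applications of the maximum principle and Schwarz reflection.
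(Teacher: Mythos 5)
This theorem is quoted from \cite[Theorem 2.3]{EYud17}; the paper does not supply its own proof, so there is no internal argument to compare against.

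Your strategy---show that $H$ is a $(g+1)$-fold branched cover of the right half-plane, verify that the proposed formula is holomorphic and unimodular on $\E_\R$ with $g+1$ zeros including $z_0$, and then invoke uniqueness---is attractive, and the $H$-analysis (Nevanlinna property, the symmetry $H(\overline z)=\overline{H(z)}$, boundary values in $\R_+$ on the gaps and in $i\R_+$ on the bands, and the degree count via the argument principle) is essentially correct, as is the cancellation of the would-be zero at $\overline{z_0}$ and the deduction of the log-formula once the zero set is known. The gap is in the uniqueness criterion. Leaving aside the slip that it should read \emph{minimal} rather than maximal (every nonconstant inner function on a $(g+1)$-connected domain is a proper map of degree at least $g+1$), the condition ``exactly $g+1$ zeros with one at $z_0$'' does \emph{not} single out the Ahlfors function. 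On the Schottky double, Abel's theorem constrains the positions of the other $g$ zeros only through a $g$-real-dimensional relation, so there is generically a real $g$-parameter family of proper degree-$(g+1)$ maps onto $\D$ vanishing at $z_0$. Already for the annulus $\{r<|z|<1\}$ the Abel constraint on the second zero $z_1$ reduces to $|z_0 z_1|=r$, leaving $\arg z_1$ free. The Ahlfors function is the extremal member of this family, the one maximizing $|w'(z_0)|$; to close the argument you would need either to verify this extremality directly for your formula, or to show that the specific zeros $\overline{z_1},\dots,\overline{z_g}$ it produces satisfy an independent characterizing condition (e.g., via the Szeg\H{o}/Garabedian kernel). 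As written, you have exhibited \emph{a} proper degree-$(g+1)$ map vanishing at $z_0$, not yet \emph{the} Ahlfors function.
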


Now, fix $z_0\in\C_+$ and consider the discriminant $\Delta_{\E_\R}$ defined by
\begin{align}\label{def:DeltaReal}
\Delta_{\E_\R}(z):=w_{z_0}(z)w_{\overline{z_0}}(z)+\frac{1}{w_{z_0}(z)w_{\overline{z_0}}(z)}.
\end{align}
We collect the properties of $\Delta_{\E_\R}$ in the following theorem and point out that the conclusion for the critical points appears to be new.
\begin{theorem}
$\Delta_{\E_\R}$ is a real rational function, that is, $\Delta_{\E_\R}(z)=\overline{\Delta_{\E_\R}(\overline{z})}$. Its poles are given by $z_0,\overline{z_0},\dots,z_g,\overline{z_g}$, where $\overline{z_1},\dots,\overline{z_g}$ are the zeros of $w_{z_0}$ in $\C_-$. Moreover,
\begin{align} \label{eq:preDelta}
\E_\R=\Delta_{\E_\R}^{-1}\bigl([-2,2]\bigr)
\end{align}
and $\Delta_{\E_\R}$ has exactly one critical point, say $c_{2j}$, in the $j$-th gap $(a_j,b_j)$ with $\Delta_{\E_\R}(c_{2j})>2$ and exactly one critical point, say $c_{2j+1}$, in the $j$-th band $(b_j,a_{j+1})$ with $\Delta_{\E_\R}(c_{2j+1})=-2$.
\end{theorem}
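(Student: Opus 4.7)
My plan is to establish the four assertions sequentially, with the first three following from essentially formal manipulations of the explicit formula \eqref{eq:propertyDiscrApp} combined with the maximum modulus principle for the Ahlfors function. The fourth assertion is the deeper one and requires a winding-number analysis on each band.

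For the reality claim, I would first observe that the uniqueness of the Ahlfors extremal problem, applied to the conjugation-symmetric domain $\Omega_\R$, forces the symmetry $w_{\bar z_0}(z) = \overline{w_{z_0}(\bar z)}$. Taking conjugates of the defining formula
$\Delta_{\E_\R} = W + W^{-1}$ with $W := w_{z_0}w_{\bar z_0}$ then gives $\overline{\Delta_{\E_\R}(\bar z)} = \Delta_{\E_\R}(z)$ immediately. For rationality, I would write out $W$ using \eqref{eq:propertyDiscrApp}. The prefactors $(z-z_0)(z-\bar z_0)/[(z-\bar z_0)(z-z_0)]$ cancel, and using $H(\bar z_0)=\overline{H(z_0)}$ the remaining $H$-dependent quotient becomes a rational function of $H(z)^2$. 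Since $H^2$ is rational, so is $W + W^{-1}$.

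For the location of the poles, the key point is that $w_{z_0}$ and $w_{\bar z_0}$ are both holomorphic and bounded by $1$ on $\Omega_\R$, so the term $W$ in $\Delta_{\E_\R} = W+W^{-1}$ is holomorphic; the poles come entirely from $W^{-1}$, hence from zeros of $W$. The zeros of $w_{z_0}$ are $\{z_0,\bar z_1,\ldots,\bar z_g\}$ by hypothesis, and applying the symmetry $w_{\bar z_0}(z)=\overline{w_{z_0}(\bar z)}$ gives the zero set $\{\bar z_0, z_1,\ldots,z_g\}$ of $w_{\bar z_0}$. Together these yield the claimed $2(g+1)$ poles. No poles occur on $\E_\R$ because $|W|=1$ there. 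For \eqref{eq:preDelta}, I would rewrite $\Delta_{\E_\R} = W+W^{-1}$ as the quadratic $W^2 - \Delta_{\E_\R} W + 1 = 0$, so that $\Delta_{\E_\R}(z)\in[-2,2]$ forces $|W(z)|=1$. Since $|w_{z_0}|,|w_{\bar z_0}|<1$ strictly on the open set $\Omega_\R$ by the maximum principle, $|W|<1$ there; the equality $|W|=1$ holds only on the boundary $\E_\R$.

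The critical-point statement is the most subtle. My approach is to use the algebraic identities
\[
\Delta_{\E_\R} - 2 = \frac{(W-1)^2}{W}, \qquad \Delta_{\E_\R} + 2 = \frac{(W+1)^2}{W},
\]
which show that real roots of $\Delta_{\E_\R}\mp 2$ have even order and correspond precisely to $W=\pm 1$. In each interior gap $(a_j,b_j)\subset\R$, the function $\Delta_{\E_\R}$ is real, continuous, free of poles (all poles are off $\R$), and equals $+2$ at both endpoints (gap edges of $\E_\R$, where $W=1$). Hence $\Delta_{\E_\R}$ attains at least one interior local extremum; ruling out a minimum requires showing $\Delta_{\E_\R}>2$ throughout $(a_j,b_j)$, which follows from writing $\Delta_{\E_\R}=W+W^{-1}$ and observing that on the gap one has $W(x)>0$ real with $W(x)\neq 1$. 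Since $\Delta_{\E_\R}$ has degree $2(g+1)$, a count of zeros of $(W-1)^2$ versus $(W+1)^2$, combined with the degree-$(g+1)$ proper-map property of the Ahlfors function on $\partial\Omega_\R$, shows $W^+$ winds once around $\partial\D$ as $x$ traverses each interior band and returns to $+1$ at both endpoints, hitting $W=-1$ exactly once in the interior and producing the unique critical point $c_{2j+1}$ with $\Delta_{\E_\R}(c_{2j+1})=-2$. The main obstacle I anticipate is in pinning down this winding count rigorously, i.e.\ justifying the precise statement that the gap edges are exactly the points with $W=+1$ (and not $-1$) on each band; this appears to require a careful argument tracking the boundary orientation and the relation between the Schwarz reflection of $w_{z_0}$ and $w_{\bar z_0}$ across $\E_\R$.
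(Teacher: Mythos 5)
Your treatment of the reality, rationality, pole-location, and preimage-of-$[-2,2]$ claims lines up with the paper's argument: conjugation symmetry of $H$ gives $\overline{w_{z_0}(z)}=w_{\overline{z_0}}(\overline{z})$, and the Joukowsky-map picture (equivalently the quadratic $W^2-\Delta_{\E_\R}W+1=0$) plus the maximum principle handles \eqref{eq:preDelta}. That much is fine.

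The gap is in the critical-point count, and you have correctly diagnosed where it lies. What the paper supplies --- and what your sketch lacks --- is a \emph{monotone reparametrization by the Nevanlinna function $H$}. Since $H$ maps $\C_+$ into $\C_+$ and is explicitly a product of square roots, one shows that $H(x+i0)$ runs \emph{strictly monotonically} along $i\R_+$ from $i\infty$ to $0$ as $x$ traverses a band, and strictly monotonically along $\R_+$ from $0$ to $\infty$ as $x$ traverses a gap. Once this is in hand, the winding question you were worried about is no longer a question about $W$ as a function of $x$ on a curved piece of $\partial\D$, but about the \emph{elementary} rational function
\[
\psi(x)=\frac{x-H_0}{x+\overline{H_0}}\cdot\frac{x-\overline{H_0}}{x+H_0}
\]
of a single real (resp.\ purely imaginary) variable $H$. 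On the gaps, one computes $\psi(x)=\bigl((x-a)^2+b^2\bigr)/\bigl((x+a)^2+b^2\bigr)$ with $H_0=a+ib$, which is strictly unimodal with a unique minimum at $x=|H_0|$ --- this gives the \emph{exact} count, not merely the existence. Your observation that $\Delta_{\E_\R}>2$ in the gap with $\Delta_{\E_\R}=2$ at the endpoints only produces \emph{at least one} critical point; unimodality of $\psi$ composed with the monotone $H$ is what rules out additional oscillations. On the bands, the analogous factorization into two right-half-plane Blaschke factors with parameters $H_0,\overline{H_0}$ makes the "winds once around $\partial\D$, returning to $1$" claim an immediate consequence of the argument principle and the symmetry $W(-\overline{H})=\overline{W(H)}$, with the endpoint values $W=1$ following directly from $H\to 0$ and $H\to i\infty$ rather than from any delicate orientation argument about Schwarz reflection. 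Your degree-counting fallback also does not close the gap as stated, since $\Delta_{\E_\R}-2$ has simple zeros at the band/gap edges (coming from the square-root branching of $H$) and double zeros at the interior critical points, so a naive "even order corresponds to $W=\pm1$" bookkeeping misallocates multiplicities. In short: the argument you propose is in the right spirit, but without the Nevanlinna monotonicity of $H$ and the explicit unimodality of $\psi$, you cannot upgrade "at least one" to "exactly one."
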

\begin{proof}
Since $\overline{H(\overline{z})}=H(z)$, we see that $\overline{w_{z_0}(z)}=w_{\overline{z_0}}(\overline{z})$. Also, for $z\in\E_\R$, one has
\begin{align*}
\lim\limits_{\e\to 0}\Delta_{\E_\R}(z+i\e)=\lim\limits_{\e\to 0}\Delta_{\E_\R}(z-i\e).
\end{align*}
Hence $\Delta_{\E_\R}$ is a real rational function on $\overline{\C}$. Note that in \eqref{def:DeltaReal} we have written $\Delta_{\E_\R}$ as the composition of $w_{z_0} w_{\overline{z_0}}$ and the Joukowsky map $u\mapsto u+{1}/{u}$. Since $|w_{z_0}(z)|=1$ if and only if $z\in\E_\R$, this proves \eqref{eq:preDelta}.  Moreover, since $H$ is a Nevanlinna function, it decreases monotonically from $i\i$ to $0$ as $z$ moves along a band $[b_j,a_{j+1}]$. Setting $H(z_0)=H_0$, it follows from the previous theorem that
\begin{align*}
w_{z_0}(z)w_{\overline{z_0}}(z)=\frac{H(z)-H_0}{H(z)+\overline{H_0}}\frac{H(z)-\overline{H_0}}{H(z)+H_0}.
\end{align*}
Recall now that
$
\frac{z-H_0}{z+\overline{H_0}}
$
is the Blaschke factor of the right half-plane. Therefore, as $H(z)$ decreases from $i\i$ to $0$, the values of $w_{z_0}w_{\overline{z_0}}$ runs through $\partial\D$ precisely once, starting and ending at $1$. Hence there is exactly one point $c_{2j+1}\in(b_j,a_{j+1})$ with $w_{z_0}(c_{2j+1})w_{\overline{z_0}}(c_{2j+1})=-1$, that is, $\Delta_{\E_\R}(c_{2j+1})=-2$.
To analyze the behavior in the gaps, we note that in each gap $H(z)$ increases monotonically from $0$ to $\infty$. Considering the function
\begin{align*}
\psi(x)=\frac{x-H_0}{x+\overline{H_0}}\frac{x-\overline{H_0}}{x+H_0}
\end{align*}
on $\R_+$ shows that there is exactly one critical point in each gap.
\end{proof}

\section{Operator M\"obius transforms}\label{sec:operatorMoebius}
Let $\cH$ be a Hilbert space and denote by $\mathcal{L}(\cH)$ the space of bounded linear operators from $\cH$ into itself equipped with the standard operator norm. By $\D(\cH)$ (resp. $\overline{\D(\cH)}$)  we denote the open (resp. closed) unit ball in $\mathcal{L}(\cH)$, that is, $\D(\cH)$ is the set of contractions on $\cH$. We will write operators $U\in \mathcal{L}(\cH\oplus\cH)$ in matrix form:
\begin{align}
	U=\begin{bmatrix}
	U_{11}& U_{12}\\
	U_{21}& U_{22}
	\end{bmatrix}, \quad U_{ij}\in \mathcal{L}(\cH).
\end{align}
To such matrices we can associate a linear fractional transformation
\begin{align}
\Phi_U(S):=(SU_{12}+U_{22})^{-1}(SU_{11}+U_{21}),
\end{align}
defined for those $S\in \mathcal{L}(\cH)$ for which $SU_{12}+U_{22}$ is boundedly invertible. It is straightforward to see that $\Phi_{U}(\Phi_{V}(S))=\Phi_{VU}(S)$ and that for any $\l\neq 0$, the operators $U$ and $\l U$ generate the same transform.

In \cite{Krein67}, a complete characterization of the class of operators $U$ such that $\Phi_{U}$ is a bijection from $\D(\cH)$ onto $\D(\cH)$ was given. In this case, $\Phi_U$ is called an \textit{operator M\"obius transform}. The characterization involves the special operator
\begin{align*}
	\mjH=\begin{bmatrix}
	1&0\\
	0& -1
	\end{bmatrix},
\end{align*}
where as usual $1$ denotes the identity operator on $\cH$. An operator $U\in \mathcal{L}(\cH\oplus\cH)$ is called \textit{$\mjH$-unitary} if $U^*\mjH U=\mjH$.
\begin{theorem}{\cite{Krein67}} 
	 $\Phi_A$ is an operator M\"obius transform if and only if $A$ is colinear with a $\mjH$-unitary operator.
\end{theorem}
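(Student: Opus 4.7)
The plan is to prove both directions by computing how the key algebraic identity $(U,T)=(S,\mathbf{1})A$, with $U:=SA_{11}+A_{21}$ and $T:=SA_{12}+A_{22}$, interacts with the indefinite form $\mjH$. Throughout I will freely use that $\Phi_A$ depends only on the colinearity class of $A$, so for the "if" direction I may assume $A$ is $\mjH$-unitary.

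Sufficiency first. Assuming $A^*\mjH A=\mjH$ and (by invertibility of $\mjH$) the companion identity $A\mjH A^*=\mjH$, the $(2,2)$-block of the latter gives $A_{22}A_{22}^*-A_{21}A_{21}^*=\mathbf{1}$, so $A_{22}$ has a bounded inverse; a direct pairing argument using $A_{22}^*A_{22}-A_{12}^*A_{12}=\mathbf{1}$ then shows that $T=SA_{12}+A_{22}$ is injective with closed range for every $S\in\overline{\D(\cH)}$, and a dual argument applied to $T^*$ (for $S$ in the open ball $\D(\cH)$) yields bounded invertibility of $T$. The core calculation is
\begin{align*}
TT^*-UU^*=(S,\mathbf{1})A\mjH A^*(S,\mathbf{1})^*=(S,\mathbf{1})\mjH(S,\mathbf{1})^*=\mathbf{1}-SS^*,
\end{align*}
so
\begin{align*}
\mathbf{1}-\Phi_A(S)\Phi_A(S)^*=T^{-1}(TT^*-UU^*)T^{-*}=T^{-1}(\mathbf{1}-SS^*)T^{-*}\ge 0.
\end{align*}
This shows $\Phi_A$ maps $\D(\cH)$ into $\D(\cH)$. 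Bijectivity follows because $A^{-1}=\mjH A^*\mjH$ is itself $\mjH$-unitary, the composition law $\Phi_A\circ\Phi_B=\Phi_{BA}$ gives $\Phi_A\circ\Phi_{A^{-1}}=\Phi_{\mathbf{1}}=\mathrm{id}$, and the same argument applied to $A^{-1}$ shows $\Phi_{A^{-1}}$ also maps $\D(\cH)$ to itself.

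Necessity is the more delicate direction. Suppose $\Phi_A$ is a bijection of $\D(\cH)$. Applying $\Phi_A$ to $S=0$ shows $A_{22}^{-1}A_{21}\in\D(\cH)$, in particular $A_{22}$ is invertible; after multiplying $A$ by a suitable nonzero scalar I may normalize the bottom-right block. Next, I would compose $\Phi_A$ with the (ordinary) Möbius transform of $\D(\cH)$ sending $\Phi_A(0)$ back to $0$; the composed bijection fixes the origin. A Schwarz-type lemma for operator-valued analytic maps of $\D(\cH)$, combined with applying the same normalization at a general $S_0\in\D(\cH)$, forces the associated operator to be an isometry in the indefinite $\mjH$-metric when restricted to the appropriate orbits. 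Equivalently, computing $TT^*-UU^*$ for this normalized $A$ and comparing with $\mathbf{1}-SS^*$ gives
\begin{align*}
(S,\mathbf{1})\bigl(A\mjH A^*-\lambda\,\mjH\bigr)(S,\mathbf{1})^*=0
\end{align*}
for every $S\in\D(\cH)$ and some scalar $\lambda>0$; polarization over $S$ then yields $A\mjH A^*=\lambda\mjH$, so $\lambda^{-1/2}A$ is $\mjH$-unitary.

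The hardest step, and the only one that genuinely needs care, is the rigidity argument in the necessity direction that promotes "$\Phi_A$ is a $\D(\cH)$-bijection" to the algebraic identity $A\mjH A^*=\lambda\mjH$. In scalar $\cH$ this is classical from the theory of $SU(1,1)$, but for operator-valued $\cH$ one must replace pointwise evaluation by a polarization/density argument over $\D(\cH)$ and use that the group law $\Phi_A\circ\Phi_B=\Phi_{BA}$ faithfully represents composition; everything else is routine bookkeeping of $\mjH$-unitary block relations.
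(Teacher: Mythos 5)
The paper does not prove this statement; it simply cites \cite{Krein67}, so there is no proof in the paper to compare yours against. Evaluating your proposal on its own terms: the sufficiency direction is essentially correct. The identity $TT^*-UU^*=(S,\mathbf{1})A\mjH A^*(S,\mathbf{1})^*=\mathbf{1}-SS^*$ is the right computation, and once you have $T$ boundedly invertible you do get $\mathbf{1}-\Phi_A(S)\Phi_A(S)^*=T^{-1}(\mathbf{1}-SS^*)T^{-*}$, which is bounded below (not merely $\ge 0$, which you should say explicitly) when $\|S\|<1$, so $\Phi_A(S)\in\D(\cH)$. For the invertibility of $T$ a cleaner argument than the one you sketch is to factor $T=(\mathbf{1}+SA_{12}A_{22}^{-1})A_{22}$ and observe that $A_{12}^*A_{12}=A_{22}^*A_{22}-\mathbf{1}$ forces $\|A_{12}A_{22}^{-1}\|\le 1$, whence $\|SA_{12}A_{22}^{-1}\|<1$ for $\|S\|<1$ and a Neumann series gives the inverse. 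The group-law argument for bijectivity using $A^{-1}=\mjH A^*\mjH$ and $\Phi_U\circ\Phi_V=\Phi_{VU}$ is correct.

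The necessity direction, however, contains a genuine gap, which you in fact flag yourself. You assert that a ``Schwarz-type lemma for operator-valued analytic maps of $\D(\cH)$'' forces $TT^*-UU^*=\lambda(\mathbf{1}-SS^*)$, and then the rest (taking $S=0$ to isolate $M_{22}$, matching coefficients in $t$ for $S\mapsto tS$, testing with rank-one $S$ to kill $M_{11}$ and $M_{12},M_{21}$) is routine. But the Schwarz-lemma step is precisely the content of Krein's theorem and cannot be invoked as a black box without a statement and proof: the operator ball $\D(\cH)$ is not a bounded symmetric domain in the naive sense, the usual scalar Schwarz--Pick rigidity does not carry over verbatim, and in particular the step from ``biholomorphic bijection fixing $0$'' to ``linear isometry of the indefinite metric'' requires a careful argument (e.g.\ via the Harnack-type inequality for $\Phi_A$ or via the classification of automorphisms of the Cartan domain of type I). As written, your necessity proof reduces the theorem to an unproven lemma of comparable depth; to make it complete you would need to either prove the operator Schwarz lemma you invoke, or replace that step with a direct algebraic argument (e.g.\ testing $\Phi_A$ and its inverse on the one-parameter families $S=tP$ for projections $P$ and extracting the block relations of $A$ from the boundary behavior).
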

A complete description of all $\mjH$-unitary operators and hence all operator M\"obius transforms is known:
\begin{theorem}{\cite{Krein67}} 
	The general form of a $\mjH$-unitary operator is
	\begin{align}
		U=\begin{bmatrix}
		1& A^*\\
		A& 1
		\end{bmatrix}
		\begin{bmatrix}
		\eta_{A^*}^{-1}& 0\\
		0&\eta_{A}
		\end{bmatrix}
		\begin{bmatrix}
		V_1&0\\
		0& V_2
		\end{bmatrix},
	\end{align}
	where $A\in \D(\cH)$, $\eta_A=\sqrt{1-AA^*}$, and $V_1,V_2$ are unitary operators on $\cH$.
\end{theorem}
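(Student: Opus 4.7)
The strategy is to extract an operator $A \in \D(\cH)$ from the block structure of $U$ and read off the unitaries $V_1, V_2$ as polar factors; the converse direction (any operator of the stated form is $\mjH$-unitary) is then a direct block-matrix computation using the intertwining identity $A f(A^*A) = f(AA^*) A$ for Borel functions $f$.

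First I would establish that the diagonal blocks are boundedly invertible. Since $\mjH^2 = 1$, the hypothesis $U^*\mjH U = \mjH$ implies $U^{-1} = \mjH U^* \mjH$, whence $U \mjH U^* = \mjH$ as well. Extracting the $(1,1)$-block of both these relations gives
\begin{align*}
U_{11}^*U_{11} = 1 + U_{21}^*U_{21} \geq 1, \qquad U_{11}U_{11}^* = 1 + U_{12}U_{12}^* \geq 1,
\end{align*}
so $U_{11}$ is bounded below with dense range, and the same reasoning applies to $U_{22}$; hence both are boundedly invertible.

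Next I would define $A := U_{21}U_{11}^{-1}$ and show $A \in \D(\cH)$. Conjugating the identity $U_{11}^*U_{11} - U_{21}^*U_{21} = 1$ by $U_{11}^{-1}$ on the right (and its adjoint on the left) yields
\begin{align*}
1 - A^*A = (U_{11}U_{11}^*)^{-1},
\end{align*}
a bounded strictly positive operator, so $\|A\| < 1$ and $\eta_{A^*} = \sqrt{1-A^*A}$ is invertible with $\eta_{A^*}^{-2} = U_{11}U_{11}^*$. Setting $V_1 := \eta_{A^*}U_{11}$, the two identities $V_1^*V_1 = U_{11}^*\eta_{A^*}^2 U_{11} = 1$ and $V_1V_1^* = \eta_{A^*}(U_{11}U_{11}^*)\eta_{A^*} = 1$ show $V_1$ is unitary and $U_{11}$ factors as $\eta_{A^*}^{-1}V_1$. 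Symmetrically, from $U_{22}^*U_{22} - U_{12}^*U_{12} = 1$ together with the off-diagonal relation $U_{11}^*U_{12} = U_{21}^*U_{22}$—which forces $U_{12} = A^*U_{22}$ by invertibility of $U_{11}^*$—one obtains the corresponding polar factorization of $U_{22}$ and the unitary $V_2$. Assembling the four blocks via the intertwining identities $A\eta_{A^*}^{-1} = \eta_A^{-1}A$ and $A^*\eta_A^{-1} = \eta_{A^*}^{-1}A^*$ (both instances of $Af(A^*A) = f(AA^*)A$ with $f(t) = (1-t)^{-1/2}$) produces the stated factorization.

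The chief technical point is the strict inequality $\|A\| < 1$: one needs $1 - A^*A = (U_{11}U_{11}^*)^{-1}$ to be bounded away from zero, which follows because $U_{11}U_{11}^*$ is a bounded operator on $\cH$, not merely because it is positive; a naive positivity argument would only yield $\|A\| \leq 1$. The converse direction is then a direct block multiplication: the off-diagonal blocks of $U^*\mjH U$ cancel by the intertwining relations above, and the diagonal blocks collapse to $\pm 1$ via $\eta_{A^*}^{-1}(1-A^*A)\eta_{A^*}^{-1} = 1$ and its $A \leftrightarrow A^*$ companion. Uniqueness of $A$ is built into the construction; $V_1, V_2$ are determined by $U$ and $A$ through $V_j = \eta_{(\cdot)}U_{jj}$.
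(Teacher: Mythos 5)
The paper itself does not prove this statement but cites it directly to Kre\u{\i}n \cite{Krein67}, so there is no in-text proof to compare against; your argument is essentially the standard derivation. Your blocks $U_{11} = \eta_{A^*}^{-1}V_1$, $U_{21} = A\eta_{A^*}^{-1}V_1$, $U_{12} = A^*\eta_A^{-1}V_2$, $U_{22} = \eta_A^{-1}V_2$ are all correct, as is the key technical observation that $1-A^*A = (U_{11}U_{11}^*)^{-1}$ yields the \emph{strict} bound $\|A\|<1$. However, assembling these blocks gives the middle factor $\bigl[\begin{smallmatrix}\eta_{A^*}^{-1}&0\\0&\eta_A^{-1}\end{smallmatrix}\bigr]$, not $\bigl[\begin{smallmatrix}\eta_{A^*}^{-1}&0\\0&\eta_A\end{smallmatrix}\bigr]$ as in the theorem as printed. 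A direct check of the converse confirms your version: with $\eta_A$ in the lower-right corner one gets $(U^*\mjH U)_{22} = -\eta_A^4 \neq -1$, whereas with $\eta_A^{-1}$ the product collapses to $\mjH$ as required. In other words, your proof is correct and actually uncovers a typo in the paper's (and, as literally copied from the source, Kre\u{\i}n's) displayed formula; you should flag this discrepancy rather than asserting that you "produce the stated factorization," since what you produce differs from the stated form.

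One genuine gap worth noting: your very first step claims that $U^*\mjH U = \mjH$ implies $U^{-1}=\mjH U^*\mjH$. This only gives that $\mjH U^*\mjH$ is a \emph{left} inverse of $U$; in an infinite-dimensional $\cH$ a bounded operator with a left inverse need not be invertible. Indeed, with only the one-sided hypothesis $U^*\mjH U=\mjH$ (which is what the paper's Appendix B literally defines), the theorem as stated is false — for example, $U=\bigl[\begin{smallmatrix}S&0\\0&S\end{smallmatrix}\bigr]$ with $S$ the unilateral shift satisfies $U^*\mjH U = \mjH$ but is not of the claimed (invertible) form. The standard convention in Kre\u{\i}n-space theory — and what is clearly intended here — is that $\mjH$-unitary means $U^*\mjH U=\mjH$ \emph{and} $U\mjH U^*=\mjH$ (equivalently, $U$ invertible with $U^{-1}=\mjH U^*\mjH$). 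You should either adopt that two-sided definition explicitly or add the hypothesis that $U$ is invertible; with that in hand your argument goes through, since both $U_{11}^*U_{11}\geq 1$ and $U_{11}U_{11}^*\geq 1$ are then available, giving bounded invertibility of $U_{11}$ (and symmetrically $U_{22}$) exactly as you wrote.
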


\subsection*{Funding}
J.\;S.\;C. is supported by the Swedish Research Council (VR) Grant No.\;2018-03500 and in part by the Project Grant DFF-4181-00502
from the Danish Council for Independent Research.
The research of B.\;E. is supported by the Austrian Science Fund FWF, Project No.\;J 4138-N32.
T.\;V. was partially supported by an AMS Simons travel grant 2018--2020.

\subsection*{Acknowledgements}
It is a pleasure to thank Peter Yuditskii for very helpful discussions. 

\bibliographystyle{amsplain}
\bibliography{lit}

\end{document}